\newtheorem{theorem}{Theorem}[section]
\newtheorem{lemma}[theorem]{Lemma}
\newtheorem{proposition}{Proposition}[section]
\theoremstyle{definition}
\newtheorem{definition}[theorem]{Definition}
\theoremstyle{remark}
\newtheorem{remark}[theorem]{Remark}
\numberwithin{equation}{section}
\newcommand\be{\begin{equation}}
\newcommand\ee{\end{equation}}
\newcommand\bea{\begin{eqnarray}}
\newcommand\eea{\end{eqnarray}}
\newcommand\bi{\begin{itemize}}
\newcommand\ei{\end{itemize}}
\newcommand\ben{\begin{enumerate}}
\newcommand\bena{\begin{enumerate}[(a)]}
\newcommand\een{\end{enumerate}}
\newcommand\bp{\begin{proof}}
\newcommand\ep{\end{proof}}
\newcommand{\R}{\ensuremath{\mathbb{R}}}
\newcommand{\N}{\mathbb{N}}
\renewcommand{\H}{\mathbb{H}}
\newcommand\de{\text{d}}
\newcommand\p{\partial}
\newcommand{\Ti}{\boldsymbol{T}}
\newcommand{\Ga}{\boldsymbol{\Gamma}}
\newcommand{\der}{V}
\newcommand{\BA}{\boldsymbol{A}}
\newcommand{\eps}{\varepsilon}
\title{Global stability for nonlinear wave equations with~multi-localized~initial~data}
\author[1]{John Anderson\thanks{jranders@math.princeton.edu}}
\author[1]{Federico Pasqualotto\thanks{fp2@math.princeton.edu}}
\affil[1]{\small  Department of Mathematics, Princeton University, Washington~Road,~Princeton~NJ~08544,~United~States~of~America \vskip.1pc \ }
\begin{document}

\maketitle

\begin{abstract}
In this paper, we initiate the study of the global stability of nonlinear wave equations with initial data that are not required to be localized around a single point. More precisely, we allow small initial data localized around any finite collection of points which can be arbitrarily far from one another. Existing techniques do not directly apply to this setting because they require norms with radial weights away from some center to be small. The smallness we require on the data is measured in a norm which does not depend on the scale of the configuration of the data. 

Our method of proof relies on a close analysis of the geometry of the interaction between waves originating from different sources. We prove estimates on the bilinear forms encoding the interaction, which allow us to show improved bounds for the energy of the solution. We finally apply a variant of the vector field method involving modified Klainerman--Sobolev estimates to prove global stability. As a corollary of our proof, we are able to show global existence for a class of data whose $H^1$ norm is arbitrarily large.
\end{abstract}

\tableofcontents

\section{Introduction}
The study of nonlinear hyperbolic equations is intimately tied with many fundamental physical phenomena. For example, the irrotational compressible Euler equations, which are useful for describing the dynamics of a compressible gas, can be realized as a system of quasilinear hyperbolic equations, see e.g.~\cite{courantfriedrichs}. Similarly, the dynamics of elastic materials can be described using systems of quasilinear hyperbolic equations \cite{Sideris2000}. We finally mention that the Einstein vacuum equations in general relativity can be realized as a system of quasilinear hyperbolic equations after choosing an appropriate gauge, see e.g.~\cite{choquetbruhat1952}.

In this paper, we initiate the study of global stability for systems of quasilinear wave equations with small initial data localized around $N$ fixed points which are allowed to be arbitrarily far away from each other. This extends classical results, which require initial data to be highly localized around a single point. One can regard the present study as a model problem to understand the interaction of two or more gravitational waves originating from far away sources, and we expect the methods developed in the present paper to extend to other physical equations such as the Einstein equations~\cite{forthcoming}.

\subsection{Formulation of the problem and historical remarks}
In this part of the introduction, we will first state a rough version of our result in the case of data localized around two points (Section~\ref{sub:resultsrough}). We will then proceed to describe the existing theory in Section~\ref{sub:existing}. Moreover, we will remark on the early developments concerning the classical null condition in Section~\ref{sub:nullintro}. Furthermore, in Section~\ref{sub:overview}, we will explain how our work relates to classical small data results, giving an overview of the problem at hand. 
In Section~\ref{sub:nirenberg}, as a motivating example, we will provide a concise proof of our theorem for a particular semilinear wave equation, to which the so-called Nirenberg trick applies.
In Section~\ref{sub:secondv}, we provide a rough statement for the general version of our main theorem, which applies to initial data localized around $N$ points far away from each other (in a sense which we are going to make precise later). 
In Section~\ref{sub:largedata}, we describe a large data  existence result for nonlinear wave equations satisfying the null condition which follows from the methods introduced in the present work. Finally, in Section \ref{structure}, we shall describe the structure of the rest of the paper.

\subsubsection{Rough description of the results}\label{sub:resultsrough}

This paper deals with the global stability of nonlinear wave equations with small data localized around $N$ distinct points.

To gain some intuition, we shall now state a special case of the theorem in which the data are localized around only two points. A rough version of the main result in full generality will be given in Theorem~\ref{thm:rough2}.

\begin{figure}[H]
\def\svgwidth{\linewidth}
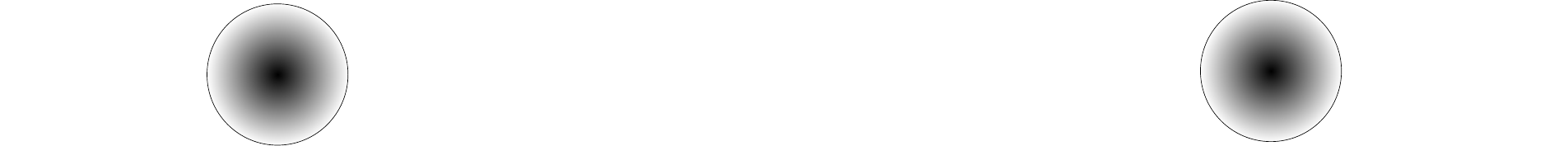
\caption{Initial data configuration in the special case $N = 2$.}
\label{fig:zerozero}
\end{figure}

In our main theorem, we are going to prove global stability for a class of systems of quasilinear wave equations of the following form:
\begin{equation} \label{eq:nonlwaverough}
    \begin{aligned}
        \Box \phi_A + \sum_{B, C= 1}^M F^{A}_{BC} (d \phi_B, d^2 \phi_C) = \sum_{B,C = 1}^M G^A_{BC} (d \phi_B, d \phi_C), \quad A = 1, \ldots, M.
    \end{aligned}
\end{equation}
Here, $\phi_A: [0, T]\times \R^{3} \to \R^M$ is the unknown, and $F^{A}_{BC}$ and $G^{A}_{BC}$ are collections of resp.~trilinear and bilinear forms satisfying the null condition:
\begin{equation}\label{eq:nullintro}
    F^{A}_{BC} (N,N,N) = 0, \quad  G^{A}_{BC} (N,N) = 0 \qquad \text{for all } \ N \in T\R^4 \ \text{ such that } m(N,N) =0.
\end{equation}
Here, $m$ is the Minkowski metric, and $\Box$ denotes the wave operator induced by $m$ on $\R^{3 + 1}$. See Section~\ref{sec:null} for additional details. For this class of equations, we have the following rough version of the theorem on global stability:

\begin{theorem} [Rough version of Theorem~\ref{thm:main} for $N = 2$]\label{thm:roughtwo}
For all sufficiently small data supported in two unit sized balls, nonlinear wave equations \eqref{eq:nonlwaverough} satisfying the classical null condition~\eqref{eq:nullintro} admit global-in-time solutions. Moreover, the smallness of initial data is measured in a norm which does not depend on the distance between the two balls.
\end{theorem}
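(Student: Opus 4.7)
The plan is to adapt the vector field method to the two-source setting by working with two copies of the Klainerman algebra, one centered at each source, and by treating the cross-interaction between the two packets as a genuine bilinear object rather than a diagonal one. First I would decompose the initial data as a sum of two pieces supported respectively in the two unit balls, and correspondingly split the solution as $\phi = \phi^{(1)} + \phi^{(2)}$, where each $\phi^{(i)}$ solves \eqref{eq:nonlwaverough} with a nonlinear source term coming from the coupling of both components. I then introduce two families of commuting vector fields $Z^{(i)}_I$ adapted to the Minkowski origins $(0,p_i)$ (translations, boosts, rotations, and scaling centered at $p_i$), and measure the smallness of the $i$-th piece of the data in a weighted Sobolev norm centered at $p_i$ only. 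Because each piece of data is supported in a unit ball, these centered norms reduce to a local $H^N$-type quantity that is insensitive to $|p_1 - p_2|$.

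Next, I would run a bootstrap on higher-order energies of the form
$\mathcal{E}_k(t) = \sum_{|I| \le k} \bigl( \|\partial Z^{(1), I} \phi^{(1)}(t)\|_{L^2}^2 + \|\partial Z^{(2), I} \phi^{(2)}(t)\|_{L^2}^2 \bigr),$
coupled with Klainerman--Sobolev type pointwise bounds $|\partial Z^{(i), \le k} \phi^{(i)}(t,x)| \lesssim \varepsilon \, (1+t)^{-1} \langle t - |x - p_i|\rangle^{-1/2}$ valid in the forward light cone from each center. Propagating these bounds by the usual energy identity reduces the problem to estimating three kinds of nonlinear sources: the two self-interaction terms $F^{A}_{BC}(d\phi^{(i)}, d^2\phi^{(i)})$ and $G^{A}_{BC}(d\phi^{(i)}, d\phi^{(i)})$, which are handled by Klainerman's classical null-form argument centered at $p_i$; and the genuinely new cross terms such as $G^A_{BC}(d\phi^{(1)}, d\phi^{(2)})$, which have no reason to enjoy the usual radial weight gain relative to either center.

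The technical heart of the proof is therefore the treatment of the cross-interaction. Here I would exploit two geometric features. First, while each $\phi^{(i)}$ has good tangential derivatives along its own outgoing null cone $C_i = \{|x-p_i| = t\}$, its bad radial derivative points along the null generator of $C_i$; away from the degenerate locus where $C_1$ and $C_2$ are tangent the two null generators are transverse, and the null condition $G^A_{BC}(N,N)=0$ applied at a coincident null vector forces a quantitative gain in the cross term proportional to the angle between the two cones. Second, the set where the two cones are nearly tangent has small spacetime measure, controlled geometrically by the two families of outgoing cones and, crucially, independent of $|p_1 - p_2|$. Integrating bilinear estimates of this type against weights adapted simultaneously to both centers should extract enough decay from the cross terms to close the bootstrap.

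The hardest part, which I expect to absorb most of the effort, is to phrase the bilinear estimates and the accompanying modified Klainerman--Sobolev inequality in a form that is genuinely scale-invariant in $|p_1 - p_2|$: the weights must be chosen so that no positive power of the separation appears on the right-hand side, and one must verify that the commutator and error terms produced when $Z^{(1)}$ hits a function localized near $p_2$ (and vice versa) are truly harmless. Once these ingredients are in place, the closing of the bootstrap parallels the classical small-data argument, carried out twice in parallel and coupled through the bilinear control of the interaction region.
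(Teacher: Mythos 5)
Your proposal has the right general flavor --- adapted vector fields at each center, a bilinear geometric gain on the cross-interaction --- but the decomposition $\phi = \phi^{(1)} + \phi^{(2)}$ into a \emph{coupled} nonlinear system is not the one the paper uses, and it creates a genuine difficulty your sketch does not resolve. The source driving $\phi^{(1)}$ contains $G(d\phi^{(1)},d\phi^{(2)})$, which is concentrated near the \emph{intersection} of the two forward cones rather than near $p_1$, so the part of $\phi^{(1)}$ radiated by that source has no reason to obey the decay $(1+|u_1|)^{-1/2}$ you posit for $\p Z^{(1),\le k}\phi^{(1)}$; and since $\phi^{(1)}$, $\phi^{(2)}$ must be estimated simultaneously in the bootstrap, you cannot dump arbitrarily many derivatives onto the ``other'' factor. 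The paper instead lets $\phi_i$ be the global solution of the \emph{uncoupled} full nonlinear equation with only the $i$-th piece of data (Lemma~\ref{lem:decomposition}), so that each $\phi_i$ is a \emph{known} function with quantitative decay along its own cone and with as many $L^\infty$-bounded derivatives as needed; it then subtracts first $\sum_i\phi_i$, and next the solutions $\psi_{ij}$ of the \emph{linear} inhomogeneous equations~\eqref{eq:pairs} driven by the now-known cross interaction, leaving a remainder $\Psi$ with vanishing data and source small in $R$. This two-stage subtraction of known objects is what makes the bootstrap closeable.

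On the interaction estimate itself, two of the paper's key geometric inputs are missing from your sketch. First, the classical Klainerman--Sobolev decay $(1+|u_i|)^{-1/2}$ is not strong enough: the trilinear integral over the interaction region, carried out in the coordinates $(u_1,v_1,u_2,\varphi)$ of Lemma~\ref{lem:r1r2}, requires integrability in $u$, and the paper obtains this from the improved rates $|\p\,\der^I\phi_i|\lesssim\eps(1+v_i)^{-1}(1+|u_i|)^{-1-\delta}$ and $|\bar\p^{(i)}\der^I\phi_i|\lesssim\eps(1+r_i)^{-2}(1+|u_i|)^{-\delta}$ of Lemma~\ref{prop:decphii}, proved by an $r^p$-weighted argument; with only $u^{-1/2}$ the $u$-integration costs a positive power of $R$ and destroys the $R^{-1}$ gain. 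Second, your heuristic ``gain proportional to the angle between the cones'' is not the mechanism the paper uses: the crucial observation (Lemma~\ref{lem:goodbad}) is that in the interaction region $\mathcal{I}_{ij}$ the multiplier $\p_t$ lies in the span of $\bar\p^{(1)}$ and $\bar\p^{(2)}$ with coefficients of size $t/R$, which, together with the $R/t$ factor from re-expressing $\bar\p^{(1)}$ in terms of $\bar\p^{(2)}$, cancels exactly and lets the bad multiplier $\p_t\psi_{ij}$ be traded for characteristic energy fluxes of $\psi_{ij}$ through outgoing cones. Without these two ingredients, the small-measure-of-tangency observation alone does not suffice to extract the required $R^{-1}$.
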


Note that the theorem does not follow from the classical theory (see Section \ref{sub:existing} below) because we are not assuming the data to be localized around a single point.

We now proceed to review existing theory and connect it to the present work.

\subsubsection{Nonlinear waves: the classical small data theory}\label{sub:existing}

Despite the physical interest attached to nonlinear hyperbolic equations, their rigorous mathematical study is a relatively young field with comparatively few general results, especially outside of the $1 + 1$ dimensional setting of hyperbolic conservation laws (see e.g.~\cite{bressannotes, majdabook}). Specifically, it is only since the $1980$'s that small data global stability results have been rigorously established in more than one space dimension. Indeed, after proving local well-posedness, a very natural question is whether certain special solutions (such as the trivial solution) are globally stable under suitable perturbations of the initial data.

Motivated by physical models such as the irrotational compressible Euler equations, the equations of elasticity, and the Einstein vacuum equations, we are led to consider equations of the form
\begin{equation}\label{eq:nlwclassical}
    \begin{aligned}
        \Box_{g(\phi,d \phi)} \phi = N(d \phi),
    \end{aligned}
\end{equation}
where $g$ is a Lorentzian metric depending on the solution $\phi$ and its first derivatives $d \phi$, and where $N(d \phi)$ is a semilinear term.

Small data long time existence results for wave equations originate with the works of John and Klainerman. In \cite{Klainerman1980}, Klainerman was able to establish small data global existence results for nonlinear wave equations of the form~\eqref{eq:nlwclassical} in sufficiently high space dimensions (see also \cite{KlainermanPonce1983}). Already in these works, it is clear that the mechanism exploited for global existence is the decay of the solutions. This allows the contribution of the nonlinearity to the dynamics to be controlled upon integration. In the physical case of $3 + 1$ dimensions, ``almost global existence'' in the case of quadratic nonlinearities was established by John and Klainerman \cite{johnklainerman1984} (see also the earlier spherically symmetric case \cite{Klainerman1983}).

In \cite{Klainerman1985}, Klainerman developed a way of proving pointwise decay of solutions to wave equations by commuting with Lorentz vector fields (see Section \ref{sec:vectors}), circumventing the use of the fundamental solution that was common in previous work. This breakthrough already led to 
a sharp global existence result in dimensions $n + 1$ for $n \ge 3$. Indeed, he was able to show global existence in the cases of general quadratic nonlinearities for $n \ge 4$ and cubic nonlinearities for $n = 3$. This result is sharp since certain nonlinear wave equations with quadratic nonlinearities do not admit global-in-time solutions in $3 + 1$ dimensions for small initial data. The work of John in \cite{John1979} and \cite{John1981} already showed that global existence cannot be expected for general wave equations with quadratic nonlinearities in $\R^{3 + 1}$ (see also the work of Sideris \cite{sideris91}).\footnote{More recently, a mechanism of shock formation leading to blowup was investigated by Alinhac in~\cite{Alinhac99}. In the monumental work \cite{christodoulou2007}, Christodoulou was able to provide a complete description of shock formation, and subsequently, he was able to understand the shock development problem for the compressible Euler equations \cite{Christodoulou2017}. The understanding of shock formation was later generalized to a large class of wave equations by Speck in~\cite{speck2016}. This continues to be an active area of study.}

\subsubsection{Global existence in three space dimensions and the null condition}\label{sub:nullintro}

Given that general quadratic nonlinearities can lead to blowup in finite time, it is natural to look for a condition which can distinguish between nonlinearities that will result in small data global existence and nonlinearities that will not. This led to the discovery of the null condition~\eqref{eq:nullintro}, which was first discussed by Klainerman in~\cite{icm1982}.

Already in \cite{Klainerman1980}, a particular example of a nonlinear wave equation satisfying what became known as the null condition appeared, the so-called \emph{Nirenberg example}. This is the simple equation
\begin{equation} \label{nirenbergex1}
    \begin{aligned}
        \Box \phi = m(d \phi,d \phi),
    \end{aligned}
\end{equation}
where $m$ is the Minkowski metric. The equation \eqref{nirenbergex1} admits a representation formula that allows for a precise analysis of the solution (see Section \ref{sub:nirenberg} below). Because \eqref{nirenbergex1} is a wave equation with a quadratic nonlinearity admitting global solutions for small data, it becomes apparent that it is possible for such equations to have globally stable trivial solutions in $\R^{3 + 1}$. This provided an early clue that some condition on the nonlinearity may result in global stability. Indeed, the trivial solution to nonlinear wave equations satisfying the null condition~\eqref{eq:nullintro} is globally stable in $3 + 1$ dimensions. This was first shown by Klainerman using the vector field method \cite{Klainerman1986}, and Christodolou gave a different proof using conformal compactification in \cite{christodoulou1986}. Since then, there have been other proofs, as well as several generalizations and extensions.\footnote{For wave equations on Minkowski space, we mention the work of Klainerman--Sideris \cite{KS1996}, Katayama \cite{Katayama2004}, \cite{Katayama2005}, and Katayama--Yokoyama \cite{KY2006}. We also mention the work of Yang \cite{Yang2013}, \cite{yang2016} concerning quasilinear wave equations on perturbations of Minkowski space and that by Keir~\cite{keir2018} on equations satisfying the weak null condition on a general class of asymptotically flat manifolds. Both of these works are based on the $r^p$ method due to Dafermos--Rodnianski \cite{DR2010}. We also mention the work of Pusateri--Shatah \cite{PS2013} on equations satisfying the null condition and Deng--Pusateri \cite{DP2018} on equations satisfying the weak null condition using the method of spacetime resonances (see, for example, \cite{germain2011} and the references therein).} We discuss the null condition further in Section~\ref{sec:null}.

Among the physical systems used as motivation above, the null condition appears in certain models of elasticity. In this context, the null condition was used by Sideris to prove a global stability result in \cite{Sideris2000} dealing with elastic materials that are homogeneous, isotropic, and hyperelastic. However, the null condition is not satisfied by the irrotational compressible Euler equations.

The case of the Einstein vacuum equations is more subtle. The classical null condition is not satisfied by the Einstein vacuum equations in wave coordinates. Nonetheless, identifying the presence of a form of the null condition in a more geometric setting was an important step in the monumental work by Christodoulou--Klainerman \cite{globalnon} showing that Minkowski space is globally nonlinearly stable as a solution to the Einstein vacuum equations under suitable perturbations. We note that later, Lindblad and Rodnianski identified a generalization of the classical null condition which is referred to as the weak null condition. The weak null condition is satisfied by the Einstein vacuum equations in wave coordinates, and this observation allowed Lindblad and Rodnianski to prove the global nonlinear stability of Minkowski space using wave coordinates in \cite{LR2010}.

Compared to generic quadratic nonlinearities, those satisfying the null condition are better behaved because the null structure allows stronger estimates to be proven. Such improved estimates are an essential ingredient in proving global stability. Other examples of the importance of estimates taking advantage of null structure concern low regularity results. In this context, there is the work of Klainerman--Machedon \cite{klainermanmachedon1993}, \cite{klainermanmachedon1995}, Klainerman--Rodnianski--Tao \cite{KRT2002}, and Klainerman--Rodnianski \cite{KR2005} on bilinear estimates. Then, in a monumental series of papers, Klainerman, Rodnianski, and Szeftel were able to prove the Bounded $L^2$ Curvature Conjecture (see \cite{KRS2015} and the references therein). Bilinear estimates (in the form of those proved in \cite{KR2005}) were a fundamental tool in proving this conjecture.

The present work also relies heavily on null structure. Indeed, the most important estimate in the following argument is one that controls the nonlinear interaction between waves originating from distant sources when the nonlinearity satisfies the null condition. These trilinear estimates involve controlling expressions that arise from using a multiplier on a null form. This is described in more detail in Section \ref{subsub:quadraticimp}.

\subsubsection{Overview of the problem: why the classical theory does not apply}\label{sub:overview}

As was already mentioned, previous results concerning global existence for nonlinear wave equations require the initial data to be localized around a single point in a quantitative sense. For example, in \cite{Klainerman1986}, the requirement for global existence is that norms of the form $\Vert \partial \Gamma^\alpha \phi \Vert_{L^2 (\Sigma_0)}$ be of size $\eps$, where $\alpha$ is a multi-index, $\Gamma^\alpha$ is a string of Lorentz vector fields, $\phi$ is the solution to the equation at hand, and $\Sigma_0$ is the hypersurface defined by $t = 0$ (thus, the above is a requirement on the initial data). We note that Lorentz vector fields in general will produce radial weights at $t = 0$, which implies that the norm of initial data will be large if the data itself is not localized around the origin. Indeed, the Lorentz vector fields are given by
\begin{equation} \label{lfields1}
    \begin{aligned}
        x^i \partial_j - x^j \partial_i, \qquad t \partial_i + x^i \partial_t, \qquad t \partial_t + r \partial_r.
    \end{aligned}
\end{equation}
At $t = 0$, these vector fields manifestly have weights that grow away from the fixed origin.

We now consider the following situation. We take the points $(R,0,0)$ and $(-R,0,0)$ on the $x$-axis in $\R^3$. We then fix two pairs of real valued functions $(\phi_0^R,\phi_1^R)$ and $(\phi_0^{-R},\phi_1^{-R})$. The first pair of functions is supported in the unit ball centered at $(R,0,0)$, and the second pair of functions is supported in the unit ball centered at $(-R,0,0)$. 

We then consider an arbitrary nonlinear wave equation of the type~\eqref{eq:nlwclassical} satisfying the null condition~\eqref{eq:nullintro}, and we impose the sum of the above functions as initial data:

\begin{equation}
    \begin{aligned}
        & \Box_{g(\phi,d \phi)} \phi = N(d \phi),\\
        &\phi|_{t=0} = \phi_0^R + \phi_0^{-R},\\
        &\partial_t \phi|_{t=0} = \phi_1^R + \phi_1^{-R}.
    \end{aligned}
\end{equation}

We study the resulting initial value problem with $R$ as a parameter. For $R = 1$, we know that, after scaling the initial data by some small parameter $\eps$, this initial value problem has a globally stable trivial solution by known results. If we take $R$ large, however, we note that the initial data will grow like some power of $R$ in the norms required by existing results on nonlinear wave equations satisfying the null condition. For example, the weighted norms appearing in~\cite{Klainerman1986} satisfy:
\begin{equation*}
    \Vert \p \Gamma^\alpha \phi  \Vert_{L^2(\Sigma_0)} \sim \eps R^{|\alpha|}.
\end{equation*}
This holds because the vector fields in \eqref{lfields1} have weights proportional to the distance between the supports of $\phi_i^R$ and $\phi_i^{-R}$, which is of size $R$. Thus, for $R$ sufficiently large, the initial data would appear large from the point of view of the norms used for global stability in the classical theory. Because of this, the classical results do not apply for this class of initial data. The difficulties introduced by these weights are discussed in more detail in Section~\ref{subsub:badrsobolevs} below.

\subsubsection{A special case: the Nirenberg example}\label{sub:nirenberg}

As a motivating example, we now prove a version of our Theorem~\ref{thm:roughtwo} for a specific equation, the \emph{Nirenberg example}, which we recall was discussed in the context of the classical small data theory in Section \ref{sub:nullintro}. Proving this result for this particular equation is significantly easier than addressing the general case of \eqref{eq:nonlwaverough} because it has a representation formula. Thus, studying this particular example serves as a preliminary check before trying to prove a more general result.

Let us consider the initial value problem for the Nirenberg example:
\begin{equation}\label{eq:nlw}
\begin{aligned}
    &\Box \phi = m(d \phi, d \phi),\\
    &\phi|_{t=0} = \phi_0^{R} + \phi_0^{-R},\\
    &\p_t \phi|_{t=0}  = 0.
\end{aligned}
\end{equation}
We recall that $m$ is the Minkowski metric, and we take $\phi_0^{R}$ to be a smooth function supported in the unit ball centered at the point $(x,y,z)=(R, 0, 0)$, while taking $\phi_0^{-R}$ to be a smooth function supported in the unit ball centered at the point $(x,y,z)= (-R, 0,0)$. Furthermore, we assume that both $\phi_0^{R}$ and $\phi_0^{-R}$ have small $H^{N_0}(\R^3)$ norm, where $N_0$ is assumed to be a large positive integer.

\begin{remark}
	Note that this norm does not depend on the distance ($2 R$) between the supports of $\phi_0^{R}$ and~$\phi_0^{-R}$.
\end{remark}

It is then straightforward to note that, upon defining
$$
u := \exp(-\phi),
$$
the initial value problem~\eqref{eq:nlw} is equivalent to the following linear problem:
\begin{equation}\label{eq:nirenberg}
\begin{aligned}
    &\Box u = 0,\\
    &u|_{t=0} = \exp(-\phi^R_{0} - \phi^{-R}_{0}) =: u_0,\\
    &\p_t u|_{t=0} = 0.
\end{aligned}
\end{equation}
The solution $\phi$ to the original nonlinear wave equation can be recovered by taking $\phi = \log u$. It is clear that $u_0$ is positive everywhere. Furthermore, it is evident that, as long as $u$ solving~\eqref{eq:nirenberg} remains positive, the solution $\phi$ to problem~\eqref{eq:nlw} will not develop singularities.

The function $u_0$, in addition to being positive everywhere, is identically 1 outside of the union of the supports of $\phi_0^{R}$ and $\phi_0^{-R}$. This set is contained, by our choice, in the union of two unit balls centered resp.~at $(R,0,0)$ and $(-R,0,0)$. 

Using now the Kirchhoff formula for solutions to the linear wave equation in $\R^{3+1}$, we obtain that
\begin{equation} \label{ntr1}
    u(t,x) = \frac{1}{4\pi t^2}\int_{\p B_{t,x}} (\underbrace{u_0(y)}_{(a)} + \underbrace{(y-x) \cdot \nabla u_0(y)}_{(b)}) \de S(y)
\end{equation}
Here, $\partial B_{t,x}$ is the boundary of the sphere of radus $t$ centered at the point $x \in \R^3$, whereas $\de S(y)$ denotes the induced area form on $\partial B_{t,x}$.

We then note that, if $R$ is chosen to be large enough, the term corresponding to $(a)$ in \eqref{ntr1} is, upon integration, positive and of size $\sim t^2$, regardless of the choice of $x$ and $t$. This is because the initial data for $u_0$ is 1 on the whole sphere $\partial B_{t,x}$, except at most two caps of unit size. On the other hand, the term corresponding to $(b)$ is, upon integration, at most of size $t$. This is because the expression $(y-x) \cdot \nabla u_0(y)$, restricted to $y \in \p B_{t,x}$, vanishes outside of a set of measure comparable to one (the union of supports of $\phi_0^{R}$ and $\phi_0^{-R}$). On the same set, on the other hand, the expression $(b)$ is of size at most $t$.

In conclusion, for large enough values of $R >0$, $u$ stays positive for all times. Thus the solution $\phi$ does not develop a singularity. This reasoning proves Theorem~\ref{thm:roughtwo} in the very specific case of equation \eqref{eq:nlw}, and it suggests that the result may hold true for a more general class of nonlinear wave equations satisfying the null condition~\eqref{eq:nullintro}.

\subsubsection{Main Theorem: second version}\label{sub:secondv}
We now state a more precise version of our global stability theorem. Let us consider an initial configuration of points $\{p_i\}_{i \in \{1, \ldots, N\}}$ along with the origin $O$ in $\R^3$. For technical convenience, we can assume that the minimum pairwise distance between the points is $2$ after possibly rescaling about the origin. We now scale the configuration by some $R \geq 1$ around the origin $O$, meaning that we consider the points $R p_i$. The smallest and largest pairwise distances between the points are then proportional to $R$; see Figure~\ref{fig:zero} to see how the configuration of our initial data looks. 

\begin{figure}[H]
\def\svgwidth{\linewidth}
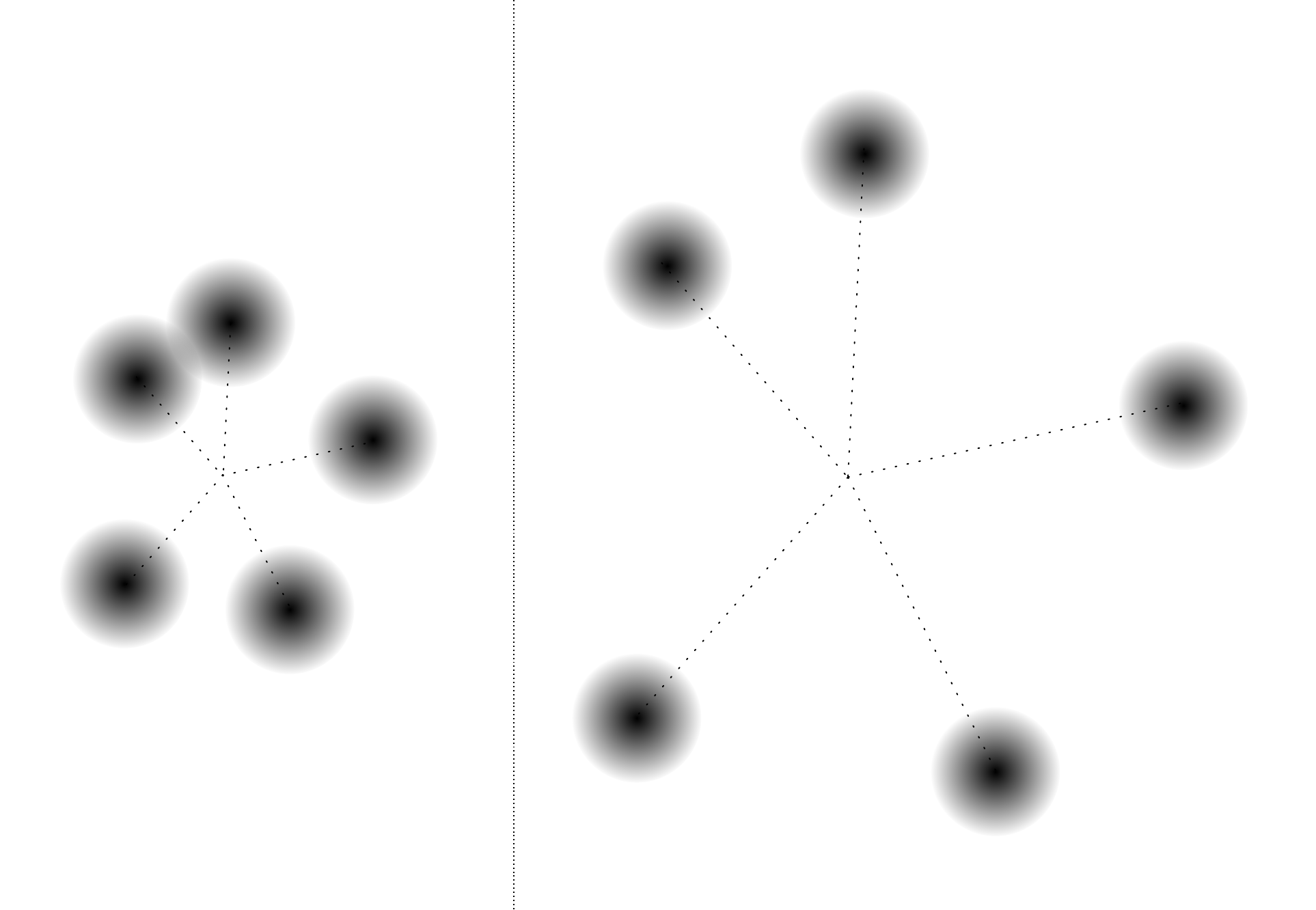
\caption{Initial data configuration and the parameter $R$.}
\label{fig:zero}
\end{figure}

We shall prove global stability of the trivial solution for general quasilinear systems of wave equations satisfying the classical null condition independently of the parameter $R$. In other words, the size of the initial data will be allowed to depend only on $N$, where $N$ is the number of points around which the data are localized, and $d_\Pi$, where $d_\Pi$ is the ratio of the largest and smallest pairwise distances between the $N$ points (see Section \ref{csystems}). It is in this sense that the smallness of the initial data does not depend on the scale of its configuration, which is measured by the parameter $R$. Summarizing, we have the following rough version of the global stability theorem (Theorem~\ref{thm:main}).

\begin{theorem}[Second rough version of Theorem~\ref{thm:main}]\label{thm:rough2}
For all sufficiently small data which are localized around $N$ points as in Figure~\ref{fig:zero}, the system of nonlinear wave equations \eqref{eq:nonlwaverough} admits global-in-time solutions. Moreover, the smallness of initial data data is measured in a norm which does not depend on the scaling parameter $R$, and thus depends only on $N$ and $d_\Pi$, which is the ratio of the largest and smallest pairwise distances between the $N$ points.
\end{theorem}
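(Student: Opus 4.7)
The plan is to develop a multi-center adaptation of Klainerman's vector field method. For each $i = 1,\dots,N$, I would associate the collection of Lorentz and scaling vector fields $\Gamma_{(i)}$ based at the spacetime point $(0, Rp_i)$ rather than at the origin: the boosts $t\p_j + (x^j - Rp_i^j)\p_t$, the rotations $(x^k - Rp_i^k)\p_j - (x^j - Rp_i^j)\p_k$, and the scaling $t\p_t + r_{(i)}\p_{r_{(i)}}$, where $r_{(i)}(x) = |x - Rp_i|$. Each such family consists of conformal Killing fields of the Minkowski metric and preserves the null form structure of~\eqref{eq:nonlwaverough}. Because the data localized in the unit ball around $Rp_i$ lie exactly where the $\Gamma_{(i)}$-weights are $O(1)$, one has the $R$-independent bound $\Vert \p\Gamma_{(i)}^\alpha \phi\Vert_{L^2(\Sigma_0)} \lesssim \eps$ for all $|\alpha|\le k$, with $k$ sufficiently large and with $\eps$ depending only on $N$ and $d_\Pi$.

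\textbf{Bootstrap and multi-center Klainerman--Sobolev.} I would then run a continuity argument on $[0,T^\ast)$ with a bootstrap assumption of the form
\[
\sum_{i=1}^N \sum_{|\alpha|\le k} \Vert \p\Gamma_{(i)}^\alpha \phi(t)\Vert_{L^2(\Sigma_t)}^2 \le C_0\eps^2(1+t)^{C\delta}
\]
for a small $\delta > 0$. A modified Klainerman--Sobolev inequality, obtained by applying the single-center estimate simultaneously at each of the $N$ centers, yields a pointwise bound of the schematic form
\[
|\p\phi(t,x)| \lesssim \sum_{i=1}^N \frac{\eps(1+t)^{C\delta/2}}{(1+t+r_{(i)})(1+|t-r_{(i)}|)^{1/2}}.
\]
Commuting~\eqref{eq:nonlwaverough} with $\Gamma_{(i)}^\alpha$ preserves the null structure at the $i$-th center, and the standard energy identity reduces closing the bootstrap to estimating spacetime integrals of the commuted nonlinearity against $\p\Gamma_{(i)}^\alpha \phi$.

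\textbf{Main obstacle: interaction of waves from different sources.} The central difficulty, and the step I expect to require the most work, is that the nonlinearity couples contributions associated with different centers. Schematically decomposing $\phi = \sum_i \phi_{(i)}$, the ``self-interaction'' terms $N(\p\phi_{(i)},\p\phi_{(i)})$ are controllable by classical single-center methods, while the cross terms $N(\p\phi_{(i)},\p\phi_{(j)})$ with $i\ne j$ lose the usual gain from the null condition, because a derivative tangent to the outgoing cone at $Rp_i$ is generically transverse to the outgoing cone at $Rp_j$. To address this I would decompose derivatives relative to each outgoing null foliation and analyze the angle $\theta_{ij}(t,x)$ between the null directions $L_{(i)} = \p_t + \p_{r_{(i)}}$ and $L_{(j)}$. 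The key point is a bilinear estimate on the interaction terms, schematically
\[
|Q(\p\phi_{(i)},\p\phi_{(j)})| \lesssim |\sin\theta_{ij}|\,|\p\phi_{(i)}|\,|\p\phi_{(j)}| + (\text{good-derivative terms}),
\]
together with a careful spacetime integration exploiting the fact that the interaction region (where both cones are close) has restricted geometry and that $|\sin\theta_{ij}|$ admits a lower bound in terms of $d_\Pi$ away from degenerate loci, with the $R$-dependence cancelling between the area of the interaction region and the magnitude of the angle. It is exactly this geometric cancellation that allows the bilinear spacetime integrals of the cross terms to be bounded by $\eps^3$ with constants depending only on $N$ and $d_\Pi$.

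\textbf{Closing the argument.} With the multi-center energy estimates, modified Klainerman--Sobolev inequalities, and cross-center bilinear interaction estimates in place, one recovers the bootstrap assumption with a strictly better constant, provided $\eps$ is chosen sufficiently small depending only on $N$ and $d_\Pi$. A standard continuation argument then yields the global existence asserted in Theorem~\ref{thm:rough2}, with the size of the data measured in a norm entirely independent of the scaling parameter $R$.
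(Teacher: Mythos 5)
Your proposal captures the right high-level intuition (use vector fields adapted to each center, and control the cross-interaction of waves geometrically), but there is a gap at the very outset that would prevent the bootstrap from closing, and the geometric heuristic you propose for the cross terms is not quite the right one.

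The fatal issue is the claim that $\Vert \p\Gamma_{(i)}^\alpha \phi\Vert_{L^2(\Sigma_0)} \lesssim \eps$ with $\eps$ independent of $R$. This is false for $N\ge 2$. The initial data $\phi|_{t=0} = \sum_{j=1}^N \phi_j^{(0)}(\cdot - Rp_j)$ contains the pieces localized at the \emph{other} centers $Rp_j$, $j\ne i$, which lie at distance $\sim R$ from $Rp_i$. When a Lorentz boost or rotation $\Gamma_{(i)}$ based at $Rp_i$ hits the piece at $Rp_j$, it produces weights of size $R$; iterating $|\alpha|$ times gives $\Vert \p\Gamma_{(i)}^\alpha \phi\Vert_{L^2(\Sigma_0)} \sim \eps R^{|\alpha|}$. (This is precisely the obstruction discussed in Section~\ref{subsub:badrsobolevs}.) Consequently your bootstrap assumption on $\sum_i \Vert\p\Gamma_{(i)}^\alpha\phi(t)\Vert_{L^2}^2$ is already violated at $t=0$, and the continuity argument never starts. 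To make adapted vector fields useful one must first decompose the \emph{solution}, not merely the commutators: the paper subtracts off both the single-center solutions $\phi_i$ \emph{and} the second iterates $\psi_{ij}$ solving~\eqref{eq:psiijintro}, and only then runs a bootstrap on the remainder $\Psi$ (with $R$-rescaled vector fields $\Gamma_R = R^{-1}\Gamma$, since even for $\Psi$ the non-rescaled fields introduce $R$-weights). Your proposal alludes to a decomposition ``$\phi=\sum_i\phi_{(i)}$'' but leaves it schematic; making it rigorous is where most of the work lies, and the naive one-step subtraction $\psi = \phi - \sum_i\phi_i$ is not enough because the resulting equation still has $R$-weighted terms that cannot be absorbed directly.

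A secondary issue concerns the $|\sin\theta_{ij}|$ heuristic. In the region where the two cones actually meet, the angle between $L_{(i)}$ and $L_{(j)}$ tends to zero as $t\to\infty$; there is no useful lower bound on $|\sin\theta_{ij}|$ exactly where the interaction is largest. The gains in the paper come instead from three distinct sources that together produce the $1/R$ improvement: (i) the interaction region is thin, which is encoded in the Jacobian $r_ir_j/R$ of the change of coordinates $(u_i,v_i,u_j,\varphi)$ in Lemma~\ref{lem:r1r2}; (ii) the improved integrable $u$-decay of each $\phi_i$ (Lemma~\ref{prop:decphii}), needed so that the $u_i$- and $u_j$-integrals in~\eqref{eq:linearimp} converge; and (iii) the fact that $\p_t$ can be expressed through the good derivatives of \emph{both} cones at the cost of a degenerating factor $t/R$ (Lemma~\ref{lem:goodbad}, \eqref{eq:igortrick}) --- so the alignment of the two null directions is exploited despite, not because of, the smallness of the angle. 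Your ``cancellation between the area of the interaction region and the magnitude of the angle'' points vaguely toward (i) and (iii), but the mechanism you describe is not the one that actually works.
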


The main obstacle in proving this result is showing decay, which we are able to establish in this setting because waves originating from distant localized sources disperse before interacting. The main tool used to prove this theorem is a trilinear estimate measuring the nonlinear interaction between two waves when the nonlinearity satisfies the null condition. This will allow us to show improved estimates on terms measuring the interaction between the distinct waves. The proof of these trilinear estimates involves an analysis of the geometry of the interaction of two waves which originate from distant sources. We give a more detailed description of this procedure in Section \ref{subsub:quadraticimp} and in Section \ref{subsub:linearestint}, and the estimates themselves are proven in Section \ref{sub:improvedenergy}.

Another tool we shall need are modified Klainerman--Sobolev inequalities. These inequalities are designed to take advantage of the symmetry that exists when a pair of waves interact. This is described in more detail in Section \ref{subsub:rweightedintro}, and the proofs themselves can be found in Section \ref{sec:sobolevs}.

We believe that the dependence of $\eps$ on the ratio $d_\Pi$ of the largest and smallest pairwise distances between the points is purely technical. It arises from the fact that the pointwise estimates are far from sharp. We conjecture that the result is true with $\eps$ independent of the parameter $d_\Pi$, but the proof in this paper cannot directly be used to establish this result.\footnote{We have not tried to optimize the proof in terms of the dependence on $d_\Pi$. However, we note that the proof can be used to remove the dependence of $\eps$ on $d_\Pi$ when $R$ is sufficiently large as a function of $d_\Pi$. This is similar to the proof of Theorem \ref{thm:largedata}, which is carried out in Section \ref{sec:largedata}.}

\subsubsection{A class of large data}\label{sub:largedata}
As a corollary of the main theorem described above, we are able to prove global existence for a class of initial data whose $H^1$ norm is arbitrarily large.\footnote{The class of data considered here can be compared with the data considered in, say, \cite{yang2015} and \cite{Miao2017}. In both cases, the large energy of the data is concentrated in outgoing waves. For our class of data, energy is not required to be outgoing in the same sense, and the largeness comes from the fact that the data is not well localized. However, the data is still mostly outgoing in the sense that there cannot be much focusing.}

The data we consider shall be localized around $N$ points $\{ w_i \}$ with $i \in \{ 1, \ldots, N \}$. The energy can be made large because the pairwise distances between the $w_i$ can be made even larger. Thus, the data localized around $w_i$ and the data localized around $w_j$ with $i \ne j$ will not interact for a long time, allowing the dispersive effects of the wave equation to still dominate. The rough version of this large data result is as follows. For the precise statement, see Theorem~\ref{thm:largedata}.

\begin{theorem} [Rough version of Theorem \ref{thm:largedata}] \label{thm:largedatarough}
    Given any $L > 0$, there exist initial data with $H^1$ norm of size $L$ supported in $N(L)$ distinct balls of unit size for which the system of nonlinear wave equations~\eqref{eq:nonlwaverough} admits a global-in-time solution. 
\end{theorem}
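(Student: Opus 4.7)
The plan is to deduce Theorem~\ref{thm:largedatarough} directly from Theorem~\ref{thm:main} by exploiting its \emph{scale-invariant} smallness hypothesis: the norm in which smallness is measured does not depend on the separation of the source points, whereas the $H^1(\R^3)$ norm of a sum of disjointly supported bumps grows like the square root of the number of bumps. Hence, by piling up many copies of a small single-bump datum at well-separated centers, one can reach arbitrarily large $H^1$ norm while still fitting within the multi-localized small-data regime of Theorem~\ref{thm:main}.

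Concretely, I would fix a configuration $\{p_i\}_{i=1}^N \subset \R^3$ with minimum pairwise distance $2$ and with $d_\Pi$ of order $1$ (for instance vertices of a regular polytope), together with a smooth single-bump pair $(\psi_0,\psi_1)$ supported in the unit ball at the origin, of $H^1$ size $\eta$ to be determined. For a parameter $R\ge 1$ to be chosen large, I then form the multi-bump datum by translating $(\psi_0,\psi_1)$ to each of the $N$ centers $Rp_i$, placing the result in one component of $\phi_A$ and taking the remaining components to vanish:
\begin{equation*}
    \phi|_{t=0}(x) \;=\; \sum_{i=1}^N \psi_0(x - R p_i), \qquad \p_t \phi|_{t=0}(x) \;=\; \sum_{i=1}^N \psi_1(x - R p_i).
\end{equation*}
For $R$ large enough the $N$ summands have pairwise disjoint supports, so $\Vert\phi|_{t=0}\Vert_{H^1(\R^3)}^2 = N\Vert\psi_0\Vert_{H^1(\R^3)}^2 \sim N\eta^2$. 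Applying Theorem~\ref{thm:main}, and using the observation from the footnote in Section~\ref{sub:secondv} that the $d_\Pi$-dependence of the smallness threshold can be removed when $R$ is sufficiently large as a function of $d_\Pi$, the smallness becomes a quantity $\eps(N)$ depending only on $N$. Given any $L>0$, one then picks $N=N(L)$ large enough that $\sqrt{N}\,\eps(N)\ge L$, sets $\eta := L/\sqrt{N} \le \eps(N)$, and chooses $(\psi_0,\psi_1)$ with that $H^1$ size. The resulting datum has total $H^1$ norm exactly $L$, is supported in $N(L)$ unit balls, and Theorem~\ref{thm:main} yields a global-in-time solution of~\eqref{eq:nonlwaverough}.

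The \textbf{main obstacle} is ensuring that $\sqrt{N}\,\eps(N)\to\infty$ as $N\to\infty$, i.e.~that the small-data threshold of Theorem~\ref{thm:main} degrades at most like $N^{-\alpha}$ for some $\alpha<1/2$. This amounts to a careful book-keeping of the $N$-dependence throughout the proof of Theorem~\ref{thm:main}: the trilinear interaction estimates of Section~\ref{sub:improvedenergy} involve sums over pairs of source points and so introduce combinatorial factors of at least $N^2$, and analogous losses appear when applying the modified Klainerman--Sobolev inequalities of Section~\ref{sec:sobolevs}. These losses must be absorbed by the high-order energy bootstrap and the bootstrap closed with a threshold $\eps(N)$ that is only mildly polynomially small in $N$. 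If the combinatorial $N$-loss were to grow faster, one can instead choose $R$ itself as a function of $N$ and $L$ and gain additional smallness from the dispersion of each individual bump before the bumps start interacting, which is the mechanism alluded to in the footnote cross-referencing Section~\ref{sec:largedata}.
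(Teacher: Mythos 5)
Your primary route has a genuine gap, and your fallback route is the correct one but is left unexecuted; let me be precise about both.

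\textbf{The gap in the primary argument.} You want to invoke Theorem~\ref{thm:main} as a black box with a threshold $\eps(N)$ satisfying $\sqrt{N}\,\eps(N)\to\infty$. The paper neither proves nor tracks such a quantitative dependence; the smallness threshold $\eps_0(d_\Pi, \{F^A_{BC}\}, \{G^A_{BC}\}, N)$ is allowed to decay arbitrarily fast in $N$ and $d_\Pi$, and the footnote you cite only asserts that the $d_\Pi$-dependence can be pushed into a largeness requirement on $R$, not that the residual $N$-dependence is milder than $N^{-1/2}$. You acknowledge this as the main obstacle, which is correct, but this means the proposal as stated does not prove the theorem. A secondary issue: you cannot take $d_\Pi=O(1)$ and $\min_{i\neq j}|p_i-p_j|\ge 2$ simultaneously for $N$ large, since that forces all $N$ points into a ball of fixed radius with fixed minimal separation, which is impossible in $\R^3$. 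The paper itself uses $N$ equidistributed points on a unit circle, for which $d_\Pi\sim N$ grows.

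\textbf{What the paper actually does, and how it differs.} The paper does \emph{not} shrink the per-bump size to $L/\sqrt{N}$ and apply Theorem~\ref{thm:main}. Instead it fixes the per-bump size at a constant $\eps_0$, namely the single-bump threshold from Lemma~\ref{prop:decphii}, which depends on the nonlinearity but \emph{not} on $N$, $L$, or $d_\Pi$. Since a single bump has $H^1$ norm bounded below by some $\eps_1$ independent of $N$, the total $H^1$ norm grows like $\sqrt{N}\,\eps_1$, and one simply takes $N$ large enough. The $N$- and $d_\Pi$-dependent constants that would spoil the bootstrap are then absorbed by choosing $R$ large as a function of $L$ (hence of $N$ and $d_\Pi$). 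But this absorption is not automatic: the proof of Theorem~\ref{thm:nonlinear} must be re-run with the \emph{additional} structural input that, because the data are exactly supported in unit balls, $\phi_0\equiv 0$. This forces $\Psi$ and all $\psi_{ij}$ to be supported in $\{t\ge R/10\}$, which upgrades the time integrals in the error terms $\boldsymbol{(a_3)+(a_4)}$, $\boldsymbol{(a_{10})+(a_{11})}$, and $\boldsymbol{(a_{13})}$ (the ones with no room in $R$ in the general case) to start at $s=R/10$ rather than $s=0$. This gains at least a factor $R^{-3\delta/2}$, which is what swallows $C(N,d_\Pi)$. Your fallback sentence gestures at this mechanism, but the indispensable observation is the vanishing of $\phi_0$ and the consequent support-in-$\{t\ge R/10\}$ structure, which you do not isolate; without it the $R$-largeness alone does not close the argument.
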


We note, however, that the $H^k$ norm of the data in any of the $N(L)$ balls is still small (here, we assume that  $k \geq 19$).

\subsection{Structure of the paper} \label{structure}

We shall now describe the structure of the paper. In Section~\ref{sec:ideasofproof}, we shall describe in more detail the main difficulties, motivate the ideas of the proof, and give a detailed outline of the proof. The outline may be useful to consult while reading the bulk of the paper.

In Section~\ref{sec:setup}, we will introduce all the relevant classical definitions and all the notation. 

In Section~\ref{sec:statements}, we will give a rigorous statement of the main theorem of the paper (Theorem~\ref{thm:main}), and we will also give two auxiliary statements (Theorem~\ref{thm:linear} and Theorem~\ref{thm:nonlinear}). We will furthermore show how our main theorem follows from these two auxiliary theorems. After this, we will state the theorem on global existence for a class of data which is arbitrarily large in $H^1$ (Theorem \ref{thm:largedata}). The remainder of the paper will therefore be devoted to proving the auxiliary theorems and Theorem~\ref{thm:largedata}.

In Section~\ref{sec:technicaltools}, we will state and prove three statements that are fundamental for the proof of our result: Lemma~\ref{prop:decphii} concerning the improved $u$-decay of the solution to a quasilinear wave equation satisfying the null condition (with localized initial data), Lemma~\ref{lem:r1r2} concerning an important change of coordinates, and finally Lemma~\ref{lem:goodbad} which enables us to express null derivatives with respect to a certain light cone in terms of null derivatives with respect to another (translated) light cone. Lemmas \ref{lem:r1r2} and \ref{lem:goodbad} are used in the proof of the improved energy estimates and the associated trilinear estimates in Section \ref{sub:improvedenergy}.

Subsequently, in Section~\ref{sec:sobolevs}, we will prove particular types of Klainerman--Sobolev embeddings with $R$-weights, complementing the existing theory. These results are recorded in a separate section due to their crucial importance in the scheme of the paper.

Having proved these estimates, we will then proceed to estimate solutions to the linear inhomogeneous equations encoding the interaction of two waves, in Section~\ref{sec:linearest}. First, in Section~\ref{sub:constructinit}, we will decompose the initial data of our problem in $N$ compactly supported pieces, plus a remainder. Subsequently, in Section~\ref{sub:firstiterate}, we will deduce the equation satisfied by the ``first iterate'' (i.e.~the first-order approximation of the solution $\phi$ in terms of inverse $R$ weights), as well as the equation satisfied by the ``second iterate'' $\psi_{i j}$ in Section~\ref{sub:seconditerate}, which will be used crucially to close the estimates in the proof of Theorem~\ref{thm:nonlinear}. We will then proceed, in Section~\ref{sub:improvedenergy}, to show the improved bound for the energy of $\psi_{ij}$ along with the associated trilinear estimates: this is carried out in Proposition~\ref{prop:energy}. These estimates are the most important part of the following argument. We will finally deduce quantitative $L^\infty$ decay estimates for $\psi_{ij}$ in Section~\ref{sub:linfpsiij} using the $R$-weighted Klainerman--Sobolev inequalities proved in Section~\ref{sec:sobolevs}.

After that, we will turn to the proof of Theorem~\ref{thm:nonlinear} in Section~\ref{sec:mainproof}. This will conclude the proof of our main Theorem~\ref{thm:main}. Using Theorem \ref{thm:main}, we will then prove Theorem \ref{thm:largedata} in Section~\ref{sec:largedata}.

Finally, in the appendices, we will record several technical facts needed in the rest of the paper.

\subsection{Acknowledgements}
We would like to thank our advisors, Mihalis Dafermos and Sergiu Klainerman, for their support and comments on the manuscript. We are particularly indebted to them and to Jonathan Luk and Igor Rodnianski for many insightful discussions and useful suggestions when studying this problem. We are also very thankful to Yakov Shlapentokh-Rothman for making us aware of this problem.

\section{Motivation of the proof and overview of the main ideas}\label{sec:ideasofproof}
We shall now describe the difficulties in more detail, as well as the main points in the argument. The reader may want to return to this section for a detailed outline while reading the paper.

After a more precise description in Section~\ref{subsub:badrsobolevs} of why the classical theory does not apply, we will give an overview of the main ideas in the proof in Section \ref{subsub:mainideas}. Then, we will explain the intuition behind our $R$-weighted Sobolev embeddings in Section~\ref{subsub:rweightedintro}.  We will then proceed to outline how we can obtain improved $L^2$ estimates in the special case of an equation with cubic nonlinearities in  Section~\ref{subsub:cubic}. After that, we will outline how the same improvement can be obtained for a wave equation with quadratic nonlinearities in~Section~\ref{subsub:quadraticimp}. Subsequently, in Section~\ref{subsub:linearestint}, we will describe how some important geometric facts will be instrumental in showing the improved energy bounds and the associated trilinear estimates. Finally, in Section~\ref{subsub:closing}, we will describe how we close the argument.

\subsection{Description of the main difficulties}\label{subsub:badrsobolevs}

It is well known that, in order prove global stability for nonlinear wave equations, the two main ingredients are energy estimates and pointwise estimates (see~\cite{Klainerman1985}). More precisely, weighted energies allow us to prove pointwise decay estimates, which in turn enable us to show that the accumulation of the nonlinear effects remains negligible. We recall, for example, the Klainerman--Sobolev inequality (see~\cite{Klainerman1985}), which can be used to prove global stability for certain nonlinear wave equations:\footnote{More precisely, nonlinear wave equations with cubic nonlinearities in $\R^{3 + 1}$, and with quadratic nonlinearities in $\R^{d + 1}$, where $d \ge 4$.}
\begin{equation}\label{eq:ksobintro}
    \begin{aligned}
        |f| (t,r,\omega) \le \frac{C}{(1 + t + r) (1 + |t - r|)^{\frac 12}} \sum_{|\alpha| \le 2} \Vert \Gamma^\alpha f \Vert_{L^2 (\Sigma_t)}.
    \end{aligned}
\end{equation}
Here, $\alpha$ is a multiindex with $|\alpha|$ its length, and $\Gamma^\alpha$ corresponds to a product of Lorentz fields along with the scaling vector field. The function $f$ is any smooth function decaying sufficiently rapidly at infinity in $\Sigma_t$.\footnote{This is the set of all points $(t,x,y,z)$ in $\R^{3 + 1}$ with first coordinate equal to $t$. The data is posed on $\Sigma_0$.} In practice, if we take $f = \partial \phi$ where $\phi$ is the solution to some wave equation, the RHS of~\eqref{eq:ksobintro} is precisely an energy term. The terms on the RHS can then be estimated by energies, exactly because the Lorentz fields commute with $\Box$. Schematically, the $\partial_t$ energy estimate shows that
\begin{equation}
    \begin{aligned}
        \Vert \partial \Gamma^\alpha \phi \Vert_{L^2 (\Sigma_t)} \lesssim \Vert \partial \Gamma^\alpha \phi \Vert_{L^2 (\Sigma_0)},
    \end{aligned}
\end{equation}
where the RHS is determined by the initial data.

In our case, since we consider data localized around points $\{w_i\}_{i \in \{1, \ldots,N\}}$ whose pairwise distances are comparable to $R$, we note that the initial norm $\Vert \partial \Gamma^\alpha \phi \Vert_{L^2 (\Sigma_0)}$
will potentially be large as a function of $R$. Indeed, every Lorentz vector field having $w_i$ as its origin will produce an $R$ weight when applied to the piece of data localized at $w_j$, for $j \neq i$.

This would result in an estimate of the form
\begin{equation} \label{WastefulKS1}
    \begin{aligned}
        |\partial \phi| (t,r,\omega) \le {C R^2 \over (1 + t + r) (1 + |t - r|)^{{1 \over 2}}} \Vert \p \phi \Vert_{H^2 (\Sigma_t)},
    \end{aligned}
\end{equation}
The dependence on $R$ in this estimate is a serious obstruction to proving global stability independently of $R$.

\subsection{Overview of the main ideas and outline of the proof}\label{subsub:mainideas}

In order to overcome this obstruction, we shall need to apply the vector field method more carefully. We first note that the main contribution to the solution should come from a superposition of the contributions arising from each of the compactly supported pieces of data considered individually. Subtracting off these $N$ auxiliary solutions (which satisfy good asymptotics from existing theory, and which we can treat as known functions), we obtain a nonlinear equation with an inhomogeneity and vanishing initial data whose solutions have small energy in terms of the parameter $R$. This is carried out in Sections~\ref{sub:firstiterate} and~\ref{sub:seconditerate}.

More precisely, the strategy can be divided into three steps.
\begin{itemize}
\item[1.] First, we reduce to the case of $N$ compactly supported pieces of initial data localized around $N$ points, plus an additional error. This procedure is effective because the resulting error is small in terms of the parameter $R$. The relevant details are in Section~\ref{sub:constructinit} below.

\item[2.] As a second main step, we shall accomplish the task of showing that the main contribution to the solution comes from the superposition of the contributions arising from each individual compactly supported piece of data. This will be shown by proving that the resulting error is small in terms of $R$. This will be carried out in Section~\ref{sub:improvedenergy} below. 

\item[3.] Finally, we shall improve (in terms of  $R$) the pointwise estimates on the error arising from the first and second step. This will be done in Section~\ref{sub:linfpsiij}, using the $R$-weighted Sobolev embeddings of Section~\ref{sec:sobolevs}.
\end{itemize}

After finishing these three parts, we will be in a position to prove global stability. This is carried out in Section \ref{sec:mainproof}.

In the case of cubic and higher nonlinearities (see Section \ref{subsub:cubic}), the heuristic reasoning presented here (in particular, the second step in the previous list) can be made quantitative, and the resulting weights in $R$ are particularly favorable. 
Such an improvement in energy creates good powers of $R$ in the $L^2$ norms in the RHS of estimate \eqref{WastefulKS1}. We now note the two main observations which go into showing this energy improvement in $R$ for a cubic equation.
\begin{itemize}
\item The different solutions will not interact before time comparable to $R$.
\item The individual solutions satisfy good quantitative decay estimates.
\end{itemize}
The second of these points follows from the existing theory (see also the improvements discussed in~\ref{sec:onebump} below). The first of these points can be seen by looking at Figure~\ref{fig:first} below.

In order to close an argument for the cubic equation, we must then simply improve the $L^\infty$ estimates, which amounts to lowering the power of $R$ in the RHS of \eqref{WastefulKS1}.

Both of these aspects (the improvement in energy and the improvement in the Klainerman--Sobolev estimates) are more complicated for the quadratic equation (see Sections \ref{subsub:rweightedintro}, \ref{subsub:quadraticimp}, and \ref{subsub:linearestint}).

\subsubsection{\texorpdfstring{$R$}{R}-weighted vector fields and improved \texorpdfstring{$L^\infty$}{Linfinity} estimates}\label{subsub:rweightedintro} We first observe the following fact: the vector fields $\Gamma_R := {1 \over R} \Gamma$ do not introduce $R$ weights on the initial data (recall that the distance between the supports of each individual piece of data is proportional to $R$). Repeating the proof of the Klainerman--Sobolev inequality using the vector fields $\Gamma_R$ instead of the vector fields $\Gamma$ results in the following estimate:
\begin{equation} \label{BetterPointwise1}
    \begin{aligned}
        |f|(t,r,\omega) \le {C R^{{3 \over 2}} \over (1 + t + r) (1 + |t - r|)^{{1 \over 2}}} \sum_{|\alpha| \le 3} \Vert \Gamma_R^\alpha f \Vert_{L^2 (\Sigma_t)}.
    \end{aligned}
\end{equation}
Note that estimate~\eqref{BetterPointwise1} still depends on $R$. However, the constant on the RHS is already better in terms of $R$-weights than the corresponding one appearing in \eqref{WastefulKS1}. Estimate~\eqref{BetterPointwise1} is proven in Lemma~\ref{lem:bettersob2} below. Using a similar argument but not requiring decay in $u = t - r$ allows us to prove the following Klainerman--Sobolev inequality for $t \ge {R \over 10}$:
\begin{equation} \label{goodRpointwiseest1}
    \begin{aligned}
        |f| (t,r,\omega) \le {C R \over (1 + t + r)} \sum_{|\alpha| \le 3} \Vert \Gamma_R^\alpha f \Vert_{L^2 (\Sigma_t)}. 
    \end{aligned}
\end{equation}
A version of this is proven in Lemmas \ref{lem:bettersob1} and \ref{lem:bettersob2}. The estimate \eqref{goodRpointwiseest1} along with the improved energy estimates is enough to prove global stability for nonlinear equations with cubic and higher nonlinearities.

Concerning the quadratic case, such improvement in the Sobolev estimates is not enough, and we will elaborate on this point in the present and following Section~\ref{subsub:cubic}. The further complication is that we also need improved decay for good derivatives because we need to take advantage of the classical null condition (see Section \ref{subsub:linearestint} for a discussion on good derivatives).

To obtain better estimates, we realize that, restricting to $N = 2$ with both pieces of data supported in unit balls whose centers are on the $x$ axis, then those rotations and Lorentz boosts which involve only $y$, $z$, and $t$ do not introduce $R$-weights on the initial data. These are the following vector fields:
\begin{equation}\label{eq:goodvf}
    z\p_y - y \p_z, \qquad t \p_y + y \p_t, \qquad t \p_z + z\p_t.
\end{equation}
Thus, we can use such vector fields without dividing by $R$ in the RHS of the Klainerman--Sobolev estimates. This observation will allow us to get an estimate of the form
\begin{equation}\label{eq:sobolevtwo}
    \begin{aligned}
        |f (t,r,\omega) | \le {C R^{{1 \over 2}} \over (1 + t + r)} \Vert \overline{\Gamma}^\alpha f \Vert,
    \end{aligned}
\end{equation}
where now the term $\Vert \overline{\Gamma}^\alpha f \Vert$ schematically refers to $L^2$ norms we expect to correspond to $\partial_t$ energies after commuting by appropriately $R$ weighted Lorentz fields. By appropriately $R$ weighted, we mean the $\Gamma_R$ vector fields used in \eqref{BetterPointwise1} along with the vector fields that do not introduce bad weights given by \eqref{eq:goodvf}. The precise version of estimate~\eqref{eq:sobolevtwo} is proven in Lemma~\ref{lem:sobspheres} below.

This observation is useful even when $N > 2$. Indeed, for quadratic nonlinearities, all interactions occur between only two pieces of data up to an error that is better in $R$. This fact allows us to take advantage of \eqref{eq:sobolevtwo} even when the data are localized around more than two different points. This procedure will be carried out in Section \ref{sec:linearest}.

\subsubsection{The cubic case: subtracting off auxiliary solutions}\label{subsub:cubic}

In this section, we describe how we can improve the energy estimates (in terms of $R$-weights) for a cubic equation by subtracting off the main contribution to the solution arising from several localized pieces of initial data.

We take the cubic equation:
\begin{equation} \label{CubicEq1}
    \begin{aligned}
        \Box \phi = (\partial_t \phi)^3,
    \end{aligned}
\end{equation}
with initial data supported in two disjoint balls, one centered at $(-R,0,0)$, and the other centered at $(R,0,0)$. We should regard $R$ as being a very large number, and the initial data for $\phi$ as being small in some $H^k(\R^3)$ norm.

As was discussed previously in Section~\ref{subsub:mainideas}, we want to take advantage of the following two facts: the individual pieces of data give rise to solutions that disperse, and the two solutions do not interact for time comparable to $R$. We do so by subtracting off the functions $\phi_1$ and $\phi_2$, where $\phi_1$ is the solution to the equation~\eqref{CubicEq1} with initial data equal to the portion of the initial data for $\phi$ supported in the ball centered at $(-R,0,0)$. Similarly, $\phi_2$ is the solution to equation~\eqref{CubicEq1} with initial data equal to the portion of the initial data of $\phi$ that is supported in the ball centered at $(R,0,0)$. 

By our assumptions on the data and by classical theory (see \cite{Klainerman1985}), we know that $\phi_1$ and $\phi_2$ exist globally because their data are sufficiently small in $H^k(\R^3)$ (and we note that, here, the smallness does not depend on $R$, as $\phi_1$ and $\phi_2$ each only have localized data). Moreover, $\phi_1$ and $\phi_2$ decay at the same rates as solutions to the linear wave equation with compactly supported data. Therefore, quantitatively, we can assert that
\begin{equation} \label{CubicPointwise1}
    \begin{aligned}
    \Vert \partial \phi_1 \Vert_{L^\infty (\Sigma_t)} + \Vert \partial \phi_2 \Vert_{L^\infty (\Sigma_t)} \le \frac{C \eps}{1 + t}.
    \end{aligned}
\end{equation}
Furthermore, we have the following control on the energy of each individual piece:
\begin{equation} \label{CubicEnergyEst1}
    \begin{aligned}
        \Vert \partial \phi_1 \Vert_{L^2 (\Sigma_t)} + \Vert \partial \phi_2 \Vert_{L^2 (\Sigma_t)} \le C \eps.
    \end{aligned}
\end{equation}
We now subtract off these functions from the solution $\phi$, and we obtain an equation for $\psi := \phi - \phi_1 - \phi_2$. Schematically, the equation is as follows:
\begin{equation}\label{eq:cubicdiff}
    \begin{aligned}
        \Box \psi \approx (\partial_t \psi)^3 + (\partial_t \psi) ((\partial_t \phi_1)^2 + (\partial_t \phi_2)^2) + (\partial_t \phi_1) (\partial_t \phi_2) (\partial_t \phi_1 + \partial_t \phi_2).
    \end{aligned}
\end{equation}
This equation is inhomogeneous, and furthermore, the initial data for $\psi$ vanishes. The crucial observation here is that $(\partial_t \phi_1)^3$ and $(\partial_t \phi_2)^3$ do not appear on the RHS, and this is consistent with having subtracted off the main contribution to the solution arising from both $\phi_1$ and $\phi_2$. Indeed, because $\psi$ has vanishing initial data, we expect that only the inhomogeneities are responsible for adding energy into $\psi$.  
Moreover, note that the remaining terms are all supported in $t \ge R - 2$ (see again Figure~\ref{fig:first}), which encodes the fact that $\phi_1$ and $\phi_2$ only interact after time comparable to $R$.

Now, multiplying equation~\eqref{eq:cubicdiff} by $\p_t \psi$ and integrating by parts, we get the following basic energy estimate:
\begin{equation}
    \begin{aligned}
        \Vert \partial \psi \Vert_{L^2 (\Sigma_t)} \le 2 \int_{R - 2}^t \Vert \Box \psi \Vert_{L^2 (\Sigma_s)} \de s.
    \end{aligned}
\end{equation}
We now drop the terms containing $\psi$ (which we expect to be better behaved in terms of $R$) in the expression for $\Box \psi$ given by \eqref{eq:cubicdiff}, we are left with the interaction terms involving $\phi_1$ and $\phi_2$:
\begin{equation} \label{CubicInhomogeneity1}
    \begin{aligned}
        \int_{R - 2}^t \Vert \partial_t \phi_1 \partial_t \phi_2 (\partial_t \phi_1 + \partial_t \phi_2) \Vert_{L^2 (\Sigma_s)} \de s \le C \eps^3 \int_{R - 2}^t \frac{1}{s^2} \de s \le C \frac{\eps^3}{R}.
    \end{aligned}
\end{equation}
Here, we have used the pointwise estimates \eqref{CubicPointwise1} for two of the factors in the cubic expression, and we have used the energy estimate \eqref{CubicEnergyEst1} for the remaining factor. Thus, we expect to be able to propagate an estimate of the form
\begin{equation} \label{CubicBootstrap1}
    \begin{aligned}
        \Vert \partial \psi \Vert_{L^2 (\Sigma_t)} \le {C \eps^3 \over R}
    \end{aligned}
\end{equation}
for the nonlinear problem. This is indeed possible, and taking \eqref{CubicBootstrap1} as a bootstrap assumption along with using \eqref{goodRpointwiseest1} to control the nonlinearity allows us to close a bootstrap argument in the cubic case, yielding global stability of the trivial solution independently of the parameter $R$. We note that, in this procedure, we have estimated many derivatives of $\phi_1$ and $\phi_2$ in $L^\infty$. However, because we can consider $\phi_1$ and $\phi_2$ to be known functions with as much regularity as we want (as long as we only require boundedness in some $H^k$ space), this loss of derivatives does not constitute an obstruction.

\subsubsection{The quadratic case: trilinear estimates on null forms}\label{subsub:quadraticimp}

Returning to the quadratic case, we still expect that the solutions arising from each individual piece of initial data will start interacting only after time comparable to $R$. 
Before this interaction, we expect the solution to exist and to decay at rates that are known from the classical theory (see~\cite{Klainerman1986}).

We will decompose the data into $N$ compactly supported pieces which are localized near some $R p_i$, plus a remainder whose energy will be small in terms of $R$. The solution with data localized near each $R p_i$ will be denoted by $\phi_i$, and the solution with the remainder as initial data will be denoted by $\phi_0$. This process is formalized in Section~\ref{sub:constructinit} below.

For technical reasons, we cannot directly study the equation for the difference
$$\psi := \phi - \sum_{i = 0}^N \phi_i,$$
as was done in the cubic case. This is because the weights in $R$ prevent us from closing a bootstrap argument. The equation for $\psi$ is a nonlinear wave equation with vanishing initial data and inhomogeneous terms containing $\phi_i$. Because the initial data for $\psi$ vanishes, we again expect the behavior of $\psi$ to be dominated by the inhomogenous terms, just as in the cubic case discussed in Section \ref{subsub:cubic}. Thus, it is natural for us to study linear inhomogeneous equations such as
\begin{equation}\label{eq:psiijintro}
        \Box \psi_{ij} = 2 m (d \phi_i,d \phi_j).
\end{equation}
The RHS of equation~\eqref{eq:psiijintro} is precisely one of the inhomogeneous terms that appears in the RHS of the equation for $\psi$.
This term contains the information about the interaction between $\phi_i$ and $\phi_j$. We shall first prove estimates for $\psi_{ij}$. This involves getting estimates for certain trilinear terms.

The trilinear estimates arise from using $\partial_t \psi_{i j}$ as a multiplier in \eqref{eq:psiijintro}. Thus, the estimates involve controlling spacetime integrals like
\begin{equation}
    \begin{aligned}
        \int \int m(d \phi_i,d \phi_j) \partial_t \psi_{i j} \de x \de t,
    \end{aligned}
\end{equation}
and they show that the interaction between $\phi_i$ and $\phi_j$ is small. In general, $m$ can be any null form.

As a consequence of these trilinear estimates, we shall get improved energy estimates for $\psi_{i j}$ of the form
\begin{equation}
    \begin{aligned}
        \Vert \partial \psi_{i j} \Vert_{L^2 (\Sigma_t)} \le C {\eps^2 \over R},
    \end{aligned}
\end{equation}
which is analogous to the estimate \eqref{CubicBootstrap1} for the cubic equation.

After showing these improved estimates, we go one step further in decomposing $\phi$ by considering the equation for 
\begin{equation}\label{eq:capitalpsi}
\Psi:=\phi - \sum_{i=0}^N \phi_i - \sum_{i \ne j} \psi_{i j}.
\end{equation}
The resulting equation for $\Psi$ is a nonlinear equation with vanishing initial data and inhomogeneous terms on the RHS. Moreover, we expect $\Psi$ to have very good estimates in terms of $R$ weights. The improved estimates on $\psi_{i j}$ will allow us to control $\Psi$ in a sufficiently strong way.\footnote{We are not concerned about losing derivatives because we may take $\psi_{i j}$ to be known functions with as much regularity as we want, similar to $\phi_i$. Thus, we may estimate as many derivatives of $\psi_{i j}$ in $L^\infty$ as we want.}

We shall now describe in more detail the trilinear estimates we need for $\psi_{ij}$ and the strategy we follow to prove them. The procedure is formalized in Section~\ref{sub:improvedenergy}.

\subsubsection{Geometric tools for the trilinear estimates}\label{subsub:linearestint}
Before describing the trilinear estimates, we must pause to recall that solutions to nonlinear wave equations with localized initial data have ``good'' and ``bad'' derivatives in terms of decay. More precisely, we recall the null frame decomposition $\partial_v = \partial_t + \partial_r$, $\partial_u = \partial_t - \partial_r$, $e_A$, and $e_A$, $e_B$, where $e_A$ and $e_B$ are local orthonormal frames tangent to the spheres centered at $r = 0$. When the data are localized near $r = 0$ in $\Sigma_0$, we have that $\partial_v \phi$, $e_A \phi$, and $e_B \phi$ all decay faster in $r$ than $\partial_u \phi$ (see, for example, \cite{jonathannotes}). Thus, we call the derivatives $\partial_v$, $e_A$, and $e_B$ good derivatives, while we call the derivative $\partial_u$ the bad derivative. For simplicity, we shall use $\partial$ to represent an arbitrary derivative, while we shall use $\overline{\partial}$ to represent a good derivative.

Without loss of generality, we will take $i = 1$ and $j = 2$ in \eqref{eq:psiijintro}. Similar to the cubic case in Section~\ref{subsub:cubic}, we want to prove decay in $R$ of the energy of $\psi_{1 2}$. The $\partial_t$ energy estimate on \eqref{eq:psiijintro} is analogous to the schematic calculation in  \eqref{CubicInhomogeneity1}, as \eqref{eq:psiijintro} is driven by only inhomogeneities. However, in the quadratic case, it is not enough to use directly the $\partial_t$ energy estimate, H\"older's inequality, and the pointwise estimates for the known functions $\phi_1$ and $\phi_2$ as was done for the cubic case in estimate~\eqref{CubicInhomogeneity1}. 

This is the main difference between the quadratic case and the cubic case where the estimates \eqref{CubicPointwise1} sufficed to close a bootstrap argument. There, the inhomogeneity is cubic, and it is easy to gain ${1 \over R}$ (see equation~\eqref{CubicInhomogeneity1}). For a quadratic nonlinearity, the null condition must be used (indeed, we recall that general quadratic nonlinearities lead to blowup).

However, directly using the null condition is not sufficient because there is a region with interaction between $\partial_{u_1} \phi_1$ and $\partial_{u_2} \phi_2$. Here, $\partial_{u_i} = \partial_t - \partial_{r_i}$, and $r_i$ is the radial coordinate with $R p_i$ as its center. These are precisely the slowest decaying derivatives. Quadratic nonlinearities involving only terms that decay this slowly (specifically like ${1 \over t}$) are the obstruction to proving global existence when the null condition is not satisfied.

To see that this interaction must occur, we can consider the forward light cones centered at $R p_1$ and $R p_2$ (for simplicity, we can take $R p_1 = (-R,0,0)$ and $R p_2 = (R,0,0)$). Schematically, the interaction will then look as in Figure~\ref{fig:first}.

\begin{figure}
\def\svgwidth{\linewidth}
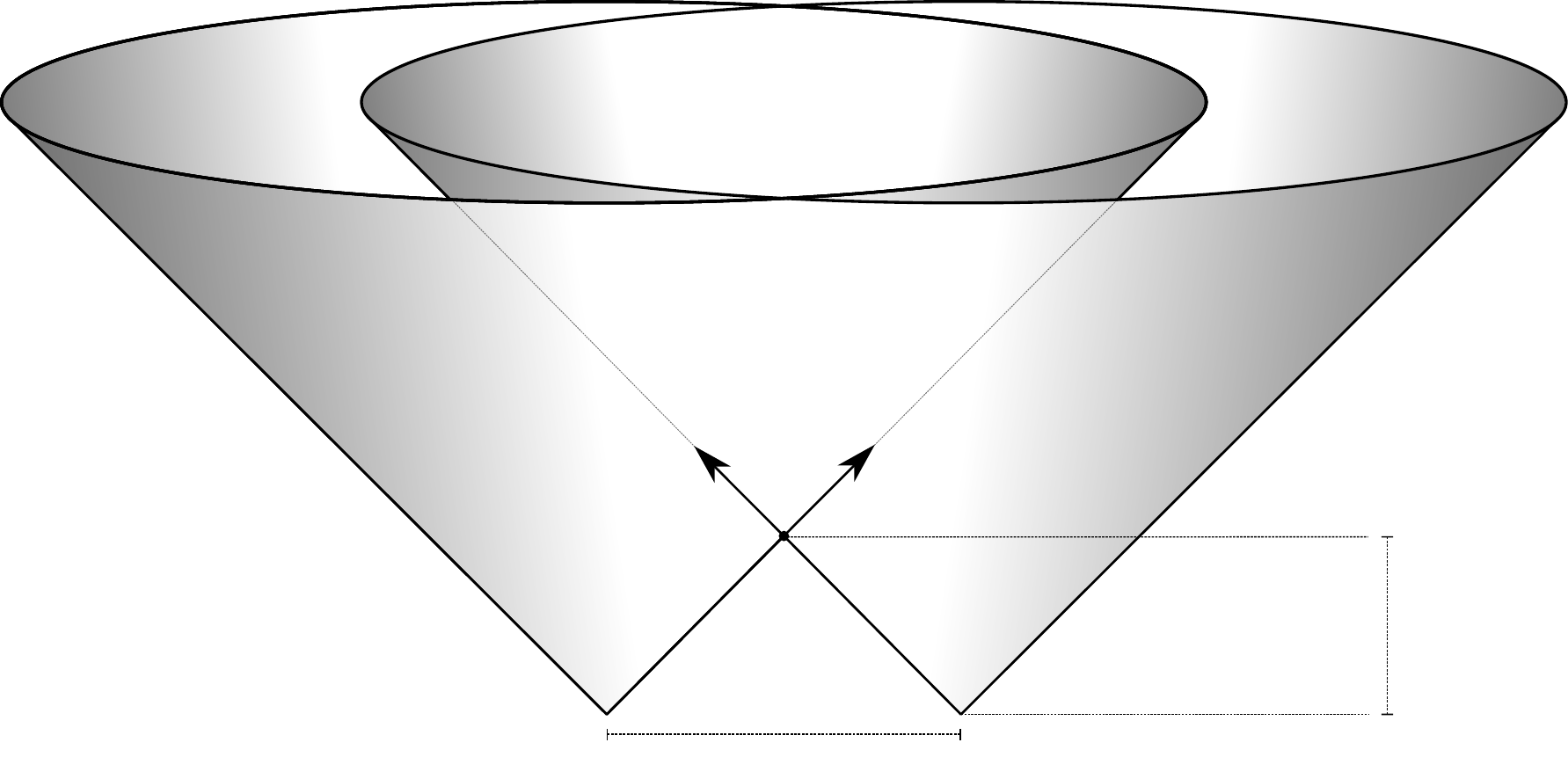
\caption{Interaction of two nonlinear waves.}
\label{fig:first}
\end{figure}

When the two light cones first intersect (which we can think of as being the first point of interaction of the distinct waves), a good derivative of $\phi_1$ is exactly the bad derivative of $\phi_2$. More precisely, we have that $\partial_{v_1} = \partial_{u_2}$ at this point (see Figure~\ref{fig:first}). Here, we are taking $\partial_{v_i} = \partial_t + \partial_{r_i}$ with $r_i$ as in the above. In terms of decay, this situation looks very similar to the nonlinearity $-(\partial_t \phi)^2$, because there is a region where the bad derivatives of $\phi_1$ and $\phi_2$ interact. Thus, in this region, the equation is similar to $\Box \phi = -(\partial_t \phi)^2$, which blows up in finite time even for arbitrarily small and compactly supported data (see \cite{John1981} and the discussion in \cite{speck2016}).

In the cubic case, we only required the observations given in the bullet points in Section~\ref{subsub:mainideas}. We shall now list the properties we must take advantage of in the quadratic case, which include three additional observations:
\begin{itemize}
\item The distinct solutions $\phi_1$ and $\phi_2$ effectively will not interact for time comparable to $R$, just as in the cubic case.
\item The distinct solutions $\phi_1$ and $\phi_2$ have sufficiently strong quantitative decay estimates, just as in the cubic case. However, we must now also use the fact that certain derivatives decay better after decomposing into a null frame. We will state and prove the necessary decay estimates for each $\phi_i$ in Lemma~\ref{prop:decphii} below.
\item The measure of the set of interaction between $\partial_{u_1} \phi_1$ and $\partial_{u_2} \phi_2$ is small in terms of $R$. This is encoded in the change of coordinates introduced in Lemma~\ref{lem:r1r2} below. Thus, using H\"older's inequality to control interactions between $\phi_1$ and $\phi_2$ is wasteful, as the two functions are supported on sets with small intersection.
\item The good derivatives of $\phi_1$ and $\phi_2$ become asymptotically aligned. This fact is formalized in Proposition~\ref{lem:goodbad} below.
\item The vector field $\partial_t$ can be written in terms of the good derivatives of $\phi_1$ and $\phi_2$ in the region in which the interaction between $\phi_1$ and $\phi_2$ is largest. Even though the coefficients of this decomposition degenerate as $t \rightarrow \infty$ as a consequence of the fact that the good derivatives of $\phi_1$ and $\phi_2$ become asymptotically aligned, it still gives an improvement. This is also done in Proposition~\ref{lem:goodbad} below.
\end{itemize}

Using these facts will allow us to get good energy estimates for $\psi_{ij}$ in terms of $R$ weights. These estimates and the associated trilinear estimates are established in Section \ref{sub:improvedenergy}. We can then combine these good energy estimates with the modified Klainerman--Sobolev estimates in Section \ref{sec:sobolevs} to get good pointwise estimates on $\psi_{i j}$. This is carried out in Section \ref{sub:linfpsiij}.

\subsubsection{Closing the argument} \label{subsub:closing}
The linear estimates for $\psi_{ij}$ described above are enough for us to prove global existence for the remainder $\Psi$ (see equation~\eqref{eq:capitalpsi}) using the vector field method as in, for example, \cite{Klainerman1985}, \cite{Klainerman1986}, \cite{LR2010}, and \cite{jonathannotes}.  
Moreover, $\Psi$ satisfies better energy bounds than $\psi_{ij}$ in terms of $R$. Indeed, schematically, the $\partial_t$ energy of $\Psi$ will be of size $\eps^3 R^{-\frac 32}$. This implies that the $\partial_t$ energy of $\Gamma_R^\alpha \Psi$ will be of size $\eps^3 R^{-{3 \over 2}}$, where the vector fields $\Gamma_R$ are as in Section \ref{subsub:rweightedintro}. Applying \eqref{goodRpointwiseest1} to $f = \partial \Gamma_R^\alpha \Psi$ and noting that the resulting norms on the right hand side of this estimate should be of size $\eps^3 R^{-{3 \over 2}}$ tells us that we should expect decay estimates like
\begin{equation} \label{goodRpointwiseest2}
    \begin{aligned}
        |\partial \Gamma_R^\alpha \Psi| (t,r,\omega) \le C \eps^3 {1 \over R^{{1 \over 2}}} {1 \over t + r}.
    \end{aligned}
\end{equation}
Moreover, using \eqref{eq:ksobintro}, we are able to obtain improved decay of the good derivatives like
\begin{equation} \label{goodtpointwiseest}
    \begin{aligned}
        |\overline{\partial} \Gamma_R^\alpha \Psi| (t,r,\omega) \le C \eps^3 {R^{{3 \over 2}} \over t^{{3 \over 2}}}.
    \end{aligned}
\end{equation}

These bounds are enough to prove global existence using a bootstrap argument for $\Psi$. We shall propagate the bootstrap assumption that the $\partial_t$ energy of $\Gamma_R^\alpha \Psi$ for all $|\alpha|$ sufficiently large is schematically of size $\eps^3 R^{-{3 \over 2}}$. We shall have to use the following observation. If we are interested in controlling nonlinear effects between time $R$ and $R^{20}$, we note that better pointwise $R$ decay is almost as good as better $t$ decay. This is because, evaluating the integrals,
\begin{equation*}
    \begin{aligned}
        \int_R^{R^{20}} {1 \over R^\delta t} \de t \le C {\log{(R)} \over R^\delta},
    \end{aligned}
\end{equation*}
while
\begin{equation*}
    \begin{aligned}
        \int_R^{R^{20}} {1 \over t^{1 + \delta}} \de t \le {C_\delta \over R^{\delta}}.
    \end{aligned}
\end{equation*}
These two expressions differ only by a factor of $\log{R}$, which we will be able to absorb using terms with extra negative powers of $R$. Thus, for this time scale, we can use the estimates \eqref{goodRpointwiseest2} which only have an improvement in $R$. 

Then, from time $R^{20}$ to $\infty$, we use the estimates \eqref{goodtpointwiseest}, which have worse $R$ weights. This can be done because the fact that we are integrating from $R^{20}$ will get rid of any bad $R$ weights. More precisely, for $\alpha \in \R$ we have that
\begin{equation}
    \begin{aligned}
        \int_{R^{20}}^\infty {R^\alpha \over t^{{3 \over 2}}} d t \le C R^{\alpha - 10}.
    \end{aligned}
\end{equation}
In practice, we will have $\alpha$ about $0$, so this estimate is strong enough to propagate an energy estimate of size $\eps^3 R^{-{3 \over 2}}$. This allows us to prove global existence for $\Psi$. After this, we have that $\Psi + \sum_{i \ne j} \psi_{i j} + \sum_i \phi_i$ is a solution to the original problem, as desired. The argument establishing the global existence and decay rates of $\Psi$ is carried out in Section \ref{sec:mainproof}.

\section{Setup}\label{sec:setup}

\subsection{Coordinate systems} \label{csystems}
We consider $(\R^{3 + 1},m)$ parametrized by the usual coordinates $(t,x,y,z)$, where $m$ is the Minkowski metric. We shall always use $\Sigma_s$ to denote the affine hyperplane with $t$-coordinate equal to $s$.

Let $\Pi := \{p_1, \ldots, p_N\}$ be a collection of points in $\R^3$ satisfying the following properties:
\begin{align*}
    \min_{\substack{i\neq j \\ i, j \in \{1, \ldots, N\}}} |p_i - p_j|\geq 2.
\end{align*}
We also define
\begin{equation}\label{eq:dpidef}
    \begin{aligned}
        d_\Pi := \Big(\max_{i,j} |p_i - p_j|\Big) \Big/ \Big(\min_{\substack{i\neq j \\ i, j \in \{1, \ldots, N\}}} |p_i - p_j|\Big).
    \end{aligned}
\end{equation}
Let $R \geq 1$ be a parameter. We will consider initial data centered around the points $R \cdot p_i$, on the initial surface $\Sigma_0$. Here, $|w|$ denotes the Euclidean length of $w \in \R^3$ as measured in $\R^3$. We recall that we prove gobal existence for initial data whose size (measured in a suitable higher-order Sobolev norm) depends only on $N$ and $d_\Pi$, and not on $R$.

For every $i \in \{0, 1, \ldots, N\}$, we consider polar coordinates adapted to the point $Rp_i$. Let us furthermore assume that $p_0 = 0$. The polar coordinates $(r_i, \theta_i, \varphi_i)$ satisfy the following relations, for all $w \in \R^3$:
\begin{equation}\label{eq:polarpi}
    w-Rp_i = r_i \left(
    \begin{array}{c}
    \cos \theta_i\\
    \cos\varphi_i \sin \theta_i\\
    \sin \varphi_i \sin \theta_i
    \end{array}
    \right).
\end{equation}
We adopt the following convention: 
\begin{equation}\label{eq:rthephi}
    r:= r_0, \quad  \theta := \theta_0, \quad \varphi := \varphi_0.
\end{equation} 
In particular, in coordinates $(r, \theta, \varphi)$, the set $\theta = 0$ corresponds to the $x$-axis (this is an abuse of notation as such coordinates break down at $\theta = 0$).

These polar coordinates induce the usual null coordinates with outgoing and incoming hypersurfaces.
\begin{definition}\label{def:riviui}
Consider $\R^{1+3}$ parametrized by the usual coordinate system $(t,x,y,z)$, and by the coordinates $(t,r_i, \theta_i, \varphi_i)$, where $(r_i, \theta_i, \varphi_i)$ have been introduced above. Then, the following defines a set of null coordinates for all $i \in \{0, \ldots, N\}$:
\begin{equation}
    u_i := t - r_i, \qquad v_i := t + r_i , \qquad \theta_i, \qquad \varphi_i.
\end{equation}
In the case $i=0$, these reduce to the usual null coordinates centered at the origin. Furthermore, in this case the set corresponding to $\theta =0$ in $(r, \theta, \phi)$ coordinates is the positive half of the $x$-axis.
\end{definition}

For every $i, j \in \{0,\ldots, N\}$, $i \neq j$, we know that $Rp_i \neq Rp_j$. Therefore, up to a translation and a rotation, we can always assume that the point $Rp_i$ has $(x,y,z)$-coordinates $(-R \frac 12 |p_i - p_j|,0,0)$, and that the point $Rp_j$ has $(x,y,z)$-coordinates $(R \frac 12 |p_i - p_j|,0,0)$. 

We then define cylindrical coordinates on $\R^3$ $(x, \rho, \varphi)$ adapted to the $x$-axis, satisfying the following relations:
\begin{equation}\label{eq:polarcoord}
    x, \qquad y = \rho \cos \varphi, \qquad z = \rho \sin \varphi. 
\end{equation}

\begin{remark}
    Note that the coordinate $\varphi$ introduced here coincides with the $\varphi$ introduced before in~\eqref{eq:rthephi}.
\end{remark}

We now take coordinates on all of $\R^{1 + 3}$ that are adapted to hypersurfaces which are hyperboloids in two directions and flat in the third direction. These coordinates will be denoted by $(\tau,\alpha,x,\varphi)$, and they satisfy the relations:
\begin{equation}\label{eq:xflathyp}
t = \tau \cosh{(\alpha)}, \qquad x = x, \qquad y = \tau \sinh{(\alpha)} \cos{(\varphi)}, \qquad z = \tau \sinh{(\alpha)} \sin{(\varphi)}. 
\end{equation}

\begin{remark}
    Again, the coordinate $\varphi$ defined here coincides with the $\varphi$ introduced in~\eqref{eq:rthephi}.
\end{remark}

\begin{definition}\label{def:hypdef}
For some $\bar \tau \geq 0$, we let $H_{\bar \tau}$ be the hypersurface which, in the coordinates defined by~\eqref{eq:xflathyp}, is defined as
\begin{equation}\label{eq:htdef}
	H_{\bar \tau} := \{\tau = \bar \tau \}.
\end{equation}
\end{definition}
\begin{remark}
Geometrically, these surfaces are just the unit hypersurface (i.e., the hypersurface defined by $\tau = 1$) scaled by a factor of $\tau$ with respect to the origin. They can also be seen to be the hypersurfaces found by intersecting the light cone $t = r$ with the hyperplane $x = \tau$ and translating the resulting two dimensional surface in the $x$-direction.
\end{remark}

\subsection{Vector fields and commutation}\label{sec:vectors}
We shall now introduce notation for the vector fields we shall use. This includes notation for the vector fields adapted to each point $p_i$, adapted to pairs of points $p_i$ and $p_j$, and the appropriate rescalings by the parameter $R$.

Let $i \neq j$, $i,j \in \{1, \ldots, N\}$. Let $O_{ij}$ be the unique isometry of $\R^3$ (composition of a translation and a rotation in $\R^3$) such that the following holds:
\begin{equation*}
    O_{ij}\Big(-  \frac{|p_i-p_j|}{2}, 0,0 \Big) =  p_i, \qquad  O_{ij}\Big(  \frac{|p_i-p_j|}{2}, 0,0 \Big) =  p_j,
\end{equation*}
Using this transformation, we can without loss of generality assume that $p_i =(-  \frac{|p_i-p_j|}{2}, 0,0 ) $, $p_j = (  \frac{|p_i-p_j|}{2}, 0,0)$.
We now let $\Gamma^{(h)}_{(w_1w_2)}$, $h \in \{i,j\}$ be the following vector fields:
\begin{equation}\label{eq:gammasdef}
\begin{aligned}
&\text{Rotation vector fields}
&\begin{cases}
&\Gamma^{(0)}_{(x w_1)} := x \p_{w_1} - w_1 \p_{x}, \quad \text{ if } w_1 \in \{y,z \},\\
&\Gamma^{(i)}_{(x w_1)} := \big(x+ R \frac{ |p_i - p_j|} 2\big) \p_{w_1} - w_1 \p_{x}, \quad \text{ if } w_1 \in \{y,z \},\\
&\Gamma^{(j)}_{(x w_1)} := \big(x-R \frac{ |p_i - p_j|} 2\big) \p_{w_1} - w_1 \p_{x}, \quad \text{ if } w_1 \in \{y,z \},\\
&\Gamma^{(i)}_{(yz)} = \Gamma^{(j)}_{(yz)} = \Gamma^{(0)}_{(yz)} := y \p_{z} - z \p_{y},\\
\end{cases}\\
&\text{Lorentz boosts}
&\begin{cases}
&\Gamma^{(0)}_{(tx)} := x\p_{t}+ t\p_{x},\\
&\Gamma^{(i)}_{(tx)} := \big(x+ R \frac{ |p_i - p_j|} 2\big)  \p_{t}+ t\p_{x},\\
&\Gamma^{(j)}_{(tx)} :=\big(x- R \frac{ |p_i - p_j|} 2\big)  \p_{t}+ t\p_{x}, \\
&\Gamma^{(0)}_{(tw_1)} = \Gamma^{(i)}_{(tw_1)}  = \Gamma^{(j)}_{(tw_1)} := w_1 \p_{t}+ t\p_{w_1}, \text{ if } w_1 \in \{y,z\} \\
\end{cases}\\
&\text{Scaling vector fields}
&\begin{cases}
&S^{(0)} := x\p_x + y \p_y + z\p_z + t \p_t,\\
&S^{(i)} := \big(x+ R \frac{ |p_i - p_j|} 2\big) \p_x + y \p_y + z\p_z + t \p_t,\\
&S^{(j)} := \big(x- R \frac{ |p_i - p_j|} 2\big) \p_x + y \p_y + z\p_z + t \p_t.\hspace{49pt}\\
\end{cases}
\end{aligned}
\end{equation}
We also rename the rotation vector fields as follows:
\begin{equation}\label{eq:omegadef}
\Omega^{(h)}_{(ab)}:= \Gamma^{(h)}_{(ab)}, \qquad \text{ for all } a,b \in \{x,y,z\},\quad h \in \{0,1,2\}.
\end{equation}
We furthermore define the set of all translations and spatial translations as follows:
\begin{equation}\label{eq:tidef}
\boldsymbol{T} := \{\p_x, \p_y, \p_z, \p_t \}, \qquad \boldsymbol{T}_s := \{\p_x, \p_y, \p_z\}.
\end{equation}
We also define the set
\begin{equation}\label{eq:kcenter}
    \boldsymbol{K} := \Ti \cup \{S^{(0)}\} \cup \bigcup_{\substack{w_1, w_2 \in \{t,x,y,z\}\\ w_1 \neq w_2}} \{\Gamma^{(0)}_{(w_1 w_2)}\} .
\end{equation}
Note that this is the usual set of all Killing vector fields of Minkowski space along with the scaling vector field.

Similarly, we have the set of all Killing vector fields based at $Rp_h$, for $h \in \{i,j\}$:
\begin{equation}\label{eq:kh}
    \boldsymbol{K}^{(h)} := \Ti \cup \{S^{(h)}\}\cup \bigcup_{w_1, w_2 \in \{x,y,z\}} \{\Gamma^{(h)}_{(w_1 w_2)}\}.
\end{equation}
Let us furthermore define the ``good'' vector fields
\begin{align}
	\boldsymbol{\Gamma} :=  \{\Gamma^{(0)}_{(yz)} , \Gamma^{(0)}_{(tz)}, \Gamma^{(0)}_{(ty)} \}.
\end{align}
This is the set of vector fields which do not introduce weights on initial data which is localized around the two points $Rp_i$ and $Rp_j$. Let us further define the $R$-weighted Lorentz fields adapted to either the piece of data localized around $Rp_i$ or $Rp_j$ as follows:
\begin{align}\label{eq:gammah}
	& \boldsymbol{\Gamma}^{(h)} := \boldsymbol{\Gamma} \cup \Ti \cup \bigcup_{w_1, w_2 \in \{t,x,y,z\}} \{R^{-1} \Gamma^{(h)}_{(w_1w_2)} \} \cup \{R^{-1} S^{(h)}\}, \quad \text{ for } h \in \{0,i,j\}.
\end{align}
Also, we define the good derivatives adapted to the $h$-th light cone:
\begin{equation}\label{eq:goodcone}
\boldsymbol{G}^{(h)} := \Big\{\p_{v_h}, \frac 1{1+r_h} \Gamma^{(h)}_{(xy)}, \frac 1{1+r_h} \Gamma^{(h)}_{(xz)}, \frac 1{1+r_h} \Gamma^{(h)}_{(yz)} \Big\}, \quad \text{ for } h \in \{0,1,2 \}.
\end{equation}
Here, we used the coordinate vector field $\p_{ v_h}$ induced by null coordinates $(u_h, v_h, \theta_h, \varphi_h)$ adapted to $Rp_h$, and defined as in equation~\eqref{eq:polarpi}. Note that $r_h = \frac 12 (v_h + u_h)$. We also note that the last three vector fields in~\eqref{eq:goodcone} are rotations, and their span is two-dimensional everywhere (they are tangent to the spheres of constant $r_h$-coordinate).

We then define the sets of $R$-normalized rotations with respect to the $h$-th light cone as follows:\vspace{-15pt}
\begin{equation}\label{eq:omegasetdef}
	\boldsymbol{\Omega}^{(h)} := \{\Omega^{(h)}_{(xy)}, \Omega^{(h)}_{(xz)}, \Omega^{(h)}_{(yz)} \} 	\qquad \boldsymbol{\Omega}_R^{(h)} := \Big\{\Omega_{(yz)},\frac{\Omega^{(h)}_{(xy)}}{R},\frac{\Omega^{(h)}_{(xz)}}{R}\Big\}, \quad \text{with}\quad h \in \{0,i,j\}.
\end{equation}

We define the set of all $R$-renormalized Minkowski Killing fields (plus scaling) based at the origin as follows:
\begin{equation}\label{eq:krcenter}
    \boldsymbol{K}_R := \Ti \cup \bigcup_{w_1, w_2 \in \{t,x,y,z\}} \{R^{-1} \Gamma^{(0)}_{(w_1w_2)} \} \cup \{R^{-1} S^{(0)}\}.
\end{equation}
And the similar set for the $h$-th light cone, with $h \in \{i,j\}$:
\begin{equation}\label{eq:krh}
    \boldsymbol{K}^{(h)}_R := \Ti \cup \bigcup_{w_1, w_2 \in \{t,x,y,z\}}\{R^{-1} \Gamma^{(h)}_{(w_1w_2)} \} \cup \{R^{-1} S^{(h)}\}.
\end{equation}

We now define a shorthand notation for ``good'' and ``bad'' derivatives.
\begin{definition}\label{def:shorthand}
	Let $\zeta \in \mathcal{C}^\infty(\R^4)$. We define
	\begin{equation}
	|\p \zeta| := \sum_{B \in \boldsymbol{T}} |B \zeta|, \qquad |\bar \p^{(h)} \zeta| := \sum_{G \in \boldsymbol{G}^{(h)}} |G \zeta|,
	\end{equation}
	so that the former notation indicates bad derivatives (all unit derivatives are included in this formula), and the latter comprises \emph{only} good derivatives adapted to the $h$-th light cone.
\end{definition}

We now define multi-indices.
\begin{definition}[Multi-index]\label{def:multiindex}
	Let $\BA$ be a set of vector fields, and $m \in \N$, $m \geq 0$. We let $I^{m}_{\BA}$ to be the set of ordered lists of $m$ elements of $\BA$. We furthermore let
	\begin{equation*}
		I^{\leq m}_{\BA} := \bigcup_{m_1 \in \{0, \ldots, m\}} I^{m_1}_{\BA}.
	\end{equation*}
\end{definition}

\begin{definition}\label{def:derivation}
	Let $m \in \N$, $m \geq 0$, and let $\BA$ be a set of vector fields. Let $f\in \mathcal{C}^\infty(\R^4)$, and let $I \in I^m_{\BA}$, such that
	\begin{equation*}
		I = (V_1, V_2, \ldots, V_m), \text{ with } V_i \in \BA \ \forall \ i \in \{1,\ldots, m\}.
	\end{equation*}
	We then define the derivative of $f$ by the multi-index $I$ as follows:
	\begin{equation}
	\der^I f := V_1 \, V_2 \, \cdots \, V_m f.
	\end{equation}
\end{definition}
\begin{definition}\label{def:indexsum}
	Let $m_1, m_2 \in \N$, and let $I \in I^{m_1}_{\BA}$, $J \in I^{m_2}_{\BA}$. We say that $K \in I^{m_1 + m_2}_{\BA}$ satisfies
	$$
	K = I + J,
	$$
	if $K$ can be decomposed into two disjoint ordered lists $K_1$ and $K_2$, such that $K = K_1 \cup K_2$ as sets (counting multiplicity) and, in addition, $I = K_1$, $J = K_2$, where the last two equations are understood in the sense of ordered lists, i.~e.~taking into account multiplicity and ordering.
\end{definition}

\begin{remark}
	Note that, with this definition, the sum of $I$ and $J$ is not unique.
\end{remark}
\begin{definition}
	We also define inclusion between multi-indices in the following way. We say that $I \subset J$ if $I \in I^{m_1}_{\BA}$, $J \in I^{m_2}_{\BA}$, with $m_1, m_2 \in \N$, $ 0  \leq m_1 \leq m_2$, and if $I$, as a list, is obtained from $J$ by removing $m_2 - m_1$ elements of $J$ (and preserving the ordering). 
\end{definition}

\begin{lemma}[Leibniz rule]
	Let $\BA$ be a set of vector fields, and let $f$, $g$ be smooth functions, $m \in \N$, $m \geq 0$. Let $K \in I^{m}_{\BA}$. We then have
	\begin{equation}
	\der^K (fg) = \sum_{I+J = K} \der^I f \der^J g,
	\end{equation}
	where the sum is over all $I \in I^{m_1}_{\BA}$, $J \in I^{m_2}_{\BA}$ such that $m_1, m_2 \in \N$, $m_1, m_2 \geq 0$,  $m_1 + m_2 = m$, and furthermore $I + J = K$ in the sense of Definition~\ref{def:indexsum}. 
\end{lemma}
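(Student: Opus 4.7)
I would proceed by induction on $m = |K|$. For the base case $m = 0$ the multi-index is empty and the identity reduces to $fg = fg$, since the only decomposition has $I = J$ empty.

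For the inductive step, write $K = (V_1) \cup K'$, where $V_1$ is the first element of $K$ and $K' \in I^{m-1}_{\BA}$. Then $\der^K(fg) = V_1(\der^{K'}(fg))$, and by the inductive hypothesis together with the product rule for the single vector field $V_1$, this equals
\begin{equation*}
\sum_{I' + J' = K'} \bigl[(V_1 \der^{I'} f)\, \der^{J'} g \; + \; \der^{I'} f \cdot (V_1 \der^{J'} g)\bigr].
\end{equation*}
The key combinatorial step is to recognize that decompositions $I + J = K$ are in bijection with pairs consisting of a decomposition $I' + J' = K'$ together with a choice of whether to prepend $V_1$ to $I'$ (producing $I = (V_1)\cup I'$, $J = J'$) or to $J'$ (producing $I = I'$, $J = (V_1)\cup J'$). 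Under this bijection the two bracketed terms above become, respectively, $\der^I f \cdot \der^J g$ in the two cases, and hence the sum collapses to $\sum_{I + J = K} \der^I f \cdot \der^J g$, as required. An equally clean alternative is to peel off the \emph{last} vector field of $K$ instead of the first; either version works.

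The one subtlety, which I do not expect to cause difficulty, is the non-uniqueness flagged in the remark after Definition~\ref{def:indexsum}: when $K$ contains repeated vector fields, several distinct decompositions of $K$ can produce the same ordered pair $(I, J)$. The summation convention is that each decomposition (equivalently, each assignment of every position of $K$ to either $I$ or $J$) contributes independently, so the bijection above preserves multiplicities automatically. This is the exact analogue of how multinomial coefficients arise in the iterated Leibniz rule in ordinary calculus, and no genuinely new work is required beyond the straightforward induction.
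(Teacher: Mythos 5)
The paper states this lemma without proof, treating it as a standard combinatorial fact, so there is no in-paper argument to compare against. Your induction is the natural one and it is correct. In particular, you correctly identify the one point that needs care: under Definition~\ref{def:indexsum} the decomposition of $K$ into $I+J$ is not unique, and for the formula to be true (e.g.\ for $K=(V,V)$, where $VV(fg)=VVf\cdot g + 2\,Vf\cdot Vg + f\cdot VVg$), the sum must be interpreted as running over decompositions of the positions of $K$ rather than over distinct ordered pairs $(I,J)$; otherwise the middle term would appear with coefficient $1$ instead of $2$. Your bijection between decompositions of $K$ and pairs (decomposition of $K'$, choice of where to send $V_1$) respects this convention, so multiplicities are carried through correctly. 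No gaps.
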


\begin{definition}[Shorthand notation for iterated derivatives]
    Let $\eta \in \mathcal{C}^\infty(\R^4)$, and let $m, k$ be non-negative integers. We then define
    \begin{equation}
        |\partial^{k} f| : = \sum_{I \in I^{k}_{\boldsymbol{T}}} |\der^I f|, \qquad \text{where} \qquad \boldsymbol{T} := \{\p_x, \p_y, \p_z, \p_t\}.
    \end{equation}    
    Also,
    \begin{equation}
        |\partial^{\leq m} f| := \sum_{k=0}^m |\partial^k f|.
    \end{equation}
    Similarly, for spatial derivatives only, we define:
    \begin{equation}
        |\widehat{\partial}^{k} f| : = \sum_{I \in I^{k}_{\boldsymbol{T}_s}} |\der^I f|, \qquad \text{where} \qquad \boldsymbol{T}_s := \{\p_x, \p_y, \p_z\}.
    \end{equation}
\end{definition}

\subsection{Classical null forms}  \label{sec:null}
\begin{definition}\label{def:null}
	Let $k$ be a positive integer. We say that a $k$-linear form with constant coefficients $F: T\R^4 \times \ldots \times T\R^4 \to \R$ is a classical null form if, for every null vector $\xi$ with respect to the Minkowski metric, we have that 
	\begin{equation}\label{eq:nullcond}
	F(\xi, \ldots, \xi) = 0.
	\end{equation}
\end{definition}
We recall the following lemma about null forms and commutation.
\begin{lemma}[\cite{hormanderbook}, Lemma 6.6.5]\label{lem:nullhorm}
	Let
	\begin{equation*}
	    \Gamma \in \boldsymbol{K} \cup  \boldsymbol{K}^{(i)} \cup  \boldsymbol{K}^{(j)}.
	\end{equation*}
	Let also $F$ be a classical trilinear null form with components $F_{\alpha\beta\gamma}$, and $G$ be a classical bilinear null form with components $G_{\alpha\beta}$, both  understood as per Definition~\ref{def:null}. Then, we have, for functions $\eta, \zeta \in \mathcal{C}^\infty(\R^4)$,
	\begin{equation}\label{eq:commutfirst}
	\begin{aligned}
	&\Gamma (F_{\alpha\beta\gamma} \p^\alpha \eta \, \p^\beta \p^\gamma \zeta) =  F_{\alpha\beta\gamma} \p^\alpha \Gamma \eta \, \p^\beta \p^\gamma \zeta + F_{\alpha\beta\gamma} \p^\alpha \eta \, \p^\beta \p^\gamma \Gamma\zeta + F'_{\alpha\beta\gamma} \p^\alpha \eta \, \p^\beta \p^\gamma \zeta,\\
	&\Gamma (g_{\alpha\beta} \p^\alpha \eta \, \p^\beta \zeta) =  G_{\alpha\beta} \p^\alpha \Gamma \eta \, \p^\beta \zeta + G_{\alpha\beta} \p^\alpha \eta \, \p^\beta  \Gamma\zeta + G'_{\alpha\beta} \p^\alpha \eta \, \p^\beta \zeta.
	\end{aligned}
	\end{equation}
	where $F', G'$ are also classical null forms in the sense of Definition~\ref{def:null}.
\end{lemma}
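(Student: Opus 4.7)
The plan is to reduce to a small number of building blocks by writing any $\Gamma \in \boldsymbol{K}\cup \boldsymbol{K}^{(i)}\cup \boldsymbol{K}^{(j)}$ as a linear combination (with $R$-dependent constant coefficients) of three types of generators: (i) translations $\partial_\mu$, (ii) a homogeneous Lorentz/rotation generator $L = \omega^\mu{}_\nu x^\nu \partial_\mu$ based at the origin, with $\omega_{\mu\nu}:=m_{\mu\rho}\omega^\rho{}_\nu$ antisymmetric, and (iii) the scaling field $S = x^\mu \partial_\mu$ based at the origin. Inspecting \eqref{eq:gammasdef}, every translated Lorentz/boost/rotation/scaling field appearing in $\boldsymbol{K}^{(i)}$ or $\boldsymbol{K}^{(j)}$ is the sum of its origin-centered counterpart and an $R$-weighted translation. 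Since both sides of \eqref{eq:commutfirst} are linear in $\Gamma$, and the space of classical null forms is a vector space, it suffices to verify the identity on each building block separately.

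For a translation $\partial_\mu$ the commutator $[\partial_\mu, \partial^\alpha]=0$ makes \eqref{eq:commutfirst} hold with $F'\equiv 0$ and $G'\equiv 0$. For the scaling field, $[S,\partial^\alpha]=-\partial^\alpha$ yields via Leibniz
\begin{equation*}
F'_{\alpha\beta\gamma} = -3\, F_{\alpha\beta\gamma}, \qquad G'_{\alpha\beta} = -2\, G_{\alpha\beta},
\end{equation*}
each a scalar multiple of the original null form and hence still a classical null form.

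The substantive case is the homogeneous Lorentz generator $L$. A direct calculation gives $[L,\partial^\alpha]=\omega^\alpha{}_\mu \partial^\mu$ (raising indices with $m$), and the Leibniz expansion produces
\begin{equation*}
F'_{\alpha\beta\gamma} = \omega^\mu{}_\alpha F_{\mu\beta\gamma} + \omega^\mu{}_\beta F_{\alpha\mu\gamma} + \omega^\mu{}_\gamma F_{\alpha\beta\mu},
\end{equation*}
which is exactly the infinitesimal $\mathfrak{so}(1,3)$-action on the tensor $F$, with an analogous formula $G'_{\alpha\beta}=\omega^\mu{}_\alpha G_{\mu\beta}+\omega^\mu{}_\beta G_{\alpha\mu}$ in the bilinear case. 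To verify that $F'$ is a classical null form, fix any null vector $\xi$. Since $\omega_{\mu\nu}\xi^\mu\xi^\nu=0$ by antisymmetry, the vector $(\omega\xi)^\mu := \omega^\mu{}_\nu \xi^\nu$ lies in the tangent space of the null cone at $\xi$; hence one can construct a smooth one-parameter family $\tilde\xi(s)$ of null vectors with $\tilde\xi(0)=\xi$ and $\tilde\xi'(0)=\omega\xi$. Differentiating the identity $F(\tilde\xi(s),\tilde\xi(s),\tilde\xi(s))=0$ at $s=0$ gives $F'_{\alpha\beta\gamma}\xi^\alpha\xi^\beta\xi^\gamma=0$, as required; the bilinear case is identical.

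The main obstacle is essentially bookkeeping: one must confirm that the $R$-weighted translation contributions arising from re-centering the generators in $\boldsymbol{K}^{(i)}$ and $\boldsymbol{K}^{(j)}$ only affect the $\partial^\alpha\Gamma\eta$ and $\partial^\beta\partial^\gamma\Gamma\zeta$ terms on the right-hand side of \eqref{eq:commutfirst}, and contribute nothing to $F'$ or $G'$, since translations commute with all coordinate derivatives. The one conceptual ingredient is the Lorentz invariance of the null cone, which is used precisely in constructing the family $\tilde\xi(s)$ above.
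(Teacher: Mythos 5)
Your proof is correct; the paper does not present its own argument here, instead citing H\"ormander's Lemma~6.6.5, and your route --- decomposing $\Gamma$ into translations, an origin-centered Lorentz generator, and the scaling field, then checking in the Lorentz case that $F'$ is the infinitesimal $\mathfrak{so}(3,1)$-action on the tensor $F$, which still annihilates triples of coincident null vectors because the light cone is Lorentz-invariant --- is the standard proof of that cited lemma. The further observation that the re-centered generators in $\boldsymbol{K}^{(i)}$ and $\boldsymbol{K}^{(j)}$ differ from their origin-centered counterparts by $R$-weighted translations, which commute with every $\partial^\alpha$ and hence contribute nothing to $F'$ or $G'$, is precisely the mechanism that lets the statement extend from H\"ormander's origin-centered setting to the full set $\boldsymbol{K}\cup\boldsymbol{K}^{(i)}\cup\boldsymbol{K}^{(j)}$.
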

\begin{remark}
In what follows, to simplify notation, we will denote a classical trilinear null form $F$ with components $F_{\alpha\beta\gamma}$ acting on two functions $\eta, \zeta$ by
$$
F(d \eta, d^2 \zeta) := F_{\alpha\beta\gamma} \p^\alpha \eta \p^\beta \p^\gamma \zeta
$$
($d^2$ indicates that the null form is acting on the hessian of $\zeta$).
Similarly, we will denote the action of a bilinear null form $G$ with components $G_{\alpha \beta}$ on two functions $\eta$, $\zeta$ by
$$
G(d \eta, d \zeta) := G_{\alpha\beta} \p^\alpha \eta \p^\beta  \zeta.
$$
\end{remark}
Iterating Lemma~\ref{lem:nullhorm}, we can prove the following
\begin{lemma}[Null forms and commutation]\label{lem:nullcomm}
	Let $K \in I^m_{\boldsymbol{K} \cup  \boldsymbol{K}^{(i)} \cup  \boldsymbol{K}^{(j)}}$, with $m \in \N$, $m \geq 0$. Let also $F$ be a trilinear null form, and $G$ be a bilinear null form.
	In these conditions, for all smooth functions $\eta, \zeta \in \mathcal{C}^\infty(\R^4)$, we have
	\begin{align}
	&\der^K F(d \eta, d^2 \zeta) = \sum_{I+J \subset K } F_{IJ}(d \, \der^I \eta, d^2 \, \der^J \zeta),\\
	&\der^K G(d \eta, d \zeta) = \sum_{I+J \subset K } G_{IJ}(d\, \der^I \eta, d \, \der^J \zeta).
	\end{align}
	Here, every $F_{IJ}$ is a trilinear null form as in Definition~\ref{def:null}, and every $G_{IJ}$ is a bilinear null form as in Definition~\ref{def:null}. The sum is taken over all $I, J$ such that $I \in I^{m_1}_{\boldsymbol{\Gamma}^{(i)}}$, $J \in I^{m_1}_{ \boldsymbol{\Gamma}^{(i)}}$,  $m_1 + m_2 \leq m$, $I + J\subset K$.
\end{lemma}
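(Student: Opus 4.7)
The plan is to prove both identities simultaneously by induction on $m$, the length of the multi-index $K \in I^{m}_{\boldsymbol{K} \cup \boldsymbol{K}^{(i)} \cup \boldsymbol{K}^{(j)}}$. The base case $m = 0$ is immediate: the only admissible decomposition is $I = J = \emptyset$, and the identity reduces to $F(d\eta, d^2\zeta) = F(d\eta, d^2\zeta)$ (resp.\ the same with $G$), with the null form $F_{\emptyset \emptyset} := F$ (resp.\ $G_{\emptyset \emptyset} := G$) trivially satisfying the null condition of Definition~\ref{def:null}.

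For the inductive step, suppose the statement holds for all multi-indices of length at most $m - 1$, and let $K \in I^{m}_{\boldsymbol{K} \cup \boldsymbol{K}^{(i)} \cup \boldsymbol{K}^{(j)}}$. Write $K = (\Gamma) + K'$ where $\Gamma$ is the leftmost vector field of $K$ and $K' \in I^{m-1}_{\boldsymbol{K} \cup \boldsymbol{K}^{(i)} \cup \boldsymbol{K}^{(j)}}$, so that $\der^K = \Gamma \, \der^{K'}$. Applying the inductive hypothesis, we expand
\begin{equation*}
    \der^{K'} F(d\eta, d^2\zeta) = \sum_{\tilde I + \tilde J \subset K'} F_{\tilde I \tilde J}(d\, \der^{\tilde I}\eta, d^2\, \der^{\tilde J}\zeta),
\end{equation*}
with each $F_{\tilde I \tilde J}$ a classical trilinear null form. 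We now apply $\Gamma$ term-by-term and invoke Lemma~\ref{lem:nullhorm} on each summand: for each $(\tilde I, \tilde J)$, the action of $\Gamma$ produces three contributions, namely $F_{\tilde I \tilde J}(d\, \der^{(\Gamma) + \tilde I}\eta, d^2\, \der^{\tilde J}\zeta)$, $F_{\tilde I \tilde J}(d\, \der^{\tilde I}\eta, d^2\, \der^{(\Gamma) + \tilde J}\zeta)$, and $F'_{\tilde I \tilde J}(d\, \der^{\tilde I}\eta, d^2\, \der^{\tilde J}\zeta)$, where $F'_{\tilde I \tilde J}$ is again a classical null form by Lemma~\ref{lem:nullhorm}. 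In the first two cases, the new indices $I := (\Gamma) + \tilde I$, $J := \tilde J$ (resp.\ $I := \tilde I$, $J := (\Gamma) + \tilde J$) satisfy $I + J \subset (\Gamma) + K' = K$; in the third case, setting $I := \tilde I$, $J := \tilde J$ gives $I + J \subset K' \subset K$. Relabeling and collecting these terms into a single sum indexed by $(I, J)$ with $I + J \subset K$ yields the desired identity for the trilinear case. The bilinear identity is obtained in an entirely analogous fashion, using the second equation in Lemma~\ref{lem:nullhorm}.

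The main obstacle is purely bookkeeping: one must verify that every term produced by iterating Lemma~\ref{lem:nullhorm} can be correctly indexed within the multi-index framework of Definitions~\ref{def:multiindex} and~\ref{def:indexsum}, and that the ``inclusion'' $I + J \subset K$ (rather than equality) correctly accommodates those summands in which $\Gamma$ modifies the null form itself rather than differentiating $\eta$ or $\zeta$. Since Lemma~\ref{lem:nullhorm} guarantees that the null form property is preserved at each step, no new analytic input is required beyond the inductive application of this single-vector-field identity.
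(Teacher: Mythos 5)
Your proof is correct and follows exactly the approach the paper indicates: the paper's own proof is the one-line remark that the result "follows from Lemma~\ref{lem:nullhorm} and an induction argument," which is precisely the induction on $m$ via Lemma~\ref{lem:nullhorm} that you have carried out in detail. The only thing to be careful about is that with Definition~\ref{def:derivation} (leftmost vector field applied last), $\der^{(\Gamma)+\tilde I}\eta = \Gamma\,\der^{\tilde I}\eta$ requires interpreting $(\Gamma)+\tilde I$ as the specific interleaving that prepends $\Gamma$, which you implicitly do and which is consistent with Definition~\ref{def:indexsum}.
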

\begin{proof}[Proof of Lemma~\ref{lem:nullcomm}]
	The proof follows from Lemma~\ref{lem:nullhorm} and an induction argument.
\end{proof}
We now recall a lemma on the structure of null forms.
\begin{lemma}[Structure of null forms]\label{lem:nullstructpre}
	Let $F,G$ be resp.~a classical trilinear null form and a classical bilinear null form in the sense of Definition~\ref{def:null}. Recall the definition of the good derivatives adapted to the $h$-th light cone in equation~\eqref{eq:goodcone}. Then, there exists a positive constant $C >0$ such that the following holds. Let $\zeta, \eta \in \mathcal{C}^\infty(\R^4)$. We have the pointwise inequality
	\begin{align}
	&|G(\de \zeta, \de \eta)| \leq C (|\bar \p^{(h)} \zeta| |\p \eta| + |\p \zeta| |\bar \p^{(h)} \eta|),\\
	&|F(\de \zeta, \de^2 \eta)| \leq C (|\bar \p^{(h)} \zeta| |\p^2 \eta| + |\p \zeta| | \bar \p^{(h)} \p \eta|).
	\end{align}
	Recall the expressions $|\bar \p^{(h)} \zeta|$ and $|\p \zeta|$ from Definition~\ref{def:shorthand}, and that $h \in \{i,j\}$.
\end{lemma}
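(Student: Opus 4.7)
The plan is to reduce the two pointwise inequalities to a finite check on a basis of classical null forms, exploit the standard algebraic characterization of such null forms, and then perform an explicit null-frame computation at an arbitrary spacetime point. Since translations are isometries of Minkowski space and the set $\boldsymbol{G}^{(h)}$ is obtained from the ``usual'' good-derivative set at the origin by translating the center from $0$ to $Rp_h$, it suffices to prove the estimates in the case $h=0$; the general case then follows by pulling back along the translation.

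First I would recall the algebraic structure of classical null forms on $\R^{3+1}$. A bilinear form $G_{\alpha\beta}$ satisfying $G_{\alpha\beta}\xi^\alpha\xi^\beta=0$ on all null vectors decomposes into its symmetric and antisymmetric parts; the symmetric part must be a scalar multiple of $m_{\alpha\beta}$ (since a symmetric form vanishing on the null cone is determined up to a multiple of the metric), while the antisymmetric part is unconstrained. Consequently every classical bilinear null form is a linear combination of the standard null forms
\begin{equation*}
Q_0(\eta,\zeta) = m^{\alpha\beta}\partial_\alpha \eta\,\partial_\beta \zeta, \qquad
Q_{\alpha\beta}(\eta,\zeta) = \partial_\alpha \eta\,\partial_\beta \zeta - \partial_\beta \eta\,\partial_\alpha \zeta.
\end{equation*}
A parallel classification holds for trilinear $F_{\alpha\beta\gamma}$ with $F(N,N,N)=0$ on null $N$: decomposing into the totally symmetric part plus lower-symmetry pieces, the totally symmetric part must take the form $m_{(\alpha\beta}c_{\gamma)}$, so $F(d\eta,d^2\zeta)$ expands as a finite linear combination of terms of the shape $Q_0(\partial_\mu\eta,\zeta)$, $Q_{\alpha\beta}(\partial_\mu\eta,\zeta)$, $Q_0(\eta,\partial_\mu\zeta)$, and $Q_{\alpha\beta}(\eta,\partial_\mu\zeta)$, plus expressions already bilinearly null in $(d\eta,d\zeta)$ contracted against one extra translation. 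By linearity it is then enough to prove the claimed bound for each of $Q_0$ and $Q_{\alpha\beta}$ (applied, in the trilinear case, with one factor replaced by a coordinate derivative of $\zeta$).

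Next I would fix a point $q \in \R^{3+1}$ and introduce the null frame $\{\partial_{v_0},\partial_{u_0},e_A,e_B\}$ adapted to the light cone based at the origin, with $e_A,e_B$ orthonormal on the sphere through $q$ of radius $r_0$. Using $\partial_t = \tfrac{1}{2}(\partial_{v_0}+\partial_{u_0})$ and $\partial_r=\tfrac{1}{2}(\partial_{v_0}-\partial_{u_0})$, a short computation gives
\begin{equation*}
Q_0(\eta,\zeta) = -\partial_{u_0}\eta\,\partial_{v_0}\zeta - \partial_{v_0}\eta\,\partial_{u_0}\zeta + e_A\eta\,e_A\zeta + e_B\eta\,e_B\zeta,
\end{equation*}
so every monomial contains at least one factor of $\partial_{v_0}$ or an angular derivative. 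For $Q_{\alpha\beta}$, one computes $Q_{tr}(\eta,\zeta) = \partial_{v_0}\eta\,\partial_{u_0}\zeta-\partial_{u_0}\eta\,\partial_{v_0}\zeta$ and, for indices tangent to the sphere, each $Q_{\alpha\beta}$ manifestly contains at least one angular derivative on either $\eta$ or $\zeta$ (if one index is the radial one, the $\partial_r$ factor couples to an angular derivative; if both are tangential, both factors are already angular). Since the angular derivatives $e_A,e_B$ are bounded pointwise by $(1+r_0)^{-1}$ times the rotation vector fields appearing in $\boldsymbol{G}^{(0)}$ (with the bound being trivial for $r_0\le 1$ where $e_A\eta$ reduces to a combination of $\partial_y\eta,\partial_z\eta$ already controlled by $|\partial\eta|$), the basis identities yield the bilinear inequality with an absolute constant. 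The trilinear inequality follows by applying the same analysis to each of the reduced terms produced by the structural reduction, noting that $\partial^2\eta$ and $\bar\partial^{(h)}\partial\eta$ on the right-hand side absorb the extra translation $\partial_\mu$ that was pulled out.

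The only point requiring care is the small-$r_0$ regime, where the $(1+r_h)^{-1}$ normalization in the definition of $\boldsymbol{G}^{(h)}$ is essential: there $e_A,e_B$ should not be used and one instead bounds the angular derivatives directly by $|\partial\eta|,|\partial\zeta|$, which is harmless because in that regime any coordinate derivative is controlled by $|\bar\partial^{(h)}\eta|$ up to the factor $(1+r_h)\le 2$. Apart from this book-keeping, the proof is a direct algebraic computation, so I do not expect a substantive obstacle; the main thing to keep clean is the uniformity of the constant $C$ across all $h$ and all basis null forms, which follows because the reduction is finite and translation-invariant.
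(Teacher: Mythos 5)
Your reduction to a basis of null forms ($Q_0$, the antisymmetric $Q_{\alpha\beta}$, and the trilinear analogues) is a valid route but more elaborate than the paper's one-sentence argument, which simply expands an arbitrary null form $G_{\alpha\beta}$ (resp.\ $F_{\alpha\beta\gamma}$) in the null frame $\{\partial_{u_h},\partial_{v_h},e_A,e_B\}$ adapted to the $h$-th cone and observes that the coefficient of $\partial_{u_h}\zeta\,\partial_{u_h}\eta$ (resp.\ of the triple $\partial_{u_h}$ term) vanishes because $G(\partial_{u_h},\partial_{u_h})=0$ by the null condition with $\xi=\partial_{u_h}$; no prior classification is needed, and every surviving term in the expansion then carries at least one good frame factor with a bounded coefficient. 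Your large-$r_h$ analysis is consistent with this and is fine.

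The gap is in the small-$r_h$ regime. You claim that for $r_h\le 1$ ``any coordinate derivative is controlled by $|\bar\p^{(h)}\eta|$ up to the factor $(1+r_h)\le 2$.'' This is false: the rotation fields appearing in $\boldsymbol{G}^{(h)}$ vanish at the center $Rp_h$, so $|\bar\p^{(h)}\eta|$ can be of order $r_h$ even when $|\p\eta|$ is of size one. Concretely, take $G(\de\zeta,\de\eta)=\p_x\zeta\,\p_y\eta-\p_y\zeta\,\p_x\eta$ (an antisymmetric bilinear null form), $\eta=x-(Rp_h)_x$, $\zeta=y-(Rp_h)_y$. Then $|G(\de\zeta,\de\eta)|\equiv 1$, while at a point at distance $r_h$ from $Rp_h$ along the $z$-axis one computes $|\bar\p^{(h)}\eta|=|\bar\p^{(h)}\zeta|=\tfrac{r_h}{1+r_h}$ and $|\p\eta|=|\p\zeta|=1$, so the right-hand side of the claimed inequality is of order $r_h$, and the inequality cannot hold with a fixed constant $C$ as $r_h\to 0$. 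The step in your proof that tries to paper over this by bounding $e_A\eta$ by $|\p\eta|$ and then claiming $|\p\eta|$ is controlled by $|\bar\p^{(h)}\eta|$ therefore does not go through.

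To be fair, the paper's proof sketch is also silent on this point, and in all the places where the lemma is invoked (e.g.\ the trilinear estimates of Section~\ref{sub:improvedenergy}) the spacetime region of integration has $r_h$ comparable to $R$, so the degeneracy never enters in practice. But a pointwise proof of the inequality as literally stated requires either restricting to $r_h$ bounded below, or replacing $\boldsymbol{G}^{(h)}$ by the un-normalized null frame $\{\partial_{v_h},e_A,e_B\}$, for which the estimate does hold with a uniform constant by the direct frame expansion.
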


\begin{proof}
    The proof is straightforward writing the expressions for $F$ and $G$ in the null frame and making use of condition~\eqref{eq:nullcond} with $\xi = \p_{u_h}$.
\end{proof}

Combining Lemma~\ref{lem:nullcomm} and Lemma~\ref{lem:nullstructpre}, we obtain the following:
\begin{lemma}[Fundamental null form inequality]\label{lem:nullstruct}
	Let $F,G$ be resp.~a classical trilinear null form and a classical bilinear null form in the sense of Definition~\ref{def:null}. Let also $R \geq 1$.
	We define the set of vector fields
	\begin{equation*}
	    \boldsymbol{Z} := \boldsymbol{K} \cup \boldsymbol{K}_R \cup \bigcup_{h \in \{i,j\}}\big( \Ga^{(h)} \cup \boldsymbol{K}^{(h)} \cup \boldsymbol{K}^{(h)}_R\big).
	\end{equation*}
	Let $m \in \N$, $m \geq 0$.
	Let furthermore $I$ be a multi-index in $I^{\leq m}_{\boldsymbol{Z}}$.
	Then, there exists a positive constant $C >0$ such that the following holds. Let $\zeta, \eta \in \mathcal{C}^\infty(\R^4)$. We have the pointwise inequalities
	\begin{align}
	&|\der^I G(\de \zeta, \de \eta)| \leq C \sum_{H+K \subset I} (|\bar \p^{(i)} \der^H \zeta| |\p \der^K \eta| + |\p \der^H \zeta| |\bar \p^{(i)} \der^K \eta|),\\
	&|\der^I F(\de \zeta, \de^2 \eta)| \leq C \sum_{H+K \subset I} (|\bar \p^{(i)}\der^H \zeta| |\p^2\der^K\eta| + |\p \der^H \zeta| | \p \bar \p^{(i)} \der^K \eta|).
	\end{align}
	Here, we used the expressions $|\bar \p^{(i)} \zeta|$ and $|\p \zeta|$ from Definition~\ref{def:shorthand}.
\end{lemma}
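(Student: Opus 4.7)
The plan is to compose the two preceding results: Lemma~\ref{lem:nullcomm} tells us how null forms commute past iterated Killing-type derivatives, and Lemma~\ref{lem:nullstructpre} gives the pointwise good/bad derivative estimate for a single null form. The only novelty in the present statement is that the differentiating set has been enlarged from $\boldsymbol{K}\cup\boldsymbol{K}^{(i)}\cup\boldsymbol{K}^{(j)}$ to $\boldsymbol{Z}$, and $\boldsymbol{Z}$ differs from the former only by inclusion of $R^{-1}$-rescaled copies of vector fields already present. Since $R \ge 1$, these rescalings produce only constants of absolute value at most $1$, so they cannot worsen the final bound.

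Concretely, given $I = (V_1, \ldots, V_k) \in I^{\le m}_{\boldsymbol{Z}}$, I write each $V_\ell = c_\ell W_\ell$ with $c_\ell \in \{1, R^{-1}\}$ and $W_\ell \in \boldsymbol{K}\cup\boldsymbol{K}^{(i)}\cup\boldsymbol{K}^{(j)}$, and set $\tilde I := (W_1, \ldots, W_k)$, so $\der^I = \bigl(\prod_\ell c_\ell\bigr)\der^{\tilde I}$. Applying Lemma~\ref{lem:nullcomm} to $\der^{\tilde I} F(d\zeta, d^2\eta)$ expresses it as a sum over $\tilde H + \tilde K \subset \tilde I$ of classical trilinear null forms $F_{\tilde H \tilde K}(d\,\der^{\tilde H}\zeta, d^2\,\der^{\tilde K}\eta)$, and analogously for $G$. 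I then translate each $\der^{\tilde H}$ back to $\der^H$ for the corresponding sub-multi-index $H \subset I$, using $\der^{\tilde H}\zeta = \bigl(\prod_{\ell \in H} c_\ell\bigr)^{-1}\der^H\zeta$, and multilinearity of the null forms lets me pull these constants outside. The leading constant on each summand becomes $\prod_{\ell \notin H \cup K} c_\ell$, which lies in $[0,1]$ since $R \ge 1$.

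Finally, I invoke Lemma~\ref{lem:nullstructpre} with $h = i$ pointwise on each summand $F_{\tilde H \tilde K}(d\,\der^H\zeta, d^2\,\der^K\eta)$ and $G_{\tilde H \tilde K}(d\,\der^H\zeta, d\,\der^K\eta)$ to expose the good/bad derivative structure with respect to the $i$-th light cone. Summing over $H + K \subset I$ produces the two displayed inequalities. The only step requiring any attention is the bookkeeping of the constants $c_\ell$ under the redistribution in Lemma~\ref{lem:nullcomm}: one must verify that after relabeling $\tilde H \to H$, $\tilde K \to K$, no product of $R^{-1}$ factors conspires to become a positive power of $R$. This is exactly what the computation above shows, since the residual product runs over indices outside $H \cup K$ and is therefore bounded by $1$. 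After this, the constant $C$ in the conclusion depends only on $m$ and on the constants from Lemmas~\ref{lem:nullcomm} and~\ref{lem:nullstructpre}, and is independent of $R$.
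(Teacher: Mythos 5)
Your proof is correct and uses the same key idea as the paper's own argument: every element of $\boldsymbol{Z}$ is a Killing (or scaling) vector field times a constant $c \in \{1, R^{-1}\}$, and since $R \geq 1$ these constants only help. The paper's version applies the commutation formula directly with the rescaled fields and observes that the $F'$, $G'$ error terms acquire an $R^{-1}$ factor; your version pulls the constants $c_\ell$ out first, applies Lemma~\ref{lem:nullcomm} to the unscaled fields, and then reinserts the constants, verifying that the residual factor $\prod_{\ell \notin H\cup K}c_\ell \leq 1$. These are the same argument with slightly different bookkeeping, and your explicit tracking of the constants is a clean way to present it.
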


\begin{proof}[Proof of Lemma~\ref{lem:nullstruct}]
    The proof for elements of $\boldsymbol{K}$ and $\boldsymbol{K}^{(h)}$ is evident. Regarding the remaining elements in the set $\boldsymbol{Z}$, we note that the condition $R \geq 1$ suffices to bound the terms $F'$ and $G'$ arising from the application of equation~\eqref{eq:commutfirst}, as these terms will acquire an $R^{-1}$ factor in the commutation.
\end{proof}

\section{Precise statement of the main theorems}\label{sec:statements}

\subsection{Main theorem} We proceed to state the main theorem of the present paper.

\begin{theorem}[Nonlinear wave equations with null condition and multi-localized initial data]\label{thm:main}
	Let $M$, $N$, $n_0$ be non-negative integers, such that $n_0 \geq 19$. Let $\{F^A_{BC}\}_{A,B,C = 1, \ldots, M}$ and $\{G^A_{BC}\}_{A,B,C = 1, \ldots, M}$ be resp.~a collection of trilinear and bilinear classical null forms, i.~e.~each of the $F^A_{BC}$ and $G^A_{BC}$ is as in Definition~\ref{def:null} and satisfies:
	$$
	F^A_{BC}: T\R^4 \times T\R^4 \times T\R^4 \to \R, \qquad 
	G^A_{BC}: T\R^4 \times T\R^4 \to \R.
	$$
	Let furthermore $\Pi := \{p_1, \ldots, p_N\}$ be a collection of $N$ points in $\R^3$, with $d_\Pi$ the ratio of the largest and smallest distances between them as defined in display~\eqref{eq:dpidef}.
	
	Then, there exists $\varepsilon_0 = \varepsilon_0(d_\Pi, \{F^A_{BC}\}_{A,B,C = 1, \ldots, M}, \{G^A_{BC}\}_{A,B,C = 1, \ldots, M},N) > 0$ such that the following holds true for all $R \geq 0$ and for all $0< \varepsilon < \varepsilon_0$.
	
	Consider a collection of functions 
	\begin{equation}
	(\bar \phi_{A,i}^{(0)},\bar \phi_{A,i}^{(1)})_{ A \in \{1, \ldots, M\}, i \in \{1, \ldots, N\}}
	\end{equation}
	which satisfy the following bounds, in the usual angular coordinates in $\R^3$:
	\begin{equation}\label{eq:boundsbumps}
	    |\widehat{\partial}^{k} \bar \phi_{A,i}^{(0)}|\leq \varepsilon (1+r)^{-k-2}, \qquad |\widehat{\partial}^{k} \bar \phi_{A,i}^{(1)}|\leq \varepsilon (1+r)^{-k-3},
	\end{equation}
	for all $0 \leq k \leq n_0$, and  $A \in \{1, \ldots, M\}$.
	Let us then construct initial data as follows. Let $w_i$ be the point $R \cdot p_i$. Let us furthermore define, for all $A \in \{1, \ldots, M\}$,
	\begin{equation}
	    \begin{aligned}
	    \phi_A^{(0)} := \sum_{i=1}^N \bar \phi_{A,i}^{(0)}(x-w_i),\qquad
	    \phi_A^{(1)} := \sum_{i=1}^N \bar \phi_{A,i}^{(1)}(x-w_i).
	    \end{aligned}
	\end{equation}
	Let us consider the initial value problem given by the following system of quasilinear wave equations with the specified data:
	\begin{equation}\label{eq:bumps}
	\begin{aligned}
	    &\Box \phi_A + \sum_{B, C= 1}^M F^{A}_{BC} (d \phi_B, d^2 \phi_C) = \sum_{B,C = 1}^M G^A_{BC} (d \phi_B, d \phi_C), \quad A = 1, \ldots, M,\\
	    & \phi_A|_{t=0} = \phi^{(0)}_A, \quad A = 1, \ldots, M,\\
	    & \p_t \phi_A|_{t=0} = \phi^{(1)}_A, \quad A = 1, \ldots, M.
	\end{aligned}
	\end{equation}
	Then, the initial value problem~\eqref{eq:bumps} admits a global-in-time solution $\phi_A$, which furthermore decays with quantitative rates. Thus, the trivial solution to~\eqref{eq:bumps} is asymptotically stable under this class of non-localized perturbations uniformly in the scale $R$.
\end{theorem}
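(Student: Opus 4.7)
The plan is to follow the three-step decomposition advertised in Section~\ref{subsub:mainideas}, combined with a bootstrap argument based on the $R$-weighted vector fields introduced in Section~\ref{sec:vectors}. First, I would split the initial data $(\phi_A^{(0)}, \phi_A^{(1)})$ into $N+1$ pieces: for each $i \in \{1, \ldots, N\}$, a piece $(\bar\phi_{A,i}^{(0)}(\cdot-w_i), \bar\phi_{A,i}^{(1)}(\cdot-w_i))$ supported in a unit ball around $Rp_i$, plus a tail piece whose $R$-weighted Sobolev norms are small. For each $i \in \{0,1,\ldots,N\}$, I solve the full quasilinear system~\eqref{eq:bumps} with this single piece as initial data, producing an auxiliary solution $\phi_{A,i}$. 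Since each data piece is small and supported in a fixed unit ball, the classical null-condition theory of Klainerman~\cite{Klainerman1986}, in the strengthened form stated as Lemma~\ref{prop:decphii}, yields global existence together with quantitative decay, in particular $(1+t+r_i)^{-1}$ for bad derivatives and faster for the good derivatives $\bar\p^{(i)} \phi_{A,i}$.

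Second, for each unordered pair $i \neq j$, I define the second iterate $\psi_{A,ij}$ as the solution of the linear inhomogeneous wave equation modeled on~\eqref{eq:psiijintro}, whose right-hand side is the sum of mixed interaction null forms $G^A_{BC}(d\phi_{B,i}, d\phi_{C,j}) + F^A_{BC}(d\phi_{B,i}, d^2\phi_{C,j})$ with vanishing data. The heart of the argument is to establish the improved energy estimate for $\psi_{A,ij}$ advertised in Section~\ref{sub:improvedenergy}: roughly, $\Vert \p \der^I \psi_{A,ij} \Vert_{L^2(\Sigma_t)} \lesssim \varepsilon^2 R^{-1}$ for multi-indices $I$ with entries in $\boldsymbol{\Gamma}^{(i)} \cup \boldsymbol{\Gamma}^{(j)}$ and $|I| \leq n_1$ for some $n_1 \ll n_0$. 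This is proved by a $\p_t$-multiplier energy identity combined with the trilinear estimate obtained from Lemma~\ref{lem:nullstruct} (which exposes a good derivative via the null condition), from the change-of-variables estimate Lemma~\ref{lem:r1r2} (which controls the small measure of the interaction region), and from Proposition~\ref{lem:goodbad} (which rewrites $\p_t$ in terms of the two good derivatives $\p_{v_i}, \p_{v_j}$ up to error). Pointwise bounds for $\psi_{A,ij}$ then follow by applying the $R$-weighted Klainerman--Sobolev inequalities of Section~\ref{sec:sobolevs}.

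Third, treating both $\phi_{A,i}$ and $\psi_{A,ij}$ as known functions with as much regularity as desired, I set $\Psi_A := \phi_A - \sum_{i=0}^N \phi_{A,i} - \sum_{i \neq j} \psi_{A,ij}$, as in~\eqref{eq:capitalpsi}. The equation for $\Psi_A$ is a quasilinear null-form system with vanishing initial data whose inhomogeneities are either cubic-and-higher source terms (where the cancellation of the diagonal terms $F(d\phi_{A,i}, d^2 \phi_{A,i})$ and $G(d\phi_{A,i}, d\phi_{A,i})$ has removed the worst contributions) or residual quadratic interactions between $\phi_i$, $\psi_{ij}$, and $\Psi_A$ that already carry favorable $R$-weights. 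I would close the argument by a bootstrap using the commutators $\boldsymbol{K}_R$ and $\boldsymbol{K}_R^{(h)}$, propagating a top-order energy estimate of the schematic form $\Vert \p \der^I \Psi_A \Vert_{L^2(\Sigma_t)} \lesssim \varepsilon^3 R^{-3/2}$ for $|I| \leq n_2$. Pointwise decay of $\p \Psi_A$ is then obtained by~\eqref{goodRpointwiseest2} on the time interval $[R, R^{20}]$---where the logarithmic divergence from $\int \!dt/t$ is absorbed by a spare power of $R$---and by~\eqref{goodtpointwiseest} on $[R^{20}, \infty)$, where the bad $R$-weight is killed by the late starting time. Combining these on the two intervals improves the bootstrap assumption and yields global existence with quantitative decay.

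The main obstacle is precisely the improved energy estimate for the second iterate $\psi_{A,ij}$ in the quadratic case. At the first contact between the light cones emanating from $Rp_i$ and $Rp_j$ one has $\p_{v_i} = \p_{u_j}$, so a good derivative of $\phi_i$ coincides with the bad derivative of $\phi_j$, and the null-form cancellation of Lemma~\ref{lem:nullstruct} alone does not produce the required $R^{-1}$ gain---a direct H\"older estimate of the form~\eqref{CubicInhomogeneity1} would give, at best, an estimate comparable to what happens for $\Box\phi = -(\p_t\phi)^2$, which of course blows up. The joint use of Lemma~\ref{lem:r1r2} (small measure of the interaction region in the bilinear change of coordinates $(u_i, u_j) \mapsto (x,y)$) and Proposition~\ref{lem:goodbad} (asymptotic alignment of the two good derivatives, allowing $\p_t$ to be decomposed into them with controlled, degenerating coefficients) is what allows this loss to be overcome. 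Everything downstream is a careful but standard vector-field bootstrap executed with the $R$-weighted commutators of Section~\ref{sec:vectors}.
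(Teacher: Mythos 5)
Your proposal reproduces the paper's three-step strategy in essentially the same form: localization of the data (Lemma~\ref{lem:decomposition}), global auxiliary solutions $\phi_i$ from Lemma~\ref{prop:decphii}, improved energy and pointwise bounds for the pair interactions $\psi_{ij}$ via the trilinear estimates of Section~\ref{sub:improvedenergy} together with Lemmas~\ref{lem:r1r2} and~\ref{lem:goodbad}, and a bootstrap for the remainder $\Psi$ in Theorem~\ref{thm:nonlinear} using the $R$-weighted Klainerman--Sobolev inequalities. One small inaccuracy that does not affect the structure: the localized pieces of data are not supported in unit balls --- the data in Theorem~\ref{thm:main} are only polynomially decaying, and the cutoffs $\chi_0((x-w_i)/R)$ in Lemma~\ref{lem:decomposition} are at scale $R$, producing a remainder $\phi_0$ that requires separate (if easier) treatment via Lemma~\ref{lem:phi0j}.
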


\begin{remark}
    We note that the global solution $\phi_A$ constructed in the previous theorem moreover satisfies uniform (in $R$) energy estimates and quantitative decay estimates obtained by combining the inequalities in the statements of Lemma~\ref{prop:decphii}, Theorem~\ref{thm:linear} and Theorem~\ref{thm:nonlinear}.
\end{remark}

\begin{remark}
    We note that we require pointwise bounds on 19 derivatives of the initial data. The proof of Theorem~\ref{thm:main} could be optimized in terms of the number of derivatives, but we are not interested in such issues here. Concerning the calculation of the number of derivatives, see Remark~\ref{rmk:numberder}.
\end{remark}

From now on, to simplify notation, we will specialize our discussion to the case in which we have one single equation, instead of a system of equations, as the two proofs are the same, and no conceptual element is introduced in the proof for systems. We will therefore restrict our attention to a single equation of the type
\begin{equation} \label{squaseq}
    \Box \phi + F(d \phi, d^2 \phi) = G (d \phi, d \phi),
\end{equation}
where $F$ is a trilinear null form as in Definition~\ref{def:null} and $G$ is a bilinar null form as in Definition~\ref{def:null}. In the case of a single equation, $G$ is necessarily a constant multiple of the Minkowski metric $m$, although the theorem holds for more general $G$ (arising from systems of equations).

We now proceed to state the theorems used in the proof of the main Theorem~\ref{thm:main}.

\subsection{Statement of the auxiliary theorems}
\begin{theorem}[Interaction of two localized pieces of initial data]\label{thm:linear}
    There exists a constant $C >0 $ such that the following holds. Let  $0 < \varepsilon < \varepsilon_0$, where $\varepsilon_0$ is as in the statement of Theorem~\ref{thm:main}. Let $\psi_{ij}$ be a solution to the equation
    \begin{equation}\label{eq:linearizeddiff}
       \begin{aligned}
        &\Box \psi_{ij} + F (d \phi_i, d^2 \phi_j) =  G (d \phi_i, d \phi_j),\\
        & \psi_{ij}|_{t=0} = 0,\\
        & \p_t \psi_{ij}|_{t=0} = 0.
    \end{aligned}
    \end{equation}
    Here, $\phi_i$, $i,j \in \{1, \ldots, N\}$, are as constructed in Lemma~\ref{lem:decomposition} (note that in such lemma we require bounds on $n+7$ derivatives of initial data).
    
    For simplicity, let us suppose that $i =1$, $j =2$, the initial data for $\phi_1$ is centered at the point $w_1 = (-\frac 12 |p_1-p_2|R, 0,0)$, and the initial data for $\phi_2$ is centered at the point $w_2 = ( \frac 12 |p_1-p_2| R, 0,0)$ (we are assuming, without loss of generality, that $|p_i - p_j| \geq 2$).
    
    Then, for all $K_1 \in I^{\leq n-1}_{\Ga^{(h)} \cup \boldsymbol{K}_R}$ a multiindex of length at most $n-1$ composed of elements of $ \Ga^{(h)}\cup \boldsymbol{K}_R $, $h \in \{1,2\}$, for all $K_2 \in I^{\leq n-4}_{\Ga^{(h)}\cup \boldsymbol{K}_R}$, and for all $t \geq 0$, the following estimates hold for $\psi_{12}$:
    \begin{align}
        & \Vert\partial \der^{K_1} \psi_{12}\Vert_{L^2 (\Sigma_t)} \le {C \varepsilon^{2} \over R},\\
        & \Vert\partial \der^{K_1} \psi_{12}\Vert_{L^2 (H_t)} \le {C \varepsilon^{2} \over R},\\
        & \Vert (1+|u_h|)^{-\frac 12 - \frac \delta 2}\overline{\partial}^{(h)} \der^{K_1} \psi_{12} \Vert_{L^2 (\R^{3 + 1})} \le {C \varepsilon^{2} \over R},\\
        & \Vert \p \der^{K_2} \psi_{12} \Vert_{L^\infty (\Sigma_t)} \le {C \varepsilon^{2} \over \sqrt R t},\\
        & \Vert \partial \der^{K_2} \psi_{12} \Vert_{L^\infty (\Sigma_t)} \le {C \varepsilon^{2} R^2 \over t (1 + |u_i|)^{{1 \over 2}}}, \\
        & \Vert \overline{\partial}^{(h)} \der^{K_2} \psi_{12} \Vert_{L^\infty (\Sigma_t)} \le {C \varepsilon^{2} R^3 \over t^{{3 \over 2}}}.
    \end{align}
    Here, $h \in \{1,2\}$. The analogous estimates hold true for $\psi_{ij}$, with straightforward changes for the vector fields $\boldsymbol{\Gamma}^{(h)}$.
\end{theorem}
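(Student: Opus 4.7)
The proof proceeds by commuting equation~\eqref{eq:linearizeddiff} with strings of vector fields from $\Ga^{(h)} \cup \boldsymbol{K}_R$ and running an iterated energy estimate. Since the data for $\psi_{12}$ vanishes, every commuted version also has trivial data, so the energy is entirely driven by the inhomogeneity. Lemma~\ref{lem:nullcomm} ensures that after commutation the RHS remains a sum of null forms applied to commuted derivatives of $\phi_1$ and $\phi_2$, and a standard $\p_t$-multiplier identity reduces the $\Sigma_t$-energy bound for $\der^{K_1}\psi_{12}$ to the control of spacetime trilinear integrals of the schematic form
\begin{equation*}
\mathcal{I}[K_1] := \int_0^t\!\!\int_{\Sigma_s}\bigl(F(d\der^{I}\phi_1,d^2\der^{J}\phi_2)+G(d\der^{I}\phi_1,d\der^{J}\phi_2)\bigr)\,\p_t\der^{K_1}\psi_{12}\,dx\,ds,
\end{equation*}
with $|I|+|J|\le |K_1|$, together with analogous permutations exchanging the roles of $\phi_1$ and $\phi_2$.

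The crux, and the main difficulty, is to prove $|\mathcal{I}[K_1]|\le C\varepsilon^4/R^2$; a naive H\"older bound using only the pointwise decay of $\phi_1,\phi_2$ from Lemma~\ref{prop:decphii} would produce a $\log R$ divergence. To extract the $R^{-1}$ gain I would combine three ingredients. First, Lemma~\ref{lem:nullstruct} reduces each summand to one in which at least one factor carries a good derivative $\bar\p^{(h)}$ adapted to one of the two light cones centered at $w_1$ or $w_2$. Second, Lemma~\ref{lem:goodbad} allows one to rewrite good derivatives with respect to the cone at $w_1$ in terms of good derivatives with respect to the cone at $w_2$ (and vice versa) in the interaction region, and in particular to decompose $\p_t\der^{K_1}\psi_{12}$ into a combination of the two families of good derivatives, with coefficients whose (only logarithmic) degeneration encodes the asymptotic alignment of the two cones. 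Third, Lemma~\ref{lem:r1r2} introduces a change of variables to coordinates $(r_1,r_2)$ whose Jacobian yields a gain of $R^{-1}$ upon integrating over the region where both waves are non-negligible. Bringing these three inputs together, inserting the pointwise estimates of Lemma~\ref{prop:decphii} on the $\phi_i$ factors, and estimating the remaining $\bar\p^{(h)}\der^{K_1}\psi_{12}$ factor through the integrated good-derivative norm (the third inequality of the statement, which is controlled simultaneously by a bootstrap), Cauchy--Schwarz then yields the $\Sigma_t$ bound $\|\p\der^{K_1}\psi_{12}\|_{L^2(\Sigma_t)}\le C\varepsilon^2/R$.

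The remaining estimates follow by specialization or by post-processing. The $L^2$ bound on the hyperboloids $H_t$ is obtained by applying the divergence theorem in the region between $\Sigma_0$ and $H_t$ with the same $\p_t$-multiplier, so that the nonlinear flux is controlled by the trilinear estimate just established. The bulk estimate on the good derivatives is a $p$-weighted energy estimate with weight $(1+|u_h|)^{-1-\delta}$: its boundary terms are absorbed into the $\Sigma_t$ and $H_t$ energies already controlled, and its spacetime source is again bounded by the trilinear estimate above. Finally, the three pointwise bounds follow by invoking the $R$-weighted Klainerman--Sobolev inequalities of Section~\ref{sec:sobolevs} on sufficiently many commuted derivatives of $\psi_{12}$: the bound $C\varepsilon^2/(\sqrt R\, t)$ comes from the sharper embedding of type~\eqref{eq:sobolevtwo} which exploits the symmetries in $\boldsymbol{\Gamma}$ shared by the two data bumps; the $R^2/\sqrt{1+|u_i|}$ bound uses the refinement~\eqref{BetterPointwise1} with $u$-decay; and the improved bound on good derivatives uses the standard Klainerman--Sobolev inequality together with the gain of an extra factor of $t^{-1}$ whenever a $\bar\p^{(h)}$ is present. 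The main obstacle throughout is the trilinear estimate: without simultaneous use of the null condition, the geometric change of variables, and the asymptotic alignment of the good frames, no amount of H\"older-type bookkeeping can recover a power of $R^{-1}$ from the interaction region.
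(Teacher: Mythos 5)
Your outline correctly identifies the three geometric inputs (the null-structure lemma, the cone–cone comparison of good derivatives, and the $(r_1,r_2)$ change of variables) and the role of vanishing initial data, which is indeed how the paper proceeds in Proposition~\ref{prop:energy}. However, there is a quantitative error at the heart of your trilinear argument that obscures the mechanism actually making it work, and I want to flag it clearly.

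You describe the decomposition of $\p_t \der^{K_1}\psi_{12}$ into the two families of good derivatives as having coefficients with ``only logarithmic'' degeneration. That is not what Lemma~\ref{lem:goodbad} gives: inequality~\eqref{eq:igortrick} reads $|\p \eta| \le C\,\tfrac{t}{R}\,(|\bar\p^{(1)}\eta| + |\bar\p^{(2)}\eta|)$ on the interaction region, so the coefficient degenerates \emph{polynomially}, like $t/R$, not logarithmically. This matters because the argument is not one of collecting several small gains: the $t/R$ loss from decomposing the multiplier $\p_t\der^{K_1}\psi_{12}$ must be \emph{exactly} cancelled by the $R/r_i$ gain that arises when one transfers a good derivative on the ``wrong'' cone to the ``right'' cone via inequality~\eqref{eq:goodbad} on the $\phi$ factors. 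Your proposal mentions both halves of Lemma~\ref{lem:goodbad} but never notes that the two factors cancel; taking the coefficient to be logarithmic makes the estimate appear to gain an extra full power of $R/t$ that is not there, which is an incorrect picture of the quantitative structure. If one tracks the actual $t/R$ factor without invoking the cancellation, the resulting spacetime integral picks up an uncompensated power of $t/R$ and the $dv_1$-integral over the interaction region diverges (the Jacobian factor $r_1 r_2/R$ only accounts for one of the two powers of $R$ that you need). The cancellation between~\eqref{eq:goodbad} and~\eqref{eq:igortrick} is the crux.

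As a smaller structural remark: you propose to close by Cauchy--Schwarz against the weighted spacetime norm $\|(1+|u_h|)^{-1/2-\delta/2}\bar\p^{(h)}\der^{K_1}\psi_{12}\|_{L^2(\R^{3+1})}$, controlled ``by a bootstrap.'' The paper instead closes a coupled system of estimates for the $\Sigma_t$-energy and the characteristic fluxes $\sup_{u_h}\|\bar\p^{(h)}\der^{K_1}\psi_{12}\|_{L^2(C^{(h)}_{u_h})}$, using a mixed $L^\infty_{u_i} L^2(C^{(i)}_{u_i})$ / $L^1_{u_i} L^2(C^{(i)}_{u_i})$ H\"older bound on the $\phi$-product (see~\eqref{eq:linearimp}); the spacetime weighted estimate of the statement is then a consequence. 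Once the $t/R$ cancellation is in place, both closing mechanisms work. Your route via the weighted bulk norm requires setting up a $p$-weighted energy estimate from the start, while the paper's argument only uses $\p_t$-multiplier fluxes; both are viable, with the paper's being marginally more elementary. The post-processing to the $H_t$-energy bound and the pointwise estimates via the $R$-weighted Klainerman--Sobolev lemmas are described correctly.
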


\begin{remark}
We note that the result is more generally true with $d(p_i,p_j)$ replacing $R$. We are using that all of the distances are comparable to $R$ up to a factor of $d_\Pi$ which we suppress because $\eps$ is allowed to depend on $d_\Pi$. In fact, the factor of $d_\Pi$ in these estimates would go in the denominator, which would in fact improve the estimates.
\end{remark}

\begin{remark}
    Note that the function $\psi_{1 2}$ is supported only at times $t \geq {R \over 2}$. Indeed, the equation has vanishing initial data, and we know that the inhomogeneous terms are $0$ before $t = {R \over 2}$ by comparing the domains of influence of $\phi_1$ and $\phi_2$. Furthermore, we have that the intersection of the supports of $\phi_1$ and $\phi_2$ is contained in the set
    $$
    \Big\{ t \geq \big( 1 - \frac 1 { |p_1 - p_2|}\big) r_1 \Big\} \cap \Big\{ t \geq \big( 1 - \frac 1 { |p_1 - p_2|}\big) r_2  \Big\} .
    $$
\end{remark}

\begin{theorem}[Interaction of a localized piece and the non-localized piece of initial data]\label{thm:linearnc}
    There exists a constant $C >0 $ such that the following holds. Let  $0 < \varepsilon < \varepsilon_0$, where $\varepsilon_0$ is as in the statement of Theorem~\ref{thm:main}. Let $\psi_{ij}$ be a solution to the equation
    \begin{equation}\label{eq:linearizeddiffnc}
       \begin{aligned}
        &\Box \psi_{ij} + F(d \phi_i, d^2 \phi_j) =  G (d \phi_i, d \phi_j),\\
        & \psi_{ij}|_{t=0} = 0,\\
        & \p_t \psi_{ij}|_{t=0} = 0.
    \end{aligned}
    \end{equation}
    Here, $\phi_i$, $i \in \{0, \ldots, N\}$, are as constructed in Lemma~\ref{lem:decomposition} (note again that in such lemma we require bounds on $n+7$ derivatives of initial data).
    
If either $i=0$ and $j \in \{1, \ldots, N\}$, or $j = 0$ and  $i \in \{1, \ldots, N\}$, we have the following estimates, valid for all $K_1 \in I^{\leq n-1}_{\boldsymbol{K}_R}$, $K_2\in I^{\leq n-4}_{\boldsymbol{K}_R}$,
    \begin{align}
    & \Vert\partial \der^{K_1} \psi_{ij}\Vert_{L^2 (\Sigma_t)} \le {C \varepsilon^{2} \over R^{\frac 32}},\\
    & \Vert\partial \der^{K_1} \psi_{ij}\Vert_{L^2 (H_t)} \le {C \varepsilon^{2} \over R^{\frac 32}},\\
    & \Vert (1+|u_h|)^{-\frac 12 - \delta}\overline{\partial}^{(h)} \der^{K_1} \psi_{ij} \Vert_{L^2 (\R^{3 + 1})} \le {C \varepsilon^{2} \over R^{\frac 32}},\\
    & \Vert \p \der^{K_2} \psi_{ij} \Vert_{L^\infty (\Sigma_t)} \le {C \varepsilon^{2} \over R^{\frac 32} (1+t)},\\
    & \Vert \partial \der^{K_2} \psi_{ij} \Vert_{L^\infty (\Sigma_t)} \le {C \varepsilon^{2}  \over R^{\frac 32} (1+v_i) (1 + |u_i|)^{{1 \over 2}}}, \\
    & \Vert \overline{\partial}^{(h)} \der^{K_2} \psi_{ij} \Vert_{L^\infty (\Sigma_t)} \le {C \varepsilon^{2}  \over R^{\frac 32} (1+t)^{{3 \over 2}}}.
    \end{align}
The estimates here are valid for all $h \in \{0, \ldots, N\}$.
\end{theorem}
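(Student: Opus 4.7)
The plan is to mirror the structure of the proof of Theorem~\ref{thm:linear}, but to exploit the additional $R^{-1/2}$ smallness of the non-localized remainder $\phi_0$ in order to upgrade the loss from $R^{-1}$ to $R^{-3/2}$. The first step is to extract from Lemma~\ref{lem:decomposition} and Lemma~\ref{prop:decphii} the analogous decay and energy bounds for $\der^{K} \phi_0$ with $K \in I^{\le n+3}_{\boldsymbol{K}_R}$. Since $\phi_0$ is the remainder of the initial-data decomposition, its size (measured in the relevant $R$-weighted norms) already carries a factor of $R^{-1/2}$; propagating this through the vector-field machinery based at the origin yields decay estimates that are a factor $R^{-1/2}$ better than the corresponding bounds for each localized $\phi_i$. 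Since $\phi_0$ has no distinguished center, only the origin-based rescaled Killing fields $\boldsymbol{K}_R$ of~\eqref{eq:krcenter} are used to commute, which matches the statement.

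Next, for the three $L^2$ inequalities, I would commute~\eqref{eq:linearizeddiffnc} by $\der^{K_1}$ with $K_1 \in I^{\le n-1}_{\boldsymbol{K}_R}$, using Lemma~\ref{lem:nullcomm} to preserve the null-form structure on the right-hand side. Using $\p_t \der^{K_1}\psi_{ij}$ as a multiplier, together with the $r^p$-type bulk estimates associated with $H_t$ and with the $(1+|u_h|)^{-1/2-\delta}$-weighted good-derivative norm, reduces the problem to controlling spacetime trilinear integrals involving $\der^{I_1} \phi_i$, $\der^{I_2}\phi_j$ and $\p \der^{K_1}\psi_{ij}$. By the fundamental null-form Lemma~\ref{lem:nullstruct}, every such integrand is dominated by pairings containing at least one good derivative $\bar\p^{(h)}$ on one of the $\phi$'s. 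Placing one factor in $L^\infty$ via the pointwise decay just recorded, pairing the other factor in $L^2$ with $\p \der^{K_1}\psi_{ij}$, and absorbing into the energy via Gr\"onwall produces the $L^2(\Sigma_t)$, $L^2(H_t)$ and weighted good-derivative estimates. In every admissible pairing exactly one factor is a derivative of $\phi_0$, contributing the decisive extra $R^{-1/2}$ on top of the $R^{-1}$ produced by time-integration against a $1/(1+t)$ decay factor, so that the overall prefactor is $\eps^{2}/R^{3/2}$.

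For the six pointwise bounds I would then apply the $R$-weighted Klainerman--Sobolev embeddings of Section~\ref{sec:sobolevs} to $\der^{K_2}\psi_{ij}$ with $|K_2|\le n-4$, losing up to three additional $\boldsymbol{K}_R$-derivatives. The $\eps^{2}/(R^{3/2}(1+t))$ bound on $\p \der^{K_2}\psi_{ij}$ follows from the analogue of~\eqref{goodRpointwiseest1}; the sharper $(1+|u_i|)^{-1/2}$ improvement comes from~\eqref{BetterPointwise1}; and the $\eps^{2}/(R^{3/2}(1+t)^{3/2})$ bound on good derivatives is obtained by applying the weighted embedding to the good-derivative quantity, exactly as in the last step of the proof of Theorem~\ref{thm:linear}.

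The main obstacle is the trilinear estimate itself. Unlike in Theorem~\ref{thm:linear}, where one uses the change-of-coordinates Lemma~\ref{lem:r1r2} and the asymptotic alignment Lemma~\ref{lem:goodbad} to exploit the small measure of the overlap of two translated light cones, here $\phi_0$ has no distinguished center, so no geometric interaction-volume improvement is available and one must rely entirely on the $R^{-1/2}$ smallness inherited by $\phi_0$ from its initial data. One must therefore verify, case by case, that in every trilinear pairing produced by commutation and by the null-form decomposition of Lemma~\ref{lem:nullstruct}, a $\phi_0$-factor genuinely appears \emph{together with} a good derivative on one of the two $\phi$-factors, so that the $R^{-1/2}$ gain from $\phi_0$ and the $R^{-1}$ gain from the null structure compound to give the claimed $R^{-3/2}$.
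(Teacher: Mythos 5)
Your proposal arrives at the correct exponent $R^{-3/2}$, but the mechanism you describe for obtaining it is wrong in two compensating places, and the paper's actual argument (Lemma~\ref{lem:phi0j}) is substantially simpler than what you envisage.

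First, the remainder $\phi_0$ does \emph{not} carry an $R^{-1/2}$ smallness; it carries the full $R^{-3/2}$ on its own. This is exactly inequalities~\eqref{eq:l2impphi0}--\eqref{eq:linfimpphi0} of Lemma~\ref{lem:decomposition}: one has $\Vert \p \der^{K_1}\phi_0\Vert_{L^2(\Sigma_t)} \leq C(N,d_\Pi)\,\eps\,R^{-3/2}$ together with the corresponding pointwise bounds $|\p\der^{K_2}\phi_0| \lesssim \eps\,R^{-3/2}(1+v_i)^{-1}(1+|u_i|)^{-1-\delta}$ and $|\bar\p^{(i)}\der^{K_2}\phi_0|\lesssim \eps\,R^{-3/2}(1+r_i)^{-2}(1+|u_i|)^{-\delta}$. (Indeed, $\tilde\phi_0^{(0)}$ has pointwise size $\min(R^{-2},r^{-2})$ on a region of volume $O(R^3)$, so its $H^1$ norm already scales like $R^{-3/2}$.) Your claim of $R^{-1/2}$ is off by a full power of $R$.

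Second, the ``extra $R^{-1}$ from time integration'' that you want to pair with this does not exist here. Unlike the $\psi_{ij}$ with $i,j\geq 1$, where the two compactly supported data sets only interact after $t\sim R$, the data for $\phi_0$ is not separated from the data for $\phi_i$: $\tilde\phi_0^{(0)}$ lives on the annulus $\{R/4 \leq |x-w_i|\leq R/2\}$ (and beyond), which borders $\text{supp}\,\tilde\phi_i^{(0)}\subset B(w_i,R/2)$, so the inhomogeneity in~\eqref{eq:linearizeddiffnc} need not vanish for $t\lesssim R$. The paper therefore cannot and does not restrict the time integral to $[R/10,\infty)$; the $\p_t$-energy estimate is taken over all of $[0,t]$, and the time integral converges because the null structure produces an integrable $(1+s)^{-2}$ weight, contributing only an $O(1)$ factor, not $R^{-1}$.

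Putting these together: the paper's Lemma~\ref{lem:phi0j} is a short, direct $\p_t$-energy estimate. One commutes~\eqref{eq:linearizeddiffnc} with $\der^I$ (the paper uses $\boldsymbol{K}^{(i)}$-commutation, not $\boldsymbol{K}_R$, exploiting that $\phi_0$'s improved bounds in Lemma~\ref{lem:decomposition} are phrased in $\boldsymbol{K}^{(i)}$-norms), invokes the fundamental null-form inequality Lemma~\ref{lem:nullstruct} to pair a good derivative in $L^\infty$ with a full gradient in $L^2$ at each time, and integrates. Every term has one $\phi_0$-factor (carrying $R^{-3/2}$) and one $\phi_i$-factor (carrying the $(1+s)^{-2}$-decaying $L^\infty$ bound or the bounded energy), and the result follows immediately. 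None of the machinery of Section~\ref{sub:improvedenergy} — Lemma~\ref{lem:r1r2}, Lemma~\ref{lem:goodbad}, the change to $(u_1,v_1,u_2,\varphi)$ coordinates, the alignment of good frames — is needed, nor is any case-by-case verification that a good derivative lands on a $\phi_0$ factor: the $R$-gain is already built into $\phi_0$ irrespective of how the null form is split. Your final paragraph correctly senses that the two-cone geometric tools are unavailable, but the compensating idea you invoke (time-integration gain) is the wrong one; what actually compensates is the stronger-than-you-assumed smallness of $\phi_0$.
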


\begin{theorem}[Existence of solutions to the nonlinear equation]\label{thm:nonlinear}
	There exists a constant $C > 0$ such that, for every $0 < \varepsilon < \varepsilon_0$, where $\varepsilon_0$ is as in the statement of Theorem~\ref{thm:main}, we have the following. We consider the initial value problem
	\begin{align}
            &\Box \Psi +F(d \Psi, d^2 \Psi) \nonumber \\
            &+ \sum_{\substack{i,j = 0,\ldots, N\\i\neq j}}(F(d \psi_{ij}, d^2 \Psi)+F(d \Psi, d^2 \psi_{ij})) + \sum_{\substack{g,h,i,j = 0,\ldots, N\\g \neq h, i\neq j}}F(d \psi_{gh}, d^2 \psi_{ij}) \nonumber \\  
            & +\sum_{i=0}^N\sum_{\substack{g,h = 0,\ldots, N\\g\neq h}} ( F(d \phi_i, d^2 \psi_{gh}) + F(d \psi_{gh}, d^2 \phi_i)) \nonumber \\  
            & +\sum_{i=0}^N ( F(d \phi_i, d^2 \Psi) + F(d \Psi, d^2 \phi_i)) \nonumber \\
             &\qquad = \sum_{\substack{i,j = 0,\ldots, N\\i\neq j}}(G(d \psi_{ij}, d \Psi)+G(d \Psi, d \psi_{ij}))+ \sum_{\substack{g,h,i,j = 0,\ldots, N\\g\neq h, i\neq j}}G(d \psi_{gh}, d \psi_{ij}) \label{eq:psistat} \\  
            & \qquad +\sum_{i=0}^N\sum_{\substack{g,h = 0,\ldots, N\\g\neq h}} ( G(d \phi_i, d \psi_{gh}) + G(d \psi_{gh}, d \phi_i)) \nonumber \\  
            & \qquad +\sum_{i=0}^N ( G(d \phi_i, d \Psi) + G(d \Psi, d \phi_i)),\\
            & \Psi|_{t=0} = 0, \nonumber \\
            & \p_t\Psi|_{t=0} = 0. \nonumber
	\end{align}
    Here, $\phi_i$ are as in the statement of Lemma~\ref{prop:decphii}, and $\psi_{ij}$ is as in the statement of Theorem~\ref{thm:linear} and Theorem~\ref{thm:linearnc}. Under these hypotheses, we have that equation~\eqref{eq:psistat} admits a global solution $\Psi$. Moreover, we have the following quantitative decay estimates, valid for all $I \in I^{\leq N_0}_{\boldsymbol{K}_R}$, with $N_0 \geq 7$, and $i \in \{0, \ldots, N\}$:
\begin{align*}
&\Vert \p \der^I \Psi \Vert_{L^2(\Sigma_t)} \leq \eps^{ 3 -\delta} R^{-\frac 32 + \delta} \quad  \text{ for } \quad t\geq 0,   \\
&\Vert (1+|u_i|)^{-\frac 12 - \frac \delta 2} \bar \p^{(i)} \der^I \Psi \Vert_{L^2([0,T]\times \R^3)} \leq \eps^{ 3 -\delta}  R^{-\frac 32 + \delta}  \quad  \text{ for } \quad t \geq 0, \quad i \in \{0, \ldots, N\},  \\
&|\p \der^J  \Psi(t, r, \theta,\varphi)| \leq \eps^{ 3 -\delta}  (1+t)^{-1} R^{-\frac 1 2 +\delta}  \quad  \text{ for } t \geq 0,    \\
&|\bar \p^{(i)} \der^J \Psi(t, r, \theta,\varphi)| \leq \eps^{ 3 -\delta}  (1+t)^{-\frac 32 } R^{\frac 32 + \delta}  \quad  \text{ for } \quad t \geq R^{20}, \quad i \in \{0, \ldots, N\}, \\
&|\p \der^J \Psi(t, r, \theta,\varphi)| \leq  \eps^{ 3 -\delta}  (1+v_i)^{-1}(1+|u_i|)^{-\frac 12} R^{\frac 12 + \delta}  \quad  \text{ for } \quad t \geq R^{20}, \quad i \in \{0, \ldots, N\}.
\end{align*}
\end{theorem}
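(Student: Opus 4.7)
The plan is to run a bootstrap/continuity argument on a maximal time interval $[0,T^\ast]$ on which the commuted energies and pointwise norms of $\Psi$ are controlled by (say) twice the constants claimed in the statement. Local well-posedness provides a small $T^\ast$; the task is to strictly improve every bootstrap assumption so that $T^\ast$ extends to $+\infty$.

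First I would commute equation~\eqref{eq:psistat} by $\der^I$ with $I \in I^{\leq N_0}_{\boldsymbol{K}_R}$. By Lemma~\ref{lem:nullcomm} every commuted inhomogeneity is again a null form acting on products of derivatives of $\Psi$, $\psi_{gh}$ and $\phi_i$; schematically
\begin{equation*}
    \Box \der^I \Psi = \mathcal{N}_{\Psi\Psi}^I + \mathcal{N}_{\Psi\star}^I + \mathcal{N}_{\star\star}^I,
\end{equation*}
with the $F(d\Psi,d^2\Psi)$ piece treated as a small metric perturbation of $\Box$. A $\p_t$-multiplier estimate combined with the standard $r^p$/Morawetz bulk term (made available by the null frame decomposition of Lemma~\ref{lem:nullstructpre}) controls
\begin{equation*}
    \|\p \der^I \Psi\|_{L^2(\Sigma_t)}^2 + \|(1+|u_i|)^{-\frac 12 - \frac \delta 2} \bar \p^{(i)} \der^I \Psi\|_{L^2([0,t]\times\R^3)}^2
\end{equation*}
by the spacetime pairing of the commuted right-hand side with $\p_t \der^I \Psi$.

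Second, I would bound the three classes of terms separately. For $\mathcal{N}_{\star\star}^I$, which involves only known functions, Lemma~\ref{prop:decphii} together with Theorems~\ref{thm:linear} and~\ref{thm:linearnc} supplies the required $R$- and $t$-decay; invoking the fundamental null-form inequality (Lemma~\ref{lem:nullstruct}) to place a good derivative on one factor then yields the target $\eps^{3-\delta} R^{-\frac 32 + \delta}$ after time integration. For the mixed term $\mathcal{N}_{\Psi\star}^I$, I would put the high-order factor in $L^2$ (using either the bootstrap on $\Psi$ or the existing estimates on $\phi_i$, $\psi_{gh}$) and the low-order factor in $L^\infty$, again using null structure to land the bad derivative on the factor with the better decay. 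The genuinely nonlinear term $\mathcal{N}_{\Psi\Psi}^I$ is cubically small under the bootstrap and closes directly.

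Third, the pointwise estimates come from the modified Klainerman--Sobolev inequalities of Section~\ref{sec:sobolevs}. For $|J|\leq N_0-3$, applying the $R$-weighted estimate~\eqref{goodRpointwiseest2} to $\p\der^J\Psi$ and inserting the bootstrap energy gives the claimed $(1+t)^{-1}R^{-\frac 12+\delta}$ decay for all $t\geq 0$. For the sharper good-derivative bound, the classical inequality~\eqref{eq:ksobintro} delivers $t^{-3/2}$ at the price of $R^{3/2}$ weights; the hypothesis $t\geq R^{20}$ is exactly what is needed so that these weights are harmless when integrated in time, since $\int_{R^{20}}^\infty R^\alpha t^{-3/2}\,dt \lesssim R^{\alpha-10}$, as in Section~\ref{subsub:closing}. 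Splitting the driving time integrals into the two regimes $[0,R^{20}]$ (where the logarithmic loss $\log R$ is absorbed by a spare $R^{-\delta}$) and $[R^{20},\infty)$ (where ten orders of magnitude in $R$ are gained for free) is the mechanism that strictly improves the bootstrap constants.

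The hardest piece will be $\mathcal{N}_{\Psi\star}^I$ with $\star=\phi_i$, since $\phi_i$ has only $1/t$ pointwise decay with no $R$-gain in its energy; closing the $R^{-\frac 32 + \delta}$ energy bound against such a term demands the full strength of null structure (moving the bad derivative onto $\phi_i$ and invoking the sharper $v_i$-weighted estimates of Lemma~\ref{prop:decphii}) together with the two-regime temporal split described above. A secondary but unavoidable technicality is derivative bookkeeping: commuting $\Psi$ to order $N_0$ requires $\phi_i$ and $\psi_{gh}$ at a higher order, which is why the hypothesis $n_0\geq 19$ is imposed so that Theorems~\ref{thm:linear} and~\ref{thm:linearnc} apply at the needed level. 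The quasilinear correction $F(d\Psi,d^2\Psi)$ is then absorbed via Gronwall once the bootstrap enforces $\|\p\Psi\|_{L^\infty}=o(1)$.
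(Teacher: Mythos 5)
Your proposal is correct and follows essentially the same route as the paper: a bootstrap on the $\partial_t$-energy and a $(1+|u_i|)^{-1-\delta}$-weighted spacetime bulk of good derivatives (the paper realizes the latter via an averaged characteristic-energy flux over outgoing cones, Lemma~\ref{avcharest1}, rather than a literal $r^p$ estimate, but the role is identical), followed by the $R$-weighted Klainerman--Sobolev inequalities and the $[0,R^{20}]$ versus $[R^{20},\infty)$ split. You also correctly single out the mixed $\Psi$--$\phi_i$ interaction as the term with no spare power of $R$, which is exactly where the paper has to work hardest (terms $\boldsymbol{(a_3)+(a_4)}$, $\boldsymbol{(a_{10})+(a_{11})}$, $\boldsymbol{(a_{13})}$).
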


\subsection{Proof of the main theorem given the auxiliary theorems}

\begin{proof}[Proof of Theorem~\ref{thm:main} given Theorems~\ref{thm:linear} and~\ref{thm:nonlinear}]
    We note that Lemma~\ref{prop:decphii} gives the existence of global-in-time solutions $\phi_i$ to the initial value problem~\eqref{eq:phii}. Moreover, we have that each $\psi_{i j}$ exists globally because it solves a linear wave equation. Then, by Theorem~\ref{thm:nonlinear}, we have the global-in-time existence of $\Psi$ satisfying the initial value problem~\eqref{eq:psistat}.  We then let
    \begin{equation*}
        \phi := \Psi + \sum_{i = 0}^N \phi_i + \sum_{\substack{i,j  = 0, \ldots, N\\i\neq j}} \psi_{ij},
    \end{equation*}
    and note that $\phi$ is a smooth global solution to the initial value problem:
    \begin{equation}
	\begin{aligned}
	    &\Box \phi + F (d \phi, d^2 \phi) =  G (d \phi, d \phi), \\
	    & \phi|_{t=0} = \phi^{(0)},\\
	    & \p_t \phi|_{t=0} = \phi^{(1)}.
	\end{aligned}
    \end{equation}
    The calculations showing this are carried out in more detail in Sections  \ref{sub:firstiterate} and \ref{sub:seconditerate}. Furthermore, one can clearly reconstruct the decay estimates for $\phi$ from the known decay estimates for $\Psi, \psi_{ij}, \phi_i$. This concludes the proof of the main theorem (Theorem~\ref{thm:main}).
\end{proof}

\subsection{A large data global existence result}
We are also able to prove a large data global existence result.

\begin{theorem}\label{thm:largedata}
Let $L \in \R$, $L \geq 0$, and $M \in \N$, $M \geq 0$ be given. There exist a real number $\eps_0$, an integer $N(L)$ and points $w_i \in \R^3$, $i \in \{1, \ldots, N(L)\}$, such that the following holds. There exists a collection
$$
(\phi_{A,i}^{(0)},\phi_{A,i}^{(1)})_{A \in \{1, \ldots, M\}, \, i \in \{1, \ldots, N(L)\}}
$$
of pairs of smooth vector valued functions on $\R^3$ satisfying the following properties:
\begin{align}
 &\text{supp}(\phi^{(0)}_{A,i}) \subset B(0,1), \qquad \text{supp}(\phi^{(1)}_{A,i}) \subset B(0,1),\\
 &\Vert \phi_{A,i}^{(0)}\Vert_{H^{N_1}(B(0,1))} \leq \eps_0, \qquad  \Vert \phi_{A,i}^{(1)}\Vert_{H^{N_1-1}(B(0,1))} \leq \eps_0,
 \end{align}
 for all $i \in \{1, \ldots, N(L)\}$ and all $A \in \{1, \ldots, M\}$, where $N_1 \geq 19$. Moreover,
under these conditions, the trivial solution to the initial value problem~\eqref{eq:bumps} is asymptotically stable under perturbations of the form 
$$
\phi_A^{(0)} = \sum_{i=1}^{N(L)} \phi_{A,i}^{(0)} (x - w_i), \qquad \phi_A^{(1)} = \sum_{i=1}^{N(L)} \phi_{A,i}^{(1)} (x - w_i).
$$
Moreover, we have that
$$
\Vert \phi_A^{(0)}  \Vert_{H^1(\R^3)}  \geq L, \qquad \Vert \phi_A^{(1)}  \Vert_{L^2(\R^3)}  \geq L,
$$
meaning that the initial data is allowed to have arbitrarily large $H^1$ norm.
\end{theorem}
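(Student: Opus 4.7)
\textbf{Proof plan for Theorem~\ref{thm:largedata}.} The strategy is to build the initial data by superposing $N(L)$ identical copies of a single, universally small ``template'' bump, placed at positions $R p_i$ for a carefully chosen configuration of points $p_i$ and a very large scale $R = R(L)$. The largeness of the $H^1$ norm will come from the sheer number of disjoint bumps (via orthogonality of disjoint supports), while the smallness required to invoke Theorem~\ref{thm:main} is maintained because each individual bump has small higher Sobolev norm.

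\textbf{Step 1 (template).} Fix once and for all a smooth pair $(\chi^{(0)}, \chi^{(1)})$ supported in the unit ball $B(0,1) \subset \R^3$, with $\|\chi^{(0)}\|_{H^{N_1}} + \|\chi^{(1)}\|_{H^{N_1-1}} \leq \eps_*$ and $\|\chi^{(0)}\|_{H^1} + \|\chi^{(1)}\|_{L^2} \geq \eps_*/2$, where $\eps_* > 0$ will be chosen later, independent of $L$. \textbf{Step 2 (configuration).} Set $N(L) := \lceil 8 L^2/\eps_*^2 \rceil$ and choose $N(L)$ points $p_1, \ldots, p_{N(L)} \in \R^3$ in a cubic grid so that $\min_{i\neq j}|p_i - p_j| \geq 2$; this gives $d_\Pi \lesssim N(L)^{1/3}$. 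Set $w_i := R p_i$ with $R \geq 1$ to be chosen. \textbf{Step 3 (data).} Define the initial data by $\phi_A^{(0)}(x) = \sum_{i=1}^{N(L)} \chi^{(0)}(x - w_i)$ and similarly for $\phi_A^{(1)}$. Because the supports are pairwise disjoint, the bounds \eqref{eq:boundsbumps} for each summand hold with $\eps = \eps_*$, and $\|\phi_A^{(0)}\|_{H^1}^2 = N(L) \|\chi^{(0)}\|_{H^1}^2 \geq L^2$.

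\textbf{Step 4 (apply the main theorem with large $R$).} We now invoke Theorem~\ref{thm:main} for this configuration with smallness parameter $\eps_*$ and scale $R$. The naive application fails because the threshold $\eps_0 = \eps_0(d_\Pi, N, \ldots)$ depends on $N(L)$ and $d_\Pi$, both of which grow with $L$. Following the strategy indicated in the footnote after Theorem~\ref{thm:rough2}, we take $R = R(L, N(L), d_\Pi)$ sufficiently large; this forces the onset of nonlinear interaction to occur only after time $\sim R$, at which point each of the $N(L)$ single-bump solutions has already dispersed to an amplitude proportional to an inverse power of $R$. By revisiting the proof of Theorem~\ref{thm:main} (in particular, the energy improvement of Section~\ref{sub:improvedenergy} and the Klainerman--Sobolev estimates of Section~\ref{sec:sobolevs}), any combinatorial factor in $N(L)$ or polynomial factor in $d_\Pi$ entering the constants can be absorbed by taking $R$ large: every pairwise interaction contributes an extra power of $R^{-1}$, and there are only $O(N(L)^2)$ pairs, so choosing, e.g., $R \geq (N(L) d_\Pi)^A$ for a suitable fixed exponent $A$ makes the resulting $\eps_0$ a universal constant, independent of $L$. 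We then fix $\eps_*$ once and for all below this universal threshold.

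\textbf{Main obstacle.} The heart of the argument is verifying that the smallness threshold in Theorem~\ref{thm:main} can indeed be decoupled from $N$ and $d_\Pi$ by taking $R$ large. This amounts to tracking, in the proof of the auxiliary Theorems~\ref{thm:linear}, \ref{thm:linearnc}, and \ref{thm:nonlinear}, how the constants depend on $N$ and $d_\Pi$: each trilinear interaction between two distant single-bump solutions $\phi_i, \phi_j$ yields an extra factor $R^{-1}$ (as in the estimates on $\psi_{ij}$), the total number of such pairwise interactions is $O(N^2)$, and the Klainerman--Sobolev losses in $d_\Pi$ appear only as fixed polynomial weights. Hence a polynomial-in-$(N(L), d_\Pi)$ choice of $R$ suffices to beat all such losses and to close the bootstrap argument with smallness parameter $\eps_*$ fixed. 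Once this uniformity is in hand, Theorem~\ref{thm:main} yields the global-in-time solution, completing the proof.
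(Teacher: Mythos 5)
Your overall strategy is the same as the paper's: build the data out of many identical, individually small bumps placed at $Rp_i$ with $R$ very large, note that $d_\Pi$ depends only on $N = N(L)$ and hence only on $L$, and aim to absorb the $C(N,d_\Pi)$-dependence of the constants in Theorem~\ref{thm:main} by taking $R$ large. However, your stated mechanism for why the absorption works — that ``every pairwise interaction contributes an extra power of $R^{-1}$, and there are only $O(N^2)$ pairs, so a polynomial-in-$(N,d_\Pi)$ choice of $R$ beats all losses'' — is not what actually closes the argument, and as stated it does not.

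The obstruction is not in the pairwise interaction terms (those do gain $R^{-1}$ via the trilinear estimates of Proposition~\ref{prop:energy}). It is in the terms of the bootstrap argument for $\Psi$ in Theorem~\ref{thm:nonlinear} that couple $\Psi$ to the background $\phi_i$, namely $\boldsymbol{(a_3)+(a_4)}$, $\boldsymbol{(a_{10})+(a_{11})}$, and $\boldsymbol{(a_{13})}$. These are bounded by $C(N,d_\Pi)\,\eps^{7-2\delta} R^{-3+2\delta}$ with \emph{no} extra negative power of $R$ to spare: the time integral $\int_0^t (1+s)^{-2}\,\de s$ appearing there is $O(1)$, not $O(R^{-1})$, because in general $\Psi$ is nontrivial from $t=0$ onward. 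In the generic case of Theorem~\ref{thm:main}, the paper absorbs $C(N,d_\Pi)$ into an extra power of $\eps$, which is precisely what is forbidden in Theorem~\ref{thm:largedata}, where $\eps_0$ must be fixed independently of $L$. The paper's resolution, which your proposal misses, is to exploit the special structure of the large-data configuration: since each bump is compactly supported in a unit ball, the cutoff remainder $\phi_0$ from Lemma~\ref{lem:decomposition} vanishes identically, and hence by domain of dependence $\Psi$ and every $\psi_{ij}$ are supported in $\{t \ge R/10\}$. This turns the time integral into $\int_{R/10}^t (1+s)^{-2}\,\de s = O(R^{-1})$, which supplies exactly the room in $R$ needed to absorb $C(N,d_\Pi)$ (see Remarks~\ref{rmk:improved34}, \ref{rmk:improved1011}, \ref{rmk:improved13} and Remark~\ref{rmk:improvedR}). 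Without identifying $\phi_0\equiv 0$ and its consequence for the support of $\Psi$, the bootstrap argument does not close with $\eps_0$ independent of $L$; the remaining parts of your plan (template bump, grid configuration, $H^1$ orthogonality) are fine and essentially match the paper.
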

The configuration of the data is further described in Section \ref{sec:largedata}. Moreover, we shall once again restrict ourselves to studying single equations of the form \eqref{squaseq} to simplify notation, although the same proof would establish the Theorem for quasilinear systems as in \eqref{eq:bumps}.

\section{Geometry of interacting wave fronts}\label{sec:technicaltools}

In this section, we prove three important technical tools which we will need in the rest of the paper. They are a statement concerning improved $u$-decay for solution to quasilinear wave equations satisfying the null condition and with localized initial data (Lemma~\ref{prop:decphii}), an important change of coordinates (Lemma~\ref{lem:r1r2}, which is used to formalize the fact that the measure of the interaction region of two nonlinear waves originating from sources located far away from each other is small), and finally a lemma concerning the asymptotic comparison of null derivatives with respect to two different light cones (Lemma~\ref{lem:goodbad}).

\subsection{Decay properties for localized initial data}\label{sec:onebump}
In this section, we prove the following result.

\begin{lemma}[Improved $u$-decay for solutions to quasilinear wave equations]\label{prop:decphii}
	Let $R>0$, $n \in \N$, $n \geq 0$. Let also $F,G$ be a trilinear resp.~bilinear classical null form as in Definition~\ref{def:null}. There exist a universal constant $C>0$ and a $\delta > 0$ such that the following holds. Let $\zeta$ be a smooth solution to the following initial value problem:
	\begin{equation}\label{eq:phii}
	\begin{aligned}
	&\Box \zeta + F(\de \zeta, \de^2 \zeta) = G(\de \zeta, \de \zeta),\\
	&\zeta|_{t = 0}  = \zeta^{(0)},\\
	&\p_t \zeta|_{t = 0} = \zeta^{(1)}.
	\end{aligned}
	\end{equation}
	We further suppose the following bounds on $\zeta^{(0)}$ and $\zeta^{(1)}$, for all $k$ non-negative integers, $k \leq n+7$:
	\begin{equation}\label{eq:decayinitial}
	    |\widehat{\partial}^{k} \zeta^{(0)}|\leq \varepsilon (1+r)^{-k-2}, \qquad |\widehat{\partial}^{k} \zeta^{(1)}|\leq \varepsilon (1+r)^{-k-3}.
	\end{equation}
	Here, given $f$ a smooth function on $\R^3$, recall that we defined $|\hat \partial^{k} f|$ as
	$$
	|\widehat{\partial}^{k} f| : = \sum_{I \in I^{k}_{\boldsymbol{T}_s}} |\der^I f|, \qquad \text{where} \qquad \boldsymbol{T}_s := \{\p_x, \p_y, \p_z\}.
	$$
	In these conditions, for every multi-index $I \in I^{\leq n}_{ \boldsymbol{K}}$, we have the following decay properties:
	\begin{equation}\label{eq:decphii}
	|\bar \p^{(0)} \der^{I} \zeta| \leq C \varepsilon \frac{1}{(1+r^2)(1+|u|)^\delta}, \quad |\p \der^{I} \zeta| \leq C \varepsilon \frac{1}{(1+v)(1+|u|)^{1+\delta}}.
	\end{equation}
	Here, we used the definition of good derivatives adapted to the light cone emanating from the origin in equation~\eqref{eq:goodcone}.
	
	Furthermore, we have the following uniform $L^2$ estimates, valid for all $I \in I^{\leq n+4}_{\boldsymbol{K}}$, and all $t \geq 0$:
	\begin{equation}\label{eq:l2phii}
	\Vert \p \der^I \zeta \Vert_{L^2(\Sigma_t)} \leq C \varepsilon.
	\end{equation}
\end{lemma}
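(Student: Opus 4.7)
The proof will follow the classical vector-field bootstrap strategy of Klainerman for quasilinear wave equations satisfying the null condition, enhanced by a Dafermos--Rodnianski $r^p$-weighted energy estimate to capture the $(1+|u|)^{-\delta}$ improvement. The decay hypothesis \eqref{eq:decayinitial} is calibrated so that, for any multi-index $I \in I^{\leq n+6}_{\boldsymbol{K}}$, each Killing or scaling field contributes at most a factor of $(1+r)$ at $t=0$, yielding the pointwise bound $|\partial \der^I \zeta|_{t=0} \lesssim \eps(1+r)^{-3}$ and hence $\Vert(1+r)^p \partial \der^I \zeta\Vert_{L^2(\Sigma_0)}\lesssim \eps$ for all $0<p<3$. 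All weighted and higher-order initial energies that enter the argument are therefore controlled by $C\eps$.

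Equipped with these data bounds, the plan is to run a bootstrap in two simultaneous quantities: the standard $\partial_t$-energy $\Vert\partial\der^I\zeta\Vert_{L^2(\Sigma_t)}$ for $|I|\le n+4$, and an $r^p$-weighted flux through outgoing cones $\{u=\mathrm{const}\}$ with $p=2\delta$ for $\delta>0$ small. Commuting \eqref{eq:phii} with $\der^I$ preserves the null structure by Lemma~\ref{lem:nullcomm}, so the commuted inhomogeneity is a sum of trilinear and bilinear null forms in $\der^{I_1}\zeta$, $\der^{I_2}\zeta$ of lower order. The multiplier $\partial_t$ propagates the standard energy, while the multiplier $r^p\partial_{v_0}\der^I\zeta$ integrated on spacetime slabs yields the weighted flux together with a coercive spacetime bulk $\iint r^{p-1}|\bar\partial^{(0)}\der^I\zeta|^2$, which in turn controls decay of the outgoing flux as $u\to\infty$. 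A Klainerman--Sobolev inequality applied to $\partial\der^{I'}\zeta$ for $|I'|\le n+2$ gives the preliminary bound $|\partial\der^{I'}\zeta|\lesssim \eps(1+v)^{-1}(1+|u|)^{-1/2}$; the $u$-decay of the outgoing flux, combined with a fundamental-theorem-of-calculus argument along outgoing cones in $u$, upgrades $(1+|u|)^{-1/2}$ to $(1+|u|)^{-1-\delta}$, yielding the bad-derivative bound in \eqref{eq:decphii}. The good-derivative bound $|\bar\partial^{(0)}\der^I\zeta|\lesssim \eps(1+r^2)^{-1}(1+|u|)^{-\delta}$ follows from the Klainerman--Sideris identity $(1+v)|\bar\partial f|\lesssim \sum_{|J|\le 1}|\der^J f|$ together with a Hardy inequality on outgoing cones to extract the extra factor of $r$.

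With \eqref{eq:decphii} in hand, Lemma~\ref{lem:nullstruct} shows that the commuted nonlinearity splits into products of one good derivative and one arbitrary derivative, both acting on quantities of order $\le|I|$, which gives an $L^1_t L^2_x$-integrable source term whose $L^1_t$ norm is majorized by the pointwise decay factors $(1+t)^{-1-\delta}$. This closes both bootstraps with halved constants once $\eps$ is taken small enough, and in particular establishes the uniform energy bound \eqref{eq:l2phii}. The main obstacle I expect is the interaction between the $r^p$-weighted multiplier and the quasilinear perturbation $F(d\zeta,d^2\zeta)$: the true characteristic cones are deformed by $\zeta$-dependent corrections, so $\partial_{v_0}$ is only an \emph{approximate} outgoing null generator, and the resulting error terms in the weighted energy identity must be absorbed using the bootstrap smallness of $\zeta$ and the null structure of $F$. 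This is standard in the Lindblad--Rodnianski / Yang framework but requires careful tracking of weights and derivative counts to stay within the $n+7$ derivatives of initial data prescribed by the hypotheses.
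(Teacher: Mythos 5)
Your overall plan---classical vector-field estimates plus a Dafermos--Rodnianski $r^p$-weighted hierarchy to upgrade the $u$-decay---is the same strategy the paper follows. However, there is a concrete gap in the way you set up the $r^p$ step. You propose running a single weighted multiplier $r^p\partial_{v_0}\der^I\zeta$ with $p=2\delta$ for $\delta$ small. The resulting outgoing flux carries the weight $r^{2\delta}(\partial_v(r\der^I\zeta))^2$, and the bulk carries $r^{2\delta-1}$. To pass from flux control to a pointwise bound on $r\der^I\zeta$ you need a one-dimensional Sobolev/trace inequality along the $v$-direction of the form $|r\der^I\zeta|^2 \lesssim \int r^{1+\delta'}|\partial_v(r\der^I\zeta)|^2\,\mathrm{d}v$ (together with angular Sobolev), which requires a flux weight \emph{above} $r^1$. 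With $p=2\delta<1$ the integral does not close. The paper resolves this by running the estimate at \emph{two} levels, $p=1+\delta$ and $p=\delta$: the $p=1+\delta$ flux gives uniform boundedness with a supercritical weight, the $p=\delta$ spacetime bulk gives integrability in $u$ of the $r^\delta$-weighted flux, a pigeonhole/mean-value argument converts this into decay of the $p=\delta$ flux along a dyadic sequence in $u$ (removed afterwards using the $p=\delta$ hierarchy), and finally an interpolation between the two flux levels produces a flux with weight $r^{1+\delta'}$ and $u$-decay $(1+|u|)^{-\delta'}$, from which the pointwise bound follows by the trace inequality in $v$. Your sketch does not reproduce this two-parameter hierarchy and interpolation, so the crucial claim that the flux can be made both $u$-decaying and supercritically weighted is left unjustified.

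A secondary imprecision: you describe the final upgrade as ``a fundamental-theorem-of-calculus argument along outgoing cones in $u$,'' but outgoing cones are level sets of $u$; the FTC integration is in $v$ along each outgoing cone, and the decay in $u$ comes from the pigeonhole argument applied to the $u$-integrated bulk, not from any integration along cones. Your good-derivative estimate is fine in substance (the inequality $|\bar\partial^{(0)}f|\lesssim(1+v)^{-1}\sum_{|J|\le1}|\der^J f|$ applied to the established pointwise bound on $r\der^I\zeta$ already gives the desired $(1+r^2)^{-1}(1+|u|)^{-\delta}$), though the ``Hardy inequality on outgoing cones'' you invoke for the extra power of $r$ is unnecessary. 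One more organizational point: the paper treats the classical decay and energy bounds (your bootstrap base case) as given from the literature and only bootstraps the improved $u$-decay, which keeps the derivative count under control within the $n+7$ hypothesis; your plan to run all three quantities in a single bootstrap should also be possible, but you would need to check the derivative budget carefully, since the $r^p$ estimate at top order loses derivatives and relies on already-established $L^\infty$ control of lower-order quantities.
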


\begin{proof}[Proof of Lemma~\ref{prop:decphii}] The proof follows straightforwardly from known theory, and we include it here for completeness. 
We will break up the proof in several {\bf Steps}. In {\bf Step 0}, we will recall the decay properties which follow from classical theory. Subsequently, in {\bf Step 1}, we will perform $r^p$ estimates on equation~\eqref{eq:phii} choosing $p=1+\delta$ and $p= \delta$. We will use the estimates from {\bf Step 0} to control the error terms in the RHS of such estimates. Having done that, in {\bf Step 2}, we will proceed to prove the first estimate in display~\eqref{eq:decphii}. This will involve a mean value theorem argument (which will give decay along a dyadic sequence) plus eliminating the restriction to the dyadic sequence. Finally, in {\bf Step 3}, we will commute the equation with the vector field $r\p_v$, in order to obtain the second claim in display~\eqref{eq:decphii}.

{\bf Step 0}. We recall that, from classical theory, the global solution $\zeta$ to the initial value problem~\eqref{eq:phii} satisfies the following estimates, for every multi-index $I \in I^{\leq n+4}_{\boldsymbol{K}}$:
	\begin{equation}\label{eq:decclassical}
	|\bar \p \der^{I} \zeta| \leq C \varepsilon \frac{1}{(1+v)^{\frac 32}}, \quad |\p \der^{I} \zeta| \leq C \varepsilon \frac{1}{(1+v)(1+|u|)^{\frac 12}}, \quad |\der^{I} \zeta| \leq C \varepsilon \frac{(1+|u|)^{\frac 12}}{(1+v)}.
	\end{equation}
We also have, for every multi-index $J \in I^{\leq n+3}_{\boldsymbol{K}}$:
	\begin{equation}\label{eq:decclassical2}
	|\bar \p \bar \p \der^{J} \zeta| \leq C \varepsilon \frac{1}{(1+v)^{\frac 52}}, \qquad | \p \bar \p \der^{J} \zeta| \leq C \varepsilon \frac{1}{(1+v)^{\frac 32}(1+|u|)}
	\end{equation}
Furthermore, for every multi-index $I \in I^{\leq n+6}_{\boldsymbol{K}}$, we have the characteristic energy bounds, valid for all $u_1 \in \R$:
\begin{equation}\label{eq:nullenergy}
    \int_{\{u = u_1\} \cap \{t \geq 0\}} |\bar \p \der^I \zeta|^2 r^2 \de v \de \omega \leq C \eps^2.
\end{equation}

{\bf Step 1}. We now note that $r \zeta$ satisfies the following equation:
\begin{equation}\label{eq:zetar}
    \p_u \p_v (r \zeta) -\slashed{\Delta} (r \zeta) = r F(d \zeta, d^2 \zeta) - r G(d\zeta, d\zeta).
\end{equation}
After commutation with $\der^I$, with $I \in I^{\leq n+1}_{\boldsymbol{K}}$, we have:
\begin{equation}\label{eq:zetarcomm}
    \p_u \p_v (r \der^I \zeta) -\slashed{\Delta} (r \der^I \zeta) = \sum_{H + K \subset I} \Big( r F_{HK}(d \der^H \zeta, d^2 \der^K \zeta) - r G(d \der^H \zeta, d \der^K \zeta) \Big),
\end{equation}
where $F_{HK}$, $G_{HK}$ are a collection of trilinear (resp.~bilinear) classical null forms, in the sense of Definition~\ref{def:null}.

We let let $\chi$ to be a smooth cutoff function such that 
$$
\chi(v) =\frac{1}{1+av^2},
$$
for $v \geq 0$ (we think of $a$ as being a positive small parameter). We then multiply equation~\eqref{eq:zetar} by $\chi(v) r^p \p_v (r\der^I \zeta)$ to get
\begin{equation*}
\begin{aligned}
    &\frac 12\p_u (\chi(v) r^{p} (\p_v (r\der^I \zeta))^2)+\frac p 2 \chi(v) r^{p-1} (\p_v (r\der^I \zeta))^2 + \frac 12 \p_v  (\chi(v) r^{p} |\slashed \nabla (r\der^I \zeta)|^2 )- \chi(v)\frac{p-2}2 r^{p-1}|\slashed \nabla (r\der^I \zeta)|^2\\
    &\qquad - \frac{\chi'(v)}{2} r^{p}|\slashed \nabla (r\der^I \zeta)|^2 \equiv_{\mathbb{S}^2} \sum_{H + K \subset I}(r F_{HK}(d \der^H \zeta, d^2 \der^K \zeta) - r G_{HK}(d \der^H\zeta, d\der^K\zeta))\chi(v) r^p \p_v (r\der^I \zeta).
\end{aligned}
\end{equation*}
Here, the symbol $\equiv_{\mathbb{S}^2}$ denotes that equality is achieved upon going to polar coordinates $(t, r, \omega)$ and integrating over $\omega \in \mathbb{S}^2$. We now integrate the previous display on the region $\mathcal{R}_{0}^{u_2} := \{(u,v): u \leq u_1, \frac 12 (u+v) \geq 0\}$ with respect to the form $\de u \de v$. We obtain, discarding the positive boundary terms, bounding the initial data term, and assuming that $p = 1+ \delta$ (this gets rid of the boundary term at $r=0$):
\begin{equation*}
\begin{aligned}
    &\int_{\{u = u_1\}\cap \{t \geq 0\}} \frac \chi 2 r^{1+\delta} (\p_v(r\der^I \zeta))^2 \de u \de \omega + \int_{\mathcal{R}_{0}^{u_1}} \frac {1+\delta} 2 \chi\, r^{\delta}(\p_v (r\der^I \zeta))^2 \de u \de v \de \omega \\
    &+\int_{\mathcal{R}_{0}^{u_1}} \frac {1-\delta} 2 \chi\, r^{\delta}|\slashed{\nabla} (r\der^I \zeta)|^2 \de u \de v \de \omega\\ 
    &\qquad \leq C \eps^2 + \sum_{H + K \subset I}\Big| \int_{\mathcal{R}_{0}^{u_1}} ( F_{HK}(d \der^H \zeta, d^2 \der^K \zeta) - G_{HK}(d \der^H\zeta, d \der^K\zeta))\, \chi \, \p_v \der^I \zeta \, r^{2+\delta} \de u \de v \de \omega\Big|.
\end{aligned}
\end{equation*}
Let us now sum the previous inequality over all $I \in I^{\leq n+4}_{\boldsymbol{K}}$ and take the limit as $a \to 0$. We get, letting $\eta_{I} := r \der^I \zeta$,
\begin{equation}\label{eq:rp1pdeltasum}
\begin{aligned}
    &\sum_{I \in I^{\leq n+4}_{\boldsymbol{K}}}\int_{\{u = u_1\}\cap \{t \geq 0\}} \frac 1 2 r^{1+\delta} (\p_v \eta_{I})^2 \de u \de \omega + \sum_{I \in I^{\leq n+4}_{\boldsymbol{K}}}\int_{\mathcal{R}_{0}^{u_1}} \frac {1+\delta} 2 \, r^{\delta}(\p_v \eta_{I})^2 \de u \de v \de \omega \\
    &+\sum_{I \in I^{\leq n+4}_{\boldsymbol{K}}}\int_{\mathcal{R}_{0}^{u_1}} \frac {1-\delta} 2 \, r^{\delta}|\slashed{\nabla} \eta_{I}|^2 \de u \de v \de \omega\\ 
    &\qquad \leq C \eps^2 + \lim_{a\to 0}\sum_{I \in I^{\leq n+4}_{\boldsymbol{K}}}\sum_{H + K \subset I}\Big| \int_{\mathcal{R}_{0}^{u_1}} ( F_{HK}(d \der^H \zeta, d^2 \der^K \zeta) - G_{HK}(d \der^H\zeta, d \der^K\zeta))\, \chi \, \p_v \eta_I \, r^{2+\delta} \de u \de v \de \omega\Big|.
\end{aligned}
\end{equation}
Let us now bound the terms on the RHS of equation~\eqref{eq:rp1pdeltasum}.
Let us start with the terms containing $G_{HK}$:
\begin{equation*}
    \begin{aligned}
        &\int_{\mathcal{R}_{0}^{u_1}} \big| G_{HK}(d \der^H\zeta, d  \der^K \zeta)\big|r^{2+\delta} |\p_v \eta_I| \de u \de v \de \omega\\
        &\quad \leq C \int_{\mathcal{R}_{0}^{u_1}} r^{1+\delta}|\bar \p^{(0)} (r \der^H \zeta)| \, |\p \der^K \zeta|\, |\p_v \eta_I| \de u \de v \de \omega+ C  \int_{\mathcal{R}_{0}^{u_1}} r^{1+\delta}|\der^H \zeta|\,| \p \der^K \zeta| \, |\p_v \eta_I| \de u \de v \de  \omega\\
        &\quad \leq C \eps \int_{\mathcal{R}_{0}^{u_1}} r^{\delta}|\bar \p^{(0)} \eta_H|\, |\p_v \eta_I| \de u \de v \de \omega
        \\
        & \qquad + C \Big( \int_{\mathcal{R}_{0}^{u_1}} r^{\delta}|\p_v \eta_I|^2 \de u \de v \de  \omega \Big)^{\frac 12}\Big( \int_{\mathcal{R}_{0}^{u_1}} r^{2+\delta}|\der^H \zeta|^2 \, | \p \der^K \zeta|^2\,  \de u \de v \de  \omega \Big)^{\frac 12}
    \end{aligned}
\end{equation*}
The first term in the last display can be clearly absorbed in the LHS of~\eqref{eq:rp1pdeltasum}, upon an application of the Cauchy--Schwarz inequality. Another application of the Cauchy--Schwarz inequality reduces estimating the second term in the last display to the following expression:
\begin{equation*}
    \begin{aligned}
        & \int_{\mathcal{R}_{0}^{u_1}} r^{2+\delta}|\der^H \zeta|^2 \, | \p \der^K \zeta|^2\,  \de u \de v \de  \omega  \leq C \eps^2 \int_{\mathcal{R}_{0}^{u_1}} r^{\delta}|\der^H \zeta|^2 \,  \de u \de v \de  \omega = C \eps^2 \int_{\mathcal{R}_{0}^{u_1}} r^{\delta-2}|r\der^H \zeta|^2 \,  \de u \de v \de  \omega\\
         & \quad \leq C \varepsilon^2 \int_{\mathcal{R}_{0}^{u_1}} r^{\delta}|\p_v (r \der^H\eta)|^2 \,  \de u \de v \de  \omega = C \varepsilon^2 \int_{\mathcal{R}_{0}^{u_1}} r^{\delta}|\p_v \eta_H|^2 \,  \de u \de v \de  \omega,
    \end{aligned}
\end{equation*}
which can be again absorbed in the LHS of~\eqref{eq:rp1pdeltasum}. Note that we used a Hardy-type inequality to obtain this bound.

\begin{remark}
    Note that we have to estimate at most $n+5$ derivatives of $\zeta$ in $L^\infty$. This is why we require $n+7$ derivatives of initial data, and then note that the estimates arising from classical theory (the estimates is~\eqref{eq:decclassical}) ``lose'' two derivatives, which means that we control $n+5$ derivatives in $L^\infty$ of $\zeta$.
\end{remark}

We now turn to estimating the term
\begin{equation}
    \Big| \int_{\mathcal{R}_{0}^{u_1}}  F_{HK}(d \der^H\zeta, d^2\der^K \zeta)\, \chi \, \p_v \eta_I \, r^{2+\delta} \de u \de v \de \omega\Big|.
\end{equation}
We note that, if $|K| < n+4$, we can estimate this term in exactly the same way we estimated the terms in $G_{HK}$. This is because we are allowed to absorb the high-order derivative terms in the LHS. On the other hand, if $H = 0$ (the empty multi-index), $K = I$, and $|I|=n+4$, a different argument is needed. We now focus on that case. Let $\tilde F := F_{0I}$. First, we estimate
\begin{equation*}
\begin{aligned}
    &\Big| \int_{\mathcal{R}_{0}^{u_1}}  \tilde F(d \zeta, d^2 \der^I \zeta)\, \chi \, \p_v \eta_I \, r^{2+\delta} \de u \de v \de \omega\Big| \\
    &\qquad \leq \Big| \int_{\mathcal{R}_{0}^{u_1}}  \tilde F(d \zeta, d^2 (r\der^I \zeta))\, \chi \, \p_v \eta_I \, r^{1+\delta} \de u \de v \de \omega\Big| \\
    &\qquad + 2 \Big| \int_{\mathcal{R}_{0}^{u_1}}  \tilde F^{\alpha\beta\gamma} \p_\alpha \zeta \, \p_\beta r \, \p_\gamma \der^I \zeta\, \chi \, \p_v \eta_I \, r^{1+\delta} \de u \de v \de \omega\Big|.
\end{aligned}
\end{equation*}
We then note that the following identity holds true:
\begin{equation*}
\begin{aligned}
&\p_v \eta_I \  \tilde F^{\alpha\beta\gamma} \ \p_\alpha \zeta  \ \p_\beta \p_\gamma  \eta_I\\
&= \p_\beta \big(\tilde F^{\alpha\beta\gamma} \  \p_\alpha \zeta \ \p_v \eta_I  \  \p_\gamma  \eta_I\big) -  \tilde F^{\alpha\beta\gamma} \  \p_\beta \p_\alpha \zeta \ \p_v \eta_I  \  \p_\gamma \eta_I\\
&\quad -\frac 12 \p_v \big(\p_\alpha \zeta \ \tilde F^{\alpha \beta\gamma} \ \p_\beta  \eta_I \p_\gamma \eta_I\big) + \frac 12  \p_v \p_\alpha \zeta\ \tilde F^{\alpha \beta\gamma} \ \p_\beta  \eta_I \p_\gamma \eta_I.
\end{aligned}
\end{equation*}

In view of this, we obtain, integrating by parts the first and last term arising from the previous display, since $|\chi'(v)| = \frac{2av}{(1+av^2)^2} \leq \frac{2av}{2av^2} \leq \frac 1 r,$
\begin{equation}\label{eq:masterf}
\begin{aligned}
    &\Big|\int_{\mathcal{R}_{0}^{u_1}}  \tilde F(d \zeta, d^2 \eta_I)\, \chi \, \p_v \eta_I \, r^{1+\delta} \de u \de v \de \omega \Big|  \\
    &\quad \leq \Big|\int_{\{u=u_1\}\cap \{t \geq 0\}} \tilde F^{\alpha u \gamma} \, \p_\alpha \zeta \, \p_v \eta_I \, \p_\gamma \eta_I \, \chi  \, r^{1+\delta}\de v \de \omega\Big| \\
    &\qquad +5\Big| \int_{\mathcal{R}_{0}^{u_1}} \tilde F^{\alpha v\gamma} \  \p_\alpha \zeta \ \p_v \eta_I  \  \p_\gamma \eta_I \, r^{\delta} \,  \de u \de v \de \omega\Big|\\
    &\qquad +5\Big| \int_{\mathcal{R}_{0}^{u_1}} \tilde F^{\alpha u\gamma} \  \p_\alpha \zeta \ \p_v \eta_I  \  \p_\gamma \eta_I \, r^{\delta} \,  \de u \de v \de \omega\Big|\\
    &\qquad +\Big|\int_{\mathcal{R}_{0}^{u_1}} \tilde F^{\alpha\beta\gamma} \  \p_\beta \p_\alpha \zeta \ \p_v \eta_I \ \p_\gamma \eta_I r^{1+\delta} \chi \,  \de u \de v \de \omega \Big|\\
    &\qquad + \frac 32 \Big|  \int_{\mathcal{R}_{0}^{u_1}} \tilde F^{\alpha \beta\gamma}  \p_\alpha \zeta \ \p_\beta  \eta_I \  \p_\gamma \eta_I \, r^\delta \, \de u \de v\Big| \\
    &\qquad + \frac 12 \Big| \int_{\mathcal{R}_{0}^{u_1}} \tilde F^{\alpha \beta\gamma} \ \p_v \p_\alpha \zeta \ \p_\beta  \eta_I \p_\gamma \eta_I \, r^{1+\delta} \, \chi \, \de u \de v \de \omega\Big| + C \eps^2.
\end{aligned}
\end{equation}
We now estimate the terms:
\begin{equation*}
    \begin{aligned}
        &\Big|\int_{\{u=u_1\}\cap \{t \geq 0\}} \tilde F^{\alpha u \gamma} \, \p_\alpha \zeta \, \p_v \eta_I \, \p_\gamma \eta_I \, \chi  \, r^{1+\delta}\de v \de \omega\Big|\\
        &\quad \leq \frac 1 {20} \int_{\{u=u_1\}\cap \{t \geq 0\}} (\p_v \eta_I)^2 r^{1+\delta}\de v \de \omega + C\int_{\{u=u_1\}\cap \{t \geq 0\}} |\bar \p \eta_I|^2|\p \zeta|^2  r^{1+\delta}\de v \de \omega\\
        &\qquad + C\int_{\{u=u_1\}\cap \{t \geq 0\}} | \p \eta_I|^2|\bar \p \zeta|^2  r^{1+\delta}\de v \de \omega.
    \end{aligned}
\end{equation*}
Now, 
\begin{equation*}
\begin{aligned}
    &\int_{\{u=u_1\}\cap \{t \geq 0\}} |\bar \p \eta_I|^2|\p \zeta|^2  r^{1+\delta}\de v \de \omega \\
    &\qquad \leq C\eps^2 \int_{\{u=u_1\}\cap \{t \geq 0\}} | \p_v \eta_I|^2\de v \de \omega +C\int_{\{u=u_1\}\cap \{t \geq 0\}} |\slashed \nabla \der^I \zeta|^2|\p \zeta|^2  r^{3+\delta}\de v \de \omega\\
    &\qquad \leq C\eps^2  \int_{\{u=u_1\}\cap \{t \geq 0\}} r^{1+\delta}| \p_v \eta_I|^2\de v \de \omega +C \varepsilon^4.
\end{aligned}
\end{equation*}
Note that in the previous inequality we used bound~\eqref{eq:nullenergy} from the classical theory to bound the angular terms, and in addition we estimated the first term with a higher power of $r$ (the error in this procedure is just at a finite $r$-value, and as such we can control it again by the characteristic energy).

Furthermore,
\begin{align*}
        &\int_{\{u=u_1\}\cap \{t \geq 0\}} | \p \eta_I|^2|\bar \p \zeta|^2  r^{1+\delta}\de v \de \omega\\
        &\quad \leq C\int_{\{u=u_1\}\cap \{t \geq 0\}} |\bar \p \eta_I|^2|\bar \p \zeta|^2  r^{1+\delta}\de v \de \omega  +C\int_{\{u=u_1\}\cap \{t \geq 0\}} | \p_u \eta_I|^2|\bar \p \zeta|^2  r^{1+\delta}\de v \de \omega\\
        &\quad \leq  C\eps^2 \int_{\{u=u_1\}\cap \{t \geq 0\}} | \p_v \eta_I|^2\de v \de \omega +C \varepsilon^4+C\eps^2\int_{\{u=u_1\}\cap \{t \geq 0\}} |\bar \p \zeta|^2 r^\delta \de v \de \omega \\
        &\qquad +C\int_{\{u=u_1\}\cap \{t \geq 0\}} | \p\der^I \zeta|^2|\bar \p \zeta|^2  r^{3+\delta}\de v \de \omega\\
        &\quad \leq  C\eps^2 \int_{\{u=u_1\}\cap \{t \geq 0\}} r^{1+\delta}| \p_v \eta_I|^2\de v \de \omega +C\eps^2\int_{\{u=u_1\}\cap \{t \geq 0\}} |\p_v \eta_I|^2 r^\delta \de v \de \omega + C \eps^4\\
        &\quad \leq  C\eps^2 \int_{\{u=u_1\}\cap \{t \geq 0\}} r^{1+\delta}| \p_v \eta_I|^2\de v \de \omega + C \eps^4.
\end{align*}
Here, we used the estimates in {\bf Step 0}. This concludes the inequality for the boundary term at $u = u_1$.

We then proceed to estimate the second and third terms in display~\eqref{eq:masterf}. We obtain
\begin{equation}\label{eq:firsttermf}
\begin{aligned}
    &\Big| \int_{\mathcal{R}_{0}^{u_1}} \tilde F^{\alpha v\gamma} \  \p_\alpha \zeta \ \p_v \eta_I  \  \p_\gamma \eta_I \, r^{\delta} \, \de u \de v \de \omega\Big|+\Big| \int_{\mathcal{R}_{0}^{u_1}} \tilde F^{\alpha u\gamma} \  \p_\alpha \zeta \ \p_v \eta_I  \  \p_\gamma \eta_I \, r^{\delta} \, \de u \de v \de \omega\Big|\\
    &\quad \leq \frac 1{20} \int_{\mathcal{R}_{0}^{u_1}}  (\p_v \eta_I)^2    r^{\delta}  \de u \de v \de \omega +C \int_{\mathcal{R}_{0}^{u_1}}  |\p \eta_I|^2 | \p \zeta|^2  r^{\delta}  \de u \de v \de \omega\\
    &\quad \leq \frac 1{20} \int_{\mathcal{R}_{0}^{u_1}}  (\p_v \eta_I)^2    r^{\delta}  \de u \de v \de \omega  +C \int_{\mathcal{R}_{0}^{u_1}}  |\p \der^I \zeta|^2|\p \zeta|^2  r^{2+\delta}  \de u \de v \de \omega \\
    &\qquad +C \int_{\mathcal{R}_{0}^{u_1}}|\der^I \zeta|^2  |\p \zeta|^2  r^{\delta}  \de u \de v \de \omega\\
    & \quad \leq \frac 1{20} \int_{\mathcal{R}_{0}^{u_1}}  (\p_v \eta_I)^2    r^{\delta}  \de u \de v \de \omega +C \eps^4 \int_{\mathcal{R}_{0}^{u_1}}\frac{1}{(1+|u|)^2(1+v)^4} r^{2+\delta}  \de u \de v \de \omega\\
    &\quad +C \eps^4 \int_{\mathcal{R}_{0}^{u_1}}\frac{(1+|u|)}{(1+v)^2} (1+v)^{-2} (1+|u|)^{-1} r^{\delta}  \de u \de v \de \omega\\
    & \quad \leq \frac 1{20} \int_{\mathcal{R}_{0}^{u_1}}  (\p_v \eta_I)^2    r^{\delta}  \de u \de v \de \omega  +C \eps^4.
\end{aligned}
\end{equation}
Here, again, we used the pointwise estimates in {\bf Step 0}, see display~\eqref{eq:decclassical}.

Similarly, for the fourth term,
\begin{align}
    &\Big|\int_{\mathcal{R}_{0}^{u_1}} \tilde F^{\alpha\beta\gamma} \  \p_\beta \p_\alpha \zeta \ \p_v \eta_I \ \p_\gamma \eta_I r^{1+\delta} \chi \de u \de v \de \omega\Big| \nonumber\\
    &\quad \leq \frac 1{20} \int_{\mathcal{R}_{0}^{u_1}}  (\p_v \eta_I)^2    r^{\delta}  \de u \de v \de \omega +C \int_{\mathcal{R}_{0}^{u_1}}  |\p \bar \p \zeta|^2 | \p \eta_I|^2  r^{2+\delta}  \de u \de v \de \omega \nonumber\\
    &\quad +C \int_{\mathcal{R}_{0}^{u_1}}  |\p \p \zeta|^2 | \bar \p \eta_I|^2  r^{2+\delta}  \de u \de v \de \omega \nonumber\\
    &\quad \leq \frac 1{10} \int_{\mathcal{R}_{0}^{u_1}}  ((\p_v \eta_I)^2 +|\slashed{\nabla}\eta_I|^2)   r^{\delta}  \de u \de v \de \omega +C \int_{\mathcal{R}_{0}^{u_1}}  | \p_u \eta_I|^2 | \p \bar \p \zeta|^2  r^{2+\delta}  \de u \de v \de \omega \nonumber\\ 
    &\quad \leq \frac 1{10} \int_{\mathcal{R}_{0}^{u_1}}  ((\p_v \eta_I)^2 +|\slashed{\nabla}\eta_I|^2)   r^{\delta}  \de u \de v \de \omega + C \int_{\mathcal{R}_{0}^{u_1}}  | \der^I \zeta|^2 | \p\bar \p \zeta|^2  r^{2+\delta}  \de u \de v \de \omega \nonumber\\
    &\quad + C \int_{\mathcal{R}_{0}^{u_1}}  | \p \der^I \zeta|^2 |\p \bar \p \zeta|^2  r^{4+\delta}  \de u \de v \de \omega  \nonumber\\
    &\quad \leq \frac 1{10} \int_{\mathcal{R}_{0}^{u_1}}  ((\p_v \eta_I)^2 +|\slashed{\nabla}\eta_I|^2)   r^{\delta}  \de u \de v \de \omega + C \eps^4 \int_{\mathcal{R}_{0}^{u_1}} \frac{1+|u|}{(1+v)^2} \frac{1}{(1+|u|)^2(1+v)^{3}} r^{2+\delta} \de u \de v \de \omega \nonumber\\
    &\qquad + C \int_{\mathcal{R}_{0}^{u_1}}  | \p \der^I \zeta|^2 |\bar \p( r\p \zeta)|^2  r^{2+\delta}  \de u \de v \de \omega + C \int_{\mathcal{R}_{0}^{u_1}} | \p \der^I \zeta|^2 | \p \zeta|^2 r^{2+\delta}  \de u \de v \de \omega \nonumber\\
    &\quad \leq \frac 1{10} \int_{\mathcal{R}_{0}^{u_1}}  ((\p_v \eta_I)^2 +|\slashed{\nabla}\eta_I|^2)   r^{\delta}  \de u \de v \de \omega + C \eps^2 \int_{\mathcal{R}_{0}^{u_1}} |\bar \p (r\p \zeta)|^2  r^{\delta}  \de u \de v \de \omega+ C \eps^4 \nonumber \\
    &\quad \leq \frac 1 9 \sum_{I \in I^{\leq n+4}_{\boldsymbol{K}}}\int_{\mathcal{R}_{0}^{u_1}}  ((\p_v \eta_I)^2 +|\slashed{\nabla}\eta_I|^2)  r^{\delta}  \de u \de v \de \omega+ C \eps^4 \label{eq:secondtermf} 
\end{align}

Now, for the fifth term in the RHS of display~\eqref{eq:masterf}, we have, by analogous estimates as those in display~\eqref{eq:firsttermf},
\begin{equation}
    \begin{aligned}
        \Big|  \int_{\mathcal{R}_{0}^{u_1}} \tilde F^{\alpha \beta\gamma}  \p_\alpha \zeta \ \p_\beta  \eta_I \  \p_\gamma \eta_I \, r^\delta \, \de u \de v\de \omega\Big| \leq \frac 1{20} \int_{\mathcal{R}_{0}^{u_1}}  (\p_v \eta_I)^2    r^{\delta}  \de u \de v \de \omega  +C \eps^4. 
    \end{aligned}
\end{equation}
Finally, for the sixth term in the RHS of display~\eqref{eq:firsttermf},
\begin{equation*}
    \begin{aligned}
        &\Big| \int_{\mathcal{R}_{0}^{u_1}} \tilde F^{\alpha \beta\gamma} \ \p_v \p_\alpha \zeta \ \p_\beta  \eta_I \p_\gamma \eta_I \, r^{1+\delta} \, \chi \, \de u \de v \de \omega\Big| \\ &\quad \leq  C \int_{\mathcal{R}_{0}^{u_1}}  |\bar \p \bar \p \zeta | \ |\p  \eta_I|^2  \, r^{1+\delta} \, \de u \de v \de \omega + C  \int_{\mathcal{R}_{0}^{u_1}} |\p_v \p \zeta| \ |\bar \p  \eta_I| |\p \eta_I| \, r^{1+\delta} \, \de u \de v \de \omega.
    \end{aligned}
\end{equation*}
The second term in the last display is estimated exactly as in display~\eqref{eq:secondtermf}. Regarding the first term, we have, using estimate~\eqref{eq:secondtermf} in {\bf Step 0},
\begin{equation*}
    \begin{aligned}
        &\int_{\mathcal{R}_{0}^{u_1}}  |\bar \p \bar \p \zeta | \ |\p  \eta_I|^2  \, r^{1+\delta} \, \de u \de v \de \omega \\
        & \quad \leq C  \int_{\mathcal{R}_{0}^{u_1}} |\bar \p \bar \p \zeta | \ |\p \der^I \zeta|^2  \, r^{3+\delta} \, \de u \de v \de \omega  +  C  \int_{\mathcal{R}_{0}^{u_1}} |\bar \p \bar \p \zeta | \ |\der^I \zeta|^2  \, r^{1+\delta} \, \de u \de v \de\omega\\
        &\quad \leq C \eps^4  \int_{\mathcal{R}_{0}^{u_1}} (1+v)^{-\frac 52} (1+v)^{-2}(1+|u|)^{-1} r^{3+\delta} \, \de u \de v \de \omega \\
        & \qquad +  C \eps^4  \int_{\mathcal{R}_{0}^{u_1}} (1+v)^{-\frac 52} (1+v)^{-2}(1+|u|)  \, r^{1+\delta} \, \de u \de v  \de\omega \leq C \eps^4.
    \end{aligned}
\end{equation*}

All in all, we have, combining the estimates for the terms on the RHS of~\eqref{eq:masterf}, for all $u_1 \in \R$, and absorbing terms in the LHS accordingly,
\begin{align*}
    &\sum_{I \in I^{\leq n+4}_{\boldsymbol{K}}}\int_{\{u = u_1\}\cap \{t \geq 0\}} \frac 1 2 r^{1+\delta} (\p_v \eta_{I})^2 \de u \de \omega + \sum_{I \in I^{\leq n+4}_{\boldsymbol{K}}}\int_{\mathcal{R}_{0}^{u_1}} \frac {1+\delta} 2 \, r^{\delta}(\p_v \eta_{I})^2 \de u \de v \de \omega \\
    &\qquad +\sum_{I \in I^{\leq n+4}_{\boldsymbol{K}}}\int_{\mathcal{R}_{0}^{u_1}} \frac {1-\delta} 2 \, r^{\delta}|\slashed{\nabla} \eta_{I}|^2 \de u \de v \de \omega \leq C \varepsilon^2.
\end{align*}

By a completely analogous reasoning, we have, setting $p = \delta$, and integrating now on the region $\mathfrak{D}_{u_1}^{u_2} := \{(u,v): u_1 \leq u \leq u_2, u+v \geq 0 \}$, the corresponding estimate:
\begin{align}
    &\sum_{I \in I^{\leq n+4}_{\boldsymbol{K}}}\int_{\{u = u_2\}\cap \{t \geq 0\}} \frac 12 r^{\delta} (\p_v \eta_I)^2 \de u \de \omega + \sum_{I \in I^{\leq n+4}_{\boldsymbol{K}}}\int_{\mathfrak{D}_{u_1}^{u_2}} \frac {\delta} 2 r^{\delta-1}(\p_v \eta_I)^2 \de u \de v \de \omega \nonumber\\
    &+\sum_{I \in I^{\leq n+4}_{\boldsymbol{K}}}\int_{\mathfrak{D}_{u_1}^{u_2}} \frac {2-\delta} 2 r^{\delta-1}|\slashed{\nabla} \eta_I|^2 \de u \de v \de \omega \nonumber\\ 
    &\qquad \leq \sum_{I \in I^{\leq n+4}_{\boldsymbol{K}}} \int_{\{u = u_1\}\cap \{t \geq 0\}} \frac 12 r^{\delta} (\p_v \eta_I)^2 \de u \de \omega \label{eq:rpdelta}\\
    &\qquad +\sum_{I \in I^{\leq n+4}_{\boldsymbol{K}}} \sum_{H+K \subset I} \int_{\mathfrak{D}_{u_1}^{u_2}} \big|r F_{HK}(d \der^H \zeta, d^2 \der^K  \zeta) - r G_{HK}(d \der^H\zeta, d\der^K\zeta)\big|r^{\delta} |\p_v \eta_I| \de u \de v \de \omega \nonumber\\
    &\qquad + C\sum_{I \in I^{\leq n+4}_{\boldsymbol{K}}} \int_{\{t=0\} \cap \{u_1 \leq u \leq u_2 \}} r^{\delta} ((\p_v \eta_I)^2 + |\slashed \nabla \eta_I|^2) \de r \de \omega. \nonumber
\end{align}
We deal with the error terms on the RHS of the previous display exactly as in the case $p = 1+\delta$, obtaining the inequality:
\begin{equation}\label{eq:rpdeltafin}
\begin{aligned}
    &\sum_{I \in I^{\leq n+4}_{\boldsymbol{K}}} \int_{\{u = u_2\}\cap \{t \geq 0\}} \frac 12 r^{\delta} (\p_v \eta_I)^2 \de u \de \omega + \sum_{I \in I^{\leq n+4}_{\boldsymbol{K}}}\int_{\mathfrak{D}_{u_1}^{u_2}} \frac {\delta} 2 r^{\delta-1}(\p_v \eta_I)^2 \de u \de v \de \omega\\
    &\qquad +\sum_{I \in I^{\leq n+4}_{\boldsymbol{K}}}\int_{\mathfrak{D}_{u_1}^{u_2}} \frac {2-\delta} 2 r^{\delta-1}|\slashed{\nabla} \eta_I|^2 \de u \de v \de \omega\\ 
    &\qquad \leq \sum_{I \in I^{\leq n+4}_{\boldsymbol{K}}}\int_{\{u = u_1\}\cap \{t \geq 0\}} \frac 12 r^{\delta} (\p_v \eta_I)^2 \de u \de \omega + C\sum_{I \in I^{\leq n+4}_{\boldsymbol{K}}}\int_{\{t=0\} \cap \{u_1 \leq -r \leq u_2 \}} r^{\delta} ((\p_v \eta)^2 + |\slashed \nabla \eta_I|^2) \de r \de \omega\\
    &\qquad + C \eps^2(1+|u_1|)^{-1}.
\end{aligned}
\end{equation}
Note that these estimates are valid also after commuting with the Laplacian on the conformal sphere $\mathbb{S}^2$:
\begin{equation*}
    \slashed \Delta_{\mathbb{S}^2} f :=  \frac 1 {\sin \theta} \p_\theta (\sin \theta \, \p_\theta f ) + \p^2_\varphi f.
\end{equation*}

{\bf Step 2}. From {\bf Step 1}, commuting with the Laplacian on the conformal sphere $\mathbb{S}^2$, we have the following estimates, for $0 \leq u_1 \leq u_2$:

\begin{align}
    &\sum_{I \in I^{\leq n+2}_{\boldsymbol{K}}} \int_{\{u = u_1\}\cap \{t \geq 0\}} \frac 12 r^{1+\delta} ((\p_v \eta_I)^2 + (\p_v \slashed \Delta_{\mathbb{S}^2}\eta_I)^2)\de u \de \omega  \leq C \varepsilon^2, \label{eq:fordyadic1}\\
    &\sum_{I \in I^{\leq n+2}_{\boldsymbol{K}}} \int_{\mathcal{R}_{0}^{\infty}} r^{\delta}((\p_v \eta_I)^2 +(\p_v \slashed \Delta_{\mathbb{S}^2}\eta_I)^2 +|\slashed{\nabla} \eta_I|^2+|\slashed{\nabla} \slashed{\Delta}_{\mathbb{S}^2}\eta_I|^2) \de u \de v \de \omega \leq C \varepsilon^2,\label{eq:fordyadic2}\\
    &\sum_{I \in I^{\leq n+2}_{\boldsymbol{K}}} \int_{\{u = u_2\}} r^{\delta} ((\p_v \eta_I)^2 + (\p_v \slashed \Delta_{\mathbb{S}^2} \eta_I)^2) \de u \de \omega \nonumber \\
    &\quad \leq C \sum_{I \in I^{\leq n+2}_{\boldsymbol{K}}}\int_{\{u = u_1\}} r^{\delta} ((\p_v \eta_I)^2 + (\p_v \slashed \Delta_{\mathbb{S}^2} \eta_I)^2) \de u \de \omega + C \eps^2(1+|\bar u|)^{-1}. \label{eq:fordyadic3}
\end{align}
Using display~\eqref{eq:fordyadic2}, we have, along a dyadic sequence $(u_n)_{n \in \N}$, such that $2^n \leq u_n \leq 2^{n+1}$,
\begin{equation}\label{eq:ondyadic}
    \sum_{I \in I^{\leq n+2}_{\boldsymbol{K}}} \int_{\{u = u_n\}} r^{\delta} ((\p_v \eta_I)^2 + (\p_v \slashed \Delta_{\mathbb{S}^2} \eta_I)^2) \de u \de \omega \leq C \eps^2 (1+|u_n|)^{-1}.
\end{equation}
With the aid of~\eqref{eq:fordyadic3} we now remove the restriction to the dyadic sequence: for all $\bar u \geq 0$, we have:
\begin{equation}\label{eq:allu}
    \sum_{I \in I^{\leq n+2}_{\boldsymbol{K}}} \int_{\{u = \bar u\}} r^{\delta} ((\p_v \eta_I)^2 + (\p_v \slashed \Delta_{\mathbb{S}^2} \eta_I)^2) \de u \de \omega \leq C \eps^2 (1+|\bar u|)^{-1}.
\end{equation}
Now, interpolating (using H\"older's inequality) between estimates~\eqref{eq:fordyadic1} and~\eqref{eq:allu}, we have, for some $\delta'$ such that $0 < \delta' < \delta$,
\begin{equation}\label{eq:deltaprime}
    \begin{aligned}
        \sum_{I \in I^{\leq n+2}_{\boldsymbol{K}}}\int_{\{u = \bar u\}} r^{1+\delta'} ((\p_v \eta_I)^2 + (\p_v \slashed \Delta_{\mathbb{S}^2} \eta_I)^2) \de u \de \omega \leq C \eps^2 (1+|\bar u|)^{-\delta'}.
    \end{aligned}
\end{equation}

We now note the following basic estimate, which follows from the Sobolev embedding on the conformal sphere $\mathbb{S}^2$ plus H\"older's inequality:
\begin{equation}\label{eq:sobolevrp}
    \begin{aligned}
        &|\eta_I(u,v,\omega)| \leq C \int_{\mathbb{S}^2} |\eta_I(u,v,\omega)| + |\slashed \Delta_{\mathbb{S}^2} \eta_I(u,v,\omega)| \de \omega \\
        &\qquad \leq C \Big(\int_{u}^v \int_{\mathbb{S}^2} r^{1+ \delta'}(|\p_v \eta_I(u,\bar v,\omega)|^2 + |\p_v \slashed \Delta_{\mathbb{S}^2}  \eta_I(u,\bar v,\omega)|^2)\de \bar v \de \omega \Big)^{\frac 12}.
    \end{aligned}
\end{equation}
(note that there is no boundary term at $r=0$ because $\eta = r \zeta$ and $\zeta$ is regular at $r=0$.

Combining now~\eqref{eq:deltaprime} with~\eqref{eq:sobolevrp}, we obtain
\begin{equation}\label{eq:uncommuted}
    \sum_{I \in I^{\leq n+2}_{\boldsymbol{K}}}|\eta_I(u,v,\omega)| \leq C \eps (1+|u|)^{-\delta'}.
\end{equation}
Therefore, we obtain:
\begin{equation}\label{eq:forvariousdec}
    \sum_{I \in I^{\leq n+2}_{\boldsymbol{K}}}|r \der^I \zeta(u,v,\omega)| \leq C \eps (1+|u|)^{-\delta'}.
\end{equation}
It then follows immediately that:
\begin{equation}\label{eq:impdeccone}
   |\p \der^I\zeta(u,v,\omega)| \leq C \eps (1+|u|)^{-\delta'-1}r^{-1}.
\end{equation}
for all $I \in I^{\leq n+1}_{\boldsymbol{K}}$.

It is evident that, defining $r'$ to be the radial distance from a point $p$ such that $|p|=1$, we can go into coordinates $(t, r', \omega')$, which then induce null coordinates $(u', v', \omega')$ in the usual way. It is then clear that we can repeat the above reasoning in {\bf Step 1} and in the current {\bf Step}, to obtain:
\begin{equation}\label{eq:rprime}
   |\p \der^I \zeta(u',v',\omega')| \leq C \eps (1+|u'|)^{-\delta'-1}(r')^{-1}.
\end{equation}
Combining this with estimate~\eqref{eq:impdeccone}, we then have
\begin{equation}\label{eq:impdec}
   |\p \der^I \zeta(u,v,\omega)| \leq C \eps (1+|u|)^{-\delta'-1}(1+r)^{-1}.
\end{equation}
Again, this holds for all $I \in I^{\leq n+1}_{\boldsymbol{K}}$.
Now, in the region where $u \leq cr$, we have that $r \geq c_1 v$, for some $c_1 > 0$, implying the first inequality in display~\eqref{eq:decphii} restricted to this spacetime region.

In order to complete {\bf Step 2} and the proof of the first estimate in~\eqref{eq:decphii}, we are then left with showing the estimate
\begin{equation}\label{eq:claiminterior}
    |\p \der^I \zeta(u,v,\omega)| \leq C \eps (1+|u|)^{-\delta''-2}
\end{equation}
in the region where $u \geq cr$, for some $c > 0$ and $\delta'' > 0$. Note that, in this spacetime region, there is a constant $c_2 > 0$ such that $c_2 t \leq u \leq t$.

We deduce immediately from equation~\eqref{eq:rprime} the following, which holds for all $I \in I^{\leq n+1}_{\boldsymbol{K}}$:
\begin{equation}\label{eq:forspaceint}
   |\p_r \der^I \zeta(t,r,\omega)| \leq C \eps (1+|u|)^{-\delta'-1}(1+r)^{-1} \leq C \eps (1+t)^{-\delta'-1}(1+r)^{-1}.
\end{equation}
We now proceed to integrate equation~\eqref{eq:forspaceint} radially, to get, for all $I \in I^{\leq n+1}_{\boldsymbol{K}}$:
\begin{equation}\label{eq:radialint}
    |\der^I \zeta(t,r,\omega)| \leq \int^{ r_0}_{r} |\p_r \zeta(t, \bar r, \omega)| \de \bar r + |\zeta(t, r_0, \omega)|.
\end{equation}
Here, we have chosen the number $r_0$ such that $r_0 \geq r$, and furthermore $t-r_0 = cr_0$. Now, using estimate~\eqref{eq:uncommuted} at the point $(t,r_0,\omega)$, we have
\begin{equation}\label{eq:forintconcl}
    |\der^I \zeta(t, r_0, \omega)| \leq C \eps (1+t)^{-1-\delta'}.
\end{equation}
Furthermore, we have 
\begin{equation}\label{eq:evalint}
\int^{r_0}_{r} |\p_r \der^I \zeta(t, \bar r, \omega)| \de \bar r \leq C \eps \log(1+r_0)(1+t)^{-1-\delta'} \leq C \eps (1+t)^{-1-\delta''}.
\end{equation}
Here, $\delta''$ satisfies $0 < \delta'' < \delta'$.

Combining now displays~\eqref{eq:radialint},~\eqref{eq:forspaceint}, and~\eqref{eq:evalint} we have, in the region where $u \geq cr$,
\begin{equation*}
    \sum_{I \in I^{\leq n+1}_{\boldsymbol{K}}}| \der^I \zeta(u,v,\omega)| \leq C \eps (1+|u|)^{-1-\delta''}.
\end{equation*}
This immediately yields
claim~\eqref{eq:claiminterior}, which concludes {\bf Step 2}.

{\bf Step 3}. The improved decay for the ``good'' derivatives follows again from  equation~\eqref{eq:forvariousdec} and the following inequality:
\begin{equation*}
    |\bar \p^{(0)} f| \leq C \frac{1}{1+v} \sum_{H\in \boldsymbol{K}} |\der^H f|.
\end{equation*}
This shows the first inequality in display~\eqref{eq:decphii}.
\end{proof}

\subsection{An important change of coordinates} We shall now define a coordinate system which will be useful when computing the interaction between waves originating from different points in space, and we prove a change of variables formula for this coordinate system. We suppose without loss of generality here that $i = 1$, $j = 2$, and that 
$$
p_1 = (-1,0,0), \qquad 
p_2 = (1,0,0),
$$
so that we focus on interaction of waves emanating from resp.~the points $(-R,0,0)$ and $(R,0,0)$. This Lemma is used in proving the improved energy estimates in Section \ref{sub:improvedenergy}.

\begin{lemma}\label{lem:r1r2}
	On $\R^4$, parametrized by cartesian coordinates $(t,x,y,z)$ we consider the coordinate system $(t,r_1, r_2, \varphi)$ defined by the following relations:
	\begin{equation}
	\begin{aligned}
	&t = t,\\
	&(x+R)^2 + \rho^2 = r_1^2,\\
	&(x-R)^2 + \rho^2 = r_2^2,\\
	&\varphi = \arctan(z/y).
	\end{aligned}
	\end{equation}
	Here, $\rho^2 := y^2 + z^2$. We also consider null coordinates defined by:
	\begin{equation}
	u_h := t-r_h, \quad v_h := t+r_h, \qquad \text{where } h \in \{1,2\}.
	\end{equation}
	Then, for every smooth and integrable function $f:\R^4 \to \R$, the following change of variable formulas hold:
	\begin{equation}\label{eq:changevars}
		\begin{aligned}
			&\int_{\R^4} f(t,x,y,z) \de t \de x \de y \de z = \int_{\R \times \mathcal{R} \times [0,2\pi)}f(t,r_1, r_2,\varphi)\frac {r_1 r_2} {2R}  \de t \de r_1 \de r_2 \de \varphi,\\
			&\int_{\R^4} f(t,x,y,z) \de t \de x \de y \de z = \int_{\mathcal{S}\times [0,2\pi)}f(u_1, v_1, u_2, \varphi)\frac {r_1 r_2} {4R}  \de u_1 \de v_1 \de u_2 \de \varphi,
		\end{aligned}
	\end{equation}
	where $\mathcal{R}$ is the set of those values of $r_1$ and $r_2$ which satisfy:
	$$
	r_1 + r_2 \ge 2 R, \qquad |r_1 - r_2| \leq 2R.
	$$
	Furthermore, $\mathcal{S}$ is the set of those values of $u_1, v_1$ and $u_2$ such that
	$$
	v_1 - u_2 \geq 2R, \qquad |u_2 - u_1| \leq 2R.
	$$
	Finally, the following formulas for integrals on null cones hold true:
\begin{equation}\label{eq:nullconescc}
\begin{aligned}
	&\int_{C^{(1)}_{\bar u_1}} f \ \de S_{C^{(1)}_{u_1}}= \int_{\mathcal{S}_{\bar u_1} \times [0,2\pi)} f(v_1, u_2, \varphi) \frac{r_1 r_2}{4R}\de v_1 \de u_2 \de \varphi,\\
	&\int_{C^{(2)}_{\bar u_2}} f \ \de S_{C^{(2)}_{u_2}}= \int_{\mathcal{S}_{\bar u_2} \times [0,2\pi)} f(v_2, u_1, \varphi) \frac{r_1 r_2}{4R}\de v_2 \de u_1 \de \varphi.
\end{aligned}
\end{equation}
Here, $C^{(1)}_{\bar u_1}$ is the cone $\{u_1 = \bar u_1\} \cap \{t \geq 0\}$, and similarly $C^{(2)}_{\bar u_2}$ is the cone $\{u_2 = \bar u_2\} \cap \{t \geq 0\}$. Furthermore, $\de S_{C^{(h)}_{u_h}}$, $h =1, 2$ is the volume form induced on cones of constant $u_h$ coordinate, with $h =1, 2$.
Moreover, $\mathcal{S}_{\bar u_1}$ is the set of those values of $v_1$, $u_2$ such that 
$$
v_1 - u_2 \geq 2R, \qquad v_1 \geq - \bar u_1, \qquad |u_2 - \bar u_1| \leq 2R,
$$
and, similarly, $\mathcal{S}_{\bar u_2}$ is the set of those values of $v_2$, $u_1$ such that
$$
v_2 - u_1 \geq 2R, \qquad v_2 \geq - \bar u_2, \qquad |u_1 - \bar u_2| \leq 2R.
$$
\end{lemma}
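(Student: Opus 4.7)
The plan is to split the change of coordinates into two independent stages: first a spatial change $(x,y,z) \to (r_1, r_2, \varphi)$ at fixed $t$, and then the passage from $(t, r_1, r_2)$ to the null coordinates $(u_1, v_1, u_2)$.

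For the first stage, I would exploit the cylindrical symmetry of the configuration about the $x$-axis. Introducing $\rho = \sqrt{y^2 + z^2}$, one has $\de x\,\de y\,\de z = \rho\, \de x\,\de\rho\,\de\varphi$. Differentiating the defining relations $r_h^2 = (x \mp R)^2 + \rho^2$ gives
\begin{equation*}
	r_h\,\de r_h = (x \mp R)\,\de x + \rho\,\de\rho, \qquad h \in \{1,2\},
\end{equation*}
so that the Jacobian matrix of $(x,\rho) \mapsto (r_1,r_2)$ has determinant $2R\rho$. This yields $\rho\,\de x\,\de\rho = \frac{r_1 r_2}{2R}\,\de r_1\,\de r_2$ and hence the first identity in~\eqref{eq:changevars}, once the image of the map is identified. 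To pin down this image, I would solve the two defining relations for $x = (r_1^2 - r_2^2)/(4R)$ and then for $\rho^2$, which produces the factorization
\begin{equation*}
	16 R^2\rho^2 = \bigl((r_1+r_2)^2 - 4R^2\bigr)\bigl(4R^2 - (r_1-r_2)^2\bigr).
\end{equation*}
The requirement $\rho^2 \geq 0$ forces both factors to have the same sign; the case of two negative factors is ruled out by the elementary inequality $r_1 + r_2 \geq |r_1 - r_2|$, leaving exactly $\mathcal{R} = \{r_1 + r_2 \geq 2R,\ |r_1 - r_2| \leq 2R\}$.

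For the second stage, the affine map $(t, r_1, r_2) \mapsto (u_1, v_1, u_2)$ has Jacobian determinant of absolute value $2$, so $\de t\,\de r_1\,\de r_2 = \tfrac{1}{2}\,\de u_1\,\de v_1\,\de u_2$. Composing with the first stage produces the factor $\frac{r_1 r_2}{4R}$ in the second identity. The range $\mathcal{S}$ follows from $\mathcal{R}$ by using $r_1 = (v_1 - u_1)/2$ and $r_2 = t - u_2 = (v_1 + u_1)/2 - u_2$, which convert $r_1 + r_2 \geq 2R$ into $v_1 - u_2 \geq 2R$ and $|r_1 - r_2| \leq 2R$ into $|u_2 - u_1| \leq 2R$.

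Finally, for the null-cone formulas~\eqref{eq:nullconescc}, the plan is to use the natural induced volume form $\de S_{C^{(h)}_{u_h}} = \iota_{\p_{u_h}}(\de t\,\de x\,\de y\,\de z)$ on level sets of $u_h$, which coincides with the standard null-cone area element $\tfrac{1}{2} r_h^2\,\de v_h\,\de\omega_h$ (as can be checked on a single cone centered at $Rp_h$ using $\de\theta_h = \frac{r_{h'}}{2Rr_h\sin\theta_h}\,\de u_{h'}$, obtained by differentiating $r_{h'}^2 = 4R^2 \mp 4R r_h\cos\theta_h + r_h^2$). Writing the spacetime volume in the null coordinates of the second stage as $\Omega = \frac{r_1 r_2}{4R}\,\de u_1\,\de v_1\,\de u_2\,\de\varphi$, contracting with $\p_{u_1}$ immediately extracts the claimed measure $\frac{r_1 r_2}{4R}\,\de v_1\,\de u_2\,\de\varphi$ on $\{u_1 = \bar u_1\}$, and symmetrically for $\{u_2 = \bar u_2\}$. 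The ranges $\mathcal{S}_{\bar u_h}$ are obtained by intersecting $\mathcal{S}$ with the restriction $t \geq 0$, i.e.~$v_h \geq -\bar u_h$. The only real subtlety in the whole argument is the domain computation in the first stage, but the factorization identity above makes it transparent.
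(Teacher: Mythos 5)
Your argument is essentially the same as the paper's (spatial change of variables followed by the affine map to null coordinates), but with two notable refinements and one slight slip in phrasing. The refinements: (i) you factor the spatial step through cylindrical coordinates $(x,\rho,\varphi)$, reducing the Jacobian to a $2\times 2$ computation, where the paper computes the full $3\times 3$ determinant $J_{(r_1,r_2,\varphi)}(x,y,z)$ directly; (ii) your identification of the image $\mathcal{R}$ via the factorization $16R^2\rho^2 = ((r_1+r_2)^2-4R^2)(4R^2-(r_1-r_2)^2)$ is sharper than the paper's, which invokes the triangle inequality to show the image is \emph{contained} in $\mathcal{R}$ but does not explicitly address surjectivity onto $\mathcal{R}$ — your argument gives both inclusions at once and is the cleaner route. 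You also fill in the null-cone formulas~\eqref{eq:nullconescc} (paper: ``follows in a straightforward way''), using the interior-product characterization of the induced measure and the observation that $\iota_{\partial_{u_1}}\Omega$ restricted to a level set of $u_1$ depends only on the transversal normalization $\partial_{u_1}(u_1)=1$ and hence is coordinate-system independent — that is the right idea. The slip: you write that ``the Jacobian matrix of $(x,\rho)\mapsto(r_1,r_2)$ has determinant $2R\rho$''; the actual determinant is $2R\rho/(r_1 r_2)$ (the matrix with determinant $2R\rho$ is the one for $(x,\rho)\mapsto(r_1^2/2, r_2^2/2)$ obtained directly from the differential relations $r_h\,\de r_h=\cdots$). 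Your subsequent conclusion $\rho\,\de x\,\de\rho=\tfrac{r_1 r_2}{2R}\,\de r_1\,\de r_2$ is nonetheless correct, so this is only a matter of wording.
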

\begin{remark}
    We note that $r_1$ is the Euclidean distance in $\R^3$ to the point $Rp_1 = (-R, 0,0)$, and similarly $r_2$ is the Euclidean distance in $\R^3$ to the point $Rp_2 = (R,0,0)$.
\end{remark}
\begin{proof}[Proof of Lemma~\ref{lem:r1r2}]
Let $\rho^2 := y^2 + z^2$. On $\R^3$, we consider the following coordinates $(r_1, r_2, \varphi)$:
\begin{equation}
\begin{aligned}
	&r_1^2 = (x+R)^2 + \rho^2,\\
	&r_2^2 = (x-R)^2 + \rho^2,\\
	&\varphi = \arctan(z/y).
\end{aligned}
\end{equation}
so that the relations hold:
\begin{equation}
\begin{aligned}
	&x = \frac{r_1^2 - r_2^2}{4R},\qquad y = \rho \cos \varphi, \qquad z= \rho \sin \varphi.
\end{aligned}
\end{equation}
where $\rho = \sqrt{\frac 12 \Big ( r_1^2 + r_2^2\Big) - R^2 -x^2}$.
Let us now compute the Jacobian determinant of such change of variables. We have
\begin{equation}
	J_1:= |\det( J_{(r_1, r_2, \varphi)}(x,y,z))| = 
	\left| 
	\begin{array}{ccc}
	\frac{r_1}{2R} & -\frac{r_2}{2R} & 0\\
	\partial_{r_1} \rho \cos \varphi & \partial_{r_2} \rho \cos \varphi & - \rho \sin \varphi\\
	\partial_{r_1} \rho \sin \varphi & \partial_{r_2} \rho \sin \varphi &  \rho \cos \varphi
	\end{array}
	\right|
\end{equation}
Expanding with respect to the first row,
\begin{equation*}
	J_1 =\Big| \frac{r_1}{4R} \partial_{r_2} \rho^2 + \frac{r_2}{4R} \partial_{r_1} \rho^2 \Big| = \frac 1 {4R} \Big|2 r_1 r_2 \Big| = \frac{r_1 r_2}{2R},
\end{equation*}
since we have
\begin{equation*}
	\begin{aligned}
		\partial_{r_1}\rho^2 = r_1 - \frac{r_1(r_1^2-r_2^2)}{4R^2}, \qquad \partial_{r_2}\rho^2 = r_2 + \frac{r_2(r_1^2-r_2^2)}{4R^2}
	\end{aligned}
\end{equation*}
Therefore, we get the following expression for the corresponding volume forms:
\begin{equation}
	\de x \wedge \de y \wedge \de z = \frac{r_1 r_2}{2R} \de r_1 \wedge \de r_2 \wedge \de \varphi.
\end{equation}
Let us now consider the product manifold $\R \times \R^3$, and let the first variable to be time $(t)$.

Now, we shall find the range of admissible $r_1$ and $r_2$. It is clear that $r_1 \ge 0$ and $r_2 \ge 0$. Moreover, because the distance between $R p_1$ and $R p_2$ is $2 R$ and $r_1$ is the distance from $R p_1$ while $r_2$ is the distance from $R p_2$, we have that $r_1 + r_2 \ge 2 R$ from the triangle inequality. Similarly, it follows that $|r_1 - r_2| \leq 2R$ from the triangle inequality, as desired (note that these inequalities alone ensure that both $r_1$ and $r_2$ are non-negative).

We now define $(u_1, v_1, u_2, \varphi)$ as follows:
\begin{equation*}
	u_1 := t-r_1, \quad v_1 := t+r_1, \quad u_2 := t-r_2.
\end{equation*}
This implies
\begin{equation*}
	t = \frac 12 (u_1 + v_1), \quad r_1 = \frac 12 (v_1 - u_1), \quad r_2 = \frac 12 (u_1 + v_1) - u_2.
\end{equation*}
The Jacobian determinant is now
\begin{equation*}
 J_2 := |\det (J_{(u_1, v_1, u_2, \varphi)}(t, r_1, r_2, \varphi))| = \left| 
 \begin{array}{cccc}
 	\frac 12 & \frac 12 & 0 & 0\\
 	\frac 12 & - \frac 12 & 0 & 0\\
 	\frac 12 & \frac 12 & - 1 & 0\\
 	0 & 0 & 0 & 1
 \end{array}
 \right| = \frac 12.
\end{equation*}
All in all, we obtain that the following formulas hold. We then have, if $f$ is a smooth function,
\begin{equation*}
	\begin{aligned}
	&\int_{\R^4} f(t,x,y,z) \de t \de x \de y \de z = \int_{\R \times \mathcal{R} \times [0,2\pi)}f(t,r_1, r_2,\varphi)\frac {r_1 r_2} {2R}  \de t \de r_1 \de r_2 \de \varphi\\
	&\int_{\R^4} f(t,x,y,z) \de t \de x \de y \de z = \int_{\mathcal{S} \times [0,2\pi)}f(u_1, v_1, u_2, \varphi)\frac {r_1 r_2} {4R}  \de u_1 \de v_1 \de u_2 \de \varphi
	\end{aligned}
\end{equation*}
where $\mathcal{R}$ is the subset of the values of $r_1$ and $r_2$ such that
$$
r_1 + r_2 \geq 2R, \qquad |r_1 - r_2 | \leq 2R.
$$
Translating these bounds into $(u_1, v_1, u_2, \varphi)$ coordinates we obtain that the second integral in the display above is over the set $\mathcal{S}\times [0, 2\pi)$, where $\mathcal{S}$ is the set of those values of $u_1, v_1, u_2$ which satisfy:
$$
	v_1 - u_2 \geq 2R, \qquad |u_2 - u_1| \leq 2R.
$$
This concludes the proof of the change of variables in display~\eqref{eq:changevars}.

The proof of~\eqref{eq:nullconescc} follows in a straightforward way.
\end{proof}

\subsection{Asymptotic comparison of derivatives intrinsic to two distinct light cones}

Here, we wish to formalize the fact that, as time increases, ``good'' derivatives for both cones become aligned on the interaction set, thus giving rise to improved estimates. This Lemma is used in the proof of the improved energy estimates in Section \ref{sub:improvedenergy}.

\begin{lemma}\label{lem:goodbad}
    Let $c \in (0,1/10)$, let $\eta \in \mathcal{C}^\infty(\R^4)$, and recall the coordinates $r_1$ and $r_2$. There exists a constant $C > 0$ such that the following pointwise inequalities hold true:
    \begin{equation}\label{eq:goodbad}
        |\bar{\p}^{(1)} \eta| \leq C \frac R {r_1} |\p \eta| + C |\bar{\p}^{(2)}\eta|, \qquad |\bar{\p}^{(2)} \eta| \leq C \frac R {r_2} |\p \eta| + C |\bar{\p}^{(1)}\eta|.
    \end{equation}
    Moreover, let the spacetime region $\mathcal{I}_{12}$ be defined as the region where both $|u_1| \leq cR $ and $|u_2| \leq cR$. Restricting to the region $\mathcal{I}_{12}$, we have the following estimate, valid for every smooth function $\eta$:
    \begin{equation}\label{eq:igortrick}
    |\p \eta| \leq C \frac{t}{R} (|\overline \p^{(1)} \eta| + |\overline \p^{(2)} \eta|).
\end{equation}
\end{lemma}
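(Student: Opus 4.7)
The plan is to compute each element of $\boldsymbol{G}^{(1)}$ directly in Cartesian coordinates and compare it to the corresponding elements of $\boldsymbol{G}^{(2)}$, tracking coefficients precisely. For the first two inequalities \eqref{eq:goodbad}, I would first observe that $\p_{v_1} - \p_{v_2}$ is a constant multiple of
\begin{equation*}
\p_{r_1} - \p_{r_2} = \left(\tfrac{x+R}{r_1} - \tfrac{x-R}{r_2}\right)\p_x + y\left(\tfrac{1}{r_1} - \tfrac{1}{r_2}\right)\p_y + z\left(\tfrac{1}{r_1} - \tfrac{1}{r_2}\right)\p_z.
\end{equation*}
Rewriting the coefficient of $\p_x$ as $\tfrac{2R}{r_1} + (x-R)\tfrac{r_2-r_1}{r_1 r_2}$ and using $|x-R| \leq r_2$, $|y|,|z| \leq r_2$, together with $|r_1 - r_2| \leq |Rp_1 - Rp_2| = 2R$, every coefficient is bounded by $CR/r_1$. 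For the angular good derivatives, the exact identities $\Omega^{(1)}_{(xy)} - \Omega^{(2)}_{(xy)} = 2R\p_y$, $\Omega^{(1)}_{(xz)} - \Omega^{(2)}_{(xz)} = 2R\p_z$, and $\Omega^{(1)}_{(yz)} = \Omega^{(2)}_{(yz)}$ yield
\begin{equation*}
\tfrac{1}{1+r_1}\Omega^{(1)}_{(xy)} \eta = \tfrac{1+r_2}{1+r_1}\cdot \tfrac{1}{1+r_2}\Omega^{(2)}_{(xy)}\eta + \tfrac{2R}{1+r_1}\p_y \eta,
\end{equation*}
and analogous formulas for the other rotations. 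The ratio $(1+r_2)/(1+r_1)$ is at most $1 + CR/r_1$ by $|r_1-r_2| \leq 2R$, while the translation term supplies an explicit $R/r_1$ weight times $|\p\eta|$. The first inequality then follows, and the second comes from interchanging the roles of $r_1$ and $r_2$.

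For \eqref{eq:igortrick} on $\mathcal{I}_{12}$, the strategy is to invert these identities to extract Cartesian derivatives from good derivatives at both cones, paying a factor $t/R$. The identities $\p_y = \tfrac{1}{2R}(\Omega^{(1)}_{(xy)} - \Omega^{(2)}_{(xy)})$ and $\p_z = \tfrac{1}{2R}(\Omega^{(1)}_{(xz)} - \Omega^{(2)}_{(xz)})$ immediately give
\begin{equation*}
|\p_y \eta|,\ |\p_z \eta| \leq \tfrac{1+r_1}{2R}|\bar{\p}^{(1)}\eta| + \tfrac{1+r_2}{2R}|\bar{\p}^{(2)}\eta|,
\end{equation*}
and on $\mathcal{I}_{12}$ we have $1+r_h \leq 1 + t + cR \leq Ct$ (since $t \geq (1-c)R$ throughout the region), producing the $t/R$ factor. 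For $\p_x$ I would write
\begin{equation*}
\p_{v_1} - \p_{v_2} = A\,\p_x + B\,\p_y + C\,\p_z, \qquad A = \tfrac{x(r_2-r_1) + R(r_1+r_2)}{r_1 r_2},
\end{equation*}
with $B, C$ the coefficients of $\p_y, \p_z$ from the earlier display (up to the conventional constant relating $\p_{v_h}$ to $\p_{r_h}$), and invert: $\p_x = A^{-1}(\p_{v_1} - \p_{v_2} - B\p_y - C\p_z)$. Using $|B|,|C| \leq CR/t$ together with the bounds on $\p_y, \p_z$ already established, the right-hand side is controlled by $C(|\bar{\p}^{(1)}\eta| + |\bar{\p}^{(2)}\eta|)$, and the factor $A^{-1}$ then supplies the $t/R$ weight. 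Finally, $\p_t$ is recovered from $\p_{v_1}$ and $\p_{r_1}$, the latter being a linear combination of $\p_x, \p_y, \p_z$ with coefficients of modulus at most one.

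The main obstacle is proving the lower bound $A \geq c' R/t$ on $\mathcal{I}_{12}$. The dominant positive part $R(r_1+r_2)/(r_1 r_2)$ is of order $R/t$, since $r_h = t - u_h \in [t-cR,\, t+cR]$ gives $r_1+r_2 \geq 2(t-cR)$ and $r_1 r_2 \leq (t+cR)^2$, with $t \geq (1-c)R$ throughout the region. The competing error $x(r_2-r_1)/(r_1 r_2)$ must be controlled via the geometric identity $x = (r_1^2 - r_2^2)/(4R) = (r_1-r_2)(r_1+r_2)/(4R)$, which together with $|r_1-r_2| \leq 2cR$ forces $|x| \lesssim ct$, so that the error is of size $c^2 R/t$. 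For $c \in (0, 1/10)$ as in the statement, the positive contribution strictly dominates, and the inversion for $\p_x$ succeeds. This geometric constraint on $|x|$, a consequence of requiring both $|u_1|$ and $|u_2|$ to be small simultaneously, is the essential ingredient; without it, the two light cones could meet nearly tangentially at the given point and no factor of $t/R$ could be extracted.
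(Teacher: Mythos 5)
Your proof of the pointwise comparisons \eqref{eq:goodbad} is essentially the same as the paper's (expand $\p_{r_1}-\p_{r_2}$ in Cartesian components, bound coefficients by $R/r_1$ using $|r_1-r_2|\leq 2R$ and $|x\pm R|,|y|,|z|\leq r_h$, and treat the rotations via the exact translation identities). For \eqref{eq:igortrick}, however, you take a genuinely different and in fact cleaner route. The paper splits the region $\mathcal{I}_{12}$ into the two cases $t\geq 2(1-c)^{-1}R$ and $t\leq 2(1-c)^{-1}R$: in the first it extracts $\p_x$ from the identity $(y^2+z^2)\p_x = -y\Omega^{(2)}_{(xy)}-z\Omega^{(2)}_{(xz)}+\cdots$ together with the lower bound $y^2+z^2\geq \tfrac12(1-c^2)t^2$ (which requires $t$ large relative to $R$), and in the second it inverts a coefficient $1-\tfrac{4x^2}{(r_1+r_2)^2}$ bounded below via $1-\tfrac{2x^2}{x^2+R^2}$, which requires $|x|\leq R/2$ and hence $t$ small. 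You avoid this dichotomy entirely by working with the single identity $\p_{v_1}-\p_{v_2}=A\p_x+B\p_y+C\p_z$ and proving the lower bound $A\geq c'R/t$ uniformly on $\mathcal{I}_{12}$: the positive part $R(r_1+r_2)/(r_1r_2)$ is $\gtrsim R/t$ since $r_h\in[t-cR,t+cR]$, while the competing term is $O(c^2R/t)$ because $|x|=\tfrac{|r_1-r_2|(r_1+r_2)}{4R}\lesssim ct$, and the restriction $c<1/10$ ensures strict domination. (Your bound is in fact sharper than the paper's: the comparison $1-\tfrac{4x^2}{(r_1+r_2)^2}\geq 1-\tfrac{2x^2}{x^2+R^2}$ throws away information that, if retained, would also remove the paper's case split.) Your recovery of $\p_t$ via $\p_t=\p_{v_1}-\p_{r_1}$ and the already-controlled spatial derivatives is likewise simpler than either of the paper's two routes. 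The argument is complete; the one small point worth making explicit is that the factor $(1+r_2)/(1+r_1)\leq 1+CR/r_1$ appearing in the rotation estimate contributes a term $CR/r_1\,|\bar\p^{(2)}\eta|$, which must then be absorbed via the trivial bound $|\bar\p^{(2)}\eta|\leq C|\p\eta|$ (or, equivalently, one may first reduce to the case $r_1\geq cR$ as the paper does, since otherwise $R/r_1\gtrsim 1$ and $|\bar\p^{(1)}\eta|\leq C|\p\eta|$ already gives the claim).
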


\begin{proof}[Proof of Lemma~\ref{lem:goodbad}]
Let us focus on proving the first inequality in display~\eqref{eq:goodbad}, the second being analogous. Without loss of generality, let us furthermore assume that $r_1 \geq c R$, as the claim is clear in case $r_1\leq cR$.

We will derive an expression for good derivatives adapted to one of the cones in terms of the other. We have the following relation:
\begin{equation*}
	r_1^2 - r_2^2 = 4xR,
\end{equation*}
hence, taking the gradient of both sides, we have
\begin{equation*}
	r_1 \partial_{r_1} - r_2 \partial_{r_2} = 2R\partial_{x}.
\end{equation*}
Here, $\p_{r_h}$ is the coordinate vector field induced by $(t, r_h, \theta_h, \varphi_h)$ (defined in display~\eqref{eq:polarpi}), and $h \in \{1,2\}$. Recalling that $\partial_{v_h} = \partial_t + \partial_{r_h}$, $h = 1,2$:
\begin{equation*}
	\begin{aligned}
		&r_1 \partial_{v_1} = 2R \partial_x + r_2 \partial_{v_2} - (r_2-r_1)\partial_t  = 2 R \partial_x + r_2 \partial_{v_2} + \frac{4xR}{r_1 + r_2} \partial_t.
	\end{aligned}
\end{equation*}
We then have, from the triangle inequality, $|r_1 - r_2| \leq 2R$, which implies $\frac{r_2}{r_1} \leq 1 + \frac{2R}{r_1}$, which, by the fact that we have $r_1 \geq c R$, implies $\frac{r_2}{r_1} \leq 1 + \frac 2 c$, so that $$\frac{r_2}{r_1} \leq C,$$
for some positive constant $C$.
Similarly, we have
\begin{equation*}
	\Big| \frac{4xR}{r_1 + r_2} \Big | \leq C R,
\end{equation*}
due to the fact that $r_1 + r_2 \geq |x|$.
All in all, we obtain the following, if $\eta \in \mathcal{C}^\infty(\R^4)$:
\begin{equation}\label{eq:vchange}
|\partial_{v_1}\eta| \lesssim \frac R {r_1} |\partial \eta| + |\bar \partial^{(2)}\eta|.
\end{equation}
Similarly, for rotation vector fields,
\begin{equation*}
\Omega^{(1)}_{(xy)} =( x + R)\partial_y - y \partial_x = \Omega^{(2)}_{(xy)} + 2 R \partial_y.
\end{equation*}
Hence, 
\begin{equation}
\frac 1 {r_1} \Omega^{(1)}_{(xy)} =\frac{( x + R)}{r_1}\partial_y - y \partial_x = {1 \over r_1} \Omega^{(2)}_{(xy)} + {2 R \over r_1} \partial_x,
\end{equation}
and we easily conclude the proof of the first inequality in display~\eqref{eq:goodbad}. The proof of the second inequality is identical.

We now turn to the proof of bound~\eqref{eq:igortrick}. Let us first restrict to the case $t \geq 2 (1-c)^{-1}R$.
Recall the definition of the region $\mathcal{I}_{12} := \{|u_1| \leq cR \} \cap \{|u_2| \leq cR \}$. We note that
\begin{equation*}
    \p_y = \frac 1 {2R}(\Omega^{(1)}_{(xy)} - \Omega^{(2)}_{(xy)} ), \qquad \p_z = \frac 1 {2R}(\Omega^{(1)}_{(xz)} - \Omega^{(2)}_{(xz)} ).
\end{equation*}
From these, for any smooth function $\eta$, we obtain, in the region $\mathcal{I}_{12}$,
\begin{equation}\label{eq:yzbounds}
    |\p_z \eta| + |\p_y \eta| \leq C \frac{t}{R} (|\overline \p^{(1)} \eta| + |\overline \p^{(2)} \eta|).
\end{equation}
This is because we are assuming $R \leq \frac 1 2 (1-c) t$, which, together with the fact that $r_1, r_2 \leq cR + t$ implies $r_1, r_2 \leq Ct$, for some positive constant $C$.
Now, we also have that
\begin{equation*}
    y\p_x = - \Omega^{(2)}_{(xy)} +(x-R)\p_y, \qquad z\p_x = - \Omega^{(2)}_{(xz)} +(x-R)\p_z.
\end{equation*}
This implies:
\begin{equation*}
    (y^2+z^2)\p_x = - y\Omega^{(2)}_{xy} +(x-R)\p_y -z \Omega^{(2)}_{xz} +(x-R)\p_z.
\end{equation*}
We now note that
\begin{equation*}
    4xR = r_1^2 - r_2^2 \leq (t+cR)^2 - (t-cR)^2 = 4 c t R,
\end{equation*}
(and similarly for $-x$)
hence we have that $|x| \leq ct$. We also have that, as $r_1^2 +r_2^2 = 2x^2 + 2y^2 +2z^2 +2R^2,$ and $r_1, r_2 \geq t - cR$,
\begin{equation}\label{eq:yzineq}
y^2 + z^2 \geq (t-cR)^2 - R^2 - c^2 t^2= (1-c^2)t^2 -2cRt +(c^2-1)R^2.
\end{equation}
Now, one can verify that, for $t \geq 2(1-c)^{-1}R$, $$\frac 12 (1-c^2)t^2 -2cRt +(c^2-1)R^2\geq 0,$$ which implies, by~\eqref{eq:yzineq}, that $y^2 + z^2 \geq \frac 12 (1-c^2) t^2$.
Together with the fact that $|x|, |y|, |z|, r_1, r_2 \leq t+cR \leq Ct$ (as $t \geq 2(1-c)^{1}R$), we can conclude that, when restricting to the case $t \geq 2 (1-c)^{-1}R$,
\begin{equation*}
    |\p_x \eta| \leq C \frac{t}{R} (|\overline \p^{(1)} \eta| + |\overline \p^{(2)} \eta|).
\end{equation*}
Now, we consider the identity, which follows from the definition of $r_1$ and $r_2$,
\begin{equation*}
r_1^2 +r_2^2 = 2(x^2 +y^2 +z^2 +R^2).    
\end{equation*}
Taking the gradient of this expression, and adding $(r_1+r_2)\p_t$ on both sides, we obtain
\begin{equation*}
    \p_t = \frac 1 {r_1+r_2}(r_1 \p_{v_1} + r_2 \p_{v_2} -2 (x\p_x + y\p_y+ z\p_z)).
\end{equation*}
We conclude using the previous bounds obtained on $\p_x$, $\p_y$ and $\p_z$:
\begin{equation*}
    |\p_t \eta| \leq C \frac{t}{R} (|\overline \p^{(1)} \eta| + |\overline \p^{(2)} \eta|),
\end{equation*}
always restricting to the case $t \geq 2 (1-c)^{-1}R$. This proves claim~\eqref{eq:igortrick}, when restricting to the case $t \geq 2 (1-c)^{-1}R$

Let us now turn to the case $t \leq 2 (1-c)^{-1}R$. The bounds~\eqref{eq:yzbounds} for $\p_y$ and $\p_z$ are still valid (note that every point in the region $\mathcal{I}_{12}$ satisfies $t \geq c'R$, for some positive constant $c'$). Moreover, we have, as before,
\begin{align*}
&\p_t = \frac 1 {r_1+r_2}(r_1 \p_{v_1} + r_2 \p_{v_2} -2 (x\p_x + y\p_y+ z\p_z)),\\
&\p_x = \frac 1 {2R} (r_1 \p_{v_1} -r_2\p_{v_2} +(r_2-r_1)\p_t).
\end{align*}
Substituting, we have
\begin{equation}\label{eq:prexder}
\big(1 + \frac{x(r_2-r_1)}{R(r_1+r_2)} \big)\p_t = \frac 1 {r_1+r_2}(r_1 \p_{v_1} + r_2 \p_{v_2} -2 ( y\p_y+ z\p_z))-\frac x {(r_1 +r_2)R} (r_1 \p_{v_1} -r_2\p_{v_2}).
\end{equation}
We then note that, since $r_1^2 - r_2^2 = 4xR$,
$$
1+\frac{x(r_2-r_1)}{R(r_1+r_2)} = 1  -\frac{4x^2}{(r_1+r_2)^2}\geq 1 - \frac{2x^2}{x^2 + R^2}.
$$
Now, if we can prove that that $|x| \leq \frac 12 R$, $1 - \frac{2x^2}{x^2 + R^2}$ would be bounded below by $1/2$, and we would use equation~\eqref{eq:prexder} to conclude. Now, we have
$$
|x| = \frac 1 {4R}|r_1^2 - r_2^2| \leq ct \leq 2 c (1-c)^{-1}R < \frac 12 R,
$$
since $c < \frac 1 {10}$.
Hence equation~\eqref{eq:prexder} implies:
$$
|\p_t \eta| \leq C  (|\overline \p^{(1)} \eta| + |\overline \p^{(2)} \eta|),
$$
which is the claim for the $\p_t$ derivative restricted to the region $t \leq 2(1-c)^{-1}R$. We finally use the relation
$$
\p_x = \frac 1 {2R} (r_1 \p_{v_1} -r_2\p_{v_2} +(r_2-r_1)\p_t)
$$
to deduce the claim for the $\p_x$ derivative.
This concludes the proof of the lemma.
\end{proof}

\section{\texorpdfstring{$R$}{R}-weighted Klainerman--Sobolev inequalities}\label{sec:sobolevs}

We require two different sets of global Sobolev inequalities depending on whether we are seeking to obtain estimates on the linear equations for $\psi_{ij}$ or the nonlinear equation. In the estimates for $\psi_{ij}$, because the main interactions come from the two functions $\phi_i$ and $\phi_j$, there is a rotation that does not introduce $R$ weights for $\psi_{ij}$. Indeed, the centers of the supports of $\phi_i$ and $\phi_j$ lie on a line, and rotations about this line do not introduce $R$-weights in the initial data for $\phi_i$ and $\phi_j$. Similarly, the Lorentz boosts tangent to certain two-dimensional hyperboloids which are translation invariant along this line do not produce $R$-weights on the initial data either. On the other hand, in the estimates for the nonlinear problem, all Killing vector fields introduce $R$-weights in the initial data. Therefore, we need to use the usual Killing vector fields divided by the parameter $R$ (see also the discussion in Section~\ref{subsub:rweightedintro}).

\subsection{Sobolev inequalities for the linear equations}

We list here the modified Klainerman--Sobolev inequalities we need to use in this setting. We begin with the estimates that are used for $\psi_{ij}$. Without loss of generality (by changing coordinates), we can assume that the line connecting the centers of the supports of $\phi_i$ and $\phi_j$ is just the $x$ axis, as was done in the above, so that the two pieces of initial data are localized around the point $Rp_1 = (-R, 0,0)$ and $Rp_2 = (R, 0,0)$. These estimates are used in Section~\ref{sub:linfpsiij}.
\begin{lemma}\label{lem:sobspheres}
	Let $f \in C^\infty (\R^3)$. Consider polar coordinates $(r, \theta, \varphi)$ adapted to the $x$-axis, so that $r > 0$, $\theta \in [0,\pi)$, $\varphi \in [0, 2 \pi)$, and so that $\p_\varphi = y \p_z - z\p_y$. There exists a a positive constant $C$ such that the following holds. We have that, for $\theta \in \big[\pi/8, 7 \pi/ 8\big]$,
	\begin{equation}\label{eq:presobspheres}
	\begin{aligned}
	|f(r, \theta, \varphi)|^2 \leq C \frac R {r^2}  \sum_{I \in I^{\leq 3}_{\Ga^{(0)}}} \Vert \der^I f \Vert_{L^2(\R^3)}^2.
	\end{aligned}
	\end{equation}
	Here, recall the definition of the vector fields $\Ga^{(0)}$ in display~\eqref{eq:gammah}.
	
	This implies that, for $h \in \{0, i, j\}$, we have that, restricting to the region where
	$\rho = \sqrt{y^2 + z^2} \geq \frac 1 {10} t $,
	\begin{equation}\label{eq:sobspheres}
	\begin{aligned}
	|f(t,x,y,z)|^2 \leq C \frac R {t^2}  \sum_{I \in I^{\leq 3}_{\Ga^{(h)}}} \Vert \der^I f \Vert_{L^2(\Sigma_t)}^2.
	\end{aligned}
	\end{equation}
\end{lemma}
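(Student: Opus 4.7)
The plan is to establish the first inequality by a three-dimensional Sobolev embedding on an anisotropic box in polar coordinates $(r,\theta,\varphi)$ adapted to the $x$-axis, where the scales are chosen so that each rescaled coordinate derivative becomes a bounded combination of vector fields in $\Ga^{(0)}$. Specifically, for a fixed point $P_0 = (r_0,\theta_0,\varphi_0)$ with $\theta_0 \in [\pi/8,7\pi/8]$, I work on the box
\[
B := \bigl\{(r,\theta,\varphi) : r - r_0 \in [-\tfrac{1}{2},\tfrac{1}{2}],\ R(\theta - \theta_0) \in [-\tfrac{1}{2},\tfrac{1}{2}],\ \varphi - \varphi_0 \in [-\tfrac{1}{2},\tfrac{1}{2}] \bigr\},
\]
and introduce rescaled variables $\tilde r := r - r_0$, $\tilde\theta := R(\theta - \theta_0)$, $\tilde\varphi := \varphi - \varphi_0$, so that $(\tilde r,\tilde\theta,\tilde\varphi) \in [-1/2,1/2]^3$. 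The key observation is the decomposition
\[
\p_{\tilde r} = \p_r, \qquad \p_{\tilde\theta} = \tfrac{1}{R}\p_\theta = \cos\varphi\cdot R^{-1}\Omega^{(0)}_{(xy)} + \sin\varphi\cdot R^{-1}\Omega^{(0)}_{(xz)}, \qquad \p_{\tilde\varphi} = \Omega^{(0)}_{(yz)},
\]
each of which is a bounded combination of elements of $\Ga^{(0)}$ with universal coefficients. Consequently, iterated derivatives $\p_{\tilde\cdot}^I f$ are controlled pointwise by $\sum_{J \in I^{\leq |I|}_{\Ga^{(0)}}} |\der^J f|$.

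Next I invoke the standard Sobolev embedding $H^3([-1/2,1/2]^3) \hookrightarrow L^\infty$ on the unit cube, giving
\[
|f|^2(P_0) \leq C \sum_{|I| \leq 3} \int_{[-1/2,1/2]^3} |\p_{\tilde\cdot}^I f|^2\, d\tilde r\, d\tilde\theta\, d\tilde\varphi.
\]
A direct Jacobian computation yields
\[
d\tilde r\, d\tilde\theta\, d\tilde\varphi = \frac{R}{r^2\sin\theta}\,dV \leq \frac{CR}{r_0^2}\,dV,
\]
where I have used $r \sim r_0$ and $\sin\theta \sim \sin\theta_0 \geq \sin(\pi/8)$ on $B$. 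Combining the two displays and extending the integration from $B$ to $\R^3$ produces the first inequality. The construction is admissible provided $R$ and $r_0$ are bounded below (so that the box fits inside the angular range $\theta \in (0,\pi)$); the remaining regime is easy, since then $R/r_0^2$ is already bounded below by a positive constant and the unscaled Sobolev embedding $H^3(\R^3) \hookrightarrow L^\infty$ suffices.

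The second inequality then follows from the first by recentering on $Rp_h$, which lies on the $x$-axis for each $h \in \{0,i,j\}$, and observing that $\rho \geq t/10$ implies both $r_h \geq t/10$ and $\sin\theta_h = \rho/r_h$. When $r_h \lesssim t$, we have $\sin\theta_h$ uniformly bounded below, so the first inequality in the translated polar coordinates gives the required bound with $r_h^{-2} \leq Ct^{-2}$. In the residual regime $r_h \gg t$, where $\sin\theta_h$ may be small, one replaces the box by one whose $\theta_h$- and $\varphi_h$-extents are proportional to $t/R$ and $t$ respectively; a small variant of the Jacobian computation shows that such a box still has volume $\sim t^2/R$, yielding the $R/t^2$ bound. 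The main obstacle throughout is identifying the correct anisotropic scaling: the $\theta$-direction must shrink by exactly $1/R$ in order to absorb the $R^{-1}$-weights of $\Omega^{(0)}_{(xy)}$ and $\Omega^{(0)}_{(xz)}$, while the $r$- and $\varphi$-directions remain at unit scale so that $\p_r$ and $\Omega^{(0)}_{(yz)}$ are not amplified; any other choice introduces an extra power of $R$ in the final constant.
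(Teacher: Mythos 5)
Your proof of the first inequality \eqref{eq:presobspheres} is correct, but the route is genuinely different from the paper's. You use a single three-dimensional $H^3 \hookrightarrow L^\infty$ embedding on an anisotropic box whose sides are chosen so that the rescaled coordinate derivatives become bounded combinations of $\Ga^{(0)}$ fields; the Jacobian then produces the $R/r^2$ weight in one shot. The paper instead treats the three directions by three different mechanisms: a fundamental-theorem-of-calculus argument in $\theta$ (with a cutoff $\chi(\theta)$ vanishing near the poles), a one-dimensional Sobolev embedding on $\mathbb{S}^1$ in $\varphi$, and the separate radial trace lemma (Lemma~\ref{lem:radialtrace}). Your version is arguably cleaner conceptually, and the identity $\p_\theta = \cos\varphi\,\Omega_{(xy)} + \sin\varphi\,\Omega_{(xz)}$ is a neat simplification of the paper's more abstract statement that $\partial_\theta^R$ is a bounded combination of the $R$-weighted rotations. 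One small point you should record explicitly: when iterating $\partial_{\tilde\theta}$, the commutator $[\p_\theta,\Omega_{(xy)}] = \sin\varphi\,\csc^2\theta\,\p_\varphi$ produces $\csc^2\theta$ coefficients, which are bounded on your box precisely because the $\theta$-range stays away from the poles; this is the same role the paper's cutoff $\chi$ plays. Your fallback for $R$ or $r_0$ small (unscaled Sobolev) is fine up to noting that for $R$ bounded and $r_0$ large one still needs the spatial Klainerman--Sobolev inequality with $1/r_0^2$ weight, not merely $H^3(\R^3)\hookrightarrow L^\infty$.

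For the implication \eqref{eq:presobspheres}\,$\Rightarrow$\,\eqref{eq:sobspheres}, you have correctly spotted a subtlety that the paper leaves implicit: $\rho \geq t/10$ alone does not force $\theta_h \in [\pi/8,7\pi/8]$, since $r_h$ can be much larger than $t$, in which case $\sin\theta_h = \rho/r_h$ is small. However, your proposed fix is not stated correctly: the ``extents proportional to $t/R$ and $t$'' are dimensionally wrong (they are angular coordinates, bounded by $\pi$ and $2\pi$). What you actually want is a $\theta_h$-extent of order $t/(R r_h)$, so that the rescaled derivative $\tfrac{t}{Rr_h}\p_{\theta_h} = \tfrac{t}{r_h}\bigl(\cos\varphi\,(R^{-1}\Omega^{(h)}_{(xy)}) + \sin\varphi\,(R^{-1}\Omega^{(h)}_{(xz)})\bigr)$ has coefficients $\leq 1$ in the regime $r_h \geq t$, together with a unit $\varphi_h$-extent. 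The resulting box volume is $\sim r_h^2\sin\theta_h\cdot\tfrac{t}{Rr_h} = \rho t/R \geq t^2/(10R)$, which yields the $R/t^2$ weight. It is worth noting that in the paper's actual applications of \eqref{eq:sobspheres} (Proposition~\ref{prop:linfty}), domain-of-dependence restricts the relevant functions to $\{r_h \lesssim t\}$, so only your first regime occurs; but as an unconditional statement about $f$, the second regime does require the extra argument and your instinct that something must be said there is correct.
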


\begin{proof}[Proof of Lemma~\ref{lem:sobspheres}]
	Let us consider the sphere $\mathbb{S}^2$ with coordinates $(\theta, \varphi)$, so that $\theta = 0$ corresponds to a point lying on the positive $x$-axis.
	Let $\chi(\theta)$ be a smooth cutoff function such that
	$$
	\chi(\theta) = \left\{\begin{array}{l}
	    1 \qquad \text{if} \quad \theta \in  [\pi/8, 7 \pi/ 8],  \\
	    0 \qquad \text{if} \quad \theta \in  [0, \pi/16) \cup (15 \pi /16, \pi].
	\end{array} \right.
	$$
	We shall use a localized Sobolev embedding on the unit sphere. We have that $\partial_\varphi = \Omega_{(yz)} =  y \p_z - z\p_y$. With $\partial_\theta^R := R^{-1} \partial_\theta$, we now estimate
	\begin{equation*}
	\begin{aligned} 
		&|\chi(\theta) f(r, \theta, \varphi)|^2 \leq 2\int_0^\theta |f \partial_\theta \chi| |\chi f| + \chi |\partial_\theta f| |\chi f|\de \vartheta \\ &\quad \leq  2 \frac R r \int_0^\theta \left ( \left |f {1 \over R} \partial_\theta \chi \right | \left |\chi f \right | + \left | \chi {1 \over R} \partial_\theta f \right | \left | \chi f \right | \right ) r \de \vartheta \\
		&\quad \le 2 \frac R r \int_0^\theta \left ((f \partial_\theta^R \chi)^2 + 2 (\chi f)^2 + (\chi \partial_\theta^R f)^2 \right ) r \de \vartheta.
	\end{aligned}
	\end{equation*}
	Multiplying and dividing by $r$ and integrating over $\mathbb{S}^1$ in $\varphi$, we have that
	\begin{align}\label{eq:sphere1}
	\int_{\mathbb{S}^1} |\chi(\theta) f(r,\theta,\varphi)|^2 d \varphi \le C \frac{R}{r^2} \int_0^\theta \int_{\mathbb{S}^1} \left ((f \partial_\theta^R \chi)^2 + 2 (\chi f)^2 + (\chi \partial_\theta^R f)^2 \right ) r^2 \de \varphi \, \de \vartheta
	\end{align}
	Now, taking $\partial_\varphi$, we have that
	\begin{equation}\label{eq:sphere2}
	\begin{aligned}
	&\int_{\mathbb{S}^1} |\partial_\varphi (\chi(\theta) f(r, \theta,\varphi))|^2 \de \varphi = \int_{\mathbb{S}^1} |\chi(\theta) \partial_\varphi f(r,\theta,\varphi)|^2 \de \varphi \\ 
	&\quad \le C  {R \over r^2} \int_0^\theta \int_{\mathbb{S}^1} \left ((\partial_\varphi f \partial_\theta^R \chi)^2 + 2 (\chi \partial_\varphi f)^2 + (\chi \partial_\theta^R \partial_\varphi f)^2 \right ) r^2 \de \varphi \, \de \vartheta.
	\end{aligned}
	\end{equation}
	Now, we recall the Sobolev inequality on $\mathbb{S}^1$ given by
	\begin{equation*}
	\Vert f \Vert_{L^\infty(\mathbb{S}^1)}^2 \le C\big( \Vert f \Vert_{L^2(\mathbb{S}^1)}^2 + \Vert \partial_\varphi f \Vert_{L^2(\mathbb{S}^1)}^2\big).
	\end{equation*}
	Treating $\chi f$ as a function on $\mathbb{S}^1$ in $\theta$ and $\varphi$ and $r$ fixed along with using both~\eqref{eq:sphere1} and~\eqref{eq:sphere2} gives us that
	\begin{align}
	\Vert \chi f \Vert_{L^\infty}^2 \le C {R \over r^2} \left (\Vert f \Vert_{L^2(S_r)}^2 + \Vert \partial_\theta^R f \Vert_{L^2(S_r)}^2 + \Vert \partial_\varphi f \Vert_{L^2(S_r)}^2 + \Vert \partial_\theta^R \partial_\varphi f \Vert_{L^2(S_r)}^2 \right )
	\end{align}
	with $C$ depending on $\chi$. We note that we have used the fact that $0 < c \le \sin{(\theta)}$ on the support of $\chi$, as $\chi(\theta) = 0$ for $\theta \le {\pi \over 16}$ and for $\theta \ge {15 \pi \over 16}$. Here, $S_r$ is the sphere of radius $r$ centered at the origin in $\R^3$.
	
	We now have that
	\begin{equation*}
	\partial_\theta^R = a_1(x,y,z) \Omega_{(xy)} + a_2(x,y,z) \Omega_{(xz)} + a_3 (x,y,z) \Omega_{(yz)},
	\end{equation*}
	with $a_1$, $a_2$, and $a_3$ smooth functions on $\R^3$ which are all pointwise controlled by $\frac C R$. Thus, using H\"older's inequality, we have that
	\begin{equation*}
	\Vert \partial_\theta^R f \Vert_{L^2(S_r)}^2 \le C \left (\Vert \Omega^R_{(xy)} f \Vert_{L^2 (S_r)}^2 + \Vert \Omega^R_{(xz)} f \Vert_{L^2 (S_r)}^2 + \Vert \Omega_{(y z)} f \Vert_{L^2 (S_r)}^2 \right ).
	\end{equation*}
	for any $f \in \mathcal{C}^\infty(S_r)$. Here, we denoted $\Omega^R_{(xy)} := R^{-1}\Omega_{(xy)}$, and $\Omega^R_{(xz)} := R^{-1} \Omega_{(xz)}$.
	The claim is then easily obtained by the trace lemma in the $r$-direction (Lemma~\ref{lem:radialtrace}), noting that, for a smooth function $f$, we have the pointwise inequality $|\p_r f| \leq C |\p f|$.
\end{proof}

Now, we must also get estimates using the $x$ translation invariant hyperboloids.
Recall the hyperboloidal coordinates $(\tau,\alpha,x,\varphi)$ introduced in display~\eqref{eq:xflathyp}. Recall furthermore that we defined $\rho := \sqrt{y^2 +z^2}$.
Then, we have the following lemma, which is used in Section~\ref{sub:linfpsiij}.

\begin{lemma}\label{lem:sobhyp}
    There is a positive constant $C$ such that the following inequality holds, for all $(t,x,y,z)$ such that $t\geq 1$ and $\rho = \sqrt{y^2+z^2 }\leq \frac{1}{10} t$:
    \begin{equation}
        |f(t,x,y,z)| \leq  \frac C t  \sum_{I \in I^{\leq 3}_{\Ga^{(h)}}}\Vert  \der^I f\Vert_{L^2(H_\tau \cap \{\rho \leq \frac 1 {10} t\})}.
    \end{equation}
    Here, $h \in \{0,i,j\}$, and $\tau$ satisfies $\tau = \sqrt{t^2 - \rho^2}$.
\end{lemma}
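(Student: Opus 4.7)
The plan is to combine a one-dimensional Sobolev trace in the flat $x$-direction with a scale-invariant two-dimensional Sobolev estimate on the disc transverse to the $x$-axis inside $H_\tau$. This mirrors the strategy of Lemma~\ref{lem:sobspheres}, except that the circle of radius $\sim R$ there is replaced by the $x$-axis factor of $H_\tau$ and an $\mathbb{R}^2$-disc of radius $\sim \tau$.

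First, I would record three geometric observations. (a)~For a point on $H_\tau$, using $t = \sqrt{\tau^2 + \rho^2}$, the restriction $\rho \leq t/10$ is equivalent to $\rho \leq \tau/\sqrt{99}$, and throughout this region $t = \tau\cosh\alpha \sim \tau$; hence it suffices to prove the estimate with $\tau^{-1}$ in place of $t^{-1}$. (b)~In the intrinsic coordinates $(x, y, z)$ on $H_\tau$, the induced metric is
\[
g_{H_\tau} = dx^2 + dy^2 + dz^2 - t^{-2}(y\,dy + z\,dz)^2,
\]
which is uniformly equivalent to the Euclidean metric in the region since $\rho^2/t^2 \leq 1/100$; in particular, the induced volume form is comparable to $dx\,dy\,dz$. (c)~The intrinsic coordinate vector fields on $H_\tau$ satisfy the key identities
\[
\partial_x^{H_\tau} = \partial_x, \qquad t\,\partial_y^{H_\tau} = \Gamma^{(0)}_{(ty)}, \qquad t\,\partial_z^{H_\tau} = \Gamma^{(0)}_{(tz)},
\]
where $\partial_y^{H_\tau} = \partial_y + (y/t)\partial_t$ and $\partial_z^{H_\tau} = \partial_z + (z/t)\partial_t$. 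All three of $\partial_x$, $\Gamma^{(0)}_{(ty)}$, $\Gamma^{(0)}_{(tz)}$ lie in $\boldsymbol{T} \cup \boldsymbol{\Gamma} \subset \boldsymbol{\Gamma}^{(h)}$ for every $h \in \{0, i, j\}$.

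Next, I would carry out the two embeddings. Fix $P = (x_P, y_P, z_P) \in H_\tau \cap \{\rho \leq t/10\}$. A one-dimensional Sobolev trace in $x$ with a unit cutoff gives
\[
|f(P)|^2 \leq C \int_{|x - x_P| \leq 1}\bigl(|f|^2 + |\partial_x f|^2\bigr)(x, y_P, z_P)\,dx.
\]
For each fixed $x$, I would then apply a scale-invariant two-dimensional Sobolev embedding on the disc $D = \{y^2 + z^2 \leq \tau^2/99\}$, of radius $\sim \tau$:
\[
|g(y_P, z_P)|^2 \leq \frac{C}{\tau^2}\sum_{n_y + n_z \leq 2} \tau^{2(n_y + n_z)}\bigl\|(\partial_y^{H_\tau})^{n_y} (\partial_z^{H_\tau})^{n_z} g\bigr\|_{L^2(D,\,dy\,dz)}^2,
\]
applied to $g = f|_{H_\tau}(x, \cdot)$ and to $g = \partial_x f|_{H_\tau}(x, \cdot)$. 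Each factor $\tau\,\partial_y^{H_\tau}$ is converted into $(\tau/t)\,\Gamma^{(0)}_{(ty)}$ with $\tau/t$ uniformly bounded in the region, and similarly for $z$. Integrating over $|x - x_P| \leq 1$ and bounding this $L^2$ norm by the larger $L^2$ norm on all of $H_\tau \cap \{\rho \leq t/10\}$ yields the claim. The total number of derivatives is at most $1$ (from the $x$-trace) plus $2$ (from the 2D Sobolev), which accounts for the index $|I| \leq 3$.

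The main obstacle is to verify that the iteration of the conversion $\tau\,\partial_y^{H_\tau} = (\tau/t)\,\Gamma^{(0)}_{(ty)}$ costs no positive power of $\tau$. This holds because $\tau/t$ and all of its partial derivatives in $(x, y, z)$ are bounded by constants depending only on $\alpha_0 = \tanh^{-1}(1/10)$ throughout $\{\rho \leq t/10\}$, so that the commutators of $\Gamma^{(0)}_{(ty)}$, $\Gamma^{(0)}_{(tz)}$ with these smooth conversion coefficients produce only lower-order derivatives of $f$ with bounded coefficients. Replacing $\tau^{-1}$ by $t^{-1}$ using $t \sim \tau$ completes the proof.
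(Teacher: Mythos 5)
Your proof is correct and takes essentially the same approach as the paper: a scale-invariant two-dimensional Sobolev embedding on the disc $\{\rho \lesssim \tau\} \subset H_\tau$ combined with a one-dimensional trace in the flat $x$-direction, exploiting the identities $t\,\partial_y^{H_\tau} = \Gamma^{(0)}_{(ty)}$ and $t\,\partial_z^{H_\tau} = \Gamma^{(0)}_{(tz)}$ (the paper phrases this via the rescaled coordinates $\bar y = y/\tau$, $\bar z = z/\tau$, which yields $\partial_{\bar y} = (\tau/t)\Gamma^{(0)}_{(ty)}$, the same conversion factor). The only cosmetic differences are the order of the two embeddings and the use of a local cutoff trace in $x$ rather than the global fundamental-theorem-of-calculus trace of Lemma~\ref{lem:tracehyp}.
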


\begin{proof}[Proof of Lemma~\ref{lem:sobhyp}]
Consider coordinates $(\tau, \bar y, \bar z)$ such that
\begin{equation*}
    t = \tau \sqrt{1 + \bar y^2 + \bar z^2}, \qquad y = \tau \bar y, \qquad z = \tau \bar z.
\end{equation*}
We then note that the coordinate vector fields $\p_{\bar y}$ and $\p_{\bar z}$ are parallel to the Lorentz boosts $\Gamma_{(ty)}$ and $\Gamma_{(tz)}$:
\begin{equation}\label{eq:gammalocalcoord}
    \p_{\bar y} = \frac \tau t (y \p_t + t \p_y), \qquad \p_{\bar z} = \frac \tau t (z \p_t + t \p_z).
\end{equation}
We then consider the function
$$
\tilde f(\bar y, \bar z) := f(\tau \sqrt{1+\bar y^2 + \bar z^2},  \tau \bar y, \tau \bar z).
$$
We then use the following version of the Sobolev embedding, valid for all $(\bar y, \bar z)$ in the ball $\tilde B$, defined as the set where $\bar y^2 + \bar z^2 \leq 1/99$:
\begin{equation}\label{eq:forhyps}
|\tilde f(\bar y, \bar z)|^2 \leq C\big( \Vert \tilde f\Vert^2_{L^2(\tilde B)} + \Vert  \p^2_{\bar y}\tilde f\Vert^2_{L^2(\tilde B)} + \Vert  \p^2_{\bar z}  \tilde f\Vert^2_{L^2(\tilde B)}\big).
\end{equation}
The claim then follows by changing variables in the integrals appearing in display~\eqref{eq:forhyps}, using the expression~\eqref{eq:gammalocalcoord}, and applying the trace lemma in the $x$-direction (Lemma~\ref{lem:tracehyp}). Finally, the restriction $\bar y^2 + \bar z^2 \leq 1/99$ translates to $t^2 \leq 100/99\,  \tau^2$, which implies $\rho \leq \frac{1}{10} t$.
\end{proof}

\subsection{Sobolev inequalities for the nonlinear equation}
Finally, we turn to the $L^\infty$ estimates needed to close the bootstrap argument for the nonlinear equation in the proof of Theorem~\ref{thm:nonlinear}. In this case, we note that all the Lorentz vector fields will introduce $R$-weights on initial data. For this purpose, we are going to be using the vector fields $\boldsymbol{K}_R$, as defined in display~\eqref{eq:krcenter}. All of these vector fields have $\frac 1 R$ weights. However, following the discussion above in Section~\ref{subsub:rweightedintro}, it is too wasteful to naively use the classical Klainerman--Sobolev inequality introducing these weights. Therefore, we shall now reprove the Klainerman--Sobolev inequality in this modified setting, being careful to keep track of the $R$ weights. The first result we prove is suitable to gain additional weights in a region of large $r$-coordinate. This estimate is used in Section~\ref{sub:linfpsiij}.

\begin{lemma}\label{lem:bettersob1}
Let $f:\R^{1+3} \to \R$ be a smooth function. Then, we have the estimate
\begin{equation}\label{eq:krsobolev}
    \begin{aligned}
        |f(t,r,\theta, \varphi)|^2 \le C { R^2 \over {(1+r)^2}} \sum_{I \in I^{\leq 3}_{\boldsymbol{K}_R}}\Vert  \der^I f \Vert^2_{L^2(\Sigma_t)}.
    \end{aligned}
\end{equation}
Here, we used the definition of the set $\boldsymbol{K}_R$ in display~\eqref{eq:krcenter}.
\end{lemma}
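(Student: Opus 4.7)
The plan is to adapt the argument of Lemma~\ref{lem:sobspheres}, with the key difference that every rotation in $\boldsymbol{K}_R$ now carries a factor $R^{-1}$. The main subtlety is to avoid picking up $R^4$ from a naive two-derivative Sobolev embedding on the sphere; we instead extract exactly one factor of $R$ from each of the two angular directions by using Cauchy--Schwarz and AM--GM tightly.

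First, by the rotational invariance of the set $\boldsymbol{K}_R$---rotations in $\R^3$ permute the three generators $R^{-1}\Omega^{(0)}_{(ab)}$ among themselves and fix translations, the scaling vector field, and the time-translation---together with rotation invariance of the measure on $\Sigma_t$, we may rotate the coordinate axes so that the evaluation point has polar angle $\theta$ bounded away from $0$ and $\pi$. Fix a smooth cutoff $\chi(\theta)$ vanishing near the poles and equal to $1$ on $[\pi/3,2\pi/3]$.

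Second, run the fundamental-theorem-of-calculus argument in $\theta$ exactly as in the proof of Lemma~\ref{lem:sobspheres}. Using $\chi(0)=0$ and writing $\partial_\theta = R\cdot(R^{-1}\partial_\theta)$, Cauchy--Schwarz followed by AM--GM yields
\begin{equation*}
\int_0^{2\pi}|\chi f|^2(r,\theta,\varphi)\,d\varphi \le \frac{CR}{r^2}\bigl(\|f\|^2_{L^2(S_r)} + \|R^{-1}\partial_\theta f\|^2_{L^2(S_r)}\bigr).
\end{equation*}
For the $\varphi$-direction, rather than the naive $H^1(\mathbb{S}^1)\hookrightarrow L^\infty$ embedding (which would produce $R^2$ instead of $R$), we use the sharp inequality
\begin{equation*}
|g(\varphi)|^2 \le C\|g\|_{L^2(\mathbb{S}^1)}\bigl(\|g\|_{L^2(\mathbb{S}^1)}+\|\partial_\varphi g\|_{L^2(\mathbb{S}^1)}\bigr),
\end{equation*}
applied to $g = \chi f$ and to $g = \chi R^{-1}\Omega^{(0)}_{(yz)}f$; writing $\partial_\varphi = R\cdot R^{-1}\Omega^{(0)}_{(yz)}$ and applying AM--GM extracts one further factor of $R$. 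Combining with the $\theta$-estimate yields
\begin{equation*}
|f(t,r,\theta,\varphi)|^2 \le \frac{CR^2}{r^2}\sum_{I\in I^{\le 2}_{\boldsymbol{\Omega}^{(0)}_R}}\|\der^I f\|^2_{L^2(S_r)}.
\end{equation*}

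Third, convert the $L^2(S_r)$ norm into an $L^2(\Sigma_t)$ norm via the one-dimensional radial trace lemma (Lemma~\ref{lem:radialtrace}), which costs one extra unit derivative. Since $\partial_r$ is a combination of the unweighted translations in $\boldsymbol{T}\subset\boldsymbol{K}_R$ with bounded coefficients $x_i/r$, no further $R$-weight appears, and we arrive at three derivatives in total on the right-hand side, matching $|I|\le 3$. The regime $r\le 1$ is handled separately: the prefactor $R^2/(1+r)^2$ is then bounded below by $R^2/4 \ge 1/4$, so the estimate reduces at once to the classical three-dimensional Sobolev embedding $H^2(\R^3)\hookrightarrow L^\infty(\R^3)$ with two translations from $\boldsymbol{K}_R$. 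The principal difficulty is precisely this $R$-bookkeeping: each of the two angular derivatives must be arranged to contribute linearly---not quadratically---in $R$ to the prefactor, which is the reason for the sharp Cauchy--Schwarz-based estimates used in the second step.
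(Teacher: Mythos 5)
Your proof is correct, and it usefully sharpens the paper's own (labeled) sketch. The paper says only to ``replace $\partial_\varphi$ by $\partial_\varphi/R$'' in the proof of Lemma~\ref{lem:sobspheres} and to ``divide by a factor of $R$,'' but a literal reading of that instruction would re-use the naive $\mathbb{S}^1$-Sobolev embedding $\Vert g\Vert_{L^\infty(\mathbb{S}^1)}^2 \le C(\Vert g\Vert_{L^2}^2 + \Vert\partial_\varphi g\Vert_{L^2}^2)$ from the proof of Lemma~\ref{lem:sobspheres}; after writing $\partial_\varphi = R\cdot(R^{-1}\partial_\varphi)$ this costs $R^2$, and combined with the $R/r^2$ already coming from the $\theta$-direction one arrives at $R^3/r^2$ rather than the claimed $R^2/(1+r)^2$. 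Your replacement of the naive embedding by the interpolation $|g(\varphi)|^2\le C\Vert g\Vert_{L^2(\mathbb{S}^1)}(\Vert g\Vert_{L^2(\mathbb{S}^1)}+\Vert\partial_\varphi g\Vert_{L^2(\mathbb{S}^1)})$ (which is, in effect, the same bilinear FTC-plus-Cauchy--Schwarz argument the paper already uses in the $\theta$-direction) is exactly what is needed: after multiplying and dividing by $R$ and applying AM--GM, the $\varphi$-direction costs only one $R$, giving $R^2/r^2$. The remaining ingredients---invoking the rotational invariance of $\boldsymbol{K}_R$ and of $L^2(\Sigma_t)$ to move the evaluation point away from the poles of the chosen spherical coordinates, converting $L^2(S_r)$ to $L^2(\Sigma_t)$ via Lemma~\ref{lem:radialtrace} at the cost of one unweighted translation, and disposing of the region $r\le 1$ via the classical $H^2(\R^3)\hookrightarrow L^\infty$ embedding using only translations in $\boldsymbol{K}_R$---are all correct. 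One small notational point: where you write the intermediate estimate with a sum over $I\in I^{\le 2}_{\boldsymbol{\Omega}^{(0)}_R}$, note that the paper's $\boldsymbol{\Omega}^{(0)}_R$ contains the \emph{unweighted} $\Omega_{(yz)}$; what you actually use (and intend) is the fully $R^{-1}$-weighted set of rotations, which is the one contained in $\boldsymbol{K}_R$, so the final summation over $I^{\le 3}_{\boldsymbol{K}_R}$ is correct.
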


\begin{proof}[Sketch of proof of Lemma~\ref{lem:bettersob1}]
  We note that this can be proven in exactly the same way as Lemma~\ref{lem:sobspheres} above by appropriately replacing every occurrence of $\p_\varphi$ with $\p_\varphi/R$ (we note that in this case, we also think of $\p_\varphi$ as introducing a bad $R$-weight), upon dividing all inequalities by a factor of $R$. When we take the square root, the final inequality we obtain is thus worse by a factor of $R$.
\end{proof}

For $r$ very small, we need the following estimate, whose proof follows the proof of the Klainerman--Sobolev inequality in the analogous region (see Section 9 of~\cite{jonathannotes} and also \cite{sogge}). This estimate is used in Section~\ref{sec:mainproof}, when we prove the main theorem.

\begin{lemma}\label{lem:bettersob2}
Let $f:\R^{1+3} \to \R$ be a smooth function. We fix a smooth, positive, even function $\chi : \R \rightarrow \R$ with $\chi = 1$ for $|x| \le {4 \over 5}$ and with $\chi = 0$ for $|x| \ge {9 \over 10}$. Then, there exists a constant $C >0 $ such that the following estimate holds:
\begin{equation}\label{eq:sob32}
    \begin{aligned}
        \left |\chi \left ({r \over t} \right ) f(t,r,\theta, \varphi) \right | \le {C R^{{3 \over 2}} \over t^{{3 \over 2}}} \sum_{I \in I^{\leq 3}_{\boldsymbol{K}_R}}\Vert  \der^I f \Vert_{L^2(\Sigma_t)}.
    \end{aligned}
\end{equation}
Moreover, as a result of this, in the region where $t \ge c R$ where $c$ is some constant, we have that
\begin{equation}\label{eq:sob11}
    \begin{aligned}
        \left |\chi \left ({r \over t} \right ) f(t,r,\theta, \varphi) \right | \le {C R \over t} \sum_{I \in I^{\leq 3}_{\boldsymbol{K}_R}}\Vert  \der^I f \Vert_{L^2(\Sigma_t)}. 
    \end{aligned}
\end{equation}
\end{lemma}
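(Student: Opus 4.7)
The plan is to carry out a version of the classical interior Klainerman--Sobolev argument (see \cite{sogge} and Section~9 of \cite{jonathannotes}), with the size of the Sobolev ball chosen to extract the precise $R^{3/2}$-weight. I will first establish a pointwise inequality valid in the interior region and then feed it into a rescaled 3D Sobolev embedding on a ball whose radius is carefully tuned to $R$.

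The starting point is the algebraic identity $(t^2-r^2)\partial_t = tS^{(0)} - x^i\Gamma^{(0)}_{(tx^i)}$, verified directly from the definitions in Section~\ref{sec:vectors}. In the region where $r/t \leq 19/20$ (slightly larger than the support of $\chi(r/t)$), one has $t^2 - r^2 \gtrsim t^2$; since each of $S^{(0)}$ and $\Gamma^{(0)}_{(tx^i)}$ equals $R$ times a vector field in $\boldsymbol{K}_R$, this immediately yields
\begin{equation*}
    |\partial_t f|(t,y) \leq \frac{CR}{t}\sum_{V \in \boldsymbol{K}_R}|Vf|(t,y)
\end{equation*}
for $|y|/t \leq 19/20$. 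Combining with $\partial_i = t^{-1}\Gamma^{(0)}_{(tx^i)} - (x^i/t)\partial_t$ gives the analogous bound for every spatial derivative, and iterating (absorbing commutator terms, since $[V,\partial_j]$ for $V \in \boldsymbol{K}_R$ lies in $R^{-1}\boldsymbol{K}_R$) yields, for every multi-index $\alpha$ of ordinary partial derivatives,
\begin{equation*}
    |\partial^\alpha f|(t,y) \leq C\Big(\frac{R}{t}\Big)^{|\alpha|}\sum_{I \in I^{\leq |\alpha|}_{\boldsymbol{K}_R}}|\der^I f|(t,y).
\end{equation*}

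Next, I would apply the rescaled 3D Sobolev embedding $H^2(B(0,1))\hookrightarrow L^\infty$: writing $z = (y-x)/L$, one obtains for any $L > 0$
\begin{equation*}
    |f(t,x)|^2 \leq C\left(L^{-3}\|f\|_{L^2(B(x,L))}^2 + L^{-1}\|\nabla f\|_{L^2(B(x,L))}^2 + L\|\nabla^2 f\|_{L^2(B(x,L))}^2\right).
\end{equation*}
The critical choice is $L = t/R$. Assuming $R \geq 20$ (the bounded case $1 \leq R \leq 20$ is handled by the classical Klainerman--Sobolev inequality, since $R^{3/2}$ is then just a constant), this ensures $L \leq t/20$, so $B(x,L)$ lies in the region where the pointwise bound applies. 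Plugging in, all three terms come out to size $(R/t)^3 \sum_{|I|\leq 2}\|\der^I f\|^2_{L^2(\Sigma_t)}$: the first is $L^{-3}=(R/t)^3$, the second is $L^{-1}(R/t)^2=(R/t)^3$, and the third is $L(R/t)^4=(R/t)^3$. Taking a square root gives \eqref{eq:sob32}, and \eqref{eq:sob11} follows immediately by writing $R^{3/2}/t^{3/2} = (R/t)^{1/2} \cdot R/t \leq c^{-1/2}R/t$ when $t \geq cR$.

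The main technical obstacle is the bookkeeping of commutator terms when iterating the pointwise bound to second-order derivatives uniformly over the Sobolev ball $B(x,L)$. Fortunately, each commutator $[V,\partial_j]$ for $V \in \boldsymbol{K}_R$ produces an $R^{-1}$-weighted element of $\boldsymbol{K}_R$, which is strictly better than the leading order $R/t$ factor and can thus be absorbed without loss. A secondary point requiring care is the verification that the Sobolev radius $L = t/R$ really does remain bounded above by $t/20$ in the relevant regime, and below by $1$ when $t \geq R$, so that the classical Sobolev embedding applies on a ball of admissible scale.
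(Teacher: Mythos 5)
Your proof is correct but takes a genuinely different route from the paper's. The paper applies the Gagliardo--Nirenberg interpolation inequality $\Vert g\Vert_{L^\infty(\R^3)}\leq C\Vert \partial^2 g\Vert_{L^2(\R^3)}^{3/4}\Vert g\Vert_{L^2(\R^3)}^{1/4}$ to $g=\chi(r/t)f$, combined with the interior pointwise bound on $|\partial^2 g|$ by $(R/|t-r|)^2$ times $\boldsymbol{K}_R$-derivatives (invoking Lemma~9.6 of~\cite{jonathannotes}) and the fact that $|t-r|\gtrsim t$ on the support of $\chi(r/t)$; the exponents $3/4$ and $1/4$ automatically produce the $(R/t)^{3/2}$ weight. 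You instead rescale the $H^2\hookrightarrow L^\infty$ Sobolev embedding to a ball of radius $L$ and observe that the choice $L=t/R$ makes the three resulting terms balance. The underlying pointwise content --- bounding $|\partial^\alpha f|$ by $(R/t)^{|\alpha|}$ times $\boldsymbol{K}_R$-derivatives in the interior of the cone --- is the same in both; you re-derive it from the scaling/boost decomposition of $\partial_t$ and $\partial_i$ rather than citing the lecture notes, and the commutator bookkeeping you carry out ($[V,\partial_j]$ for $V\in\boldsymbol{K}_R$ being $R^{-1}$ times a translation) is exactly what makes the iteration to second order close. The interpolation route is slightly shorter; your ball-radius optimization is more elementary and self-contained, and either works. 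One remark: your worry that $L$ must stay bounded below by $1$ is not a genuine constraint, since the rescaled Sobolev embedding holds for any $L>0$; the only geometric requirement is $L\leq t/20$ (so that the ball stays where the pointwise bound applies), which you correctly verify.
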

\bp
We recall the following Sobolev inequality on $\R^3$, for smooth and compactly supported functions $g$:
\begin{equation*}
    \Vert g\Vert_{L^\infty(\R^3)}\leq C \Vert \p^2 g \Vert^{\frac 34}_{L^2(\R^3)}  \Vert g \Vert^{\frac 14}_{L^2(\R^3)}.
\end{equation*}
Now, by Lemma~9.6 in the lecture notes~\cite{jonathannotes}, we have that
\begin{equation*}
    |\p^2 g| \leq \frac C {|t-r|^2} \sum_{I \in I^{\leq 2}_{\boldsymbol{K}_R}} |\der^I g|.
\end{equation*}
We then combine the previous two displays, choosing $g(t,r,\theta,\varphi) = \chi\big( \frac r t \big) f(t,r,\theta,\varphi)$. We then apply the chain rule, and use the fact that in the region considered we have $u 
\geq ct$ for some positive constant $c$. Upon an application of H\"older's inequality, we conclude.
\ep

We finally recall the classical estimate by Klainerman, appropriately modified to be used with $R$-weighted vector fields:
\begin{lemma}[Classical Klainerman--Sobolev inequality with $R$-weights]\label{lem:classicalks}
There exists a constant $C > 0$ such that, for all $f$ smooth functions on $\R^{1+3}$, parametrized by coordinates $(t, r, \theta, \varphi)$, and for all $R \geq 1$, we have the following inequality:
\begin{equation}\label{eq:ks1}
    (1+r+t)(1+|t-r|)^{\frac 12} |f(t,r,\theta, \varphi)|\leq C R^{2}\sum_{I \in \boldsymbol{K}^{\leq 2}_R}\Vert \der^I f \Vert_{L^2(\Sigma_t)}.
\end{equation}
\end{lemma}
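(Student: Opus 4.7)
The plan is to reduce directly to the standard (unweighted) Klainerman--Sobolev inequality and then track how many powers of $R$ are introduced when passing from the classical Killing generators $\boldsymbol{K}$ to the $R$-renormalized generators $\boldsymbol{K}_R$. First I would invoke the classical estimate (see, e.g., \cite{Klainerman1985} or Section~9 of \cite{jonathannotes}), which asserts that for any sufficiently smooth $f$ on $\R^{1+3}$ decaying at spatial infinity,
\begin{equation*}
(1+r+t)(1+|t-r|)^{1/2}\,|f(t,r,\theta,\varphi)| \;\leq\; C \sum_{J \in I^{\leq 2}_{\boldsymbol{K}}} \Vert \der^J f \Vert_{L^2(\Sigma_t)}.
\end{equation*}
This is the pointwise estimate produced by the vector field method with the untwisted Killing generators and the scaling field.

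Second, I would compare term by term. By the definition of $\boldsymbol{K}_R$ in \eqref{eq:krcenter}, every element of $\boldsymbol{K}$ is either a translation (already contained in $\boldsymbol{K}_R$) or equals $R$ times an element of $\boldsymbol{K}_R$ (in the case of a Lorentz boost, rotation, or scaling field). Hence, for any ordered list $J \in I^k_{\boldsymbol{K}}$ with $k\leq 2$, one has
\begin{equation*}
\der^J f \;=\; R^{\,m(J)}\, \der^{J'} f,
\end{equation*}
where $J' \in I^k_{\boldsymbol{K}_R}$ is obtained from $J$ by replacing each Lorentz/scaling factor with its $R^{-1}$-normalized counterpart, and $m(J) \in \{0,1,2\}$ is the number of Lorentz/scaling entries in $J$. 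Using the assumption $R\geq 1$ we bound $R^{m(J)} \leq R^2$ uniformly, so
\begin{equation*}
\sum_{J \in I^{\leq 2}_{\boldsymbol{K}}} \Vert \der^J f \Vert_{L^2(\Sigma_t)} \;\leq\; C R^{2} \sum_{I \in I^{\leq 2}_{\boldsymbol{K}_R}} \Vert \der^I f \Vert_{L^2(\Sigma_t)}.
\end{equation*}
Substituting into the classical estimate yields exactly \eqref{eq:ks1}.

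There is no substantive analytic obstacle here beyond carefully accounting for the powers of $R$; the entire argument is a bookkeeping reduction to the known Klainerman--Sobolev embedding. The only point to watch is that one cannot do better than $R^2$ in general, because the worst case arises when all two derivatives fall on Lorentz/scaling generators (which is the case saturating the bound). If one wanted to refine the lemma with a sharper $R$-weight in favourable regions, one would need to distinguish the number of Lorentz/scaling factors and prove several variants; but for the global estimate \eqref{eq:ks1}, the uniform bound $R^{m(J)} \leq R^2$ suffices.
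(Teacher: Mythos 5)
Your argument is correct and matches the paper's own proof: both invoke the classical Klainerman--Sobolev inequality with the un-normalized Killing generators and then observe that each Lorentz, rotation, or scaling field in $\boldsymbol{K}$ is $R$ times the corresponding element of $\boldsymbol{K}_R$, so at most two derivatives cost at most $R^2$ when $R\geq 1$. The additional remark that the $R^2$ cannot be improved in general is a fair observation but not needed for the lemma.
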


\begin{proof}[Proof of Lemma~\ref{lem:classicalks}]
The result without $R$-weights is classical, a proof can be found for example in~\cite{jonathannotes} and also in \cite{sogge}:
\begin{equation*}
    (1+r+t)(1+|t-r|)^{\frac 12} |f(t,r,\theta, \varphi)|\leq C \sum_{I \in \boldsymbol{K}^{\leq 2}}\Vert \der^I f \Vert_{\Sigma_t}.
\end{equation*}
Recall now that every element in $\boldsymbol{K}$ can be written as $R$ multiplying an element in $\boldsymbol{K}_R$. This concludes the proof, since $R \geq 1$.
\end{proof}

\section{Main Estimates}\label{sec:linearest}

\subsection{Initial data decomposition}\label{sub:constructinit}
In this section, we decompose the initial data in Theorem~\ref{thm:main} in a suitable manner, by introducing cutoff functions such that the diameter of the support of every individual cutoff is comparable to the parameter $R$.

Let us first suppose, without loss of generality, that the configuration of points $\Pi$ introduced in the statement of Theorem~\ref{thm:main} is such that the following condition holds:
\begin{equation}\label{eq:sepcond}
\min_{\substack{i,j \in \{1, \ldots, N\}\\i\neq j}} |p_i - p_j| \geq 2.
\end{equation}
Let us start from a smooth cutoff function $\chi_0: \R \to \R$, such that $|\chi_0(x)|\leq 1$, and such that
\begin{equation*}
    \chi_0(x) := \left\{ 
        \begin{array}{c}
            1 \quad \text{for}\quad |x|\leq  1 / 4, \\
            0 \quad \text{for}\quad |x| \geq 1 / 2.
        \end{array}
    \right.
\end{equation*}
We then recall the initial data as defined in the statement of Theorem~\ref{thm:main}, and we set, recalling that $w_i = R \cdot p_i$, and that here we consider $x \in \R^3$,
\begin{equation}\label{eq:defcutoffbumps}
    \tilde \phi^{(0)}_{i} := \chi_0\left(\frac{x-w_i}{R}\right) \phi^{(0)}_{i}(x-w_i), \qquad \tilde \phi^{(1)}_{i} := \chi_0\left(\frac{x-w_i}{R}\right) \phi^{(1)}_{i}(x-w_i).
\end{equation}
Recall that here $i \in \{1, \ldots, N\}$. Condition~\eqref{eq:sepcond} now ensures that, for $i \neq j$, $i,j \in \{1, \ldots, N\}$, the support of $\tilde \phi^{(0)}_{i}$ is disjoint from the support of $\tilde \phi^{(0)}_{j}$, and similarly the support of $\tilde \phi^{(1)}_{i}$ is disjoint from that of $\tilde \phi^{(1)}_{j}$.

This decomposition corresponds to localizing each of the pieces of data around the point $w_i$, where it is centered. We still have to consider the remainder, which we define as
\begin{equation}\label{eq:defcutoffrest}
    \tilde \phi^{(0)}_{0} := \phi^{(0)} - \sum_{i=1}^N \tilde \phi^{(0)}_{i}, \qquad \tilde \phi^{(1)}_{0} := \phi^{(1)} - \sum_{i=1}^N \tilde \phi^{(1)}_{i}.
\end{equation}

We now note that the following lemma holds true.

\begin{lemma}\label{lem:decomposition}
    There exists a universal constant $C>0$, and a constant $C(N, d_\Pi)$ depending on $N$ and $d_\Pi$ (which has been defined in equation~\eqref{eq:dpidef}), such that the following holds. Let $\tilde \phi^{(0)}_{i}$, $\tilde \phi^{(1)}_{i}$, $i \in \{0, \ldots, N\}$ be constructed from data coming from the statement of Theorem~\ref{thm:main}, according to formulas~\eqref{eq:defcutoffbumps}~and~\eqref{eq:defcutoffrest}. In particular, we are assuming that each of the $\phi^{(0)}_{i}$ and $\phi^{(1)}_{i}$ satisfies bounds~\eqref{eq:boundsbumps}. Then, the following bounds hold for $i \in \{0,\ldots,N\}$ and for $k$ non-negative integer, $k \leq n+7$:
	\begin{align}\label{eq:boundsit}
	    &|\widehat{\partial}^{k} \tilde{\phi}^{(0)}_{i}|\leq \frac{C \varepsilon} {(1+r_i)^{k+2}} \quad &\text{for } k \geq 0, \qquad &|\widehat{\partial}^{k-1} \tilde \phi^{(1)}_{i}|\leq \frac{ C \varepsilon}{ (1+r_i)^{k+2}}, \quad &\text{for }k \geq 1,\\
	    &|\widehat{\partial}^{k} \tilde{\phi}^{(0)}_{0}|\leq \frac{C(N, d_\Pi) \varepsilon}{ (1+r)^{k+2}} \quad &\text{for } k \geq 0, \qquad &|\widehat{\partial}^{k-1} \tilde \phi^{(1)}_{0}|\leq \frac{C(N, d_\Pi)\varepsilon}{ (1+r)^{k+2}}, \quad &\text{for }k \geq 1,\label{eq:boundsphizero}\\
	    &|\widehat{\partial}^{k} \tilde{\phi}^{(0)}_{0}|\leq \frac{C(N, d_\Pi) \varepsilon} {R^{k+2}} \quad &\text{for } k \geq 0, \qquad &|\widehat{\partial}^{k-1} \tilde \phi^{(1)}_{0}|\leq \frac{C(N, d_\Pi) \varepsilon} {R^{k+2}}, \quad &\text{for }k \geq 1. \label{eq:boundzeroloc}
	\end{align}
	Here, for $i \geq 1$, the coordinate $r_i$ is defined as the radial distance to the point where the $i$-th piece of data is localized, i.~e.~the point $w_i = R \cdot p_i$:
	$$
	r_i = |w_i -  x|, \quad \text{for} \quad  x \in \R^3.
	$$
	In particular, upon possibly restricting $\varepsilon_0$ to a smaller value depending on $N$ and $d_\Pi$, all the following initial value problems admit a global-in-time solution:
	\begin{equation}\label{eq:bumps2}
    \begin{aligned}
    &\Box \phi_{i} + F (d \phi_i, d^2 \phi_i) =  G (d \phi_i, d \phi_i),\\
    & \phi_i|_{t=0} = \tilde \phi^{(0)}_{i},\\
    & \p_t \phi_i|_{t=0} = \tilde \phi^{(1)}_{i},
    \end{aligned}
\end{equation}
for $i \in \{0, \ldots, N\}$.

Furthermore, every such solution $\phi_i$ for $i \in \{1, \ldots, N\}$ satisfies the conclusions from Lemma~\ref{prop:decphii}: we have the following bounds, for $i \in \{1, \ldots, N\}$, and for all multi-indices $I \in I^{\leq n}_{ \Ga^{(i)}}$:
\begin{equation}\label{eq:asymptotici}
|\bar \p^{(i)} \der^{I} \phi_i| \leq C(N, d_\Pi) \varepsilon \frac{1}{(1+r_i^2)(1+|u_i|)^\delta}, \quad |\p \der^{I} \phi_i| \leq C(N, d_\Pi) \varepsilon \frac{1}{(1+v_i)(1+|u_i|)^{1+\delta}}.
\end{equation}
In addition, we have the following uniform $L^2$ estimates, valid for all $i \in \{1, \ldots, N\}$ and all $I \in I^{\leq n+6}_{ \Ga^{(i)}}$:
\begin{equation}\label{eq:decl2phii}
\Vert \p \der^I \phi_i \Vert_{L^2(\Sigma_t)} \leq C(N, d_\Pi) \varepsilon.
\end{equation}
Finally, we have the improved bounds for $\phi_0$, valid for all $K_1 \in I^{\leq n+6}_{\boldsymbol{K}^{(i)}}$, and all $K_2 \in I^{\leq n}_{\boldsymbol{K}^{(i)}}$:
\begin{align}
&\Vert \p \der^{K_1} \phi_0 \Vert_{L^2(\Sigma_t)} \leq C(N,d_\Pi) \eps R^{-\frac 32},\label{eq:l2impphi0} \\
&|\bar \p^{(i)} \der^{K_2} \phi_0| \leq C(N,d_\Pi) \varepsilon \frac{R^{- \frac 32}}{(1+r_i^2)(1+|u_i|)^\delta},\\
& |\p \der^{K_2} \phi_0| \leq C(N,d_\Pi) \varepsilon \frac{R^{-\frac 32}}{(1+v_i)(1+|u_i|)^{1+\delta}}.\label{eq:linfimpphi0}
\end{align}
\end{lemma}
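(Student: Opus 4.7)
The plan is to establish the four families of claims in four stages, exploiting the fact that the problem is translation invariant and that the cutoff $\chi_0$ is at scale $R$.

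\emph{Step 1 (Bounds on the localized pieces).} Writing $\tilde{\phi}_i^{(0)}(x)=\chi_0((x-w_i)/R)\,\bar{\phi}_{i}^{(0)}(x-w_i)$ for $i\geq 1$ and applying Leibniz, each derivative of the cutoff contributes a factor $R^{-1}$ supported on the annulus $\{R/4\leq r_i\leq R/2\}$. On that support $R^{-1}\lesssim (1+r_i)^{-1}$, so the hypothesis~\eqref{eq:boundsbumps} immediately yields $|\widehat{\partial}^{k}\tilde{\phi}_i^{(0)}|\lesssim \varepsilon(1+r_i)^{-k-2}$, and similarly for $\tilde{\phi}_i^{(1)}$; this is~\eqref{eq:boundsit}. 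For the remainder, $\tilde{\phi}_0^{(0)}=\sum_{i=1}^N(1-\chi_0((x-w_i)/R))\bar{\phi}_i^{(0)}(x-w_i)$ is supported on $\bigcup_i\{r_i\geq R/4\}$. Each summand is bounded by $\varepsilon(1+r_i)^{-k-2}$, and on its support $(1+r_i)^{-k-2}\leq CR^{-k-2}$, yielding~\eqref{eq:boundzeroloc}. For~\eqref{eq:boundsphizero}, we note $r_i\leq r+R|p_i|\leq C(N,d_\Pi)(1+r)$ when $r\gtrsim R$, so $(1+r_i)^{-k-2}\lesssim(1+r)^{-k-2}$; for $r\lesssim R$, the uniform $R^{-k-2}$ bound is itself $\lesssim (1+r)^{-k-2}$.

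\emph{Step 2 (Global existence and asymptotics for $\phi_i$, $i\geq 1$).} Because~\eqref{eq:bumps2} is translation invariant, we may translate so that the data for $\phi_i$ is centered at the origin; the translated data satisfy exactly the hypothesis~\eqref{eq:decayinitial} of Lemma~\ref{prop:decphii} (with $r$ replacing $r_i$). Applying that lemma yields global existence of $\phi_i$, the pointwise estimates~\eqref{eq:asymptotici}, and the $L^2$ bound~\eqref{eq:decl2phii}, upon translating the decay coordinates back to those centered at $w_i$. The constant from Lemma~\ref{prop:decphii} is universal, so here the dependence on $N,d_\Pi$ only enters through the requirement that $\varepsilon_0$ be small.

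\emph{Step 3 (Improved $L^2$ bounds for $\phi_0$ at $t=0$).} The key gain of $R^{-3/2}$ comes from the support condition on the remainder. Using the decomposition $\tilde{\phi}_0^{(0)}=\sum_j(1-\chi_0((x-w_j)/R))\bar{\phi}_j^{(0)}(x-w_j)$, fix $V^{K_1}\in I^{\leq n+6}_{\boldsymbol{K}^{(i)}}$ and estimate term $j$. The coefficients of the vector fields in $\boldsymbol{K}^{(i)}$ at $t=0$ are polynomials in $(x-w_i)$, which we bound by $|x-w_j|+|w_j-w_i|\leq r_j+CR|p_i-p_j|\leq C(d_\Pi)(1+r_j)$ on the support $\{r_j\geq R/4\}$. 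Combined with~\eqref{eq:boundsbumps}, we obtain pointwise
\[
|V^{K_1}\widehat{\partial}\tilde{\phi}_0^{(0)}|+|V^{K_1}\tilde{\phi}_0^{(1)}|\leq C(N,d_\Pi)\varepsilon\sum_{j=1}^N (1+r_j)^{-3}\mathbb{1}_{\{r_j\geq R/4\}}.
\]
Squaring, integrating on $\Sigma_0$, and using $\int_{R/4}^\infty(1+r_j)^{-6}r_j^2\,dr_j\lesssim R^{-3}$ yields $\|\partial V^{K_1}\phi_0\|_{L^2(\Sigma_0)}\leq C(N,d_\Pi)\varepsilon R^{-3/2}$.

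\emph{Step 4 (Propagation of the $R^{-3/2}$ smallness).} This is the main step; we must verify that the argument of Lemma~\ref{prop:decphii} only uses the initial smallness parameter linearly, so that replacing $\varepsilon$ by $C(N,d_\Pi)\varepsilon R^{-3/2}$ throughout preserves all conclusions. Concretely, the $r^p$-estimates leading to~\eqref{eq:rp1pdeltasum} and~\eqref{eq:rpdeltafin} are bootstrapped under the smallness of the energy; the nonlinear error terms (estimated via the classical decay~\eqref{eq:decclassical}) all absorb into the LHS provided the boot energy is at most $C(N,d_\Pi)\varepsilon R^{-3/2}$, and the inhomogeneous contribution is quadratic or cubic in this small parameter. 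Thus the full argument of Lemma~\ref{prop:decphii} applied to $\phi_0$ yields~\eqref{eq:l2impphi0}--\eqref{eq:linfimpphi0}. The only subtlety is that Lemma~\ref{prop:decphii} is formulated for data localized at the origin, while the $V^{K_1}$ used for $\phi_0$ are based at an arbitrary $w_i$; however, by translation invariance, we may repeat the proof centered at any $w_i$, noting that the decay estimates in~\eqref{eq:asymptotici}, after translation, are the correct input. This completes the proof of Lemma~\ref{lem:decomposition}.
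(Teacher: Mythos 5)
Your proof follows essentially the same route as the paper: Leibniz on the cutoff for the localized pieces (Step 1), translation invariance plus Lemma~\ref{prop:decphii} for the $\phi_i$ with $i\geq 1$ (Step 2), the support condition $\{r_j\geq R/4\}$ for the $L^2$ gain on $\phi_0$ at $t=0$ (Step 3, including the needed observation that weights based at $w_i$ are $\lesssim C(d_\Pi)(1+r_j)$ on that support), and re-invoking Lemma~\ref{prop:decphii} to propagate the $R^{-3/2}$ factor (Step 4). One place to be slightly more careful than your phrasing suggests: the input to Lemma~\ref{prop:decphii} for $\phi_0$ is not a pointwise bound with an extra $R^{-3/2}$ (that fails), but the combination of the pointwise $(1+r_i)^{-k-2}$ decay and the support condition, which makes the \emph{$L^2$} data term — and hence the $r^p$ fluxes and the Klainerman--Sobolev outputs of the classical theory — scale like $\varepsilon R^{-3/2}$; the paper glosses over this in essentially the same way, so this is not a gap relative to the paper's own treatment.
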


\begin{proof}[Proof of Lemma~\ref{lem:decomposition}]
Let us first focus on the case $i \geq 1$. We have that, for a multi-index $I \in I^{k}_{\boldsymbol{T}_s}$, with $k \leq n+7$,
\begin{equation*}
\begin{aligned}
    &|\der^I \tilde \phi^{(0)}_{i}| \leq \sum_{J+K \subset I} \left|\der^J \chi_0\left(\frac{x-w_i}{R}\right)\right| \cdot  \left|\der^K \phi_{i}^{(0)}\left(x-w_i\right)\right|\\
    & \qquad \leq C \varepsilon \frac 1 {R^{|J|}} \frac 1 {r_i^{|K|+2}} \chi^{(|J|)}_0\left(\frac{x-w_i}{R}\right) \leq C \frac{\varepsilon}{(1+r_i)^{k+2}}.
\end{aligned}
\end{equation*}
This proves the claim~\eqref{eq:boundsit} for $\phi^{(0)}_{i}$, with $i\in \{1, \ldots, N\}$. The claim for $\phi^{(1)}_{i}$, $i \geq 1$ is analogous. The global existence statement and decay for $\phi_i$ with $i \neq 0$ (inequalities~\eqref{eq:asymptotici} and~\eqref{eq:decl2phii}) then follow readily from Lemma~\ref{prop:decphii}. The bounds for $\phi_i^{(1)}$ with $i \in \{ 1, \dots, N \}$ follow analogously.

We now need to show the improved estimates for $\phi_0$. We have the expression
\begin{equation*}
    \tilde\phi^{(0)}_{0} = \sum_{i=1}^N \Big(1- \chi_0\Big(\frac{x-w_i}{R}\Big) \Big) \phi^{(0)}_{i}.
\end{equation*}
Let us now focus only on one term in the sum, again with $I \in I^k_{\boldsymbol{T}_s}$:
\begin{equation*}
\Big| \der^I\Big(\Big(1- \chi_0\Big(\frac{x-w_i}{R}\Big) \Big) \phi^{(0)}_{i}\Big)\Big|.
\end{equation*}
Let us now set $L = 4|w_i| = 4R|p_i|$. If we restrict to the region $|x| \leq L$, we have the inequality:
\begin{equation*}
\begin{aligned}
    &\Big| \der^I \Big(\Big(1- \chi_0\Big(\frac{x-w_i}{R}\Big) \Big) \phi_{i}^{(0)}\Big)\Big|\\
    &\leq \sum_{J + K \subset I} \Big| \der^J \Big(1- \chi_0\Big(\frac{x-w_i}{R}\Big) \Big) \der^K\phi_{i}^{(0)}| \leq C(d_\Pi) \frac \varepsilon {R^{k+2}} \leq C(d_\Pi) \frac \varepsilon {(1+r)^{k+2}},
\end{aligned}
\end{equation*}
since in this region $r$ is at most $4R|p_i|$.

If instead we restrict to the region where $ r\geq 4 R|p_i|$, we note that $\chi_0\Big(\frac{x-w_i}{R}\Big)$ is identically $0$ in this region. Moreover, we have $r_i \geq |r - R|p_i|| \geq 3 R |p_i|$ Therefore, we have that $|r| \geq |r_i - R |p_i|| = r_i - R|p_i|\geq \frac 2 3 r_i$, and the claim follows readily:
\begin{equation*}
\Big| \der^I \Big(\Big(1- \chi_0\Big(\frac{x-w_i}{R}\Big) \Big) \phi_{i}^{(0)}\Big)\Big| =  \Big| \Big(1- \chi_0\Big(\frac{x-w_i}{R}\Big) \Big)\der^I\phi_{i}^{(0)}| \leq C(d_\Pi) \frac \varepsilon {(1+r_i)^{k+2}} \leq C(d_\Pi) \frac \varepsilon {(1+r)^{k+2}}.
\end{equation*}
Finally, summing all the contributions from the different $\phi_{i}^{(0)}$'s we conclude the proof of inequality~\eqref{eq:boundsphizero}. The bounds for $\tilde\phi^{(1)}_0$ follow analogously.

Note now that, upon following the same reasoning as above choosing $w_i$ as the center of our coordinate system, we have the bounds: 
\begin{equation*}
 |\widehat{\partial}^{k} \tilde{\phi}^{(0)}_{0}|\leq C(N, d_\Pi) \varepsilon (1+r_i)^{-k-2}, \qquad |\widehat{\partial}^{k-1} \tilde \phi^{(1)}_{0}|\leq C(N, d_\Pi) \varepsilon (1+r_i)^{-k-2},
\end{equation*}
valid for all $k \leq n+7$. We also note that $\Big(1- \chi_0\Big(\frac{x-w_i}{R}\Big) \Big) \phi_{i}^{(0)}$ and $\Big(1- \chi_0\Big(\frac{x-w_i}{R}\Big) \Big) \phi_{i}^{(1)}$ are supported outside of a ball of radius $R/4$ centered at $w_i$, which implies claim~\eqref{eq:boundzeroloc}.

In particular, this implies that for all $i \in \{1, \ldots, N\}$ and for all $K_1 \in I^{\leq n+2}_{\boldsymbol{K}^{(i)}}$, we have
$$
\Vert \p \der^{K_1} \phi_0 \Vert_{L^2(\Sigma_0)}\leq C(N,d_\Pi) \varepsilon R^{-\frac 32}.
$$
Note that some care is needed to derive this estimate as weighted vector fields adapted to the $i$-th piece of data may hit the $j$-th piece of data.

By Lemma~\ref{prop:decphii}, for every $i \in \{0, \ldots, N\}$, we then have the improved bounds for $\phi_0$, valid for all $K_1 \in I^{\leq n+6}_{\boldsymbol{K}^{(i)}}$, $K_2 \in I^{\leq n}_{\boldsymbol{K}^{(i)}}$:
\begin{equation}
\begin{aligned}
&\Vert \p \der^{K_1} \phi_0 \Vert_{L^2(\Sigma_t)} \leq C(N,d_\Pi) \eps R^{-\frac 32}, \\
&|\bar \p^{(i)} \der^{K_2} \phi_0| \leq C(N,d_\Pi) \varepsilon \frac{R^{- \frac 32}}{(1+r_i^2)(1+|u_i|)^\delta}, \quad |\p \der^{K_2} \phi_0| \leq C(N,d_\Pi) \varepsilon \frac{R^{-\frac 32}}{(1+v_i)(1+|u_i|)^{1+\delta}}.
\end{aligned}
\end{equation}
\end{proof}

\subsection{Derivation of the equation for the first iterate}\label{sub:firstiterate}

In this section, we derive the system satisfied by the difference
\begin{equation}\label{eq:psidefinition}
    \psi: = \phi - \sum_{i = 0}^N \phi_i,
\end{equation}
where each of the $\phi_i$ is the global solution to the following initial value problem:
\begin{equation}\label{eq:onebump}
\begin{aligned}
    &\Box \phi_i + F (d \phi_i, d^2 \phi_i) =  G (d \phi_i, d \phi_i), \\
    & \phi_i|_{t=0} = \tilde \phi^{(0)}_{i}, \\
    & \p_t \phi_i|_{t=0} = \tilde \phi^{(1)}_{i}.
\end{aligned}
\end{equation}
\begin{remark}
Note that the initial data for these $N+1$ auxiliary problems is $(\tilde \phi^{(0)}_{i}, \tilde \phi^{(1)}_{i})$ and it was constructed in Section~\ref{sub:constructinit}. It is not to be confused with the data in the statement of Theorem~\ref{thm:main}.
\end{remark}
We have the following lemma:
\begin{lemma}\label{lem:firstit}
    Let $N$ be a non-negative integer. Let furthermore $(\tilde \phi^{(0)}_{i}, \tilde \phi^{(1)}_{i})$  be a collection of smooth functions in $\R^3$, for $i \in \{0, \ldots, N\}$, as constructed in formulas~\eqref{eq:defcutoffbumps} and~\eqref{eq:defcutoffrest}. Recall that, by construction,
    $$
    \phi^{(0)}:= \sum_{i=0}^N \tilde \phi^{(0)}_{i}, \qquad \phi^{(1)} := \sum_{i=0}^N  \tilde \phi^{(1)}_{i}.
    $$
    Suppose that $\phi$ is the smooth solution to the following initial value problem:
	\begin{equation}\label{eq:bumpsfit}
    \begin{aligned}
    &\Box \phi + F (d \phi, d^2 \phi) = G (d \phi, d \phi),\\
    & \phi|_{t=0} =  \phi^{(0)},\\
    & \p_t \phi|_{t=0} =  \phi^{(1)}.
    \end{aligned}
\end{equation}
Suppose that $\phi_i$, $i \in \{0,\ldots,N\}$ is the smooth solution to the initial value problem~\eqref{eq:onebump}. Then, letting
\begin{equation}
    \psi := \phi - \sum_{i=0}^N \phi_i,
\end{equation}
we have that $\psi$ satisfies the following system:
\begin{equation}\label{eq:fordiff}
   \begin{aligned}
    &\Box \psi +F(d \psi, d^2 \psi)+ \sum_{\substack{i,j = 0, \ldots, N\\i \neq j}} F (d \phi_i, d^2 \phi_j) +\sum_{i=0}^N ( F(d \phi_i, d^2 \psi) + F(d \psi, d^2 \phi_i)) \\
    & \qquad = G(d \psi, d \psi) +\sum_{\substack{i,j = 0, \ldots, N\\i \neq j}} G (d \phi_i, d \phi_j) + \sum_{i=0}^N ( G(d \phi_i, d \psi) + G(d \psi, d \phi_i)),\\
    & \psi|_{t=0} = 0,\\
    & \p_t \psi|_{t=0} = 0.
\end{aligned}
\end{equation}
\end{lemma}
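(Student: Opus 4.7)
The proof is essentially a direct algebraic computation exploiting the multilinearity of the null forms $F$ and $G$. The plan is to expand $d\phi$ and $d^2\phi$ via the decomposition $\phi = \psi + \sum_{i=0}^N \phi_i$ in the equation satisfied by $\phi$, then subtract the equations satisfied by each $\phi_i$.

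First I would verify the initial conditions. Since each $\phi_i|_{t=0} = \tilde\phi^{(0)}_i$ and $\partial_t\phi_i|_{t=0} = \tilde\phi^{(1)}_i$, and since $\phi^{(0)} = \sum_{i=0}^N \tilde\phi^{(0)}_i$ and $\phi^{(1)} = \sum_{i=0}^N \tilde\phi^{(1)}_i$ by construction (Section~\ref{sub:constructinit}), it follows immediately that $\psi|_{t=0} = 0$ and $\partial_t\psi|_{t=0} = 0$.

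Next I would compute $\Box\psi = \Box\phi - \sum_{i=0}^N \Box\phi_i$, substitute in $\Box\phi = -F(d\phi, d^2\phi) + G(d\phi,d\phi)$ and $\Box\phi_i = -F(d\phi_i, d^2\phi_i) + G(d\phi_i, d\phi_i)$, and use multilinearity of $F$ and $G$ to expand
\begin{align*}
F(d\phi, d^2\phi) &= F\!\left(d\psi + \sum_i d\phi_i,\, d^2\psi + \sum_j d^2\phi_j\right) \\
&= F(d\psi, d^2\psi) + \sum_i F(d\phi_i, d^2\psi) + \sum_j F(d\psi, d^2\phi_j) \\
&\qquad + \sum_i F(d\phi_i, d^2\phi_i) + \sum_{i\neq j} F(d\phi_i, d^2\phi_j),
\end{align*}
and analogously for $G(d\phi, d\phi)$. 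The diagonal terms $\sum_i F(d\phi_i, d^2\phi_i)$ and $\sum_i G(d\phi_i, d\phi_i)$ precisely cancel the contribution from $\sum_i \Box\phi_i$, leaving only the mixed terms $\sum_{i\neq j}$ and the terms linear in $\psi$.

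Rearranging the resulting identity so that the $F$-type terms sit on the left-hand side and the $G$-type terms on the right-hand side reproduces~\eqref{eq:fordiff} verbatim. I do not anticipate any obstacle here: the argument is purely algebraic, requiring only the bilinearity (resp.\ trilinearity) of $G$ (resp.\ $F$) as forms with constant coefficients, together with the fact that the cutoff decomposition of the initial data was built so as to sum exactly to $(\phi^{(0)}, \phi^{(1)})$. No null structure, decay, or smallness assumption is used in this step; the null condition will enter only later when one estimates the inhomogeneities in~\eqref{eq:fordiff}.
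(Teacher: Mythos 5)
Your proof is correct and follows essentially the same approach as the paper: a direct multilinear expansion of the nonlinearity under the decomposition $\phi = \psi + \sum_{i=0}^N \phi_i$, followed by cancellation of the diagonal terms $\sum_i F(d\phi_i, d^2\phi_i)$ and $\sum_i G(d\phi_i, d\phi_i)$ against $\sum_i \Box\phi_i$. The paper chooses to expand $G(d\psi,d\psi)$ (assuming $F=0$ for brevity and noting the $F$ case is analogous), whereas you expand $F(d\phi,d^2\phi)$ and $G(d\phi,d\phi)$ directly and keep both terms; this is the same algebraic computation in a slightly different order and there is no substantive difference.
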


\begin{proof}[Proof of Lemma~\ref{lem:firstit}]
    The proof follows from a straightforward calculation. For simplicity, let's assume $F = 0$ (the proof in the other case being totally analogous). We start by calculating:
    \begin{equation*}
    \begin{aligned}
        &G(d \psi, d \psi) = G(d \phi -  \sum_{i=0}^N d \phi_i, d \phi -  \sum_{i=0}^N  d\phi_i) =  \\
        &\quad G(d \phi, d\phi) - \sum_{i=0}^N G(d \psi, d \phi_i) - \sum_{i=0}^N G(d \phi_i, d \psi) - \sum_{\substack{ i,j =0,\ldots,N\\ i\neq j}} G(d \phi_i, d \phi_j) - \sum_{i=0}^N G(d \phi_i, d \phi_i)\\
        & \quad = \Box \left (\phi - \sum_{i=0}^N \phi_i \right ) -  \sum_{i=0}^N G(d \psi, d \phi_i) - \sum_{i=0}^N G(d \phi_i, d \psi) - \sum_{\substack{ i,j =1,\ldots,N\\ i\neq j}} G(d \phi_i, d \phi_j).
    \end{aligned}
    \end{equation*}
From the fact that $\psi = \phi - \sum_{i=0}^N \phi_i$ the claim then follows readily. It is also evident that $\psi|_{t = 0}= 0$, and that $\p_t \psi|_{t = 0}= 0$.
\end{proof}

\subsection{Derivation of the equation for the second iterate}\label{sub:seconditerate}

In this section, we derive the system satisfied by the difference
\begin{equation}\label{eq:Psidefinition}
    \Psi : = \psi - \sum_{\substack{i,j = 0\\i\neq j}}^N \psi_{ij},
\end{equation}
where each of the $\psi_{ij}$'s is the global solution to the following initial value problem:
\begin{equation}\label{eq:pairs}
\begin{aligned}
    &\Box \psi_{ij} +  F(d \phi_i, d^2 \phi_j) + F(d \phi_j,d^2 \phi_i) = 2 G (d \phi_i, d \phi_j),\\
    & \psi_{ij}|_{t=0} = 0, \\
    & \p_t \psi_{ij}|_{t=0} = 0,
\end{aligned}
\end{equation}
valid for all $i,j \in \{0, \ldots, N\}$. 

\begin{remark}
    Note that the equation satisfied by $\psi_{ij}$ is formed taking equation~\eqref{eq:fordiff} and considering only the inhomogeneous contributions from $\phi_i$ and $\phi_j$.
\end{remark}

We have the following lemma.

\begin{lemma}\label{lem:secondit}
Let $N$ be a positive integer and let $\psi$, $\phi_i$, $i \in \{0,\ldots, N\}$ be as in the statement of Lemma~\ref{lem:firstit}. Define furthermore $\psi_{ij}$ as in equation~\eqref{eq:pairs}, and let $\Psi$ be as in equation~\eqref{eq:Psidefinition}. Under these conditions, $\Psi$ satisfies the following initial value problem:
\begin{align}
    &\Box \Psi +F(d \Psi, d^2 \Psi) \nonumber\\
    &+ \sum_{\substack{i,j = 0,\ldots, N\\i\neq j}}(F(d \psi_{ij}, d^2 \Psi)+F(d \Psi, d^2 \psi_{ij})) + \sum_{\substack{g,h,i,j = 0,\ldots, N\\g \neq h, i\neq j}}F(d \psi_{gh}, d^2 \psi_{ij})\nonumber\\  
    & +\sum_{i=0}^N\sum_{\substack{g,h = 0,\ldots, N\\g\neq h}} ( F(d \phi_i, d^2 \psi_{gh}) + F(d \psi_{gh}, d^2 \phi_i))\nonumber\\  
    & +\sum_{i=0}^N ( F(d \phi_i, d^2 \Psi) + F(d \Psi, d^2 \phi_i)) \nonumber\\
     &\qquad = \sum_{\substack{i,j = 0,\ldots, N\\i\neq j}}(G(d \psi_{ij}, d \Psi)+G(d \Psi, d \psi_{ij})) + \sum_{\substack{g,h,i,j = 0,\ldots, N\\g\neq h, i\neq j}}G(d \psi_{gh}, d \psi_{ij})\label{eq:secondit}\\  
    & \qquad +\sum_{i=0}^N\sum_{\substack{g,h = 0,\ldots, N\\g\neq h}} ( G(d \phi_i, d \psi_{gh}) + G(d \psi_{gh}, d \phi_i))\nonumber\\  
    & \qquad +\sum_{i=0}^N ( G(d \phi_i, d \Psi) + G(d \Psi, d \phi_i)),\nonumber\\
    & \Psi|_{t=0} = 0,\nonumber\\
    & \p_t\Psi|_{t=0} = 0.\nonumber
\end{align}
\end{lemma}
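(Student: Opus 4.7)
The plan is to prove Lemma~\ref{lem:secondit} by direct algebraic substitution, using the defining equation~\eqref{eq:pairs} for $\psi_{ij}$ to cancel the first-order cross interactions that remain in the equation~\eqref{eq:fordiff} for $\psi$. No analysis is required beyond bookkeeping; the argument is purely an expansion of the bilinear and trilinear forms $G$ and $F$.

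I would start by writing $\psi = \Psi + \sum_{i \neq j} \psi_{ij}$ and substituting into~\eqref{eq:fordiff}. By linearity of $\Box$,
\begin{equation*}
\Box \psi \;=\; \Box \Psi + \sum_{i\neq j}\Box \psi_{ij} \;=\; \Box \Psi + \sum_{i\neq j}\bigl[\,2\,G(d\phi_i,d\phi_j) - F(d\phi_i,d^2\phi_j) - F(d\phi_j,d^2\phi_i)\,\bigr],
\end{equation*}
after using~\eqref{eq:pairs}. Relabeling $i \leftrightarrow j$ in the symmetric sums shows that the resulting pure $\phi$-quadratic contributions cancel exactly against the terms $\sum_{i\neq j} F(d\phi_i,d^2\phi_j)$ and $\sum_{i\neq j} G(d\phi_i,d\phi_j)$ already present in~\eqref{eq:fordiff}, up to the normalization convention adopted for the indexing of the sum over ordered pairs.

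Next, I would expand the remaining $\psi$-dependent nonlinearities using trilinearity of $F$ and bilinearity of $G$. Writing again $\psi = \Psi + \sum_{g \neq h} \psi_{gh}$, the term $F(d\psi,d^2\psi)$ splits into $F(d\Psi,d^2\Psi)$, mixed terms of the form $F(d\Psi,d^2\psi_{ij}) + F(d\psi_{ij},d^2\Psi)$, and pair-pair interactions $F(d\psi_{gh},d^2\psi_{ij})$ with $g\neq h$ and $i\neq j$. The terms $\sum_i \bigl[F(d\phi_i,d^2\psi)+F(d\psi,d^2\phi_i)\bigr]$ split in the obvious way into $\sum_i \bigl[F(d\phi_i,d^2\Psi)+F(d\Psi,d^2\phi_i)\bigr]$ together with the double sum $\sum_i\sum_{g\neq h}\bigl[F(d\phi_i,d^2\psi_{gh})+F(d\psi_{gh},d^2\phi_i)\bigr]$, and the analogous expansions are performed for the bilinear $G$ contributions on the right-hand side. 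Collecting all contributions reproduces the LHS and RHS of~\eqref{eq:secondit} term by term.

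Finally, the initial conditions $\Psi|_{t=0} = 0$ and $\partial_t \Psi|_{t=0} = 0$ follow trivially from the vanishing initial data of $\psi$ established in Lemma~\ref{lem:firstit} together with the vanishing initial data of each $\psi_{ij}$ imposed in~\eqref{eq:pairs}. The only nontrivial point in the proof is the bookkeeping of the cancellation in the first step: one must verify that, under the convention used for $\sum_{i\neq j}$ (ordered pairs) and the symmetry $\psi_{ij}=\psi_{ji}$ forced by uniqueness of solutions to~\eqref{eq:pairs}, the coefficients produced by substituting $\Box\psi_{ij}$ exactly annihilate the pure $\phi_i\phi_j$ source terms inherited from~\eqref{eq:fordiff}. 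Since this is a finite-dimensional linear identity between multilinear forms, it can be checked by inspection, and no further ingredients are needed.
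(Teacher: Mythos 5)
Your proof is correct and follows essentially the same route as the paper's: substitute $\psi = \Psi + \sum_{i\neq j}\psi_{ij}$ into~\eqref{eq:fordiff}, use the defining equation for $\psi_{ij}$ to cancel the pure $\phi_i$--$\phi_j$ inhomogeneities, and expand the remaining nonlinearities by multilinearity of $F$ and $G$; the vanishing initial data is immediate. The indexing caveat you flag ("up to the normalization convention adopted for the sum over ordered pairs") is a real subtlety that the paper itself leaves implicit, so you are at the same level of rigor.
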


\begin{proof}[Proof of Lemma~\ref{lem:secondit}]
We start from equation~\eqref{eq:fordiff}:
\begin{equation}
   \begin{aligned}
    &\Box \psi +F(d \psi, d^2 \psi)+ \sum_{\substack{i,j = 0, \ldots, N\\i \neq j}} F (d \phi_i, d^2 \phi_j) \\  
    &\qquad +\sum_{i=0}^N ( F(d \phi_i, d^2 \psi) + F(d \psi, d^2 \phi_i))  \\
    & \qquad = G(d \psi, d \psi) +\sum_{\substack{i,j = 0, \ldots, N\\i \neq j}} G (d \phi_i, d \phi_j) \\ &\qquad + \sum_{i=0}^N ( G(d \phi_i, d \psi) + G(d \psi, d \phi_i)).
\end{aligned}
\end{equation}
This implies:
\begin{equation}
   \begin{aligned}
    &\Box \Big( \psi - \sum_{\substack{i,j = 0,\ldots, N\\i\neq j}} \psi_{ij} \Big)+F(d \psi, d^2 \psi) \\
    &\qquad +\sum_{i=0}^N ( F(d \phi_i, d^2 \psi) + F(d \psi, d^2 \phi_i)) \\
    & \qquad = G(d \psi, d \psi) \\ &\qquad + \sum_{i=0}^N ( G(d \phi_i, d \psi) + G(d \psi, d \phi_i)).
\end{aligned}
\end{equation}
The conclusion follows readily using equation~\eqref{eq:Psidefinition}.
\end{proof}

\subsection{The improved energy estimates and the trilinear estimates}\label{sub:improvedenergy}

We shall now prove the trilinear estimates described in Section~\ref{subsub:quadraticimp}. More precisely, we shall obtain improved energy estimates on solutions to the first iterates $\psi_{ij}$, which require us to prove these trilinear estimates. Indeed, we must control a trilinear spacetime integral in order to control the $\partial_t$ energy of the functions $\psi_{i j}$ in \eqref{eq:pairs}. These trilinear estimates will control the bilinear interaction between the solutions $\phi_i$ and $\phi_j$. The trilinear estimates which result are explicitly stated in Proposition~\ref{prop:trilinear}.

The functions $\psi_{ij}$ satisfy linear equations with fixed inhomogeneities, so we already know that solutions exist globally. The improvements introduced by these estimates will be strong enough to prove global existence for the nonlinear equation arising from the second iterate.

In proving our estimates, we shall consider two different cases. The first case involves $\psi_{i j}$ with $i \ne 0$ and $j \ne 0$ (this will be the content of Proposition~\ref{prop:energy}). The second case involves $\psi_{ij}$ with either $i = 0$ or $j = 0$ (this will be the content of Lemma~\ref{lem:phi0j}). We have the following lemma:
\begin{proposition}\label{prop:energy}
Let $i \ne 0$ and $j \ne 0$, and let $\psi_{ij}$ be a solution to the initial value problem~\eqref{eq:pairs}. For simplicity, let us suppose that the initial data for $\phi_i$ is centered at the point $w_i = (-R, 0,0)$, and the initial data for $\phi_j$ is centered at the point $w_j = (R, 0,0)$ (we are assuming, without loss of generality, that $|p_i - p_j| = 2$). Then, we have that, for all multi-indices $I \in I^{\leq n-1}_{\boldsymbol{\Gamma}^{(h)} }$, $h \in \{1,2\}$, the following inequality holds true:
\begin{equation}\label{eq:enpsiij}
    \begin{aligned}
    \sup_{t\geq 0} \Vert \partial \der^I \psi_{i j} \Vert_{L^2 (\Sigma_t)} \le C {\eps^2 \over R},
    \end{aligned}
\end{equation}
where the constant $C$ can depend on the distance between $p_i$ and $p_j$, which we recall are the points where the initial data for $\phi_i$ and $\phi_j$, respectively, are centered when $R = 1$.
\end{proposition}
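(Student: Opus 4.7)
The strategy is to combine a standard $\partial_t$ energy identity with a family of trilinear spacetime estimates for the null-form inhomogeneities on the right-hand side of~\eqref{eq:pairs}. Commuting the equation with $\der^I$ for $I \in I^{\leq n-1}_{\Ga^{(h)}}$ and multiplying by $\partial_t \der^I \psi_{ij}$, since $\psi_{ij}$ has vanishing data the basic energy inequality gives
\begin{equation*}
\Vert \partial \der^I \psi_{ij}\Vert^2_{L^2(\Sigma_t)} \leq C \int_0^t\int_{\Sigma_s} |\der^I (\text{RHS})|\,|\partial_t \der^I \psi_{ij}|\,\mathrm{d}x\,\mathrm{d}s,
\end{equation*}
so the entire task is to control the spacetime integral on the right by $C\varepsilon^4/R^2 + \tfrac14 \sup_{s\leq t} \Vert \partial \der^I \psi_{ij}\Vert^2_{L^2(\Sigma_s)}$, after which absorption and a Gronwall argument yield~\eqref{eq:enpsiij}.

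By Lemma~\ref{lem:nullstruct} applied to $\der^I$ (note $\Ga^{(h)} \subset \boldsymbol{Z}$), commutation preserves nullness, writing $\der^I(\text{RHS})$ as a sum of null forms $G_{I_1I_2}(d\der^{I_1}\phi_a, d\der^{I_2}\phi_b)$ and $F_{I_1I_2}(d\der^{I_1}\phi_a, d^2\der^{I_2}\phi_b)$ with $\{a,b\}=\{i,j\}$ and $I_1 + I_2 \subset I$. All commutators fall on the known solutions $\phi_i, \phi_j$, and Lemma~\ref{prop:decphii} together with Lemma~\ref{lem:decomposition} control more than $n-1$ derivatives of these in both pointwise and $L^2$ norms, so~\eqref{eq:asymptotici}--\eqref{eq:decl2phii} are available for every term that appears.

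The core of the argument is the trilinear estimate, which I would carry out in the coordinates $(u_1,v_1,u_2,\varphi)$ of Lemma~\ref{lem:r1r2}; their Jacobian $r_1 r_2/(4R)$ is where the decisive factor $1/R$ is produced. Dominating $G$ pointwise via Lemma~\ref{lem:nullstructpre} with the choice $h = i$,
\begin{equation*}
|G_{I_1I_2}(d\der^{I_1}\phi_i, d\der^{I_2}\phi_j)| \leq C\,|\bar \p^{(i)}\der^{I_1}\phi_i|\,|\p\der^{I_2}\phi_j| + C\,|\p\der^{I_1}\phi_i|\,|\bar \p^{(i)}\der^{I_2}\phi_j|.
\end{equation*}
In the first summand, $\bar \p^{(i)}\der^{I_1}\phi_i$ is a good derivative of $\phi_i$ with respect to its \emph{own} cone and therefore enjoys the sharp decay $(1+r_i^2)^{-1}(1+|u_i|)^{-\delta}$ of Lemma~\ref{prop:decphii}; combined with the pointwise $(1+v_j)^{-1}(1+|u_j|)^{-1-\delta}$ control on $|\p\der^{I_2}\phi_j|$, the $r_1 r_2/R$ Jacobian, and a Cauchy--Schwarz against $\Vert\partial_t \der^I \psi_{ij}\Vert_{L^2}$, this yields the $\varepsilon^4/R^2$ bound directly. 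In the second summand, $\bar \p^{(i)}$ falls on $\phi_j$ in the \emph{wrong} cone's good direction, and I would invoke Lemma~\ref{lem:goodbad},
\begin{equation*}
|\bar \p^{(i)}\der^{I_2}\phi_j|\leq C\, (R/r_i) \,|\p\der^{I_2}\phi_j| + C\, |\bar \p^{(j)}\der^{I_2}\phi_j|,
\end{equation*}
which recovers a genuinely good derivative of $\phi_j$ plus a remainder weighted by $R/r_i$ that is tolerable since the supports of $\phi_i \cdot \phi_j$ force $r_i \gtrsim R$. The $F$-term is treated analogously, integrating by parts once to transfer the second derivative off of $\phi_j$; in the interaction region one exchanges the resulting $\partial_t$ on the multiplier for good derivatives via~\eqref{eq:igortrick}, paying a factor $t/R$ compensated by the improved $(1+t)^{-1}$ of $|\bar \p \phi|$.

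The main obstacle is the deep interaction region $\mathcal{I}_{12}$, where both $\p_{u_i}\phi_i$ and $\p_{u_j}\phi_j$ sit simultaneously in their slow-decay regime and a single choice of $h$ in Lemma~\ref{lem:nullstructpre} cannot immediately produce two good derivatives. There one must simultaneously exploit (i) the asymptotic alignment of $\bar\p^{(i)}$ with $\bar\p^{(j)}$ via Lemma~\ref{lem:goodbad}, (ii) the improved $u$-decay of Lemma~\ref{prop:decphii}, and (iii) the smallness in $R$ of $|\mathcal{I}_{12}|$ that is made visible by the coordinates of Lemma~\ref{lem:r1r2}. Assembling these ingredients, running the Gronwall absorption, and summing over the commutator tree produces~\eqref{eq:enpsiij}.
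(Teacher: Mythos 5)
Your proposal correctly identifies all the geometric ingredients the paper uses: the null-form commutation of Lemma~\ref{lem:nullstruct}, the coordinates of Lemma~\ref{lem:r1r2} with Jacobian $r_1 r_2/(4R)$, the cone-alignment estimates of Lemma~\ref{lem:goodbad}, the multiplier conversion~\eqref{eq:igortrick}, and the improved $u$-decay from Lemma~\ref{prop:decphii}. However, there is a genuine gap in how you propose to close.

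You assert that the spacetime error can be controlled by
$C\eps^4/R^2 + \tfrac14 \sup_s \Vert \partial \der^I \psi_{ij}\Vert^2_{L^2(\Sigma_s)}$,
after which absorption closes. This is not attainable with the spacelike $L^2(\Sigma_s)$ energy alone. Consider the hardest term, which after invoking~\eqref{eq:goodbad} to align the good directions is schematically
\begin{equation*}
\int_{R/10}^{t}\int_{\Sigma_s \cap \mathcal{I}_{ij}} |\partial \der^H \phi_i|\,|\partial^2 \der^K \phi_j|\, \frac{R}{s}\, |\partial_t \der^I \psi_{ij}|\, \de x \, \de s.
\end{equation*}
If you put $\partial_t \der^I \psi_{ij}$ in $L^2(\Sigma_s)$ and compute the $L^2(\Sigma_s \cap \mathcal{I}_{ij})$ norm of the remaining product in the $(r_i,r_j,\varphi)$ coordinates (Jacobian $r_i r_j/(2R)$, ranges $|u_i|,|u_j| \leq R/10$), the decay rates from Lemma~\ref{prop:decphii} give
$\Vert |\partial \phi_i|\,|\partial^2 \phi_j|\Vert_{L^2(\Sigma_s)} \lesssim \eps^2 / (s\sqrt{R})$.
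With the extra factor $R/s$ this becomes $\eps^2\sqrt{R}/s^2$, and the $s$-integral from $R/10$ to $\infty$ yields $\eps^2/\sqrt{R}$, not $\eps^2/R$. Absorbing against the spacelike energy then only produces $\sup \Vert\partial\psi_{ij}\Vert_{L^2} \lesssim \eps^2/\sqrt{R}$, off by a factor of $\sqrt{R}$.

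What the paper does instead is precisely what your proposal omits: after converting $\partial_t \psi_{ij}$ into $\bar\partial^{(i)} \psi_{ij}$ and $\bar\partial^{(j)} \psi_{ij}$ via~\eqref{eq:igortrick}, these good derivatives of $\psi_{ij}$ are \emph{not} put back in $L^2(\Sigma_s)$ — they are estimated in the mixed norm $L^\infty_{u_i} L^2(C^{(i)}_{u_i})$ (and $L^\infty_{u_j} L^2(C^{(j)}_{u_j})$) over outgoing characteristic cones, with the multiplying factor going into the dual $L^1_{u_i} L^2(C^{(i)}_{u_i})$. It is only in this foliation that the Jacobian $r_1 r_2/(4R)$, together with the integrable $(1+|u_i|)^{-1-\delta}$ decay on the range $|u_i| \lesssim R$, yields the full $1/R$ gain. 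This forces the proof to close a coupled system of three quantities — the spacelike energy $\sup_t \Vert\partial\der^I\psi_{ij}\Vert_{L^2(\Sigma_t)}$ plus the two characteristic energies $\sup_{u_h} \Vert \bar\partial^{(h)}\der^I\psi_{ij}\Vert_{L^2(C^{(h)}_{u_h})}$ for $h=i,j$ — by running the $\partial_t$ energy identity over slabs \emph{and} over truncated cones. Your "Gronwall" step needs to be replaced by this three-way system; without the characteristic energies the estimate does not close at the claimed rate.

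A secondary divergence: for the $F$-term you propose to integrate by parts to move the second derivative off $\phi_j$. The paper does not do this, and it is unnecessary and potentially harmful — $\phi_i$ and $\phi_j$ are known functions, so one may estimate arbitrarily many of their derivatives pointwise via Lemma~\ref{prop:decphii}, and integration by parts would produce a second derivative on the multiplier $\psi_{ij}$ that the $\partial_t$ energy does not control.
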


We note that the following trilinear estimates are established as a result of the proof of Proposition~\ref{prop:energy}.
\begin{proposition}\label{prop:trilinear}
    Let $\eta_h$, with $h \in \{1,2\}$, be solutions to the following initial value problem:
    \begin{equation}\label{eq:etatrilinear}
    \begin{aligned}
	    &\Box \eta_h = 0,\\
	    & \big(\eta_h|_{t=0},\ \p_t \eta_h|_{t=0}\big)  = (\eta_h^{(0)},\eta_h^{(1)}).
    \end{aligned}
    \end{equation}
    Let us furthermore suppose that the support of the initial data for $\eta_1$ is localized around the point $(-R,0,0)$ and that the support of initial data for $\eta_2$ is localized around the point $(R,0,0)$:
    \begin{equation}
        \text{supp }\big(\eta_1^{(0)}\big), \ \text{supp }\big(\eta_1^{(1)}\big) \subset B(Rp_1, 1), \qquad   \text{supp }\big(\eta_2^{(0)}\big), \ \text{supp }\big(\eta_2^{(1)}\big) \subset B(Rp_2, 1),
    \end{equation}
    Here, $p_1 = (-1,0,0)$, $p_2 = (1,0,0)$, and $B(x_1, a)$ is the Euclidean three-dimensional ball of radius $a$ centered at the point $x_1$.
    
    Moreover, let $\tilde F$ and $\tilde G$ be resp.~a trilinear null form and a bilinear null form, according to Definition~\ref{def:null}. For $f(t,x)$ an arbitrary smooth compactly supported function in spacetime, we have that the following estimates hold true:
    \begin{equation}\label{eq:trilinearestimates}
        \begin{aligned}
            &\int_0^\infty \int_{\Sigma_s} \Big( \big| \tilde F (d \eta_1,d^2 \eta_2)\big| \,\big| f(s,x)\big|+ \big| \tilde G (d \eta_1,d \eta_2)\big| \,\big| f(s,x)\big|  \Big)\de x \de s \\
            &\quad \leq C \frac{\big(\Vert \eta^{(0)}_1\Vert_{H^{8}(\Sigma_0)}+\Vert \eta^{(1)}_1 \Vert_{H^{7}(\Sigma_0)}\big)\big(\Vert \eta^{(0)}_2\Vert_{H^{8}(\Sigma_0)}+\Vert \eta^{(1)}_2 \Vert_{H^{7}(\Sigma_0)}\big)}{R} \sup_{\bar u_1 \in\R} \Vert f \Vert_{L^2 (C^{(1)}_{\bar u_1})}.
        \end{aligned}
    \end{equation}
    Furthermore, the same inequality holds replacing $\sup_{\bar u_1 \in\R} \Vert f \Vert_{L^2 (C^{(1)}_{\bar u_1})}$ with $\sup_{\bar u_2 \in\R} \Vert f \Vert_{L^2 (C^{(2)}_{\bar u_2})}$.
    
    Recall that, here, the notation $C^{(h)}_{\bar u_h}$ for $h \in \{1,2\}$ denotes the outgoing null cone $\{u_h = \bar u_h\} \cap \{t \geq 0\}$, where the coordinate $u_h$ has been introduced in Definition~\ref{def:riviui}.
\end{proposition}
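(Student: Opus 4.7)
The plan is to combine the null structure of $\tilde F$ and $\tilde G$ with the change of variables from Lemma~\ref{lem:r1r2} in order to extract the factor $R^{-1}$, which ultimately originates from the Jacobian $\tfrac{r_1 r_2}{4R}$ appearing in \eqref{eq:changevars} and \eqref{eq:nullconescc}. I regard $\eta_1,\eta_2$ as known linear waves with data in unit balls around $Rp_1=(-R,0,0)$ and $Rp_2=(R,0,0)$; a direct specialization of Lemma~\ref{prop:decphii} to $F=G=0$ (or the Kirchhoff formula combined with vector-field commutations) supplies, for every multi-index of bounded size,
\begin{equation*}
|\bar\partial^{(h)}\eta_h|\lesssim\frac{\eps}{(1+r_h)^2(1+|u_h|)^{\delta}},\qquad |\partial\eta_h|\lesssim\frac{\eps}{(1+v_h)(1+|u_h|)^{1+\delta}},
\end{equation*}
together with the characteristic flux $\int_{C^{(h)}_{u_h}}|\bar\partial^{(h)}\eta_h|^2\, r_h^2\,dv_h\,d\omega_h\lesssim\eps^{2}$, with $\eps$ controlled by the $H^{8}\times H^{7}$ norm of the data.

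First, using Lemma~\ref{lem:nullstructpre} with the null frame adapted to cone $1$ I bound
\begin{equation*}
|\tilde G(d\eta_1,d\eta_2)|\lesssim|\bar\partial^{(1)}\eta_1|\,|\partial\eta_2|+|\partial\eta_1|\,|\bar\partial^{(1)}\eta_2|,
\end{equation*}
and similarly $|\tilde F(d\eta_1,d^2\eta_2)|\lesssim |\bar\partial^{(1)}\eta_1|\,|\partial^{2}\eta_2|+|\partial\eta_1|\,|\partial\bar\partial^{(1)}\eta_2|$. I then foliate Minkowski by the outgoing cones of $Rp_1$: by Lemma~\ref{lem:r1r2} and the identity \eqref{eq:nullconescc},
\begin{equation*}
\int_{0}^{\infty}\!\!\int_{\Sigma_s}\mathcal H\,|f|\,dx\,ds=\int_{\R}\!\int_{C^{(1)}_{u_1}}\mathcal H\,|f|\,dS_{C^{(1)}_{u_1}}\,du_1,
\end{equation*}
and Cauchy--Schwarz on each cone gives the bound
\begin{equation*}
\leq\sup_{\bar u_1\in\R}\|f\|_{L^2(C^{(1)}_{\bar u_1})}\cdot\int_{\R}\|\mathcal H\|_{L^2(C^{(1)}_{u_1})}\,du_1.
\end{equation*}
The support of $\mathcal H$ lies inside $\{|u_1|\le 1\}\cap\{|u_2|\le 1\}$, and since $r_1+r_2\ge 2R$ by the triangle inequality, on this set $t\ge R-1$ and consequently $v_1,v_2\gtrsim R$; in particular the $u_1$-integration is effectively over a compact interval $[-1,1]$.

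For the first summand $\mathcal H=|\bar\partial^{(1)}\eta_1|\,|\partial\eta_2|$, rewriting the measure $\tfrac{r_1 r_2}{4R}dv_1\,du_2\,d\varphi$ on $C^{(1)}_{u_1}$ in the standard form $r_1^{2}\,dv_1\,d\omega_1$ and placing $\partial\eta_2$ in $L^\infty$ yields
\begin{equation*}
\|\mathcal H\|_{L^2(C^{(1)}_{u_1})}\lesssim\Big(\int|\bar\partial^{(1)}\eta_1|^{2}r_1^{2}dv_1 d\omega_1\Big)^{1/2}\!\!\sup_{C^{(1)}_{u_1}\cap\operatorname{supp}\eta_2}|\partial\eta_2|\lesssim\frac{\eps^{2}}{R},
\end{equation*}
where the last step uses the flux bound for $\eta_1$ and the pointwise gain $|\partial\eta_2|\lesssim\eps/R$ coming from $v_2\gtrsim R$; integrating over $u_1\in[-1,1]$ produces the claimed $\eps^{2}/R$. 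For the second summand $|\partial\eta_1|\,|\bar\partial^{(1)}\eta_2|$ the good derivative acts on $\eta_2$ but with respect to the wrong cone, and Lemma~\ref{lem:goodbad} lets me replace it by $\tfrac{R}{r_1}|\partial\eta_2|+|\bar\partial^{(2)}\eta_2|$: the first piece is controlled pointwise using $r_1\gtrsim R$ together with the decay of $\partial\eta_1,\partial\eta_2$, while the second piece is handled symmetrically by re-foliating via the cones $C^{(2)}_{u_2}$, which is precisely why the statement includes the analogous bound with $\sup_{\bar u_2}$. The argument for $\tilde F(d\eta_1,d^{2}\eta_2)$ is identical once $\partial\eta_2$ is replaced by $\partial^{2}\eta_2$ and $\bar\partial^{(1)}\eta_2$ by $\partial\bar\partial^{(1)}\eta_2$; the required pointwise decay of these second derivatives follows by commuting the linear equation with one additional Killing field, which costs precisely the gap between $H^{7}$ and $H^{8}$ in the hypothesis.

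The delicate point is the bookkeeping in the third step for the wrong-cone summand: three distinct ingredients (the Jacobian factor $1/R$ from the two-source geometry, the null-structure gain guaranteeing at least one good derivative, and the confinement $t\gtrsim R$ on the interaction region) must cooperate to produce a single clean factor $R^{-1}$ in front of the $\sup_{\bar u_h}\|f\|_{L^2(C^{(h)}_{\bar u_h})}$ term. This is where Lemmas~\ref{lem:r1r2} and~\ref{lem:goodbad} play an essential role, and the freedom to foliate by either family of outgoing cones --- corresponding to the two versions of the stated inequality --- is exactly what makes both summands in the null-form decomposition simultaneously tractable.
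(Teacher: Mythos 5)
Your high-level structure is the right one and essentially matches the paper's: decompose the null forms with respect to the cone adapted to $\eta_1$ (Lemma~\ref{lem:nullstructpre}), foliate spacetime by the outgoing cones $C^{(1)}_{u_1}$ via Lemma~\ref{lem:r1r2}, apply Cauchy--Schwarz on each cone to extract $\sup_{\bar u_1}\Vert f\Vert_{L^2(C^{(1)}_{\bar u_1})}$, and use Lemma~\ref{lem:goodbad} to convert the wrong-cone good derivative $\bar\partial^{(1)}\eta_2$ into $\tfrac{R}{r_1}|\partial\eta_2|+|\bar\partial^{(2)}\eta_2|$. Your observation that the interaction is confined to $\{|u_1|\le 1\}\cap\{|u_2|\le 1\}$ by compact support, so that $t\gtrsim R$ there and the $u_1$-integral runs over a unit interval, is a genuine simplification relative to the paper's proof of Proposition~\ref{prop:energy} (which has to treat a far region $\mathcal{E}^\ast_i$ that is absent here).

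However, there is a real gap in your treatment of the last piece $|\partial\eta_1|\,|\bar\partial^{(2)}\eta_2|$. You propose to control it by re-foliating with the cones $C^{(2)}_{u_2}$ and invoke the second version of the stated inequality as the reason this is allowed. But re-foliating by $C^{(2)}_{u_2}$ produces a bound of the form $\lesssim R^{-1}\sup_{\bar u_2}\Vert f\Vert_{L^2(C^{(2)}_{\bar u_2})}$, not $\sup_{\bar u_1}\Vert f\Vert_{L^2(C^{(1)}_{\bar u_1})}$, and the proposition asserts that \emph{each} version holds on its own. Mixing foliations across the terms of the null decomposition only yields the strictly weaker estimate involving the maximum of the two cone norms; the second version of the statement is not a fallback for one recalcitrant term, it is a fully independent (and symmetric) assertion.

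The fix is that this piece does not need re-foliation at all, and is actually better than the others. Since $|\bar\partial^{(2)}\eta_2|\lesssim \eps\, r_2^{-2}(1+|u_2|)^{-\delta}\sim \eps\, t^{-2}$ on the interaction region (where $r_1\sim r_2\sim v_1\sim t$), one computes directly on $C^{(1)}_{u_1}$ using the $(v_1,u_2,\varphi)$ coordinates of Lemma~\ref{lem:r1r2}:
\begin{equation*}
\Vert\partial\eta_1\,\bar\partial^{(2)}\eta_2\Vert_{L^2(C^{(1)}_{u_1})}^2
\lesssim\int\frac{\eps^4}{v_1^2\,r_2^4}\,\frac{r_1 r_2}{4R}\,\de v_1\,\de u_2\,\de\varphi
\lesssim\frac{\eps^4}{R}\int_R^\infty v_1^{-4}\,\de v_1\lesssim\frac{\eps^4}{R^4},
\end{equation*}
so this piece contributes $\lesssim\eps^2 R^{-2}$ after integrating over $u_1\in[-1,1]$. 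This is exactly the observation the paper uses for the analogous term $\mathfrak{B}_{31}$ in the proof of Proposition~\ref{prop:energy}, where terms carrying $\bar\partial^{(j)}\phi_j$ are dispatched because they ``have very strong pointwise decay.'' With this replacement the whole argument runs on a single cone foliation and yields the statement as written.
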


It is possible to deduce Proposition~\ref{prop:trilinear} from the the proof of Proposition~\ref{prop:energy}, noting that the null forms $\tilde F$ and $\tilde G$ correspond to the inhomogeneities in the linear equation for $\psi_{ij}$, while the function $f(t,x)$ corresponds to the multiplier we are using in the proof of Proposition~\ref{prop:energy}.

The trilinear estimates~\eqref{eq:trilinearestimates} are useful because they gain a power of $R^{-1}$. However, we note that they are very wasteful in terms of derivatives required on the functions $\eta_1$ and $\eta_2$.

\begin{remark}
    In Proposition~\ref{prop:trilinear}, we require control on $8$ derivatives of $\eta^{(0)}_1$ and  $\eta^{(0)}_2$ as we need to estimate $2$ derivatives in $L^\infty$ of both $\eta_1$ and $\eta_2$, and we require the improved decay of Lemma~\ref{prop:decphii}.
\end{remark}

We now turn to the proof of Proposition~\ref{prop:energy}.

\bp[Proof of Proposition~\ref{prop:energy}]
Let us first commute equation~\eqref{eq:pairs} with $I \in I^{\leq n-1}_{\boldsymbol{\Gamma}^{(h)} }$. We obtain:
\begin{equation}\label{eq:pairscomm}
\begin{aligned}
    &\Box \der^I \psi_{ij} +  \sum_{H+K \subset I} \big(F_{HK}(d \der^H \phi_i, d^2 \der^K \phi_j) + F_{HK}(d \der^H \phi_j,d^2 \der^K \phi_i) \big)\\
    &\quad =  \sum_{H+K \subset I} G_{HK} (d \der^H \phi_i, d \der^K \phi_j)
\end{aligned}
\end{equation}
Here, every $F_{HK}$, $G_{HK}$ is a trilinear (resp.~bilinear) null form as in Definition~\ref{def:null}.
We now multiply the evolution equation in~\eqref{eq:pairscomm} by $\partial_t \psi_{i j}$ and integrate by parts in a spacetime region bounded by two slabs $\Sigma_0$ and $\Sigma_t$ with $t \ge R/10$. This gives us that
\begin{equation}\label{eq:enijspa}
    \begin{aligned}
    &\frac 12 \int_{\Sigma_t} |\partial \der^I \psi_{i j}|^2 \de x = \int_{\frac R {10}}^t \int_{\Sigma_s} \Big( \sum_{H+K \subset I} \big(F_{HK}(d \der^H \phi_i, d^2 \der^K \phi_j) + F_{HK}(d \der^H \phi_j,d^2 \der^K \phi_i) 
    \\
    &\quad - \sum_{H+K \subset I} G_{HK} (d \der^H \phi_i, d \der^K \phi_j) \Big) \partial_t \der^I \psi_{i j} \de x \de s =: \mathfrak{B},
    \end{aligned}
\end{equation}
where we have used the fact that, by domain of dependence, any product involving $\phi_i$ and $\phi_j$ will vanish for $t \le \frac R {10}$ (recall that $i \neq 0$ and $j \neq 0$). Indeed, along with the fact that $\psi_{i j}$ has vanishing initial data, this implies that $\int_{\Sigma_{R/10}} |\partial \der^I \psi_{i j}|^2 \de x = 0$. Now, we decompose the spacetime integration region into two regions. The first region is where $|u_i| \le {R \over 10}$ and $|u_j| \le {R \over 10}$, and the second piece is where at least one of $|u_i|$ or $|u_j|$ is larger than ${R \over 10}$. With $\mathcal{I}_{i j}$ the set of all points where $|u_i| \le {R \over 10}$ and $|u_j| \le {R \over 10}$, we have that $\mathfrak{B} = \mathfrak{B}_1 + \mathfrak{B}_2$, where
\begin{equation}
    \begin{aligned}
    &\mathfrak{B}_1 = \int_{\{ R/10\le s \le t \} \cap \mathcal{I}_{i j}} \Big( \sum_{H+K \subset I} \big(F_{HK}(d \der^H \phi_i, d^2 \der^K \phi_j) + F_{HK}(d \der^H \phi_j,d^2 \der^K \phi_i) \big)
    \\
    &\qquad - \sum_{H+K \subset I} G_{HK} (d \der^H \phi_i, d \der^K \phi_j) \Big)  \partial_t \der^I \psi_{i j} \de x \de s,
    \end{aligned}
\end{equation}
and
\begin{equation}
    \begin{aligned}
    &\mathfrak{B}_2 = \int_{\{ R/10\le s \le t \} \cap \mathcal{I}^c_{i j}} \Big( \sum_{H+K \subset I} \big(F_{HK}(d \der^H \phi_i, d^2 \der^K \phi_j) + F_{HK}(d \der^H \phi_j,d^2 \der^K \phi_i) \big)
    \\
    &\qquad - \sum_{H+K \subset I} G_{HK} (d \der^H \phi_i, d \der^K \phi_j) \Big)  \partial_t \der^I \psi_{i j} \de x \de s.
    \end{aligned}
\end{equation}
We can further decompose $\mathcal{I}_{i j}^c$ (the set of points ``far away'' from at least one of the light cones) into the set where $|u_i| \ge {R \over 10}$, which we shall call $\mathcal{E}_i^*$, and the remainder, $\mathcal{E}_i' = \mathcal{I}^c_{i j} \setminus \mathcal{E}_i^*$. We note that we must have that $|u_j| \ge {R \over 10}$ in $\mathcal{E}_i'$. Now, for $\mathfrak{B}_2$, we have that
\begin{equation}
    \begin{aligned}
    &\mathfrak{B}_2 = \int_{\{ R/10 \le s \le t \} \cap \mathcal{E}_i^*} \Big( \sum_{H+K \subset I} \big(F_{HK}(d \der^H \phi_i, d^2 \der^K \phi_j) + F_{HK}(d \der^H \phi_j,d^2 \der^K \phi_i) \big)
    \\
    &\qquad - \sum_{H+K \subset I} G_{HK} (d \der^H \phi_i, d \der^K \phi_j) \Big)  \partial_t \der^I \psi_{i j} \de x \de s \\ 
    &+ \int_{\{ R/10 \le s \le t \} \cap \mathcal{E}_i'} \Big( \sum_{H+K \subset I} \big(F_{HK}(d \der^H \phi_i, d^2 \der^K \phi_j) + F_{HK}(d \der^H \phi_j,d^2 \der^K \phi_i) \big) 
    \\
    &\qquad - \sum_{H+K \subset I} G_{HK} (d \der^H \phi_i, d \der^K \phi_j) \Big)  \partial_t \der^I \psi_{i j} \de x \de s  =: \mathfrak{B}_{21} + \mathfrak{B}_{22}.
    \end{aligned}
\end{equation}
We shall now get estimates for these integrals over $\mathcal{E}_i^*$ and $\mathcal{E}_i'$, terms $\mathfrak{B}_{21}$ and $\mathfrak{B}_{22}$. Thus, in the following, it suffices to get estimates for the first integral over $\mathcal{E}_i^*$ where $|u_i| \ge {R \over 10}$, as the estimates for the integral over $\mathcal{E}_i'$ follow in the same way after replacing $i$ with $j$ in the following argument.

Now, we have that
\begin{align*}
    &\mathfrak{B}_{21}  \leq \sup_{R/10 \le s \le t} \Vert \partial_t \der^I \psi_{i j} \Vert_{L^2 (\Sigma_s \cap \mathcal{E}_i^*)} \\
    & \quad \times \int_{R/10}^t \Big( \sum_{H+K \subset I} \Big\Vert  \big(F_{HK}(d \der^H \phi_i, d^2 \der^K \phi_j) + F_{HK}(d \der^H \phi_j,d^2 \der^K \phi_i)\Big \Vert_{L^2 (\Sigma_s \cap \mathcal{E}_i^*)}
    \\
    &\qquad + \sum_{H+K \subset I} \Big\Vert G_{HK} (d \der^H \phi_i, d \der^K \phi_j)  \Big\Vert_{L^2 (\Sigma_s \cap \mathcal{E}_i^*)}\Big) \de s.
\end{align*}
We now divide the region $\mathcal{E}^*_i$ in two further subregions:
$$
\hat{\mathcal{E}}^*_i := \mathcal{E}^*_i \cap \big\{r_i \leq  \frac 1 {10} t\big\}, \qquad \check{\mathcal{E}}^*_i := \mathcal{E}^*_i \cap \big\{r_i \geq \frac 1 {10} t\big\}
$$
Let us first focus on the region $\hat{\mathcal{E}}^*_i$. We have that, in this region, $u_i \geq C t$, and hence, using the decay properties of $\phi$ which follow from display~\eqref{eq:asymptotici}, we have
\begin{equation}
    \begin{aligned}
    &\Vert G_{HK}(d \der^H \phi_i,d \der^K \phi_j) \Vert_{L^2 (\Sigma_s \cap \hat{\mathcal{E}}_i^*)}\\
    & \qquad \leq C \Vert | \p \der^H \phi_i| \, |\p \der^K \phi_j| \Vert_{L^2 (\Sigma_s \cap \hat{\mathcal{E}}_i^*)} \leq C  \Vert \p \der^H \phi_i\Vert_{L^\infty (\Sigma_s \cap \hat{\mathcal{E}}_i^*)} \, \Vert\p \der^K \phi_j\Vert_{L^2 (\Sigma_s \cap \hat{\mathcal{E}}_i^*)}\\
    & \qquad \leq C \frac {\eps^2} {s^{2+\delta}}.
    \end{aligned}
\end{equation}
Note that here we needed to bound only $n-1+1 = n$ derivatives in $L^\infty$. Furthermore, a similar reasoning holds for the terms in $F$ (bounding $n+1$ derivatives in $L^\infty$ this time) so that, overall, we obtain
\begin{equation}\label{eq:forehat}
\begin{aligned}
&\Vert F_{HK}(d \der^H \phi_i,d^2 \der^K \phi_j) \Vert_{L^2 (\Sigma_s \cap \hat{\mathcal{E}}_i^*)}+\Vert F_{HK}(d \der^H \phi_j,d^2 \der^K \phi_i) \Vert_{L^2 (\Sigma_s \cap \hat{\mathcal{E}}_i^*)}\\
&+\Vert G_{HK}(d \der^H \phi_i,d \der^K \phi_j) \Vert_{L^2 (\Sigma_s \cap \hat{\mathcal{E}}_i^*)} \leq C \frac {\eps^2} {s^{2+\delta}}.
\end{aligned}
\end{equation}
Let us now focus on the region $\check{\mathcal{E}}^*_i$. Using Lemma~\eqref{lem:nullstruct} on the structure of classical null forms, combined with the estimates in Lemma~\ref{lem:decomposition}, specifically bounds~\eqref{eq:asymptotici} and~\eqref{eq:decl2phii}, plus the bound~\eqref{eq:goodbad}, along with the fact that $|u_i| \ge {R \over 10}$ in $\mathcal{E}_i^*$ and the fact that $r_i \geq \frac 1 {10}t$ in the region $\check{\mathcal{E}}^*_i$, we have that
\begin{equation} \label{gooduest1}
    \begin{aligned}
    &\Vert G_{HK}(d \der^H \phi_i,d \der^K \phi_j) \Vert_{L^2 (\Sigma_s \cap \check{\mathcal{E}}_i^*)}\\
    & \qquad \leq C \Vert  |\overline \p^{(i)} \der^H \phi_i| \, |\p \der^K \phi_j| \Vert_{L^2 (\Sigma_s \cap \check{\mathcal{E}}_i^*)} +C \Vert  |\p \der^H \phi_i| \, |\overline \p^{(i)} \der^K \phi_j| \Vert_{L^2 (\Sigma_s \cap \check{\mathcal{E}}_i^*)}\\
    &\qquad  \le C \eps^2 {1 \over R^\delta s^2} + C  \Big\Vert  \frac R {r_i}|\p \der^H \phi_i| \, | \p \der^K \phi_j| \Big\Vert_{L^2 (\Sigma_s \cap \check{\mathcal{E}}_i^*)} + C \Vert  |\p \der^H \phi_i| \, |\overline \p^{(j)} \der^K \phi_j| \Vert_{L^2 (\Sigma_s \cap \check{\mathcal{E}}_i^*)}\\
    &\qquad  \le C \eps^2 {1 \over R^\delta s^2} + C \frac R {s} \Vert  |\p \der^H \phi_i| \, | \p \der^K \phi_j| \Vert_{L^2 (\Sigma_s \cap \check{\mathcal{E}}_i^*)} + C \eps^2 {1 \over s^2}\\
    &\qquad  \le C \eps^2 {1 \over R^\delta s^2} + C \eps^2 \frac R {s} \frac{1}{s R^{1+\delta}} + C \eps^2 {1 \over s^2} \leq C \eps^2 \frac 1 {s^2}.
    \end{aligned}
\end{equation}
\begin{remark}
Note that again we are bounding at most $n-1+1 = n$ derivatives in $L^\infty$.
\end{remark}

Using again estimates~\eqref{eq:asymptotici} and~\eqref{eq:decl2phii}, we can get the same estimates for the two terms involving $F$ in the region $\check{\mathcal{E}}^*_i$ (notice that this time we are estimating at most $n-1+2 = n+1$ derivatives in $L^\infty$). We obtain:
\begin{equation}\label{eq:forecheck}
\begin{aligned}
&\Vert F_{HK}(d \der^H \phi_i,d^2 \der^K \phi_j) \Vert_{L^2 (\Sigma_s \cap \check{\mathcal{E}}_i^*)}+\Vert F_{HK}(d \der^H \phi_j,d^2 \der^K \phi_i) \Vert_{L^2 (\Sigma_s \cap \check{\mathcal{E}}_i^*)}\\
&+\Vert G_{HK}(d \der^H \phi_i,d \der^K \phi_j) \Vert_{L^2 (\Sigma_s \cap \check{\mathcal{E}}_i^*)} \leq C \frac {\eps^2} {s^{2}}.
\end{aligned}
\end{equation}

The reasoning is totally analogous for the term $\mathfrak{B}_{22}$, restricting to the region $\mathcal{E}'_i$. Thus, altogether, combining estimates~\eqref{eq:forehat} and~\eqref{eq:forecheck} and the estimates for $\mathfrak{B}_{22}$, we have that
\begin{equation}\label{eq:forbtwo}
    \begin{aligned}
    &|\mathfrak{B}_2| \le C \eps^2 \sup_{R/10 \le s \le t } \Vert \partial_t \der^I \psi_{i j} \Vert_{L^2 (\Sigma_s \cap \mathcal{I}^c_{i j})} \int_{R/10}^t {1 \over s^2} \de s \\
    &\qquad \le C \eps^2 \sup_{R/10 \le s \le t} \Vert \partial_t \der^I \psi_{i j} \Vert_{L^2 (\Sigma_s \cap \mathcal{I}^c_{i j})} {1 \over R}.
    \end{aligned}
\end{equation}
We note that we have been wasteful in estimating this term. Indeed, being more precise, we could have controlled the term $\Vert |\partial V^H \phi_i| |\overline{\partial}^{(j)} V^K \phi_j| \Vert_{L^2 (\Sigma_s \cap \check{\mathcal{E}}_i^*)}$ in \eqref{gooduest1} by $C \eps^2 {1 \over R^{1 + \delta}}$. This would have improved the final estimate in \eqref{gooduest1} to $C \eps^2 {1 \over R^\delta s^2}$, and a similar argument would have improved the final estimate in \eqref{eq:forecheck} to $C \eps^2 {1 \over R^\delta s^2}$. This would then give us a final estimate of $C \eps^2 {1 \over R^{1 + \delta}}$ in \eqref{eq:forbtwo}. Another term, however, will prevent us from doing better than $C \eps^2 {1 \over R}$.

Now, for $\mathfrak{B}_1$, we begin by decomposing with respect to a null frame adapted to $\phi_i$. The same argument works using a null frame for $\phi_j$ instead. Now, using Lemma~\ref{lem:nullstruct} on the structure of null forms on $G_{HK}$ and $F_{HK}$, we have that
\begin{equation}\label{eq:mbfrac}
    \begin{aligned}
    &|\mathfrak{B}_1| \le C \int_{\{R/10 \le s \le t \} \cap \mathcal{I}_{i j}} \sum_{H+K \subset I} \Big(|\overline{\partial}^{(i)} \der^H \phi_i| |\partial^2 \der^K \phi_j| + |  \partial \der^H \phi_i| | \partial \overline{\partial}^{(i)} \der^K \phi_j| \\
    &\qquad \qquad \qquad +|\overline{\partial}^{(i)} \der^H \phi_j| |\partial^2  \der^K  \phi_i|+|\partial \der^H \phi_j| |\partial \overline{\partial}^{(i)} \der^K  \phi_i| \\
    &\qquad \qquad \qquad +|\overline{\partial}^{(i)} \der^H \phi_i| | \partial \der^K \phi_j|  + |\partial \der^H \phi_i| |\overline{\partial}^{(i)} \der^K \phi_j| \Big) |\partial_t \der^I \psi_{i j}| \de x \de s 
    \end{aligned}
\end{equation}
For the terms in which $\overline{\partial}^{(i)}$ falls on $\phi_i$ or $\partial \phi_i$ (i.~e.~the first, fourth, and fifth term in the previous display), we can use the pointwise estimate~\eqref{eq:decphii} to bound the $\overline{\partial}^{(i)}$ term in $L^\infty$, along with the energy estimate~\eqref{eq:decl2phii} to bound the term containing $\partial \phi_j$ in $L^2 (\Sigma_s)$. This gives us that
\begin{equation}\label{eq:rminusone1}
    \begin{aligned}
    &\int_{\{R/10 \le s \le t \} \cap \mathcal{I}_{i j}} \sum_{H+K \subset I} \Big(|\overline{\partial}^{(i)} \der^H \phi_i| |\partial^2 \der^K \phi_j| \\
    &\qquad \qquad \qquad +|\partial \der^H \phi_j| |\partial \overline{\partial}^{(i)} \der^K  \phi_i| +|\overline{\partial}^{(i)} \der^H \phi_i| | \partial \der^K \phi_j| \Big) |\partial_t \der^I \psi_{i j}| \de x \de s \\
    & \qquad \le C \eps^2 \sup_{R/10 \le s \le t} \Vert \partial_t \der^I \psi_{i j} \Vert_{L^2 (\Sigma_s \cap \mathcal{I}_{i j})}  \int_{R/10}^t {1 \over s^2} \de s \\
    &\qquad \le C \sup_{R/10 \le s \le t} \Vert \partial_t \psi_{i j} \Vert_{L^2 (\Sigma_s \cap \mathcal{I}_{i j})} {\eps^2  \over R}.
    \end{aligned}
\end{equation}
Note that we had to estimate at most $n-1+2$ derivatives in $L^\infty$. 
To bound the remaining terms in the RHS of~\eqref{eq:mbfrac}, we must now get an estimate for the terms
\begin{equation}
    \begin{aligned}
    &\int_{\{R/10 \le s \le t \} \cap \mathcal{I}_{i j}} \sum_{H+K \subset I} \Big( |\partial \der^H \phi_i| | \partial \overline{\partial}^{(i)} \der^K \phi_j|  \\
    &\qquad +|\overline{\partial}^{(i)} \der^H \phi_j| |\partial^2  \der^K  \phi_i| + |\partial \der^H \phi_i| |\overline{\partial}^{(i)} \der^K \phi_j| \Big) |\partial_t \der^I \psi_{i j}| \de x \de s =: \mathfrak{B}_3.
    \end{aligned}
\end{equation}
We have that $|\overline{\partial}^{(i)} f| \le C \Big( {R \over s} |\partial f| +   |\overline{\partial}^{(j)} f| \Big)$  in $\{R/10 \le s \le t \} \cap \mathcal{I}_{i j}$, which implies
\begin{equation}
    \begin{aligned}
    & |\mathfrak{B}_3| \leq \int_{\{R/10 \le s \le t \} \cap \mathcal{I}_{i j}} \sum_{H+K \subset I} \Big( |\partial \der^H \phi_i| | \partial \overline{\partial}^{(j)} \der^K \phi_j|  \\
    &\qquad +|\overline{\partial}^{(j)} \der^H \phi_j| |\partial^2  \der^K  \phi_i| + |\partial \der^H \phi_i| |\overline{\partial}^{(j)} \der^K \phi_j| \Big) |\partial_t \der^I \psi_{i j}| \de x \de s\\
    &+ C \int_{\{R/10 \le s \le t \} \cap \mathcal{I}_{i j}} \sum_{H+K \subset I} \Big( |\partial \der^H \phi_i| | \partial^2 \der^K \phi_j|  \\
    &\qquad +|\p \der^H \phi_j| |\partial^2  \der^K  \phi_i| + |\partial \der^H \phi_i| |\p \der^K \phi_j| \Big) |\partial_t \der^I \psi_{i j}| \frac{R}{s} \de x \de s =: \mathfrak{B}_{31} + \mathfrak{B}_{32}.
    \end{aligned}
\end{equation}
Now, the term $\mathfrak{B}_{31}$ can be controlled exactly in the same way as in estimate~\eqref{eq:rminusone1}, as terms like $\overline{\partial}^{(j)} \der^K \phi_j$ have very strong pointwise decay. Thus, we must only control terms $\mathfrak{B}_{32}$.

We begin by noting that we can decompose:
\begin{equation}
    \begin{aligned}
    \partial_t \der^I \psi_{i j} = {1 \over 2} (\partial_{v_j} \der^I \psi_{i j} + \partial_{u_j} \der^I \psi_{i j}).
    \end{aligned}
\end{equation}
Moreover, using bound~\eqref{eq:igortrick} from Lemma~\eqref{lem:goodbad}, we can write
\begin{equation}
    \begin{aligned}
    |\partial_{u_j} \psi_{i j}| \le C {s \over R} ( |\overline{\partial}^{(i)} \psi_{i j}| + |\overline{\partial}^{(j)} \psi_{i j}|),
    \end{aligned}
\end{equation}
since we are restricting to the region $\{R/10 \le s \le t \} \cap \mathcal{I}_{i j}$. Thus, we have that
\begin{equation}\label{eq:beforelinf}
    \begin{aligned}
    &|\mathfrak{B}_{32}| \leq  C \int_{\{R/10 \le s \le t \} \cap \mathcal{I}_{i j}} \sum_{H+K \subset I} \Big( |\partial \der^H \phi_i| | \partial^2 \der^K \phi_j|  \\
    &\qquad +|\p \der^H \phi_j| |\partial^2  \der^K  \phi_i| + |\partial \der^H \phi_i| |\p \der^K \phi_j| \Big) (|\bar \partial^{(i)} \der^I \psi_{i j}| + |\bar \partial^{(j)} \der^I \psi_{i j}|) \de x \de s.
    \end{aligned}
\end{equation}
Now, we note that the terms $\bar \partial^{(i)} \der^I \psi_{i j}$ and $\bar \partial^{(j)} \der^I \psi_{i j}$ all appear in the characteristic $\p_t$ energy for the wave equation satisfied by $\psi_{ij}$ (equation~\eqref{eq:pairs}). Indeed, the terms with $\bar{\p}^{(i)}$ appear in the characteristic $\p_t$-energy through outgoing cones adapted to the $i$-th piece of initial data (the cones $C^{(i)}_{\bar u_i}$, defined by $\{u_i = \bar u_i\} \cap \{t \geq 0\}$, where the coordinate $u_i$ has been introduced in Definition~\ref{def:riviui}), whereas the terms with $\bar{\p}^{(j)}$ appear in the characteristic $\p_t$-energy through outgoing cones $C^{(j)}_{\bar u_j}$ adapted to $\phi_j$. Thus, we write the RHS of inequality~\eqref{eq:beforelinf} as two separate terms, and we shall consider different foliations of outgoing null cones to control each of those terms.

We begin with the term involving $\overline{\partial}^{(i)} V^I \psi_{i j}$. We define
\begin{equation}
    \begin{aligned}
     \mathfrak{B}_{4}:=&\int_{\{R/10 \le s \le t \} \cap \mathcal{I}_{i j}} \sum_{H+K \subset I} \Big( |\partial \der^H \phi_i| | \partial^2 \der^K \phi_j|  \\
    &\qquad +|\p \der^H \phi_j| |\partial^2  \der^K  \phi_i| + |\partial \der^H \phi_i| |\p \der^K \phi_j| \Big) |\bar \partial^{(i)} \der^I \psi_{i j}|  \de x \de s.
    \end{aligned}
\end{equation}

Using the H\"older inequality in mixed Lebesgue spaces, we now estimate $\overline{\partial}^{(i)}\der^I \psi_{i j}$ in $L^2$ of the outgoing cones $C^{(i)}_{ u_i}$ and $L^\infty$ in the $u_i$-direction, while estimating the multiplying factor in $L^2$ of the outgoing cones $C^{(i)}_{ u_i}$ and $L^1$ in the $u_i$-direction. This gives us that
\begin{equation}
    \begin{aligned}
    &\mathfrak{B}_4 \leq \sup_{- R/10 \le  u_i \le R/10} \Vert \overline{\partial}^{(i)} \der^I \psi_{i j} \Vert_{L^2 (\tilde C_{ u_i}^{(i),t,R})} \int_{- R/10}^{R/10} \Big( \Vert  |\partial \der^H \phi_i| | \partial^2 \der^K \phi_j|   \Vert_{L^2 (\tilde C_{ u_i}^{(i),t,R})} \\
    & \qquad \qquad \qquad + \Vert |\p \der^H \phi_j| |\partial^2  \der^K  \phi_i| \Vert_{L^2 (\tilde C_{ u_i}^{(i),t,R})}+ \Vert |\partial \der^H \phi_i| |\p \der^K \phi_j| \Vert_{L^2 (\tilde C_{ u_i}^{(i),t,R})} \Big) \de  u_i,
    \end{aligned}
\end{equation}
where, if $\bar u_i \in \R$, we are denoting by $\tilde C^{(i),R,t}_{\bar u_i}$ the usual outgoing cone $C^{(i)}_{\bar u_i}$ defined by $\{u_i = \bar u_i\} \cap \{t \geq 0\}$ intersected with the set $\{R/10 \le s \le t\} \cap \mathcal{I}_{ij}$.

We now wish to calculate the innermost integral (the integrals in the $L^2$ norms) using the coordinates $(u_i, v_i, u_j, \varphi)$ described in Lemma~\ref{lem:r1r2}, adapted to $\phi_i$ and $\phi_j$. We note that, in the region $\mathcal{I}_{ij} \cap \{t \geq R/10\}$, $r_i$~and~$r_j$ are comparable, as the triangle inequality implies $r_i \leq r_j + 2R$, and since $r_j \geq c R$ for some positive constant $c$, we have $r_i \leq c'r_j$, for some positive constant $c'$. This also implies that $v_i$ is comparable to $v_j$.

Now, using the pointwise bounds on $\phi_i$ and $\phi_j$ given in Lemma~\ref{prop:decphii} (bounds~\eqref{eq:decphii}), we have that, since we are always differentiating at most $n-1$ times, and the estimates~\eqref{eq:decphii} give control over $n+1$ derivatives,
\begin{align}
    &\mathfrak{B}_4 \leq C \sup_{-R/10 \le u_i \le R/10} \Vert \overline{\partial}^{(i)} \der^I \psi_{i j} \Vert_{L^2 (\tilde C_{u_i}^{(i),t,R})} \nonumber  \\
    & \qquad \times \eps^2 \int_{u_i \in [-R/10, R/10]} \left (\int_{v_i \in [R/10, \infty)} \int_{u_j \in \R} \frac{1}{ v^2_i (1 + |u_i|)^{2 + 2 \delta}v^2_j (1 + |u_j|)^{2 + 2 \delta}} \frac{r_ir_j} R \de u_j \, \de v_i  \right)^{\frac 12} \de u_i \nonumber \\\label{eq:linearimp}
    & \quad \leq C \sup_{-R/10 \le u_i \le R/10} \Vert \overline{\partial}^{(i)} \der^I \psi_{i j} \Vert_{L^2 (\tilde C_{u_i}^{(i),t,R})}  \\
    & \qquad \times \eps^2 \int_{u_i \in [-R/10, R/10]} \left (\int_{v_i \in [R/10, \infty)} \int_{u_j \in \R} \frac{1}{ v_i^2 (1 + |u_i|)^{2 + 2 \delta} (1 + |u_j|)^{2 + 2 \delta}} \frac 1 R \de u_j \, \de v_i  \right)^{\frac 12} \de u_i
    \nonumber \\ 
    & \quad \le C \eps^2 {1 \over R} \sup_{- R/10 \le u_i \le R/10} \Vert \overline{\partial}^{(i)} \der^I \psi_{i j} \Vert_{L^2 (\tilde C_{u_i}^{(i),t,R})}. \nonumber
\end{align}

\begin{remark}
    This is the most important estimate in this proposition. Here, we used the crucial fact that we have improved $u$-decay for $\phi_i$ and $\phi_j$. Note in particular that, if the $u_i$-decay for $\phi_i$ were not integrable in $u_i$, we would not be able to close the argument (as the integral in~\eqref{eq:linearimp} would diverge).
\end{remark}
Using the same argument for terms involving $\overline{\partial}^{(j)} \psi_{i j}$ in display~\eqref{eq:beforelinf}, and putting the resulting estimates together, we finally obtain
\begin{equation}
    \begin{aligned}
    &\Vert \partial \der^I \psi_{i j} \Vert_{L^2 (\Sigma_t)}^2
    \\& \quad  \le C \eps^2 {1 \over R} \Big( \sup_{R/10 \le s \le t} \Vert \partial \der^I \psi_{i j} \Vert_{L^2 (\Sigma_s)} \\
    & \qquad \qquad + \sup_{-R/10 \le u_i \le R/10} \Vert \overline{\partial}^{(i)} \der^I \psi_{i j} \Vert_{L^2 (\tilde C^{(i),R,t}_{u_i})} + \sup_{- R/10 \le u_j \le R/10} \Vert \overline{\partial}^{(j)}\der^I \psi_{i j} \Vert_{L^2 (\tilde C^{(j),R,t}_{u_j})} \Big).
    \end{aligned}
\end{equation}
The inequality in the last display holds for all $t\geq 0$. Thus, we can take the supremum in $t$ on both sides. The LHS becomes $\sup_{t\geq 0} \Vert \partial \der^I \psi_{i j} \Vert_{L^2 (\Sigma_t)}^2$. Furthermore, we obtain (note that $\psi_{ij}$ is zero whenever $u_i \leq -3R$ or $u_j \leq -3R$, by domain of dependence):
\begin{equation}
    \begin{aligned}
    &\sup_{t \geq 0} \Vert \partial \der^I \psi_{i j} \Vert_{L^2 (\Sigma_t)}^2  \\
    &\quad \leq C \eps^2 {1 \over R} \left (\sup_{t \geq 0} \Vert \partial \der^I \psi_{i j} \Vert_{L^2 (\Sigma_t)} + \sup_{u_i \in \R} \Vert \overline{\partial}^{(i)} \der^I \psi_{i j} \Vert_{L^2 (C^{(i)}_{u_i})} + \sup_{u_j \in \R} \Vert \overline{\partial}^{(j)} \der^I \psi_{i j} \Vert_{L^2 (C^{(j)}_{u_j})} \right ).
    \end{aligned}
\end{equation}
We proceed by multiplying again the evolution equation in~\eqref{eq:pairscomm} by $\partial_t \der^I \psi_{i j}$ and integrating by parts in the spacetime region $\mathcal{S}_0$ to the future of the hypersurface $\Sigma_0$ and to the past of both the hypersurface $\Sigma_t$ and the outgoing cone $C^{(i)}_{u_i}$, for $u_i \geq -3R$. In other words,
$$
\mathcal{S}_0 := \{(\bar t, \bar r_i, \bar \theta, \bar \varphi):  0 \leq \bar{t} \leq t, \quad \bar t - \bar r_i \leq u_i \}.
$$
It's straightforward to see that the error term appearing in the RHS of the resulting estimate can be controlled in the same fashion as terms $\mathfrak{B}_1$ through $\mathfrak{B}_4$. This gives us that
\begin{equation*}\label{eq:icone}
    \begin{aligned}
    &\Vert \overline{\partial}^{(i)} \der^I \psi_{i j} \Vert_{L^2 (C^{(i)}_{u_i} \cap \{ 0 \leq s \leq t\})}^2
    \\& \quad  \le C \eps^2 {1 \over R} \Big( \sup_{R/10 \le s \le t} \Vert \partial \der^I \psi_{i j} \Vert_{L^2 (\Sigma_s)} \\
    & \qquad \qquad + \sup_{-R/10 \le u_i \le R/10} \Vert \overline{\partial}^{(i)} \der^I \psi_{i j} \Vert_{L^2 (C^{(i),R,t}_{u_i})} + \sup_{- R/10 \le u_j \le R/10} \Vert \overline{\partial}^{(j)}\der^I \psi_{i j} \Vert_{L^2 (C^{(j),R,t}_{u_j})} \Big).
    \end{aligned}
\end{equation*}
Here, we denoted by $C^{(i), R, t}_{u_i}$ the usual outgoing cone $C^{(i)}_{u_i}$ of constant $u_i$ to the future of the initial hypersurface $\Sigma_0$, intersected with the set $\{R/10 \le s \le t\}$:
$$
C^{(i), R, t}_{u_i} := \{(\bar t, \bar r_i, \bar \theta, \bar \varphi): R/10 \leq \bar t \leq t, \quad \bar t - \bar r_i = u_i \}.
$$
The analogous definition holds for $C^{(j), R, t}_{u_j}$.
We can now send $t$ to $\infty$ and use the monotone convergence theorem, giving us that the LHS of display~\eqref{eq:icone} becomes $\Vert \overline{\partial}^{(i)} \der^I \psi_{i j} \Vert_{L^2 (C^{(i)}_{u_i})}^2$. We bound the RHS by the trivial estimate and we obtain, for all $u_i \in \R$,
\begin{equation*}
 \begin{aligned}
    &\Vert \overline{\partial}^{(i)} \der^I \psi_{i j} \Vert_{L^2 (C^{(i)}_{u_i})}^2  \\
    &\quad \le C \eps^2 {1 \over R} \Big( \sup_{t \geq 0} \Vert \partial \der^I \psi_{i j} \Vert_{L^2 (\Sigma_t)}+ \sup_{u_i \geq -3R} \Vert \overline{\partial}^{(i)} \der^I \psi_{i j} \Vert_{L^2 (C^{(i)}_{u_i})} + \sup_{u_j \geq -3R} \Vert \overline{\partial}^{(j)}\der^I \psi_{i j} \Vert_{L^2 (C^{(j)}_{u_j})} \Big).
\end{aligned}
\end{equation*}
Then, taking the supremum in $u_i$ gives us that
\begin{equation*}
 \begin{aligned}
    &\sup_{u_i \in \R}\Vert \overline{\partial}^{(i)} \der^I \psi_{i j} \Vert_{L^2 (C^{(i)}_{u_i})}^2  \\
    &\quad \le C \eps^2 {1 \over R} \Big( \sup_{t \geq 0} \Vert \partial \der^I \psi_{i j} \Vert_{L^2 (\Sigma_t)}  + \sup_{u_i \in \R} \Vert \overline{\partial}^{(i)} \der^I \psi_{i j} \Vert_{L^2 (C^{(i)}_{u_i})} + \sup_{u_j \in \R } \Vert \overline{\partial}^{(j)}\der^I \psi_{i j} \Vert_{L^2 (C^{(j)}_{u_j})} \Big).
\end{aligned}
\end{equation*}
Similarly, using the same argument with respect to outgoing cones adapted to $\phi_j$ gives us that
\begin{equation*}
 \begin{aligned}
    &\sup_{u_j \in \R}\Vert \overline{\partial}^{(j)} \der^I \psi_{i j} \Vert_{L^2 (C^{(j)}_{u_j})}^2 \\
    &\quad \le C \eps^2 {1 \over R} \Big( \sup_{t \geq 0} \Vert \partial \der^I \psi_{i j} \Vert_{L^2 (\Sigma_t)}  + \sup_{u_i \in \R} \Vert \overline{\partial}^{(i)} \der^I \psi_{i j} \Vert_{L^2 (C^{(i)}_{u_i})} + \sup_{u_j \in \R } \Vert \overline{\partial}^{(j)}\der^I \psi_{i j} \Vert_{L^2 (C^{(j)}_{u_j})} \Big).
\end{aligned}
\end{equation*}
Thus, we have that
\begin{equation*}
 \begin{aligned}
    &\sup_{t \geq 0} \Vert \partial \der^I \psi_{i j} \Vert_{L^2 (\Sigma_t)}^2+\sup_{u_i \in \R}\Vert \overline{\partial}^{(i)} \der^I \psi_{i j} \Vert_{L^2 (C^{(i)}_{u_i})}^2+\sup_{u_j \in \R}\Vert \overline{\partial}^{(j)} \der^I \psi_{i j} \Vert_{L^2 (C^{(j)}_{u_j})}^2
    \\& \quad  \le C \eps^2 {1 \over R} \Big( \sup_{t \geq 0} \Vert \partial \der^I \psi_{i j} \Vert_{L^2 (\Sigma_t)} + \sup_{u_i \in \R} \Vert \overline{\partial}^{(i)} \der^I \psi_{i j} \Vert_{L^2 (C^{(i)}_{u_i})} + \sup_{u_j \in \R } \Vert \overline{\partial}^{(j)}\der^I \psi_{i j} \Vert_{L^2 (C^{(j)}_{u_j})} \Big).
\end{aligned}
\end{equation*}
This implies that
\begin{equation}
    \begin{aligned}
    \sup_{t \geq 0} \Vert \partial \der^I \psi_{i j} \Vert_{L^2 (\Sigma_t)}+\sup_{u_i \in \R}\Vert \overline{\partial}^{(i)} \der^I \psi_{i j} \Vert_{L^2 (C^{(i)}_{u_i})}+\sup_{u_j \in \R}\Vert \overline{\partial}^{(j)} \der^I \psi_{i j} \Vert_{L^2 (C^{(j)}_{u_j})} \le C \eps^2 {1 \over R},
    \end{aligned}
\end{equation}
as desired.
\ep

We also note that we can prove an energy estimate for the $\p_t$ energy of $\psi_{i j}$ through the outgoing cone adapted to any of the $\phi_k$. The argument is the same as in the above, as it just involves controlling the same spacetime integral. We record the result here, as it will be important later.

\begin{lemma}\label{lem:psiijencones}
Let us assume the hypotheses of Lemma~\ref{prop:energy}.
Then, we have that, for all multi-indices $I \in I^{\leq n-1}_{ \boldsymbol{\Gamma}^{(h)} }$, $h \in \{i,j\}$, the following inequality holds true:
\begin{equation}
    \begin{aligned}
    \sup_{u_k \in \R} \Vert \bar \p^{(k)} \der^I  \psi_{i j} \Vert_{L^2 (C^{(k)}_{u_k})}  \le C \eps^2 {1 \over R},
    \end{aligned}
\end{equation}
where the constant $C$ is allowed to depend on the distance between $p_i$ and $p_j$, which we recall are the points where the initial data for $\phi_i$ and $\phi_j$, respectively, are centered when $R = 1$. Here, the integration is over outgoing cones $C^{(k)}_{u_k}$ adapted to any of the $\phi_k$. Recall that, for $\bar u_k \in \R$, $C^{(k)}_{\bar u_k}$ is defined as the set $\{u_k = \bar u_k\} \cap \{t \geq 0\}$.
\end{lemma}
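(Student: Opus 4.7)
The plan is to re-run the energy identity from the proof of Proposition~\ref{prop:energy}, but in a different spacetime region whose lateral boundary is an outgoing cone adapted to $\phi_k$ rather than the constant-$t$ hypersurface $\Sigma_t$. Fix $k \in \{0,1,\ldots,N\}$ and $\bar{u}_k \in \R$, and consider the region
\[
    \mathcal{D}^{(k)}_{\bar{u}_k, T} := \{ 0 \leq t \leq T\} \cap \{ u_k \geq \bar{u}_k\}.
\]
Commute \eqref{eq:pairs} with $\der^I$ as in \eqref{eq:pairscomm}, multiply by $\p_t \der^I \psi_{ij}$, and integrate by parts over $\mathcal{D}^{(k)}_{\bar{u}_k, T}$. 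Because $\psi_{ij}$ has vanishing initial data, the contribution from $\Sigma_0$ vanishes; the contribution on the future portion of $\Sigma_T$ is nonnegative and can be discarded; the contribution from the lateral boundary $C^{(k)}_{\bar{u}_k} \cap \{0 \leq t \leq T\}$ is pointwise comparable, up to a universal constant, to
\[
    \int_{C^{(k)}_{\bar{u}_k} \cap \{0 \leq t \leq T\}} \bigl|\bar{\p}^{(k)} \der^I \psi_{ij}\bigr|^2 \, \de S_{C^{(k)}_{\bar{u}_k}},
\]
since the $\p_t$-energy current contracted with the null normal of $C^{(k)}_{\bar{u}_k}$ involves only the derivatives tangential to this cone, which is precisely $\boldsymbol{G}^{(k)}$.

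The bulk contribution is the same spacetime integral estimated in the proof of Proposition~\ref{prop:energy}, now restricted to the subregion $\mathcal{D}^{(k)}_{\bar{u}_k, T}$ of $[0,T]\times \R^3$. Every estimate on $\mathfrak{B}_1, \mathfrak{B}_2, \mathfrak{B}_3, \mathfrak{B}_4$ carried out there applies verbatim: the decomposition into $\mathcal{I}_{ij}$ and $\mathcal{I}_{ij}^c$, the null-form expansion of Lemma~\ref{lem:nullstruct}, the decay bounds \eqref{eq:asymptotici}, the energy bounds \eqref{eq:decl2phii}, and the decomposition \eqref{eq:goodbad}--\eqref{eq:igortrick} are all insensitive to the choice of $k$. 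The upshot is the inequality
\[
    \|\bar{\p}^{(k)} \der^I \psi_{ij}\|_{L^2(C^{(k)}_{\bar{u}_k} \cap \{0 \leq t \leq T\})}^2
    \leq C\,\frac{\eps^2}{R}\,\Bigl(\, \sup_{t\geq 0}\|\p \der^I \psi_{ij}\|_{L^2(\Sigma_t)} + \sup_{u_i}\|\bar{\p}^{(i)} \der^I \psi_{ij}\|_{L^2(C^{(i)}_{u_i})} + \sup_{u_j}\|\bar{\p}^{(j)} \der^I \psi_{ij}\|_{L^2(C^{(j)}_{u_j})} \,\Bigr).
\]

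To close, I send $T \to \infty$ using the monotone convergence theorem, take the supremum over $\bar{u}_k \in \R$, and then invoke Proposition~\ref{prop:energy} together with the refined cone-flux bounds already produced in its proof to bound each term on the right-hand side by $C\eps^2/R$. This yields the desired $C\eps^2/R$ bound on $\sup_{u_k}\|\bar{\p}^{(k)} \der^I \psi_{ij}\|_{L^2(C^{(k)}_{u_k})}$. The only mildly delicate step is the bulk estimate for $k \notin \{i,j\}$: the good-derivative direction $\bar{\p}^{(k)}$ picked up from the multiplier is not directly aligned with either of the frames used in Proposition~\ref{prop:energy}, but this plays no role since $\bar{\p}^{(k)} \der^I \psi_{ij}$ only appears as a nonnegative boundary flux, not inside the trilinear integrand; the trilinear bulk estimate still uses the $i$- or $j$-adapted null frames exactly as before. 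Thus no new obstacle arises, and the entire argument is a direct adaptation of the proof of Proposition~\ref{prop:energy}.
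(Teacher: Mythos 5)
Your overall strategy is the right one, and it matches the paper's (implicit) proof: re-run the $\p_t$-energy identity of Proposition~\ref{prop:energy} over a region whose lateral/future boundary is the cone $C^{(k)}_{\bar u_k}$, control the bulk with the same trilinear estimates, and close using the three quantities already bounded in Proposition~\ref{prop:energy}. Your remark about $k \notin \{i,j\}$ is also correct: the $k$-adapted good derivative only enters through the boundary flux, never in the bulk, so the $i$- and $j$-adapted frames from Proposition~\ref{prop:energy} suffice.

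However, the region is specified with the wrong sign, and this is not cosmetic. You take $\mathcal{D}^{(k)}_{\bar u_k,T} = \{0 \le t \le T\} \cap \{u_k \ge \bar u_k\}$, which is the \emph{interior} of the outgoing cone. For the interior region, $C^{(k)}_{\bar u_k}$ is a past-type boundary: the flux through it appears on the same side of the energy identity as the initial data, i.e.\ one obtains
$$
\int_{\Sigma_T \cap \mathcal{D}} |\p \der^I \psi_{ij}|^2 \ \lesssim\ \int_{\Sigma_0 \cap \mathcal{D}} |\p \der^I\psi_{ij}|^2 + \int_{C^{(k)}_{\bar u_k}\cap \{0\le t\le T\}} |\bar\p^{(k)}\der^I\psi_{ij}|^2 + \text{Bulk},
$$
from which the cone flux cannot be extracted. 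The step ``discard the nonnegative $\Sigma_T$ contribution and keep the cone flux on the left'' only makes sense if the cone is a \emph{future} boundary of the integration region, which requires the \emph{exterior} region $\{u_k \le \bar u_k\}$ (cf.\ the paper's $\mathcal{S}_0 = \{0\le \bar t\le t,\ \bar t - \bar r_i \le u_i\}$ in the proof of Proposition~\ref{prop:energy}, which is to the past of the cone). With the inequality flipped to $\{u_k \le \bar u_k\}$, the rest of your argument goes through unchanged and reproduces the paper's reasoning, so the fix is local; but as written the identity does not yield a bound on the quantity you need.
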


We now focus on the interaction between the ``remainder'' $\phi_0$ and $\phi_i$. We use the bounds on the energy of $\phi_0$ obtained in Lemma~\ref{lem:decomposition} to control the energy of $\psi_{0i}$ and $\psi_{i0}$. We collect the $L^2$ estimates in the following lemma:

\begin{lemma}[Estimates on the energy of $\psi_{0i}$ and $\psi_{i0}$, for $i \in \{1, \ldots, N\}$]\label{lem:phi0j}
Let $\psi_{0i}$, $i \in \{1,\ldots, N\}$ the solution to the initial value problem~\eqref{eq:pairs}, with $\phi_i$, $i \in \{0, \ldots, N\}$ constructed as in Lemma~\ref{lem:decomposition}. Similarly, let $\psi_{i0}$, $i \in \{1,\ldots, N\}$ the solution to the initial value problem~\eqref{eq:pairs}, with $\phi_i$, $i \in \{0, \ldots, N\}$ constructed as in Lemma~\ref{lem:decomposition} (recall that the construction depends on the parameter $\eps$, which is chosen small enough so that the conclusion of Lemma~\ref{prop:decphii} holds true). Then, for all $t \geq 0$ and $I \in I^{\leq n-1}_{\boldsymbol{K}^{(i)}}$, the following inequalities hold true:
\begin{equation}\label{eq:enphi0i}
    \Vert \p \der^I \psi_{i0}\Vert_{L^2(\Sigma_t)} \leq C(N,d_\Pi) \varepsilon^2 R^{-\frac 32}, \qquad \Vert \p \der^I \psi_{0i}\Vert_{L^2(\Sigma_t)} \leq C(N,d_\Pi) \varepsilon^2  R^{-\frac 32}.
\end{equation}

\end{lemma}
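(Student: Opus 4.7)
My plan is to mimic the energy estimate in the proof of Proposition~\ref{prop:energy}, but now exploit the fact that the estimates on $\phi_0$ from Lemma~\ref{lem:decomposition} (specifically bounds~\eqref{eq:l2impphi0}--\eqref{eq:linfimpphi0}) already carry an intrinsic $R^{-3/2}$ weight. This means we will not need the delicate two-source geometry of Lemma~\ref{lem:r1r2} or the $\partial_t$-decomposition of Lemma~\ref{lem:goodbad}; a direct H\"older-type estimate suffices.

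I will treat the case of $\psi_{0i}$ (the case $\psi_{i0}$ is identical up to relabelling). First, commuting equation~\eqref{eq:pairs} with a multi-index $I \in I^{\leq n-1}_{\boldsymbol{K}^{(i)}}$ and using Lemma~\ref{lem:nullcomm} preserves the null-form structure on the right-hand side. Multiplying by $\partial_t \der^I \psi_{0i}$, integrating over the slab bounded by $\Sigma_0$ and $\Sigma_t$, and using $\psi_{0i}|_{t=0} = \partial_t \psi_{0i}|_{t=0} = 0$ reduces matters to bounding the spacetime integral
\begin{equation*}
\int_0^t \bigl\|F_{HK}(d\der^H\phi_0,d^2\der^K\phi_i) + F_{HK}(d\der^H\phi_i,d^2\der^K\phi_0) + G_{HK}(d\der^H\phi_0,d\der^K\phi_i)\bigr\|_{L^2(\Sigma_s)}\,\|\partial_t \der^I\psi_{0i}\|_{L^2(\Sigma_s)}\,\de s,
\end{equation*}
summed over $H+K\subset I$.

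The main work is to show that the $L^2(\Sigma_s)$ norm of the null-form inhomogeneity is $\lesssim \eps^2 R^{-3/2}(1+s)^{-1-\delta}$, which is integrable in $s$ and yields the claimed bound via Gr\"onwall. Applying Lemma~\ref{lem:nullstruct} with respect to the $i$-th light cone replaces each null form by a sum of products of the type $|\bar\partial^{(i)}\der^H\phi_0|\,|\partial\der^K\phi_i|$ and $|\partial\der^H\phi_0|\,|\bar\partial^{(i)}\der^K\phi_i|$ (and analogous $F$-terms with an extra derivative on the second factor). Using the improved pointwise bounds~\eqref{eq:linfimpphi0} for $\phi_0$ together with the pointwise bounds~\eqref{eq:asymptotici} for $\phi_i$, each such product is bounded by
\begin{equation*}
\eps^2 R^{-3/2}\,(1+v_i)^{-1}(1+r_i^2)^{-1}(1+|u_i|)^{-1-2\delta},
\end{equation*}
and integrating its square on $\Sigma_s$ in null coordinates adapted to $Rp_i$ gives a bound $\lesssim \eps^4 R^{-3}(1+s)^{-4}$, as the $r_i$-integral is dominated by $r_i \sim s$ (outside this region the $|u_i|$-weight provides additional decay). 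Taking square roots yields the claimed $(1+s)^{-2}$ decay, which after time integration produces a factor $O(1)$ and leaves the $\eps^2 R^{-3/2}$ untouched. A standard continuity argument (or simply direct substitution since the equation is linear in $\psi_{0i}$) then closes the estimate.

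The main technical annoyance, rather than an obstacle, is bookkeeping: we must arrange that in every product at most $n-1$ derivatives (for the $G$ terms) or $n$ derivatives (for the $F$ terms) of either factor appear in $L^\infty$, so that the improved pointwise bounds~\eqref{eq:asymptotici}--\eqref{eq:linfimpphi0} of Lemma~\ref{lem:decomposition} apply (these control up to $n$ derivatives pointwise), while at most $n-1+1 = n$ derivatives appear in $L^2$ where the energy bounds~\eqref{eq:decl2phii}, \eqref{eq:l2impphi0} hold. Since the decomposition $H+K\subset I$ with $|I|\leq n-1$ satisfies $|H|+|K|\leq n-1$, we can always place the factor with the fewer derivatives in $L^\infty$, and no loss occurs. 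This closes the proof of both inequalities in~\eqref{eq:enphi0i}.
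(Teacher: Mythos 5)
Your argument is correct and essentially mirrors the paper's proof: both start from the $\partial_t$-energy estimate for the commuted equation, use Lemma~\ref{lem:nullstruct} to expose good derivatives along the $i$-th cone, and exploit the $R^{-3/2}$ factor already built into the bounds~\eqref{eq:l2impphi0}--\eqref{eq:linfimpphi0} for $\phi_0$ so that no fine two-source geometry is required. The only difference is cosmetic: the paper estimates the $L^2(\Sigma_s)$ norm of each null-form term by an $L^\infty\times L^2$ split (good-derivative factor pointwise, the other via the energy bounds~\eqref{eq:decl2phii} and~\eqref{eq:l2impphi0}), whereas you put both factors in $L^\infty$ and compute the radial integral explicitly; both give the same integrable $(1+s)^{-2}$ decay and the same $\eps^2 R^{-3/2}$ constant, so this is a matter of taste rather than substance.
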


\begin{proof}
Let's first prove the claim for $\psi_{i0}$ when restricted to $t \in [0, R^{20}]$. 
Recall the form of equation~\eqref{eq:pairs}, specialized to the case $j =0$:
\begin{equation}\label{eq:phi0j}
\Box \psi_{i0} +  F(d \phi_i, d^2 \phi_0) + F(d \phi_0,d^2 \phi_i) = 2 G (d \phi_0, d \phi_i).
\end{equation}
We now commute such equation with $\der^I$, where $I \in I^{\leq n-1}_{\boldsymbol{K}^{(i)}}$:
\begin{equation*}
\begin{aligned}
    &\Box \der^I \psi_{i0} + \sum_{H, K \subset I}(F_{HK}(d \der^H \phi_i, d^2 \der^K \phi_0) + F_{HK}(d \der^H \phi_0, d^2 \der^K \phi_i)) \\
    &= \sum_{H, K \subset I}G_{HK}(d \der^H \phi_0, d \der^K \phi_i).
\end{aligned}
\end{equation*}
Here, $F_{HK}$ and $G_{HK}$ are a collection of (resp.~trilinear and bilinear) null forms as in Definition~\ref{def:null}.

The energy inequality now implies, along with the fundamental null form inequality (Lemma~\ref{lem:nullstruct}):

\begin{equation*}
    \begin{aligned}
    &\p_t \Vert \p \der^I \psi_{i0}\Vert_{L^2(\Sigma_t)} \\
    &\quad \leq C \sum_{H, K \subset I}\Vert \bar \p^{(i)} \der^H \phi_i \Vert_{L^\infty(\Sigma_t)} \Vert \p^2 \der^K \phi_0 \Vert_{L^2(\Sigma_t)}+\Vert \p \der^H \phi_i \Vert_{L^2(\Sigma_t)} \Vert \bar \p^{(i)} \p \der^K \phi_0 \Vert_{L^\infty(\Sigma_t)} \\
    & \quad+ C  \sum_{H, K \subset I}\Vert \p^2 \der^H \phi_i \Vert_{L^2(\Sigma_t)} \Vert \bar\p^{(i)} \der^K \phi_0 \Vert_{L^\infty(\Sigma_t)}+\Vert \bar \p^{(i)}\p \der^H \phi_i \Vert_{L^\infty(\Sigma_t)} \Vert \p \der^K \phi_0 \Vert_{L^2(\Sigma_t)}\\
    &\quad + C \sum_{H, K \subset I}\Vert \bar \p^{(i)} \der^H \phi_i \Vert_{L^\infty(\Sigma_t)} \Vert \p \der^K \phi_0 \Vert_{L^2(\Sigma_t)}+\Vert \p \der^H \phi_i \Vert_{L^2(\Sigma_t)} \Vert \bar \p^{(i)} \der^K \phi_0 \Vert_{L^\infty(\Sigma_t)}
    \end{aligned}
\end{equation*}

We then integrate the previous display in time, using the bounds~\eqref{eq:asymptotici} and~\eqref{eq:linfimpphi0} (valid for all $K_2 \in I^{\leq n}_{\boldsymbol{K}^{(i)}}$):
$$
|\bar \p^{(i)} \der^{K_2} \phi_i| \leq C \varepsilon \frac{1}{(1+v_i^2)(1+|u_i|)^\delta}, \qquad |\bar \p^{(i)} \der^{K_2} \phi_0| \leq C(N,d_\Pi) \varepsilon \frac{R^{- \frac 32}}{(1+v_i^2)(1+|u_i|)^\delta},
$$
along with the $L^2$ bounds~\eqref{eq:decl2phii} and~\eqref{eq:l2impphi0} and the fact that $\psi_{i0}$ has zero initial data. This enabels us to deduce the claim for $\psi_{i0}$:
\begin{equation*}
    \Vert \p \der^I \psi_{i0}\Vert_{L^2(\Sigma_t)} \leq C(N,d_\Pi) \varepsilon^2 R^{-\frac 32}.
\end{equation*}
In a totally analogous manner, we obtain the claim for $\psi_{0i}$.
\end{proof}

\subsection{\texorpdfstring{$L^\infty$}{sup} estimates on the linear equation}\label{sub:linfpsiij}

Having proved suitable $L^2$ estimates for $\psi_{ij}$, our goal is now to use them to deduce $L^\infty$ estimates on the solution $\psi_{ij}$ to the linear equation~\eqref{eq:pairs}. With this in mind, we will employ the $L^2$ estimates just derived in Proposition~\ref{prop:energy}, together with the $R$-weighted Klainerman--Sobolev estimates of Section~\ref{sec:sobolevs} (we recall that these are modifications of estimates first showed  in~\cite{Klainerman1985}). These estimates account for the fact that some of the vector fields carry ${1 \over R}$ weights. We have the following proposition.

\begin{proposition}\label{prop:linfty}
	Let $n \geq 4$, $n \in \N$. Let the smooth function $\psi_{ij}$ arise as a solution to the initial value problem~\eqref{eq:pairs}, where each of the $\phi_i$'s ($i \in \{0, \ldots, N\}$) is constructed according to Lemma~\ref{lem:decomposition}.
	In particular, $\psi_{ij}$ satisfies estimate~\eqref{eq:enpsiij} from Proposition~\ref{prop:energy}, and bound~\eqref{eq:enphi0i} from Lemma~\ref{lem:phi0j}. In these conditions, there exists a constant $C >0$ such that $\psi_{ij}$ satisfies the following $L^\infty$ estimates, for all $t \geq 0$, and for all  $K \in I^{\leq n-4}_{\boldsymbol{K}_R}$:
	\begin{align}
	& \Vert \p \der^K \psi_{ij} \Vert_{L^\infty (\Sigma_{t})} \le {C \eps^{2} \over {R^{\frac 12}(1+t)}} \qquad  \text{ for } i \neq j \text{ and } (i,j) \in \{0, \ldots, N\} \times \{0, \ldots, N\},\label{eq:linf1}\\
	&\Vert \overline{\partial}^{(i)} \der^K \psi_{ij} \Vert_{L^\infty (\Sigma_{t})} \le {C \varepsilon^{2} R^3 \over {(1+t)}^{3 \over 2}}, \qquad \text{ for } i \neq j \text{ and } (i,j) \in \{0, \ldots, N\} \times \{0, \ldots, N\}, \label{eq:linf2}\\
	&|\partial \der^K \psi_{ij}|(t, u_i, \omega) \le {C \varepsilon^{2} R^2 \over {t} (1 + |u_i|)^{1 \over 2}}, \qquad \text{ for } i \neq j \text{ and } (i,j) \in \{0, \ldots, N\} \times \{0, \ldots, N\}.\label{eq:linf3}
	\end{align}
 Here, we considered $(u_i, v_i, \theta_i, \varphi_i)$ coordinates introduced in Definition~\ref{def:riviui} (recall that $t = \frac 1 2(u_i + v_i)$).
\end{proposition}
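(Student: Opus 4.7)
The strategy is to translate the $L^2$ bounds of Proposition~\ref{prop:energy} and Lemma~\ref{lem:phi0j} into the three stated $L^\infty$ bounds by applying, in suitable regions of $\Sigma_t$, the three $R$-weighted Klainerman--Sobolev inequalities of Section~\ref{sec:sobolevs}. The regularity budget is generous, since commuting up to three vector fields onto a multi-index $K \in I^{\leq n-4}_{\boldsymbol{K}_R}$ stays within the $n-1$ derivatives controlled in $L^2$. Moreover, one has the inclusion $\boldsymbol{K}_R^{(h)} \subset \boldsymbol{\Gamma}^{(h)}$, and any element of $\boldsymbol{K}_R$ differs from an element of $\boldsymbol{K}_R^{(h)}$ by a constant multiple of a translation, so the $L^2$ control furnished by Proposition~\ref{prop:energy} in terms of $\boldsymbol{\Gamma}^{(h)}$ transfers to $\boldsymbol{K}_R$ at the expense of a constant depending on $d_\Pi$.

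For~\eqref{eq:linf1} I would split $\Sigma_t$ into the exterior region $\{\rho \ge t/10\}$ and the interior region $\{\rho \le t/10\}$. On the exterior, Lemma~\ref{lem:sobspheres} applied to $f = \partial \der^K \psi_{ij}$ gives
\[
|\partial \der^K \psi_{ij}|^2 \le C\,\frac{R}{t^2}\sum_{|I|\le 3}\|\der^I \partial \der^K \psi_{ij}\|_{L^2(\Sigma_t)}^2 \le \frac{C\eps^4}{R\,t^2},
\]
matching the claim. On the interior, Lemma~\ref{lem:sobhyp} yields the even stronger estimate $|\partial \der^K \psi_{ij}| \le C\eps^2/(R(1+t))$, provided one supplies the hyperboloidal $L^2$ bound $\|\der^I \partial \der^K \psi_{ij}\|_{L^2(H_\tau)} \le C\eps^2/R$. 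The cases in which one of $i,j$ equals $0$ are handled by the same two-region argument using the sharper $R^{-3/2}$ factor from~\eqref{eq:enphi0i}.

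Estimate~\eqref{eq:linf3} follows from a direct application of the classical $R$-weighted Klainerman--Sobolev inequality (Lemma~\ref{lem:classicalks}) to $\partial \der^K \psi_{ij}$; this in fact produces the sharper bound $C\eps^2 R/((1+t)(1+|u_i|)^{1/2})$. For~\eqref{eq:linf2} I would decompose $\overline{\partial}^{(i)}$ according to the definition of $\boldsymbol{G}^{(i)}$. The standard identity writing $\partial_{v_i}$ as the sum of the scaling vector field $S^{(i)}$ and a radial Lorentz boost divided by $2(t+r_i)$, together with the fact that the corresponding $R^{-1}$-rescaled fields lie in $\boldsymbol{K}_R^{(i)}$, gives $|\partial_{v_i} \der^K \psi_{ij}| \le (CR/(1+t+r_i))\,|V \der^K \psi_{ij}|$ for some $V \in \boldsymbol{K}_R^{(i)}$; applying Lemma~\ref{lem:classicalks} to $V \der^K \psi_{ij}$ then yields a bound of order $C\eps^2 R^2/((1+t)^2(1+|u_i|)^{1/2})$, majorized by the claimed $C\eps^2 R^3/(1+t)^{3/2}$ on the temporal support $t \gtrsim R$ of $\psi_{ij}$. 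The angular good derivatives $(1+r_i)^{-1}\Omega^{(i)}_{(ab)}$ satisfy $|(1+r_i)^{-1}\Omega^{(i)}_{(ab)} f| \le C|\partial f|$ pointwise, so~\eqref{eq:linf1} directly controls them.

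The principal obstacle is securing the hyperboloidal $L^2$ bound invoked in the interior analysis of~\eqref{eq:linf1}; this is not explicitly part of the hypothesis but follows by rerunning the trilinear multiplier argument of Proposition~\ref{prop:energy} with $\Sigma_t$ replaced by $H_\tau$, exploiting that the hyperboloids $H_\tau$ are uniformly spacelike and that the change of variables of Lemma~\ref{lem:r1r2} applies unchanged in the interaction region, so the trilinear error terms close with the identical $1/R$ gain. Once this auxiliary estimate is in hand, the remainder of the proof reduces to a careful bookkeeping of $R$-weights across the three Sobolev embeddings.
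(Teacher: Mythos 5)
Your treatment of~\eqref{eq:linf1} matches the paper's proof: same exterior/interior split at $\rho = t/10$, the same use of Lemma~\ref{lem:sobspheres} on the exterior and Lemma~\ref{lem:sobhyp} on the interior, and the correct identification that the hyperboloidal $L^2$ bound must be re-derived by redoing the energy estimate with $H_\tau$ as the future boundary. Your treatment of~\eqref{eq:linf3} also agrees with the paper.

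The argument for~\eqref{eq:linf2}, however, has two real gaps. First, you apply Lemma~\ref{lem:classicalks} directly to the function $V\,\der^K\psi_{ij}$ with $V \in \boldsymbol{K}_R^{(i)}$. That lemma requires $L^2$ control of $\der^I\!\left(V\,\der^K\psi_{ij}\right)$ for $|I|\le 2$, but $V\,\der^K\psi_{ij}$ is not of the form $\p\,\der^{K'}\psi_{ij}$. Writing $V = R^{-1}c^\mu\p_\mu$ with $|c^\mu| \lesssim t + r_i$ shows that on the support of $\psi_{ij}$ one only gets $\left\|\der^I V\,\der^K\psi_{ij}\right\|_{L^2(\Sigma_t)} \lesssim (t/R)\cdot(\eps^2/R)$, not the uniform $\eps^2/R$ you implicitly assume. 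Feeding the corrected $L^2$ norm into the Klainerman--Sobolev inequality only recovers the bound already stated in~\eqref{eq:linf3}, namely $\eps^2 R\big((1+t)(1+|u_i|)^{1/2}\big)^{-1}$, which near the outgoing cone ($|u_i| = O(1)$) decays like $(1+t)^{-1}$, weaker than the claimed $(1+t)^{-3/2}$ once $t \gtrsim R^{1+2\delta}$. Second, your remark that the angular good derivatives $(1+r_i)^{-1}\Omega^{(i)}_{(ab)}$ are ``directly controlled by~\eqref{eq:linf1}'' also fails for large $t$: \eqref{eq:linf1} gives $\eps^2 R^{-1/2}(1+t)^{-1}$, which exceeds $\eps^2 R^3 (1+t)^{-3/2}$ as soon as $t > R^7$. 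The paper avoids both problems by the cruder but essential ``integration along a line of constant $v_i$'' device: one first uses the Klainerman--Sobolev estimate on $\p\,\der^{K'}\psi_{ij}$ (which involves only $L^2$ norms of quantities of the form $\p\,\der^{K''}\psi_{ij}$, all of which we control), then integrates $\p_{u_i}$ in $u_i$ starting from the vanishing locus of $\psi_{ij}$ to obtain a pointwise bound on $\der^{K'}\psi_{ij}$ itself that gains a factor $(1+v_i)^{1/2}$; only then does one apply the pointwise bound $|\bar\p^{(i)} f| \le C R(1+v_i)^{-1}\sum_H |\der^H f|$. This gain of a half-power of $v_i$ over what the direct Sobolev embedding can give is exactly what produces the $(1+t)^{-3/2}$ rate.
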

\begin{proof}
    Let us first focus on the bound~\eqref{eq:linf1}. Let us initially restrict to the region $\mathcal{A}$ for which $\rho = \sqrt{y^2 +z^2} \geq \frac t {10}$. 
    The modified Sobolev inequality of Lemma~\ref{lem:sobspheres} (inequality~\eqref{eq:sobspheres}) now implies:
    \begin{align*}
	|f(t,x,y,z)|^2 \leq C \frac R {t^2}  \sum_{I \in I^{\leq 3}_{\Ga^{(h)}}} \Vert \der^I f \Vert_{L^2(\Sigma_t)}^2.
	\end{align*}
	Here, $h \in \{i,j\}$. We set $f := \partial_a \der^K\psi_{ij}$, with $K \in I^{\leq n-4}_{\boldsymbol{K}_R}$, and $a \in \{t,x,y,z\}$.
	We then have
	\begin{align*}
	|\p \der^K \psi_{ij}|^2 \leq C \frac R {t^2}  \sum_{a\in \{t,x,y,z\}}\sum_{I \in I^{\leq 3}_{\Ga^{(h)}}} \Vert \der^I \p_a \der^K \psi_{ij} \Vert_{L^2(\Sigma_t)}^2\leq C \frac R {t^2}  \sum_{J \in I^{\leq 3}_{\Ga^{(h)}}} \Vert \p \der^J\der^K \psi_{ij}\Vert_{L^2(\Sigma_t)}^2.
	\end{align*}
	The last inequality is obtained by ``commuting out'' the $\p_a$ derivative, keeping in mind that Lie brackets of elements of $\boldsymbol{T}$ and $\boldsymbol{\Gamma}^{(h)}$ are in $\boldsymbol{T}$. We finally employ the spacelike estimates~\eqref{eq:enpsiij} and~\eqref{eq:enphi0i}, to deduce the claim~\eqref{eq:linf1} restricted to the region~$\mathcal{A}$ (recall that, in particular, we have $\boldsymbol{K}_R \subset \boldsymbol{\Gamma}^{(h)}$).
	
	Let us now focus on the region $\mathcal{A}^c$. Recall the hyperboloidal coordinates $(\tau, \alpha, x, \varphi)$ defined in display~\eqref{eq:xflathyp}. Furthermore, recall the hyperboloids $H_{\bar \tau}$ defined by $H_{\bar \tau} := \{\tau= \bar \tau\}$. We now commute equation~\eqref{eq:pairs} with $\der^I$, where $I \in I^{\leq n-1}_{\boldsymbol{\Gamma}^{(h)}}$, with $h \in \{i,j\}$. We then multiply the commuted equation~\eqref{eq:pairs} by $\p_t \der^I \psi_{ij}$ and integrate in the spacetime region between $\Sigma_0$ and $H_{ \tau}$. The inhomogeneous error terms which arise from this estimate are treated exactly as in the proof of Proposition~\ref{prop:energy}. We now use Lemma~\ref{lem:unifspacelike} in Appendix~\ref{sec:emst} to deduce that the future boundary term on $H_\tau$ controls all derivatives of $\der^I \phi$ in a non-degenerate manner (the lemma follows from the fact that every hypersurface $H_{ \tau} \cap \mathcal{A}^c$ is uniformly spacelike). We thus arrive at the estimate:
	\begin{equation}\label{eq:energyhyp}
	    \Vert \p \der^I \psi_{ij} \Vert_{L^2(H_{\tau} \cap  \mathcal{A}^c)} \leq C \frac{\eps^2}{R}.
	\end{equation}
	Now, we use the Sobolev embedding on hyperboloids (Lemma~\ref{lem:sobhyp}) to deduce:
	\begin{equation}
        |f(t,x,y,z)| \leq  \frac C t  \sum_{I \in I^{\leq 3}_{\Ga^{(h)}}}\Vert  \der^I f\Vert_{L^2(H_\tau \cap \mathcal{A}^c)}
    \end{equation}
    where $t^2 = y^2 +z^2 +\tau^2$, and $(t,x,y,z)$ belongs to $H_\tau \cap \mathcal{A}^c$. Setting $f = \p_a \der^K \psi_{ij}$, with $a \in \{t,x,y,z\}$ and $K\in I^{\leq n-4}_{\boldsymbol{K}_R}$, we have
    \begin{align*}
	    |\p \der^K \psi_{ij}| \leq C \frac 1 {t}  \sum_{a\in \{t,x,y,z\}}\sum_{I \in I^{\leq 3}_{\Ga^{(h)}}} \Vert \der^I \p_a \der^K \psi_{ij} \Vert_{L^2(H_\tau \cap \mathcal{A}^c)}\leq C \frac 1 {t}  \sum_{J \in I^{\leq 3}_{\Ga^{(h)}}} \Vert \p \der^J\der^K \psi_{ij}\Vert_{L^2(H_\tau \cap \mathcal{A}^c)}.
	\end{align*}
	The last inequality again follows from the fact that Lie brackets of elements of $\boldsymbol{\Gamma}^{(h)}$ and $\boldsymbol{T}$ are in $\boldsymbol{T}$. We now use estimate~\eqref{eq:energyhyp} to conclude the proof of claim~\eqref{eq:linf1}.
	
	Finally, claims~\eqref{eq:linf2} and~\eqref{eq:linf3} follow directly from the $L^2$ estimate~\eqref{eq:enpsiij}, combined with the classical Klainerman--Sobolev inequality with $R$-weights of Lemma~\ref{lem:classicalks} (and the standard argument involving integration along a line of constant $v_i$-coordinate, cf.~{\bf Step 3} of the proof of Theorem~\ref{thm:nonlinear} in Section~\ref{sec:mainproof}).
\end{proof}

\section{Proof of Theorem~\ref{thm:nonlinear}}\label{sec:mainproof}
In this section, we will close the argument and use all the results obtained so far to conclude global existence to the nonlinear equation~\eqref{eq:nlw}.

\begin{proof}[Proof of Theorem~\ref{thm:nonlinear}]
We begin by noting that it suffices to prove uniform estimates assuming that $R \ge R_0$, for some positive number $R_0$. Indeed, if $R \le R_0$, we can restrict $\eps$ to a smaller value, depending on $R_0$, and use the classical theory to conclude global stability. Thus, in the following, we shall without loss of generality use the fact that, restricting to the case $R \geq R_0$, we have the inequality $\log{(R)} \le C R^\delta$ for some uniform positive constant $C$.

We start from the equation satisfied by $\Psi$:
\begin{equation}\label{eq:Psi}
    \begin{aligned}
        &\Box \Psi +F(d \Psi, d^2 \Psi) \\
        &+ \sum_{\substack{i,j = 0,\ldots, N\\i\neq j}}(F(d \psi_{ij}, d^2 \Psi)+F(d \Psi, d^2 \psi_{ij})) + \sum_{\substack{g,h,i,j = 0,\ldots, N\\g \neq h, i\neq j}}F(d \psi_{gh}, d^2 \psi_{ij})\\  
        & +\sum_{i=0}^N\sum_{\substack{g,h = 0,\ldots, N\\g\neq h}} ( F(d \phi_i, d^2 \psi_{gh}) + F(d \psi_{gh}, d^2 \phi_i))\\  
        & +\sum_{i=0}^N ( F(d \phi_i, d^2 \Psi) + F(d \Psi, d^2 \phi_i)) \\
         &\qquad =G(d \Psi, d\Psi) \\
         &\qquad + \sum_{\substack{i,j = 0,\ldots, N\\i\neq j}}(G(d \psi_{ij}, d \Psi)+G(d \Psi, d \psi_{ij}))+ \sum_{\substack{g,h,i,j = 0,\ldots, N\\g\neq h, i\neq j}}G(d \psi_{gh}, d \psi_{ij})\\  
        & \qquad +\sum_{i=0}^N\sum_{\substack{g,h = 0,\ldots, N\\g\neq h}} ( G(d \phi_i, d \psi_{gh}) + G(d \psi_{gh}, d \phi_i))\\  
        & \qquad +\sum_{i=0}^N ( G(d \phi_i, d \Psi) + G(d \Psi, d \phi_i)),\\
        & \Psi|_{t=0} = 0,\\
        & \p_t\Psi|_{t=0} = 0.
    \end{aligned}
\end{equation}
 We shall use the estimates obtained for $\phi_i$ and $\psi_{ij}$ above to prove estimates and global existence for $\Psi$. We will thus have global existence and estimates for the long time behavior of $\phi$, as we recall the definition
 \begin{equation}
     \Psi := \phi - \sum_{i=0}^N \phi_i - \sum_{\substack{i,j \in \{0, \ldots, N\} \\ i \neq j}}\psi_{ij},
 \end{equation}
and all the $\phi_i$'s and $\psi_{ij}$'s are global.

For ease of notation, let us suppose for the remainder of the proof that $G$ is identically $0$, as estimating the terms in $G$ is exactly analogous to estimating the terms in $F$, and it requires fewer derivatives.

In order to solve this equation, we shall now set up a continuity argument in the parameter $T$, which we define to be the maximal time for which the initial value problem~\eqref{eq:Psi} admits a solution on the set $[0,T]\times \R^3$ and satisfies the following bootstrap estimates\footnote{Note that, in the course of our argument, we will need to be able to control at most $\lfloor N_0/2 \rfloor+2$ derivatives of $\Psi$ in $L^\infty$. This suggests that we should require $|J| \leq \lfloor N_0/2 \rfloor+1$ in the bootstrap assumptions.} on $[0,T]$,
for all $I \in I^{\leq N_0}_{\boldsymbol{K}_R}$ and all $J \in I^{\leq  \lfloor N_0/2 \rfloor+1}_{\boldsymbol{K}_R}$:

\begin{align}
&\Vert \p \der^I \Psi \Vert_{L^2(\Sigma_t)} \leq \eps^{ 3 -\delta} R^{-\frac 32 + \delta} \quad \hspace{120pt} \text{ for } \quad t \in [0,T], \label{eq:bstp1}   \\
&\Vert (1+|u_i|)^{-\frac 12 - \frac \delta 2} \bar \p^{(i)} \der^I \Psi \Vert_{L^2([0,T]\times \R^3)} \leq \eps^{ 3 -\delta}  R^{-\frac 32 + \delta}  \quad \hspace{8pt} \text{ for } \quad t \in [0,T], \quad i \in \{0, \ldots, N\}, \label{eq:bstp2}  \\
&|\p \der^J  \Psi(t, r, \theta,\varphi)| \leq \eps^{ 3 -\delta}  (1+t)^{-1} R^{-\frac 1 2 +\delta}  \quad \hspace{55pt} \text{ for } \quad t \in [0,T], \label{eq:bstp3}   \\
&|\bar \p^{(i)} \der^J \Psi(t, r, \theta,\varphi)| \leq \eps^{ 3 -\delta}  (1+t)^{-\frac 32 } R^{\frac 32 + \delta}  \quad \hspace{50pt} \text{ for } \quad t \in [R^{20}, T], \quad i \in \{0, \ldots, N\}, \label{eq:bstp4}\\
&|\p \der^J \Psi(t, r, \theta,\varphi)| \leq  \eps^{ 3 -\delta}  (1+v_i)^{-1}(1+|u_i|)^{-\frac 12} R^{\frac 12 + \delta}  \quad  \text{ for } \quad t \in [R^{20}, T], \quad i \in \{0, \ldots, N\}. \label{eq:bstp5}
\end{align}

Here, we adopted the convention that the interval $[a,b]$, with $a>b$, is the empty set. By the local existence theory for a single quasilinear equation, we know that $T > 0$ (note that, for this first non-emptiness step, the precise value of $T$ here is allowed to depend on $R$).

We will then proceed to improve these bootstrap estimates. Namely, we will prove, under the bootstrap assumptions~\eqref{eq:bstp1}--\eqref{eq:bstp5}, the following bounds for all $I \in I^{\leq N_0}_{\boldsymbol{K}_R}$ and all $J \in I^{\leq  N_0-3}_{\boldsymbol{K}_R}$:

\begin{align}
&\Vert \p \der^I \Psi \Vert_{L^2(\Sigma_t)} \leq \eps^{ 3 -\frac {3 \delta} 4} R^{-\frac 32 + \delta} \quad \hspace{111pt} \text{ for } \quad t \in [0,T], \label{eq:bstpi1}   \\
&\Vert (1+|u_i|)^{-\frac 12 - \frac \delta 2} \bar \p^{(i)} \der^I \Psi \Vert_{L^2([0,T]\times \R^3)} \leq \eps^{ 3 -\frac {3 \delta} 4}  R^{-\frac 32 + \delta}  \quad \hspace{8pt} \text{ for } \quad t \in [0,T], \ \  i \in \{0, \ldots, N\},\label{eq:bstpi2}  \\
&|\p \der^J  \Psi(t, r, \theta,\varphi)| \leq \eps^{ 3 -\frac {7 \delta} 8}  (1+t)^{-1} R^{-\frac 1 2 +\delta}  \quad \hspace{46pt}  \text{ for } \quad  t \in [0,T], \label{eq:bstpi3}   \\
&|\bar \p^{(i)} \der^J \Psi(t, r, \theta,\varphi)| \leq \eps^{ 3 - \frac {7 \delta} 8} (1+t)^{-\frac 32 } R^{\frac 32 + \delta}  \quad \hspace{50pt} \text{ for } \quad t \in [R^{20}, T], \ \ i \in \{0, \ldots, N\}, \label{eq:bstpi4}\\
&|\p \der^J \Psi(t, r, \theta,\varphi)| \leq \eps^{ 3 - \frac{7 \delta} 8}  (1+v_i)^{-1}(1+|u_i|)^{-\frac 12} R^{\frac 12 + \delta}  \quad  \text{ for } \quad t \in [R^{20}, T], \ \ i \in \{0, \ldots, N\}. \label{eq:bstpi5}
\end{align}
This will imply that the initial value problem~\eqref{eq:Psi} admits a global-in-time solution, upon choosing $N_0 \geq 7$ (this choice ensures that $N_0 - 3 \geq \big\lfloor \frac{N_0} 2 \big\rfloor + 1$).

\begin{remark}\label{rmk:numberder}
    Note that, in the course of our argument, we will also need to be able to estimate the functions $\psi_{ij}$ in $L^\infty$, which is done through an application of Proposition~\ref{prop:linfty}. More precisely, we will require (in the worst-case scenario) $L^\infty$ bounds for at most $N_0 + 2$ derivatives of $\psi_{ij}$ (this is noted in the analysis of term $\boldsymbol{(a_{12}})$ below). This means that we will have to set, in the statement of Proposition~\ref{prop:linfty}, $n-3 = N_0 + 2$, which means we have to require $n \geq N_0 + 5 = 12$. On the other hand, since we require bounds on $n+7$ derivatives in $L^\infty$ from Lemma~\ref{lem:decomposition}, this translates to $n \geq 12+7 = 19$, which is the number of derivatives we require in the statement of the main theorem (Theorem~\ref{thm:main}).
\end{remark}

\vspace{10pt}
\begin{remark}\label{rmk:improvedR}
Under the additional assumption that $\phi_0 \equiv 0$, looking ahead to the proof of the large data theorem (Theorem~\ref{thm:largedata}), we note that the estimates we will show are actually better in terms of the parameter $R$, and read as follows, for all $I \in I^{\leq N_0}_{\boldsymbol{K}_R}$ and all $J \in I^{\leq  N_0 -3}_{\boldsymbol{K}_R}$: 
\begin{align}
    &\Vert \p \der^I \Psi \Vert_{L^2(\Sigma_t)} \leq \eps^{ 3 -\delta} R^{-\frac 32 +\frac 3 4 \delta} \quad  \hspace{111pt} \text{ for } \quad t \in [0,T], \label{eq:Rbstpi1}   \\
    &\Vert (1+|u_i|)^{-\frac 12 - \frac \delta 2} \bar \p^{(i)} \der^I \Psi \Vert_{L^2([0,T]\times \R^3)} \leq \eps^{ 3 -\delta} R^{-\frac 32 +\frac 3 4 \delta}   \quad  \hspace{8pt}\text{ for } \quad t \in [0,T], \ i \in \{1, \ldots, N\},\label{eq:Rbstpi2}  \\
    &|\p \der^J  \Psi(t, r, \theta,\varphi)| \leq \eps^{ 3 -\delta} (1+t)^{-1} R^{-\frac 1 2 + \frac 78 \delta}  \quad  \hspace{46pt} \text{ for } t \in [0,T], \label{eq:Rbstpi3}   \\
    &|\bar \p^{(i)} \der^J \Psi(t, r, \theta,\varphi)| \leq \eps^{ 3 -\delta} (1+t)^{-\frac 32 } R^{\frac 32 + \frac 78 \delta}  \quad \hspace{50pt} \text{ for } \quad t \in [R^{20}, T], \ i \in \{1, \ldots, N\}, \label{eq:Rbstpi4}\\
    &|\p \der^J \Psi(t, r, \theta,\varphi)| \leq \eps^{ 3 -\delta}  (1+v_i)^{-1}(1+|u_i|)^{-\frac 12} R^{\frac 12 + \frac 78 \delta}  \quad  \text{ for } \quad t \in [R^{20}, T], \ i \in \{1, \ldots, N\}. \label{eq:Rbstpi5}
\end{align}
\end{remark}

\vspace{10pt}

We divide the proof in several {\bf Steps}.
In {\bf Step 0}, we will introduce some preliminary calculations. In {\bf Step 1}, we will prove estimate~\eqref{eq:bstpi1} by a $\p_t$-energy estimate, whereas, in {\bf Step 2}, we will prove estimates~\eqref{eq:bstpi2}, again by a $\p_t$-energy estimate. As we shall see, the only difference between {\bf Step 1} and {\bf Step 2} is in how the boundary terms are treated, as the bulk terms in the respective estimates will be bounded in essentially the same way. {\bf Step 2} will be moreover divided in several parts. We will first estimate the ``nonlinear'' term $F(d \Psi, d^2 \Psi)$, we will then proceed to estimate the ``mixed'' terms 
$$
\sum_{\substack{i,j = 0,\ldots, N\\i\neq j}}(F(d \psi_{ij}, d^2 \Psi)+F(d \Psi, d^2 \psi_{ij})) +\sum_{i=0}^N ( F(d \phi_i, d^2 \Psi) + F(d \Psi, d^2 \phi_i)).$$
Finally, we will estimate the ``inhomogenous'' terms  
$$
\sum_{i=0}^N\sum_{\substack{g,h = 0,\ldots, N\\g\neq h}} ( F(d \phi_i, d^2 \psi_{gh}) + F(d \psi_{gh}, d^2 \phi_i))  + \sum_{\substack{g,h,i,j = 0,\ldots, N\\g \neq h, i\neq j}}F(d \psi_{gh}, d^2 \psi_{ij}).
$$
To conclude the proof, in {\bf Step 3} an easy application of the Sobolev lemmas in Section~\ref{sec:sobolevs} will be sufficient to show estimates~\eqref{eq:bstpi3}--\eqref{eq:bstpi5} from~\eqref{eq:bstpi1} and~\eqref{eq:bstpi2}.

\vspace{20pt}
{\bf Step 0}.
Let $n \in \N$, $n \geq 0$. We now define $E_n [f] (\Sigma_t)$ for any smooth function $f$ to be the following energy integral:
\begin{equation}
\begin{aligned}
E_{n+1}^2 [f] (\Sigma_t) := \sum_{I \in I_{\boldsymbol{K}_R}^{\le n}} \Vert\partial \der^I f\Vert_{L^2 (\Sigma_t)}^2.
\end{aligned}
\end{equation}
We now consider the family of (truncated) the null cones $\hat C^{(i)}_{\bar u_i, T}$:
these are defined as follows, for $\bar u_i \in \R$:
$$
\hat C^{(i)}_{\bar u_i, T} := C^{(i)}_{\bar u_i} \cap \{0 \le t \le T\},
$$
where the usual outgoing cone $C^{(i)}_{\bar u_i}$ is defined as $\{u_i = \bar u_i\} \cap \{t \geq 0\}$.

We define $E_n [f] (\hat C^{(i)}_{\bar u_i, T})$ for a smooth function $f$ as follows:
\begin{equation}\label{eq:nullendef}
\begin{aligned}
& E_{1}^2 [f] (\hat C^{(i)}_{\bar u_i}) := \int_{\hat C^{(i)}_{\bar u_i, T}} Q(\p_{v_i},\partial_t) \, r_i^2 \, \de v_i \, \de \theta_i \, \de \varphi_i, \\
& E_{n+1}^2 [f] (\hat C^{(i)}_{\bar u_i, T}) :=\sum_{I \in I_{\boldsymbol{K}_R}^{\le n}} E_{1}^2 [\der^I f] (\hat C^{(i)}_{\bar u_i, T}).
\end{aligned}
\end{equation}
Here, $Q(\cdot, \cdot)$ is the stress--energy--momentum tensor associated to the linear wave equation as defined in Section~\ref{sec:emst}, and $\p_{v_i}$ is defined as a coordinate vector field arising from the coordinates $(u_i, v_i, \theta_i, \varphi_i)$ defined in Definition~\ref{def:riviui}. It is moreover a properly normalized Lorentzian normal to the cones ${\hat C^{(i)}_{\bar u_i, T}}$. 

We also recall that the energy integrals in display~\eqref{eq:nullendef} give control over good derivatives, i.e. there exists a positive constant $C$ such that the inequality holds true:
\begin{equation}
    E_{n+1}^2 [f] (\hat C^{(i)}_{\bar u_i, T}) \geq C \sum_{I \in I^{\leq n}_{\boldsymbol{K}_R}}\int_{\hat C^{(i)}_{\bar u_i, T}} |\bar \p^{(i)} \der^I f|^2 \, r_i^2 \, \de \hat v_i \, \de \theta_i \, \de \varphi_i.
\end{equation}

We now commute equation~\eqref{eq:Psi} with $\der^I$, where $I \in I^{\leq N_0}_{\boldsymbol{K}_R}$. We obtain (recall that we set $G = 0$ for ease of argument):
\begin{align}
        &\Box \der^I \Psi + \sum_{H+K \subset I} \Big( F_{HK}(d \der^H \Psi, d^2 \der^K \Psi)\Big) \nonumber\\
        &+ \sum_{H+K \subset I} \Big(\sum_{\substack{i,j = 0,\ldots, N\\i\neq j}}(F_{HK}(d \der^H \psi_{ij}, d^2 \der^K \Psi)+F_{HK}(d \der^H \Psi, d^2 \der^K \psi_{ij}))\nonumber\\
        & \qquad \qquad \qquad +\sum_{i=0}^N ( F_{HK}(d \der^H \phi_i, d^2 \der^K \Psi) + F_{HK}(d \der^H \Psi, d^2 \der^K \phi_i)) \Big) \label{eq:Psicomm} \\  
        & + \sum_{H+K \subset I} \Big(\sum_{i=0}^N\sum_{\substack{g,h = 0,\ldots, N\\g\neq h}} ( F_{HK}(d \der^H\phi_i, d^2 \der^K \psi_{gh}) + F_{HK}(d \der^H \psi_{gh}, d^2 \der^K \phi_i))\nonumber\\  
        &\qquad \qquad + \sum_{\substack{g,h,i,j = 0,\ldots, N\\g \neq h, i\neq j}}F_{HK}(d \der^H \psi_{gh}, d^2 \der^K \psi_{ij}) \Big)= 0.\nonumber
\end{align}
We can now proceed to {\bf Step 1} of the proof.
\vspace{20pt}

{\bf Step 1}. We now wish to perform a $\p_t$-energy estimate on equation~\eqref{eq:Psicomm}. To this end, we note that the following lemma holds true.

\begin{lemma}[Main lemma on spacelike $L^2$ estimates]\label{lem:spacel2} Let $F$ be a trilinear null form as in Definition~\ref{def:null}. There exists a positive $\eps_0$ and a positive constant $C >0$ such that the following holds.
For any smooth function $\tilde \Psi$ satisfying the following initial value problem on the set $[0,T]\times \R^3$:
\begin{equation}\label{eq:quassimpli}
\begin{aligned}
    &\Box \tilde \Psi + F(d g, d^2 \tilde \Psi) = h,\\
    &\tilde \Psi|_{t=0} = 0,\\
    &\p_t \tilde \Psi|_{t=0} = 0,
\end{aligned}
\end{equation}
with $g$ smooth satisfying the bound $|\p g_j| \leq \eps_0$, and with $h$ smooth, we have that the inequality holds, for all $t \in [0,T]$:
\begin{equation}
\begin{aligned}
    &E_1[\tilde \Psi](\Sigma_t) \leq C \int_0^{t} \int_{\R^3}\big( |F^{\alpha\beta\gamma} \ \p_\beta \p_\alpha g \ \p_t \tilde \Psi  \  \p_\gamma \tilde \Psi| +  |F^{\alpha \beta\gamma} \ \p_t \p_\alpha g \ \p_\beta  \tilde \Psi \p_\gamma \tilde \Psi| \big)\, \de x \de s\\
    & \qquad + C\int_0^{t} \int_{\R^3}  |\p_t \tilde \Psi| |h|\, \de x \de s+ C E_1[\tilde \Psi](\Sigma_0).
\end{aligned}
\end{equation}
\end{lemma}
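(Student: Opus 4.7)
The plan is to perform a standard $\partial_t$--multiplier estimate on equation~\eqref{eq:quassimpli}, using the integration-by-parts identity that was already exploited in the analysis of the term $\mathfrak{B}_{31}$ and, in particular, in the derivation of~\eqref{eq:masterf}. Concretely, I would multiply the equation by $\partial_t \tilde\Psi$ and write the key quasilinear term as a sum of a spacetime divergence, a $\partial_t$-derivative, and two controllable bulk contributions:
\begin{equation*}
\begin{aligned}
\partial_t\tilde\Psi \, F^{\alpha\beta\gamma} \partial_\alpha g \,\partial_\beta\partial_\gamma \tilde\Psi
&= \partial_\beta\!\big(F^{\alpha\beta\gamma}\partial_\alpha g \,\partial_t\tilde\Psi \,\partial_\gamma\tilde\Psi\big)
 - F^{\alpha\beta\gamma}\partial_\beta\partial_\alpha g \,\partial_t\tilde\Psi\,\partial_\gamma\tilde\Psi \\
&\quad - \tfrac12\partial_t\!\big(F^{\alpha\beta\gamma}\partial_\alpha g \,\partial_\beta\tilde\Psi\,\partial_\gamma\tilde\Psi\big)
 + \tfrac12 F^{\alpha\beta\gamma}\partial_t\partial_\alpha g \,\partial_\beta\tilde\Psi \,\partial_\gamma\tilde\Psi.
\end{aligned}
\end{equation*}
The standard identity $\partial_t \tilde\Psi \cdot \Box\tilde\Psi = \partial_\mu( Q(\partial_t,\partial_\mu))$ for the Minkowski stress--energy tensor $Q$ of the linear wave equation handles the $\Box \tilde\Psi$ term, and $\partial_t \tilde \Psi \cdot h$ is kept on the right hand side.

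Next I would integrate the resulting identity over the slab $[0,s]\times\R^3$ for $s \le t$, using compact spatial support (or sufficient decay) to discard spatial divergence terms and vanishing of $\tilde\Psi|_{t=0}$ and $\partial_t\tilde\Psi|_{t=0}$ to kill the boundary at $\Sigma_0$. The surviving boundary contribution at $\Sigma_s$ consists of the usual positive energy density $Q(\partial_t,\partial_t)\sim |\partial \tilde\Psi|^2$ coming from $\Box\tilde\Psi$, together with the correction
\begin{equation*}
  -\tfrac12\int_{\Sigma_s} F^{\alpha\beta\gamma}\partial_\alpha g \,\partial_\beta\tilde\Psi\,\partial_\gamma\tilde\Psi \, \de x,
\end{equation*}
which is pointwise bounded by $C|\partial g|\,|\partial\tilde\Psi|^2 \le C\eps_0|\partial\tilde\Psi|^2$. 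Upon choosing $\eps_0$ small enough (depending only on $F$), this correction is absorbed into the main energy density, so that
\begin{equation*}
  E_1^2[\tilde\Psi](\Sigma_s) \le C\Big( E_1^2[\tilde\Psi](\Sigma_0) + \mathfrak{R}(s) \Big),
\end{equation*}
where $\mathfrak{R}(s)$ denotes the spacetime integrals of the two bulk terms from the identity above, together with the forcing term $|\partial_t\tilde\Psi|\,|h|$. Taking the supremum over $s\in[0,t]$, taking a square root, and using $E_1[\tilde\Psi](\Sigma_0)=0$ yields precisely the claimed inequality; I retain the explicit $E_1[\tilde\Psi](\Sigma_0)$ term because the lemma will later be applied to commuted quantities whose initial data need not vanish.

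The only real obstacle is the sign/absorption argument at the boundary $\Sigma_s$: one must verify that the $\eps_0$-small correction can indeed be absorbed uniformly in $s$, which is immediate once $\eps_0$ is fixed small in terms of $\|F\|$. No further null structure is needed here, since the statement already places the two bulk remainders (which involve $\partial^2 g$ and $\partial_t\partial g$ contracted against $F$) on the right-hand side without demanding any cancellation. The null form hypothesis on $F$ will only be exploited later, when this lemma is applied and the bulk terms are estimated using the structure of $F$ together with the decay estimates for $\phi_i$, $\psi_{ij}$ from Sections~\ref{sec:technicaltools} and~\ref{sec:linearest}.
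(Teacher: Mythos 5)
Your argument matches the paper's: the same $\partial_t$-multiplier, the same integration-by-parts identity for $F^{\alpha\beta\gamma}\,\partial_\alpha g\,\partial_\beta\partial_\gamma\tilde\Psi$, and the same $\eps_0$-absorption of the boundary correction on $\Sigma_t$. The only minor technical deviation is that the paper integrates over the truncated region $\{t+r/2\le A\}$, whose lateral boundary is strictly spacelike, and then sends $A\to\infty$ by monotone convergence rather than assuming spatial compact support; also note that your closing ``take a square root'' step is superfluous and does not actually match the form of the stated right-hand side, since the estimate naturally emerges (as in the paper's own proof) with the \emph{squared} energy on the left.
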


\begin{proof}[Proof of Lemma~\ref{lem:spacel2}]
    First of all, let us multiply equation~\eqref{eq:quassimpli} by $\p_t \tilde \Psi$. Let us write the tensor $F$ in components as $F^{\alpha\beta\gamma}$. Recall that, without loss of generality, we can assume that $F$ is symmetric in the last two indices: $F^{\alpha\beta\gamma} = F^{\alpha\gamma\beta}$. Then, we have, 
\begin{equation*}
    \begin{aligned}
    &\p_t \tilde \Psi \  F^{\alpha\beta\gamma} \ \p_\alpha g  \ \p_\beta \p_\gamma  \tilde \Psi\\
    &= \p_\beta \big(F^{\alpha\beta\gamma} \  \p_\alpha g \ \p_t \tilde \Psi  \  \p_\gamma  \tilde \Psi\big) -  F^{\alpha\beta\gamma} \  \p_\beta \p_\alpha g \ \p_t \tilde \Psi  \  \p_\gamma \tilde \Psi\\
    &\quad -\frac 12 \p_t \big(\p_\alpha g \ F^{\alpha \beta\gamma} \ \p_\beta  \tilde \Psi \p_\gamma \tilde \Psi\big) + \frac 12  \p_t \p_\alpha g\ F^{\alpha \beta\gamma} \ \p_\beta  \tilde \Psi \p_\gamma \tilde \Psi.
    \end{aligned}
\end{equation*}
We then let $t_1 \geq 0$, and integrate the resulting equation on the region $\{0 \leq t \leq t_1\} \cap \{t+ r/2 \leq A\}$, for $A > 0$ large. Note that the boundary of the region $\{t+ r/2 \leq A\}$ is strictly spacelike. This means that, possibly restricting $\varepsilon_0$ to be smaller, we have the following inequality:
\begin{equation}
\begin{aligned}
    &\frac 12 \int_{\Sigma_{t_1} \cap \{r \leq 2(A-t_1)\}} (|\p \tilde \Psi|^2  -  |F^{\alpha\beta\gamma} \p_\alpha g  \p_\beta \tilde \Psi \p_\gamma \tilde \Psi|)\, \de x  \\
    & \qquad \leq \int_0^{t_1} \int_{\R^3}\big( |F^{\alpha\beta\gamma} \ \p_\beta \p_\alpha g \ \p_t \tilde \Psi  \  \p_\gamma \tilde \Psi| + \frac 12  |F^{\alpha \beta\gamma} \ \p_t \p_\alpha g \ \p_\beta  \tilde \Psi \p_\gamma \tilde \Psi| \big)\, \de x \de t\\
    & \qquad + \int_0^{t_1} \int_{\R^3}  |\p_t \tilde \Psi| |h|\, \de x \de t + C E_1[\tilde \Psi](\Sigma_0).
\end{aligned}
\end{equation}
Using now the fact that $|\p g| \leq \eps_0$, we conclude that: 
\begin{equation}
\begin{aligned}
    &\int_{\Sigma_{t_1} \cap \{r \leq 2(A-t_1)\}} |\p \tilde \Psi|^2\de x \\
    & \qquad \leq C \int_0^{t_1} \int_{\R^3}\big( |F^{\alpha\beta\gamma} \ \p_\beta \p_\alpha g \ \p_t \tilde \Psi  \  \p_\gamma \tilde \Psi| + \frac 12  |F^{\alpha \beta\gamma} \ \p_t \p_\alpha g \ \p_\beta  \tilde \Psi \p_\gamma \tilde \Psi| \big)\, \de x \de t\\
    & \qquad + C \int_0^{t_1} \int_{\R^3}  |\p_t \tilde \Psi| |h|\, \de x \de t+ C E_1[\tilde \Psi](\Sigma_0).
\end{aligned}
\end{equation}
We now conclude by the monotone convergence theorem, upon sending $A \to \infty$.
\end{proof}

Let us now rewrite the commuted equation~\eqref{eq:Psicomm} highlighting the top-order terms:

\begin{align}
        &\Box \der^I \Psi + F\Big(\sum_{i=0}^N d \phi_i + \sum_{\substack{i,j=\{0, \ldots, N\} \\i \neq j}} d\psi_{ij} + d \Psi, d^2 \der^I \Psi\Big)\nonumber\\
        &+ \sum_{\substack{H+K \subset I \\ K \neq I}} F_{HK}(d \der^H \Psi, d^2 \der^K \Psi)\nonumber \\
        &+ \sum_{\substack{H+K \subset I \\ K \neq I }} \Big(\sum_{\substack{i,j = 0,\ldots, N\\i\neq j}}(F_{HK}(d \der^H \psi_{ij}, d^2 \der^K \Psi)+F_{HK}(d \der^H \Psi, d^2 \der^K \psi_{ij}))\nonumber\\
        & \qquad \qquad \qquad +\sum_{i=0}^N ( F_{HK}(d \der^H \phi_i, d^2 \der^K \Psi) + F_{HK}(d \der^H \Psi, d^2 \der^K \phi_i)) \Big) \label{eq:Psicommhigh} \\
        &+\sum_{\substack{i,j = 0,\ldots, N\\i\neq j}} F(d \Psi, d^2 \der^I \psi_{ij}) + \sum_{i=0}^N F(d \Psi, d^2 \der^I \phi_i)\nonumber \\
        & + \sum_{H+K \subset I} \Big(\sum_{i=0}^N\sum_{\substack{g,h = 0,\ldots, N\\g\neq h}} ( F_{HK}(d \der^H\phi_i, d^2 \der^K \psi_{gh}) + F_{HK}(d \der^H \psi_{gh}, d^2 \der^K \phi_i))\nonumber\\  
        &\qquad \qquad + \sum_{\substack{g,h,i,j = 0,\ldots, N\\g \neq h, i\neq j}}F_{HK}(d \der^H \psi_{gh}, d^2 \der^K \psi_{ij}) \Big)= 0.\nonumber
\end{align}

We now apply Lemma~\ref{lem:spacel2}, with $\tilde \Psi := \der^I \Psi$, and
$$
g := \sum_{i=0}^N \phi_i + \sum_{\substack{i,j=\{0, \ldots, N\} \\i \neq j}} \psi_{ij} + \Psi.
$$
Note that, by the linear estimates of Proposition~\ref{prop:linfty} and by the bootstrap assumptions, we can assume that this $g$ is in the conditions of the above lemma, i.~e. $|\p g| \leq \eps_0$.

We then have the following estimate:
\begin{equation}\label{eq:prel2space}
\begin{aligned}
    &E_1[\der^I \Psi](\Sigma_t) \leq C \int_0^{t} \int_{\R^3}\big( |F^{\alpha\beta\gamma} \ \p_\beta \p_\alpha g \ \p_t \der^I \Psi  \  \p_\gamma \der^I \Psi| +  |F^{\alpha \beta\gamma} \ \p_t \p_\alpha g \ \p_\beta  \der^I \Psi \p_\gamma \der^I \Psi| \big)\, \de x \de s\\
    & \qquad + C\int_0^{t} \int_{\R^3}  |\p_t \der^I \Psi| |h|\, \de x \de s+ C E_1[\der^I \Psi](\Sigma_0).
\end{aligned}
\end{equation}
Here, $h$ is composed of the terms contained in lines 2 to 7 of display~\eqref{eq:Psicommhigh}. Expanding all the terms in display~\eqref{eq:prel2space} now gives:
\begin{align}
    &E_1[\der^I \Psi](\Sigma_t)- C E_1[\der^I \Psi](\Sigma_0)\nonumber\\
    &\leq C \int_0^{t} \int_{\R^3}\big(\underbrace{ |F^{\alpha\beta\gamma} \ \p_\beta \p_\alpha \Psi\ \p_t \der^I \Psi  \  \p_\gamma \der^I \Psi|}_{(a_1)} + \underbrace{ |F^{\alpha \beta\gamma} \ \p_t \p_\alpha \Psi \ \p_\beta  \der^I \Psi \p_\gamma \der^I \Psi|}_{(a_2)} \big)\, \de x \de s\nonumber\\
    &+C \sum_{i = 0}^N \int_0^{t} \int_{\R^3}\big( \underbrace{|F^{\alpha\beta\gamma} \ \p_\beta \p_\alpha \phi_i \ \p_t \der^I \Psi  \  \p_\gamma \der^I \Psi|}_{(a_3)} +  \underbrace{|F^{\alpha \beta\gamma} \ \p_t \p_\alpha \phi_i \ \p_\beta  \der^I \Psi \p_\gamma \der^I \Psi|}_{(a_4)} \big)\, \de x \de s\nonumber\\
    &+C \sum_{\substack{i,j \in \{0, \ldots, N\}\\i \neq j}} \int_0^{t} \int_{\R^3}\big( \underbrace{|F^{\alpha\beta\gamma} \ \p_\beta \p_\alpha \psi_{ij} \ \p_t \der^I \Psi  \  \p_\gamma \der^I \Psi|}_{(a_5)} +  \underbrace{|F^{\alpha \beta\gamma} \ \p_t \p_\alpha \psi_{ij} \ \p_\beta  \der^I \Psi \p_\gamma \der^I \Psi|}_{(a_6)} \big)\, \de x \de s\nonumber\\
    & \qquad + C \sum_{\substack{H+K \subset I \\ K \neq I}} \int_0^{t} \int_{\R^3}  \underbrace{|\p_t \der^I \Psi| |F_{HK}(d \der^H \Psi, d^2 \der^K \Psi)|}_{(a_7)}\, \de x \de s \nonumber \\
    &+ C \sum_{\substack{H+K \subset I \\ K \neq I }} \int_0^t \int_{\R^3} \Big(\sum_{\substack{i,j = 0,\ldots, N\\i\neq j}}(\underbrace{|F_{HK}(d \der^H \psi_{ij}, d^2 \der^K \Psi)|}_{(a_8)}+\underbrace{|F_{HK}(d \der^H \Psi, d^2 \der^K \psi_{ij})|}_{(a_9)})\label{eq:masterl2space} \\
    & \qquad \qquad \qquad +\sum_{i=0}^N ( \underbrace{|F_{HK}(d \der^H \phi_i, d^2 \der^K \Psi)|}_{(a_{10})} + \underbrace{|F_{HK}(d \der^H \Psi, d^2 \der^K \phi_i)|}_{(a_{11})}) \Big)|\p_t \der^I \Psi| \de x \de s\nonumber \\  
    & \qquad + C \sum_{\substack{i,j = 0,\ldots, N\\i\neq j}} \int_0^{t} \int_{\R^3}  \underbrace{|\p_t \der^I \Psi| |F(d \Psi, d^2 \der^I \psi_{ij})|}_{(a_{12})}\, \de x \de s + C \sum_{i=0}^N \int_0^{t} \int_{\R^3} \underbrace{ |\p_t \der^I \Psi| |F(d \Psi, d^2 \der^I \phi_i)|}_{(a_{13})}\, \de x \de s\nonumber\\
    & + C \sum_{H+K \subset I} \int_0^t \int_{\R^3}\Big(\sum_{i=0}^N\sum_{\substack{g,h = 0,\ldots, N\\g\neq h}} ( \underbrace{|F_{HK}(d \der^H\phi_i, d^2 \der^K \psi_{gh})|}_{(a_{14})} + \underbrace{|F_{HK}(d \der^H \psi_{gh}, d^2 \der^K \phi_i)|}_{(a_{15})})\nonumber\\  
    &\qquad \qquad + \sum_{\substack{g,h,i,j = 0,\ldots, N\\g \neq h, i\neq j}}\underbrace{|F_{HK}(d \der^H \psi_{gh}, d^2 \der^K \psi_{ij}) |}_{(a_{16})}\Big) |\p_t \der^I \Psi|\de x \de s.\nonumber
\end{align}

We now proceed to estimate the terms in the previous display one by one. Because inequality~\eqref{eq:masterl2space} controls the square of the energy, we note that we must recover the square of the bootstrap assumption~\eqref{eq:bstpi1}. We shall be wasteful when deriving our estimates in terms of the parameter $R$. Indeed, to close the bootstrap argument we must only recover $R^{-3 + 2 \delta}$ (this is what we need for~\eqref{eq:bstpi1}), but we shall keep track of which estimates ``have room in $R$''. We shall do this by bounding these terms by a factor of $R^{-3+\frac 3 2 \delta}$ instead of $R^{-3 + 2 \delta}$. This will be needed in the proof of Theorem \ref{thm:largedata} in Section \ref{sec:largedata}. See also Remark~\ref{rmk:improvedR}.
We shall also be wasteful in terms of the parameter $\eps$. Terms which gain an improvement in powers of $\eps$ will be bounded by $\eps^6$.

\begin{enumerate}
\item[$\boldsymbol{(a_1) + (a_2)}$.] We have the following estimates (as usual, we assume that the interval $[a,b]$, with $a > b$, is the empty set):
\begin{align}
    &\int_0^{t} \int_{\R^3}\big( |F^{\alpha\beta\gamma} \ \p_\beta \p_\alpha \Psi\ \p_t \der^I \Psi  \  \p_\gamma \der^I \Psi| + |F^{\alpha \beta\gamma} \ \p_t \p_\alpha \Psi \ \p_\beta  \der^I \Psi \p_\gamma \der^I \Psi| \big)\, \de x \de s \nonumber\\ 
    & \quad \leq C \Big(\int_{[0,R^{20}]}\Vert \p^2 \Psi \Vert_{L^\infty(\Sigma_s)} \de s\Big) \sup_{s \in [0,R^{20}]} \Vert \p \der^I \Psi \Vert^2_{L^2(\Sigma_s)}\nonumber \\
    &\qquad + C \int_{[R^{20}, t]} \int_{\R^3}|\p^2 \Psi|\, | \bar \p^{(0)}\der^I \Psi| \,| \p \der^I \Psi| \de x \de s\nonumber \\
    &\qquad + C\Big( \int_{[R^{20}, t]} \Vert\bar \p^{(0)} \p \Psi\Vert_{L^\infty(\Sigma_s)} \de s \Big) \sup_{s \in [R^{20},t]} \Vert \p \der^I \Psi \Vert^2_{L^2(\Sigma_s)}\nonumber \\
    & \quad \leq C \eps^{9 - 3\delta} R^{-3 + 2\delta} R^{-\frac 12 +\delta} \log R \nonumber\\
    &\quad + C \eps^{3 -\delta} R^{-\frac 32 + \delta} \Big(\int_{[R^{20}, t]} \int_{\R^3}(1+s)^{1+\delta}|\p^2 \Psi|^2 \,|\bar \p^{(0)} \der^I \Psi|^2 \de x \de s \Big)^{\frac 12}\nonumber \\
    & \qquad + C \eps^{9 - 3\delta} R^{-3+ 2\delta} \int_{R^{20}}^t R^{\frac 32 + \delta}(1+s)^{-\frac 32}\de s\nonumber\\
    &\quad \leq C \eps^7 R^{-3}. \label{terms12}
\end{align}

Here, in the first inequality we used the lemma on the structure of null forms (Lemma~\ref{lem:nullstruct}), in the second inequality we used the bootstrap assumptions~\eqref{eq:bstp1}--\eqref{eq:bstp5}, plus the Cauchy--Schwarz inequality on the second term (multiplying and dividing by $(1+s)^{\frac{1+\delta} 2}$), and finally in the last inequality we used estimate~\eqref{eq:bulkest2} from Lemma \ref{lem:shortghost}.
\item[$\boldsymbol{(a_3)+(a_4)}$.] We need to estimate the following expression:
\begin{equation*}
    \sum_{i = 0}^N \int_0^{t} \int_{\R^3}\big( |F^{\alpha\beta\gamma} \ \p_\beta \p_\alpha \phi_i \ \p_t \der^I \Psi  \  \p_\gamma \der^I \Psi| + |F^{\alpha \beta\gamma} \ \p_t \p_\alpha \phi_i \ \p_\beta  \der^I \Psi \p_\gamma \der^I \Psi|\big)\, \de x \de s.
\end{equation*}
For all $i \in \{0, \ldots, N\}$, we have, by Lemma~\ref{lem:nullstruct},
\begin{equation*}
\begin{aligned}
    &\int_0^{t} \int_{\R^3}\big( |F^{\alpha\beta\gamma} \ \p_\beta \p_\alpha \phi_i \ \p_t \der^I \Psi  \  \p_\gamma \der^I \Psi| + |F^{\alpha \beta\gamma} \ \p_t \p_\alpha \phi_i \ \p_\beta  \der^I \Psi \p_\gamma \der^I \Psi|\big)\, \de x \de s\\
    & \quad \leq C\int_0^t \int_{\R^3}\big( | \p \bar \p^{(i)} \phi_i| \ |\p \der^I \Psi |  \,  |\p \der^I \Psi| + | \p \p \phi_i| \ |\bar \p^{(i)} \der^I \Psi |  \,  |\p \der^I \Psi| \big)\, \de x \de s.
\end{aligned}
\end{equation*}
Now, by bounds~\eqref{eq:asymptotici} and~\eqref{eq:linfimpphi0}, we know that $|\p \bar \p^{(i)}\phi_i| \leq C(N,d_\Pi) \varepsilon (1+t)^{-2}$. This, together with the bootstrap assumption~\eqref{eq:bstp1}, implies that
\begin{equation*}
\begin{aligned}
    &\int_0^t \int_{\R^3} | \p \bar \p^{(i)} \phi_i| \ |\p \der^I \Psi |  \,  |\p \der^I \Psi| \de x \de s \\
    &\leq C(N,d_\Pi) \eps^{7-2\delta}R^{-3 + 2 \delta}\int_0^t \frac 1 {(1+s)^2}\de s \leq C(N, d_{\Pi}) \eps^{7 - 2 \delta} R^{-3 + 2 \delta}.
\end{aligned}
\end{equation*}
We note that this term does not have any ``room'' in terms of the parameter $R$. We also explicitly marked the dependence of the constant $C$ on the quantity $d_\Pi$ (which is defined in equation~\eqref{eq:dpidef}), as well as on the number $N$.

Now, for the remaining term, we have, applying H\"older's inequality, the bootstrap assumption~\eqref{eq:bstp1}, and the estimate~\eqref{eq:bulkest2} from Lemma \ref{lem:shortghost},
\begin{equation*}
\begin{aligned}
    &\int_0^t \int_{\R^3}  | \p \p \phi_i| \ |\bar \p^{(i)} \der^I \Psi |  \,  |\p \der^I \Psi| \, \de x \de s \leq C\Big(\int_0^t \frac 1 {(1+s)^{1+\delta}}\Vert \p \der^I \Psi \Vert^2_{L^2(\Sigma_s)} \de s\Big)^{\frac 12} \\
    & \quad \times \Big( \int_0^t \int_{\R^3}(1+s)^{1+\delta} |\p\p\phi_i|^2 |\bar \p^{(i)} \der^I \Psi|^2\de x \de s \Big)^{\frac 12} \leq C(N, d_\Pi) \eps^{7 - 2 \delta} R^{-3 + 2 \delta}.
\end{aligned}
\end{equation*}
This term similarly does not have any space in $R$. Adding these terms gives us that 
\begin{equation} \label{terms34}
    \begin{aligned}
        \boldsymbol{(a_3)}+\boldsymbol{(a_4)}\leq C(N, d_\Pi) \eps^{7 - 2 \delta} R^{-3 + 2 \delta} \le \eps^6 R^{-3 + 2 \delta},
    \end{aligned}
\end{equation}
where we have used the fact that $\eps$ can depend on $N$ and on $d_\Pi$. This suffices to bound terms $\boldsymbol{(a_3)}$ and $\boldsymbol{(a_4)}$.

\begin{remark}\label{rmk:improved34}
    Note that, if $\phi_0$ was identically $0$, all the functions $\psi_{ij}$ would be supported in the set $\{ t \geq R/10$\}. This would in particular imply that, in this case,
    $$
    \boldsymbol{(a_3)}+\boldsymbol{(a_4)} \leq C(N, d_\Pi) \eps^{7-2\delta}R^{-3}.
    $$
\end{remark}

\item[$\boldsymbol{(a_5)+(a_6)}$.] In this case, we have the estimates:
\begin{equation*}
\begin{aligned}
    &\int_0^{t} \int_{\R^3}\big( |F^{\alpha\beta\gamma} \ \p_\beta \p_\alpha \psi_{ij} \ \p_t \der^I \Psi  \  \p_\gamma \der^I \Psi| +  |F^{\alpha \beta\gamma} \ \p_t \p_\alpha \psi_{ij} \ \p_\beta  \der^I \Psi \p_\gamma \der^I \Psi| \big)\, \de x \de s\\
    & \quad \leq \int_0^{R^{20}} \int_{\R^3} |\p^2 \psi_{ij}| |\p \der^I \Psi|^2 \de x \de s+ \int_{R^{20}}^{t} \int_{\R^3}|\p  \bar \p^{(i)} \psi_{ij}| \ |\p \der^I \Psi|^2\de x \de s  \\
    &\qquad + \int_{R^{20}}^{t} \int_{\R^3}| \p^2 \psi_{ij}| \ |\bar \p^{(i)}  \der^I \Psi| \, | \p \der^I \Psi| \, \de x \de s\\
    & \quad \leq C(N, d_\Pi) \eps^7 R^{-3} + C\eps^{3 - \delta} R^{- \frac 3 2+ \delta} \Big(\int_{R^{20}}^t \int_{\R^3} (1+s)^{1+\delta} |\p^2\psi_{ij}|^2 \, |\bar \p^{(i)} \der^I \Psi |^2 \de x \de s \Big)^{\frac 12}.
\end{aligned}
\end{equation*}
Here, we used the fundamental lemma on null forms (Lemma~\ref{lem:nullstruct}), plus the bounds in Proposition~\ref{prop:linfty}, together with the bootstrap assumptions and the H\"older inequality in the last line. We then bound:
\begin{equation*}
    \Big(\int_{R^{20}}^t \int_{\R^3} (1+s)^{1+\delta} |\p^2\psi_{ij}|^2 \, |\bar \p^{(i)} \der^I \Psi |^2 \de x \de s \Big)^{\frac 12} \leq C(N, d_\Pi) \eps^7 R^{-3},
\end{equation*}
where again we used the estimates for $\psi_{ij}$ contained in Proposition~\ref{prop:linfty}, together with estimate~\eqref{eq:bulkest2} from Lemma \ref{lem:shortghost}. Summing gives us
\begin{equation} \label{terms56}
    \begin{aligned}
        \boldsymbol{(a_5)+(a_6)} \leq C(N, d_\Pi) \eps^7 R^{-3} \le \eps^6 R^{-3},
    \end{aligned}
\end{equation}
where we have once again used that $\eps$ can depend on $N$ and $d_\Pi$.

\item[$\boldsymbol{(a_7)}$.] We have to estimate the term
\begin{equation*}
    \sum_{\substack{H+K \subset I \\ K \neq I}} \int_0^{t} \int_{\R^3}  |\p_t \der^I \Psi| |F_{HK}(d \der^H \Psi, d^2 \der^K \Psi)|\, \de x \de s.
\end{equation*}
This can be controlled in the same way as $\boldsymbol{(a_1) + (a_2)}$. We obtain:
\begin{equation}\label{terms7}
    \boldsymbol{(a_7)} \leq C \eps^7 R^{-3}.
\end{equation}

\item[$\boldsymbol{(a_8)+(a_9)}$.] We need to bound the terms
\begin{equation*}
    \sum_{\substack{H+K \subset I \\ K \neq I }} \int_0^t \int_{\R^3} \Big(\sum_{\substack{i,j = 0,\ldots, N\\i\neq j}}(|F_{HK}(d \der^H \psi_{ij}, d^2 \der^K \Psi)|+|F_{HK}(d \der^H \Psi, d^2 \der^K \psi_{ij})|)\Big)|\p_t \der^I \Psi| \de x \de s.
\end{equation*}
These terms can be dealt with exactly as in the case of terms $\boldsymbol{(a_5)+(a_6)}$, always estimating $\psi_{ij}$ in $L^\infty$. We note that we need to bound at most $N_0 +1$ derivatives of $\psi_{ij}$ in $L^\infty$. We obtain:
\begin{equation}\label{terms89}
    \boldsymbol{(a_8)+(a_9)} \leq C(N, d_\Pi) \eps^7 R^{-3}.
\end{equation}

\item[$\boldsymbol{(a_{10})+(a_{11})}$.] In this case, we need to bound the terms
\begin{equation*}
    \sum_{\substack{H+K \subset I \\ K \neq I }} \int_0^t \int_{\R^3} \Big(\sum_{i=0}^N ( |F_{KH}(d \der^H \phi_i, d^2 \der^K \Psi)| + |F_{KH}(d \der^H \Psi, d^2 \der^K \phi_i)|) \Big)|\p_t \der^I \Psi| \de x \de s.
\end{equation*}
The same reasoning as the one for terms $\boldsymbol{(a_3)+(a_4)}$ will give the required bound. Again, we need to be careful as we always estimate $\phi_i$ in $L^\infty$, and in the worst case we need to be able to estimate $N_0 +1$ derivatives of $\phi_i$. We obtain:
\begin{equation} \label{terms1011}
    \begin{aligned}
        \boldsymbol{(a_{10})}+\boldsymbol{(a_{11})}\leq C(N, d_\Pi) \eps^{7 - 2 \delta} R^{-3 + 2 \delta}.
    \end{aligned}
\end{equation}
\begin{remark}\label{rmk:improved1011}
    As in the term $\boldsymbol{(a_{3})}+\boldsymbol{(a_{4})}$, we note that, under the additional assumption $\phi_0 \equiv 0$, we have the improved estimate (since in that case the support of $\Psi$ is contained in the set $\{t \geq R/10\}$):
    \begin{equation}
        \boldsymbol{(a_{10})}+\boldsymbol{(a_{11})} \leq C(N, d_\Pi) \eps^{7-2\delta} R^{-3}.
    \end{equation}
\end{remark}
\item[$\boldsymbol{(a_{12})}$.] This is the term
\begin{equation*}
    \sum_{\substack{i,j = 0,\ldots, N\\i\neq j}} \int_0^{t} \int_{\R^3} |\p_t \der^I \Psi| |F(d \Psi, d^2 \der^I \psi_{ij})|\, \de x \de s.
\end{equation*}
This can be dealt with exactly in the same way as $\boldsymbol{(a_5)+(a_6)}$, but note that we need to estimate $N_0 + 2$ derivatives of $\psi_{ij}$ in $L^\infty$. We obtain:
\begin{equation} \label{terms12d}
    \begin{aligned}
        \boldsymbol{(a_{12})} \leq C(N, d_\Pi) \eps^7 R^{-3} \le \eps^6 R^{-3},
    \end{aligned}
\end{equation}
since we are allowed to choose $\eps$ small in terms of $N$ and $d_\Pi$.

\item[$\boldsymbol{(a_{13})}$.] We need to bound the term
\begin{equation*}
     \sum_{i=0}^N \int_0^{t} \int_{\R^3}  |\p_t \der^I \Psi| |F(d \Psi, d^2 \der^I \phi_i)|\, \de x \de s.
\end{equation*}
This can be dealt with exactly in the same way as $\boldsymbol{(a_3)+(a_4)}$, but note that we need to estimate $N_0 + 2$ derivatives of $\phi_{i}$ in $L^\infty$. We obtain:
\begin{equation} \label{terms13}
    \begin{aligned}
        \boldsymbol{(a_{13})}\leq C(N, d_\Pi) \eps^{7 - 2 \delta} R^{-3 + 2 \delta}.
    \end{aligned}
\end{equation}
\begin{remark}\label{rmk:improved13}
    Note again that, under the additional assumption $\phi_0 \equiv 0$, we have that $\Psi$ is supported in the set $\{t \geq R/10\}$. In particular, in that case, we have the improved estimate:
\begin{equation} \label{terms13i}
\begin{aligned}
    \boldsymbol{(a_{13})}\leq C(N, d_\Pi) \eps^{7 - 2 \delta} R^{-3}.
\end{aligned}
\end{equation}
\end{remark}
\item[$\boldsymbol{(a_{14})+(a_{15})}$.]
 In this case, we need to estimate the terms:
\begin{equation*}
    \sum_{H+K \subset I} \int_0^t \int_{\R^3}\Big(\sum_{i=0}^N\sum_{\substack{g,h = 0,\ldots, N\\g\neq h}} ( |F_{HK}(d \der^H\phi_i, d^2 \der^K \psi_{gh})| + |F_{HK}(d \der^H \psi_{gh}, d^2 \der^K \phi_i)|)\Big) |\p_t \der^I \Psi|\de x \de s.
\end{equation*}
Let us first suppose that either $g=0$ or $h=0$. We have, using Lemma~\ref{lem:nullstruct} on the structure of null forms, combined with H\"older's inequality,
\begin{equation*}
    \begin{aligned}
        &\int_0^t \int_{\R^3}\Big( |F_{HK}(d \der^H\phi_i, d^2 \der^K \psi_{gh})| + |F_{HK}(d \der^H \psi_{gh}, d^2 \der^K \phi_i)|\Big) |\p_t \der^I \Psi|\de x \de s\\
        & \leq C\sum_{J_1, J_2 \in I^{\leq N_0}_{\boldsymbol{K}_R}}\sup_{s \in [0, R^{20}]} \Vert \p \der^I \Psi \Vert_{L^2(\Sigma_s)}  \sup_{s \in [0, R^{20}]} \Vert \p^{\leq 2} \der^{J_1} \psi_{gh} \Vert_{L^2(\Sigma_s)} \int_0^{R^{20}} \Vert \p^{\leq 2} \der^{J_2} \phi_i \Vert_{L^\infty(\Sigma_s)} \de s\\
        & \quad + C\sum_{J_1, J_2 \in I^{\leq N_0}_{\boldsymbol{K}_R}} \int_{R^{20}}^t \int_{\R^3}|\bar \p^{(i)} \p^{\leq 1} \der^{J_1} \phi_i |\, |\p^{\leq 2} \der^{J_2} \psi_{gh} | \,|\p \der^I \Psi| \de x \de s\\
        &\quad + C\sum_{J_1, J_2 \in I^{\leq N_0}_{\boldsymbol{K}_R}} \int_{R^{20}}^t \int_{\R^3}|\p^{\leq 2} \der^{J_1} \phi_i |\, |\bar \p^{(i)} \p^{\leq 1} \der^{J_2} \psi_{gh} | \,|\p \der^I \Psi| \de x \de s.
    \end{aligned}
\end{equation*}
We subsequently use the bootstrap assumptions~\eqref{eq:bstp1}, the linear estimates on $\psi_{gh}$ (recalling that either $g=0$ or $h=0$) in Proposition~\ref{prop:linfty}, and the estimates on $\phi_i$ in Lemma~\ref{prop:decphii} to bound the last display by
\begin{equation*}
\begin{aligned}
    & C(N, d_\Pi) \Big( \eps^{6 - \delta} R^{-\frac 32 + \delta} R^{-\frac 32} \log{(R)} +  \sum_{J_2 \in I^{\leq N_0}_{\boldsymbol{K}_R}} \int_{R^{20}}^t \int_{\R^3}{\eps \over (1 + s)^2} \, |\p^{\leq 2} \der^{J_2} \psi_{gh} | \,|\p \der^I \Psi| \de x \de s \Big)\\
    &\quad + C\eps^{3 - \delta} R^{-\frac 32 + \delta} \sum_{J_1, J_2 \in I^{\leq N_0}_{\boldsymbol{K}_R}} \Big(\int_{R^{20}}^t \int_{\R^3}|\p^{\leq 2} \der^{J_1} \phi_i |^2\, |\bar \p^{(i)} \p \der^{J_2} \psi_{gh} |^2 (1+s)^{1+\delta}\de x \de s\Big)^{\frac 12}\\
    &\leq C(N, d_\Pi) \eps^{6 - \delta} R^{-3 + \delta} \log{(R)}\\
    &\quad+ C(N, d_\Pi) \eps^{4 - \delta}R^{-\frac 32 + \delta} \sum_{J_1, J_2 \in I^{\leq N_0}_{\boldsymbol{K}_R}} \Big(\int_{R^{20}}^t \int_{\R^3}\frac 1 {(1+s)^{1-\delta}(1+|u_i|)^{2+2\delta}}\, |\bar \p^{(i)} \p \der^{J_2} \psi_{gh} |^2 \de x \de s\Big)^{\frac 12}\\
    &\leq C(N, d_\Pi) \eps^{6 - \delta} R^{-3 + \delta} \log{(R)}.
\end{aligned}
\end{equation*}
Here, we also used estimate~\eqref{eq:bulkest2} from Lemma \ref{lem:shortghost}. We note that this term has very little ``room'' in both the parameters $\eps$ and $R$. It is in fact this term which determines the best powers in $R$ and $\eps$ that we can use. Once again, because $\eps$ is allowed to depend on $N$ and $d_\Pi$, we have that, upon possibly restricting $\eps$ to a smaller value,
\begin{equation} \label{terms14151}
    \begin{aligned}
        C(N, d_\Pi) \eps^{6 - \delta} R^{-3 + \delta} \log{(R)} \le \eps^{6 - {3 \delta \over 2}} R^{-3 + \delta} \log{(R)},
    \end{aligned}
\end{equation}
giving us the desired result.

On the other hand, if both $g$ and $h$ are different from $0$, we know that, by an easy domain of dependence argument, $\psi_{gh}$ is supported in the set $\{t \geq R/10\}$. Then, we proceed to estimate, using also Lemma~\eqref{lem:nullstruct},
\begin{align*}
        &\int_0^t \int_{\R^3}\Big( |F_{HK}(d \der^H\phi_i, d^2 \der^K \psi_{gh})| + |F_{HK}(d \der^H \psi_{gh}, d^2 \der^K \phi_i)|\Big) |\p_t \der^I \Psi|\de x \de s\\
        & = \int_{R/10}^t \int_{\R^3}\Big( |F_{HK}(d \der^H\phi_i, d^2 \der^K \psi_{gh})| + |F_{HK}(d \der^H \psi_{gh}, d^2 \der^K \phi_i)|\Big) |\p_t \der^I \Psi|\de x \de s\\
        & \leq C\sum_{J_1, J_2 \in I^{\leq N_0}_{\boldsymbol{K}_R}} \int_{R/{10}}^t \int_{\R^3}|\bar \p^{(i)} \p^{\leq 1} \der^{J_1} \phi_i |\, |\p^{\leq 2} \der^{J_2} \psi_{gh} | \,|\p \der^I \Psi| \de x \de s\\
        &\quad + C\sum_{J_1, J_2 \in I^{\leq N_0}_{\boldsymbol{K}_R}} \int_{R/{10}}^t \int_{\R^3}|\p^{\leq 2} \der^{J_1} \phi_i |\, |\bar \p^{(i)} \p^{\leq 1} \der^{J_2} \psi_{gh} | \,|\p \der^I \Psi| \de x \de s\\
        &\leq C(N, d_\Pi)\sum_{J_2 \in I^{\leq N_0}_{\boldsymbol{K}_R}} \int_{R/{10}}^t \int_{\R^3}\frac{\eps}{s^2}|\p^{\leq 2} \der^{J_2} \psi_{gh} | \,|\p \der^I \Psi| \de x \de s\\
        &\quad + C(N, d_\Pi)\sum_{J_1, J_2 \in I^{\leq N_0}_{\boldsymbol{K}_R}} \sup_{s \geq 0} \Vert \partial \der^I \Psi \Vert_{L^2 (\Sigma_s)} \int_{R / 10}^t \Vert | \partial^{\leq 2} \der^{J_1} \phi_i| \, | \overline{\partial}^{(i)} \partial^{\leq 1} \der^{J_2} \psi_{g h}| \Vert_{L^2 (\Sigma_s)} \de s \\
        &\leq C(N, d_\Pi) \eps^{6 - \delta} R^{-3} + C(N, d_\Pi) \eps^{6 - \delta} R^{-3 +\delta}.
\end{align*}

The last inequality follows from Lemma \ref{lem:shortghost}, estimate~\eqref{eq:bulkest1}. We note that this term also does not have much ``room'' in the parameters $R$ and $\eps$. It does, however, have ``room'' of size $\frac \delta 2$ in the parameter $R$. Once again, because $\eps$ is allowed to depend on $N$ and $d_\Pi$, we have that, upon possibly restricting $\eps$ to a smaller value,
\begin{equation} \label{terms14152}
    \begin{aligned}
        C(N, d_\Pi) \eps^{6 -\delta} R^{-3} + C(N, d_\Pi) \eps^{6 - {\delta}} R^{-3 + \delta} \le  \eps^{6 - {3 \delta \over 2}} R^{-3 + \delta}.
    \end{aligned}
\end{equation}
This concludes the bounds on $\boldsymbol{(a_{14})+(a_{15})}$:
\begin{equation}
    \boldsymbol{(a_{14})+(a_{15})} \leq \eps^{6 - \frac{3\delta}{2}} R^{-3 + \delta}.
\end{equation}

\item[$\boldsymbol{(a_{16})}$.] We have to estimate the following expression:
\begin{equation*}
    \sum_{H+K \subset I} \int_0^t \int_{\R^3}\Big(\sum_{\substack{g,h,i,j = 0,\ldots, N\\g \neq h, i\neq j}}|F_{HK}(d \der^H \psi_{gh}, d^2 \der^K \psi_{ij}) |\Big) |\p_t \der^I \Psi|\de x \de s.
\end{equation*}
Let us break up the integral in two pieces, as usual:
\begin{equation*}
\begin{aligned}
    &\int_0^t \int_{\R^3}\Big(\sum_{\substack{g,h,i,j = 0,\ldots, N\\g \neq h, i\neq j}}|F_{HK}(d \der^H \psi_{gh}, d^2 \der^K \psi_{ij}) |\Big) |\p_t \der^I \Psi|\de x \de s\\
    &\leq \int_0^{R^{20}} \int_{\R^3}\Big(\sum_{\substack{g,h,i,j = 0,\ldots, N\\g \neq h, i\neq j}}\underbrace{|F_{HK}(d \der^H \psi_{gh}, d^2 \der^K \psi_{ij}) |}_{(a)}\Big) |\p_t \der^I \Psi|\de x \de s\\
    &\quad + \int_{[R^{20},t]} \int_{\R^3}\Big(\sum_{\substack{g,h,i,j = 0,\ldots, N\\g \neq h, i\neq j}}\underbrace{|F_{HK}(d \der^H \psi_{gh}, d^2 \der^K \psi_{ij}) |}_{(b)}\Big) |\p_t \der^I \Psi|\de x \de s.
\end{aligned}
\end{equation*}
Here, as usual, we adopt the convention that the interval $[a,b]$, with $a > b$, is the empty set. We focus first on term $(a)$. We have, by the linear estimates in Lemma~\ref{prop:linfty},
\begin{equation*}
    \begin{aligned}
        &(a)\leq C(N, d_
        \Pi)\sum_{J_1 \in I^{\leq N_0}_{\boldsymbol{K}_R}}\int_0^{R^{20}} \frac{ \eps^2}{ (1+t)R^{\frac 12}}\Vert \p^2 \der^{J_1} \psi_{ij} \Vert_{L^2(\Sigma_s)} \Vert \p \der^I \Psi \Vert_{L^2(\Sigma_s)}\de s \\
        & \quad \leq C(N, d_\Pi) \eps^{7 - \delta} R^{-1} R^{-\frac 12} \log{(R)} R^{-\frac 32 +\delta} \leq C(N, d_\Pi) \eps^{7 - \delta} R^{-3 + \delta} \log{(R)}.
    \end{aligned}
\end{equation*}
Using the fact that $\eps$ can depend on $N$ and $d_\Pi$, we get that
\begin{equation}
    \begin{aligned}
        C(N, d_\Pi) \eps^{7 - \delta} R^{-3 + \delta} \log{(R)} \le \eps^6 R^{-3 + 2\delta}.
    \end{aligned}
\end{equation}

Focusing now on term $(b)$, we have, again using Lemma~\ref{lem:nullstruct},
\begin{equation*}
    \begin{aligned}
         &\int_{[R^{20},t]} \int_{\R^3}|F_{HK}(d \der^H \psi_{gh}, d^2 \der^K \psi_{ij}) |\,|\p_t \der^I \Psi|\de x \de s\\
         & \leq C\int_{[R^{20},t]} \int_{\R^3}|\p \der^H \psi_{gh}| \, | \p \bar \p^{(0)} \der^K \psi_{ij} |\,|\p_t \der^I \Psi|\de x \de s\\
         & \quad +C\int_{[R^{20},t]} \int_{\R^3}|\bar \p^{(0)} \der^H \psi_{gh}| \, | \p^2 \der^K \psi_{ij} |\,|\p_t \der^I \Psi|\de x \de s\\
         & \leq C(N, d_\Pi) \int_{[R^{20},t]} \frac{\eps^2}{s^{\frac 32}} R^{\frac 32} \Vert \p \der^K \psi_{gh} \Vert_{L^2(\Sigma_s)} \Vert \p \der^I \Psi \Vert_{L^2(\Sigma_s)} \de s\\
         & \quad + C(N, d_\Pi) \int_{[R^{20},t]} \frac{\eps^2}{s^{\frac 32}} R^{\frac 32} \Vert \p \der^K \psi_{gh} \Vert_{L^2(\Sigma_s)} \Vert \p \der^I \Psi \Vert_{L^2(\Sigma_s)} \de s\\
         & \leq C(N, d_\Pi) \eps^{7 - \delta} R^{-3}.
    \end{aligned}
\end{equation*}
Here, we used the classical Klainerman--Sobolev inequality in Lemma~\ref{lem:classicalks} (which is wasteful in terms of $R$-weights), the bounds on energies of $\psi_{gh}$ and $\psi_{ij}$ in Proposition~\ref{prop:energy}, and finally the bootstrap assumption~\ref{eq:bstp1}. Summing gives us that
\begin{equation}\label{terms16}
    \begin{aligned}
        \boldsymbol{(a_{16})}\leq C(N, d_\Pi) \eps^{7 - \delta} R^{-3}\le \eps^6 R^{-3+2 \delta},
    \end{aligned}
\end{equation}
since we can choose $\eps$ to be small, depending on $d_\Pi$ and $N$.
\end{enumerate}

Having completed {\bf Step 1} of the proof, we now proceed to {\bf Step 2}.

\vspace{20pt}

{\bf Step 2}. We now proceed to recover the averaged characteristic energy estimate \eqref{eq:bstpi2}. Averaging the outgoing characteristic energy in the $u$-direction allows us to control nonlinear interactions in which it is more convenient to estimate ``good derivatives'' of the solution in $L^2$ of the outgoing null cones (this is the case when, for example, all of the commutation vector fields are applied to the ``good derivative'' in the nonlinear error terms). When deriving estimates for quantities which are differentiated at the top order (in our case, when they are differentiated $N_0$ times), however, this will cause problems, as the background Minkowski structure differs from the causal structure induced by the quasilinear wave equation at hand. Indeed, because the light cones associated to the metric defined by the solution only asymptotically become the Minkowskian light cones, we obtain error terms involving the incoming derivatives on the outgoing cones that we must control in $L^2$. Note that the incoming derivative is not intrinsic to the outgoing light cone, and as such it cannot be controlled by the na\"ive energy estimate. However, since we are seeking to prove an \emph{averaged} estimate, the error term produced in this way can be controlled in the same way as other errors. Note that the fact that the ``true'' causal structure asymptotically approaches the Minkowski causal structure is encoded in the fact that these error terms will gain ``good weights''. In the following discussion, we shall treat the situation for arbitrary solutions to appropriately perturbed wave equations, and later specify to the equation at hand.

Recall that, for $\bar u_i \in \R$, and $i \in \{0, \ldots, N\}$, we defined the cones $C^{(i)}_{\bar u_i} := \{u_i = \bar u_i\} \cap \{t \geq 0\}$, and the associated ``good'' derivatives $\bar \p^{(i)}$ in Definition~\ref{def:shorthand}.

\begin{lemma} \label{avcharest1}
    Let $\tilde{\Psi}$ be a sufficiently smooth solution, which is moreover decaying at infinity, to the equation $\Box \tilde{\Psi} = F^{\alpha \beta \gamma} \partial_\alpha h \, \partial_\beta \partial_\gamma \tilde{\Psi} + H$, where $F$ satisfies the classical null condition, and $h$, as well as $H$, are sufficiently smooth functions. Then, there exists some $\eps_0 > 0$ such that, for all $i \in \{0, \ldots, N\}$, we have the estimates
    \begin{equation} \label{avcharest11}
        \begin{aligned}
            \int_0^t \int_{\Sigma_s} |\overline{\partial}^{(i)} \tilde{\Psi}|^2 {1 \over (1 + |u|)^{1 + \delta}} \de x \de s \le C \Vert \partial \tilde{\Psi} \Vert_{L^2 (\Sigma_0)}^2 + C \int_0^t \int_{\Sigma_s} |H| |\partial_t \tilde{\Psi}| \de x \de s
            \\ + C\int_0^t \int_{\Sigma_s} |\overline{\partial} \partial^{\leq 1} h| |\partial \tilde{\Psi}|^2 \de x \de s + C\int_0^t \int_{\Sigma_s} |\partial^{\leq 1} \partial h| |\overline{\partial}^{(i)} \tilde{\Psi}| |\partial \tilde{\Psi}| \de x \de s,
        \end{aligned}
    \end{equation}
    as long as $|\partial h| \le \eps_0$, and where $C$ can only depend on $\delta$.
\end{lemma}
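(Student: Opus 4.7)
The plan is to apply the Alinhac ghost-weight multiplier adapted to the $i$-th light cone. Define $q(s) := -\int_s^{+\infty}(1+|\tau|)^{-1-\delta}\,d\tau$, so that $q$ is bounded and $q'(s) = (1+|s|)^{-1-\delta}$, and use the multiplier $X := e^{q(u_i)}\,\partial_t \tilde\Psi$. Multiplying $\Box\tilde\Psi = F^{\alpha\beta\gamma}\partial_\alpha h\,\partial_\beta\partial_\gamma\tilde\Psi + H$ by $X$ and integrating over $[0,t]\times\R^3$, the linear part produces, via the standard stress–energy computation for $\Box$, the identity
\begin{equation*}
\int_{\Sigma_t} e^{q(u_i)}|\partial\tilde\Psi|^2\,dx + \frac12\int_0^t\!\!\int_{\Sigma_s} q'(u_i)e^{q(u_i)}\bigl[(\partial_{v_i}\tilde\Psi)^2 + |\slashed\nabla^{(i)}\tilde\Psi|^2\bigr]dx\,ds \;=\; \int_{\Sigma_0} e^{q(u_i)}|\partial\tilde\Psi|^2\,dx + 2\!\int_0^t\!\!\int_{\Sigma_s} X\cdot(\Box\tilde\Psi)\,dx\,ds.
\end{equation*}
Since $q$ is bounded, $e^{q(u_i)}$ is pointwise comparable to $1$, and the second term on the left controls $\int_0^t\!\int_{\Sigma_s}(1+|u_i|)^{-1-\delta}|\overline\partial^{(i)}\tilde\Psi|^2\,dx\,ds$, which is the left-hand side of \eqref{avcharest11}.

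Next, I would handle the spacetime integral on the right. The contribution of $H$ is immediate and gives the term $\int\int |H||\partial_t\tilde\Psi|$. For the nonlinearity $F^{\alpha\beta\gamma}\partial_\alpha h\,\partial_\beta\partial_\gamma\tilde\Psi\cdot X$, integrate by parts once in $\beta$ to produce three pieces: (i) $\partial_\beta$ hits $\partial_t\tilde\Psi$, which by the symmetry $F^{\alpha\beta\gamma}=F^{\alpha\gamma\beta}$ rewrites as $\tfrac12 F^{\alpha\beta\gamma}\partial_\alpha h\,\partial_t(\partial_\beta\tilde\Psi\partial_\gamma\tilde\Psi)e^{q(u_i)}$; a further integration by parts in $t$ converts this into a boundary term at $\Sigma_t$ of the form $\int_{\Sigma_t} F^{\alpha00}\partial_\alpha h\,|\partial\tilde\Psi|^2$, which is absorbed by the $\Sigma_t$-energy on the left using $|\partial h|\le\eps_0$, plus a bulk contribution $\int|\partial^2 h||\partial\tilde\Psi|^2$; (ii) $\partial_\beta$ hits $\partial_\alpha h$, yielding $\int|\partial^2 h||\partial\tilde\Psi|^2$ directly; (iii) $\partial_\beta$ hits the ghost weight, producing the critical term
\begin{equation*}
-\!\int_0^t\!\!\int_{\Sigma_s} q'(u_i)\,e^{q(u_i)}\,F^{\alpha\beta\gamma}(\partial_\beta u_i)\,\partial_\alpha h\,\partial_t\tilde\Psi\,\partial_\gamma\tilde\Psi\,dx\,ds.
\end{equation*}
Pieces (i)–(ii) and the $\Sigma_0$-term on the right together give the boundary contribution $C\|\partial\tilde\Psi\|_{L^2(\Sigma_0)}^2$ and the error $C\int|\partial^{\le 1}\partial h||\partial\tilde\Psi|^2$, which after noting $|\partial h||\partial\tilde\Psi|^2\lesssim |\partial h||\overline\partial^{(i)}\tilde\Psi||\partial\tilde\Psi| + |\partial h||\partial\tilde\Psi|^2$ can be absorbed or included in the RHS of \eqref{avcharest11}.

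The delicate step, and the only real obstacle, is term (iii). The key observation is that $\partial u_i = \partial_t - \partial_{r_i}$ is a null covector relative to Minkowski, so $\partial_\beta u_i$ plays the role of a null direction for the $i$-th frame. Applying Lemma~\ref{lem:nullstructpre} (the structural inequality for null forms) with $\xi = \partial u_i$ gives pointwise
\begin{equation*}
\bigl|F^{\alpha\beta\gamma}(\partial_\beta u_i)\partial_\alpha h\,\partial_\gamma\tilde\Psi\bigr| \;\le\; C\bigl(|\overline\partial^{(i)} h|\,|\partial\tilde\Psi| + |\partial h|\,|\overline\partial^{(i)}\tilde\Psi|\bigr),
\end{equation*}
so (iii) is bounded by $C\int q'(u_i)|\overline\partial h||\partial\tilde\Psi|^2 + C\int q'(u_i)|\partial h||\overline\partial^{(i)}\tilde\Psi||\partial\tilde\Psi|$. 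The first summand is controlled by the last error term on the right of \eqref{avcharest11}, noting $q'\le 1$. The second summand, by Cauchy--Schwarz, is bounded by $C\eps_0\int q'(u_i)|\overline\partial^{(i)}\tilde\Psi|^2 + C\eps_0\int q'(u_i)|\partial\tilde\Psi|^2$; the first of these is absorbed into the positive ghost bulk term on the left after choosing $\eps_0$ small, and the second is handled by Gronwall against the $\Sigma_t$-energy. Assembling everything yields \eqref{avcharest11}. The main conceptual content is thus packaged in the ghost-weight identity together with the null-structure reduction of the critical piece (iii); everything else is bookkeeping.
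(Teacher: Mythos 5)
Your proposal is correct and takes a genuinely different route from the paper. The paper works on truncated cone regions $\mathcal{R}_{\bar u, \bar t}$ (bounded by $\Sigma_0$, the outgoing cone $C^t_u$, and $\Sigma_t$), reads the characteristic $\partial_t$-flux through $C^t_u$ off the divergence theorem, controls the quasilinear error flux through $C^t_u$ with the null condition, and only at the very end multiplies by $(1+|u|)^{-1-\delta}$ and integrates in $u$ over $\R$ to produce the averaged bulk estimate. You instead build the averaging weight into the multiplier from the outset via Alinhac's ghost weight $e^{q(u_i)}\partial_t\tilde\Psi$, so the $(1+|u_i|)^{-1-\delta}$-weighted bulk term appears directly from $\partial_\mu e^{q(u_i)}\cdot Q^{\mu 0}$, and the critical quasilinear commutator --- your term (iii), where $\partial_\beta$ lands on the ghost weight and produces a factor $q'(u_i)\,\partial_\beta u_i$ --- is killed by the null structure with $\xi = \partial u_i$ exactly as the paper kills its cone-flux term $(c)$. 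The two computations are related by Fubini and are mathematically equivalent, but yours avoids the two-step "fix a cone, then average over cones" structure and keeps everything as a single spacetime identity, which is arguably cleaner bookkeeping; the paper's version makes the geometric content (null fluxes through outgoing cones) more visible and reuses the same truncated-region computation it already needs elsewhere (e.g.\ in Lemma~\ref{lem:psiijencones}).

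One small gap in the bookkeeping: when in term (i) you integrate by parts in $t$, the $\partial_t$ also lands on the ghost weight $e^{q(u_i)}$, producing a bulk term $\sim q'(u_i)\, e^{q(u_i)}\, F^{\alpha\beta\gamma}\partial_\alpha h\, \partial_\beta\tilde\Psi\, \partial_\gamma\tilde\Psi$. This does not appear in your list of contributions. It is harmless --- it is a genuine null form applied to $(\partial h,\partial\tilde\Psi,\partial\tilde\Psi)$, so Lemma~\ref{lem:nullstructpre} bounds it by $q'(u_i)\bigl(|\overline\partial^{(i)} h|\,|\partial\tilde\Psi|^2 + |\partial h|\,|\overline\partial^{(i)}\tilde\Psi|\,|\partial\tilde\Psi|\bigr)$, and it is then absorbed in the same way as term (iii) --- but it should be recorded. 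Similarly, the $\beta$-integration by parts produces a $\Sigma_0$/$\Sigma_t$ boundary contribution when $\beta = 0$ which needs the same absorption step as the $\Sigma_t$ term you already discuss. Neither issue threatens the argument.
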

\bp
We restrict to the case $i = 0$, as the other cases are completely analogous. Note that we adopt the convention $\bar \p := \bar \p^{(0)}$, where $\bar \p^{(0)}$ was defined in Definition~\ref{def:shorthand}. We also recall the usual functions $t$, $u$, $v$, and $r$. Furthermore, we denote: $C_u := C^{(0)}_u$. We shall finally denote by $C_{\bar u}^{\bar t}$ the portion of the cone $C_{\bar u}$ between $\Sigma_0$ and $\Sigma_{\bar t}$\,: $C_{\bar u}^{\bar t}:= C_{\bar u} \cap \{0\leq t \leq \bar t\}$.

We shall do an energy estimate using $\partial_t$ as a multiplier, integrating on the region bounded between $\Sigma_0$, $C_u^t$, and $\Sigma_t$. We shall move the boundary term on the $C_u^t$ cone (arising from integration of $\Box \tilde{\Psi}$) on the LHS of the estimate thus obtained and we shall move all the other terms on the RHS. Just as in Lemma~\ref{lem:spacel2}, the boundary flux through $\Sigma_0$ and the error integral arising from $H$ can be controlled. More precisely, letting $\mathcal{R}_{\bar u, \bar t} := \{u \leq \bar u\} \cap \{ 0 \leq t \leq \bar t\}$, we have
\begin{equation}\label{eq:avchar1}
\begin{aligned}
   &\int_{C_u^t}|\bar \p^{(0)} \tilde \Psi|\de v \de \omega  + \int_{\Sigma_t \cap \mathcal{R}_{u,t}}|\p \tilde \Psi|^2 \de x \leq  C \Big|\int_{\mathcal{R}_{u,t}} \p_t \tilde \Psi \, \Box \tilde \Psi \de x \de s \Big|  + C\int_{\Sigma_0}|\p \tilde \Psi|^2 \de x\\
   &\leq C \underbrace{\Big|\int_{\mathcal{R}_{u,t}}  F^{\alpha \beta \gamma} \partial_\alpha h \, \partial_\beta \partial_\gamma \tilde{\Psi}\, \p_t \tilde \Psi \, \de x \de s \Big|}_{(a)}  + C \int_{\Sigma_t \cap \mathcal{R}_{u,t}}|H| |\p_t \tilde \Psi| \de x \de s+ C \Vert \p \tilde \Psi \Vert^2_{L^2(\Sigma_0)}
 \end{aligned}
\end{equation}
We then wish to estimate term $(a)$ in the previous display. To that end, we note the following identity:
\begin{equation*}
    \begin{aligned}
        2 F^{\alpha \beta \gamma} \partial_\alpha h \, \partial_\beta \partial_\gamma \tilde{\Psi} \partial_t \tilde{\Psi} = F^{\alpha \beta \gamma} \partial_\beta (\partial_\alpha h \, \partial_t \tilde{\Psi} \partial_\gamma \tilde{\Psi}) - F^{\alpha \beta \gamma} \partial_\alpha \partial_\beta h \, \partial_t \tilde{\Psi} \partial_\gamma \tilde{\Psi} - \partial_t ( F^{\alpha \beta \gamma} \partial_\alpha h \, \partial_\beta \tilde{\Psi} \partial_\gamma \tilde{\Psi})
        \\ + F^{\alpha \beta \gamma} (\partial_t \partial_\alpha h \, \partial_\beta \tilde{\Psi} \partial_\gamma \tilde{\Psi}) + F^{\alpha \beta \gamma} \partial_\gamma (\partial_\alpha h \, \partial_\beta \tilde{\Psi} \partial_t \tilde{\Psi}) - F^{\alpha \beta \gamma} (\partial_\gamma \partial_\alpha h \, \partial_\beta \tilde{\Psi} \partial_t \tilde{\Psi}).
    \end{aligned}
\end{equation*}
This implies, using the fundamental lemma on the structure of null forms (Lemma~\ref{lem:nullstruct}):
\begin{equation}\label{eq:avchar2}
\begin{aligned}
    &(a) \leq C\int_0^t \int_{\Sigma_s} |\overline{\partial} \partial h| |\partial \tilde{\Psi}|^2 \de x \de s + C\int_0^t \int_{\Sigma_s} |\partial \partial h| |\overline{\partial} \tilde{\Psi}| |\partial \tilde{\Psi}| \de x \de s\\
    & \quad + \Big|\int_{\mathcal{R}_{u,t}} \underbrace{\big( F^{\alpha \beta \gamma} \partial_\beta (\partial_\alpha h \, \partial_t \tilde{\Psi} \partial_\gamma \tilde{\Psi}) - \partial_t ( F^{\alpha \beta \gamma} \partial_\alpha h \, \partial_\beta \tilde{\Psi} \partial_\gamma \tilde{\Psi}) + F^{\alpha \beta \gamma} \partial_\gamma (\partial_\alpha h \, \partial_\beta \tilde{\Psi} \partial_t \tilde{\Psi})  \big)}_{(b)} \, \de x \de s \Big|.
\end{aligned}
\end{equation}
We now note that the term $(b)$ is in divergence form, and we now wish to integrate it by parts. For $\eps_0$ sufficiently small, we can once again absorb the error integrals through $\Sigma_t$ coming from term $(b)$ (after integration by parts) in the LHS, just as in Lemma~\ref{lem:spacel2}. Moreover, because the resulting integral over $\Sigma_t$ has a good sign, we will simply drop it.

All that remains is to control the error terms arising from $(b)$ that are fluxes through $C_u^t$ (after integration by parts). These are the terms:
\begin{equation}
    \begin{aligned}
        \underbrace{\Big| \int_{C^t_u} \big( F^{\alpha \beta \gamma} N_\beta (\partial_\alpha h \partial_t \tilde{\Psi} \partial_\gamma \tilde{\Psi}) - N_0 F^{\alpha \beta \gamma} (\partial_\alpha h \partial_\beta \tilde{\Psi} \partial_\gamma \tilde{\Psi}) + F^{\alpha \beta \gamma} N_\gamma (\partial_\alpha h \partial_\beta \tilde{\Psi} \partial_t \tilde{\Psi}) \big) r^2 \de v \de \omega \Big|}_{(c)}.
    \end{aligned}
\end{equation}
Here, $N_\beta$ is defined as follows. Note that the Euclidean unit normal to the outgoing cone $u = \text{const}$ is given by ${1 \over \sqrt{2}} \partial_u = {1 \over \sqrt{2}} (\partial_t - \partial_r) = {1 \over \sqrt{2}} \partial_t - {x^i \over \sqrt{2} r} \partial_i$. $N_\beta$ is then defined as the one-form arising from lowering the index of ${1 \over \sqrt{2}} \partial_u$ by means of the \emph{Minkowski} metric.

We then have that, using the null condition on term $(c)$, 
\begin{equation}\label{eq:avchar3}
    (c) \leq \int_{C^t_u}  \left (|\overline{\partial} h| |\partial \tilde{\Psi}|^2 + |\partial h| |\overline{\partial} \tilde{\Psi}| |\partial \tilde{\Psi}| \right ) r^2 \de v \de \omega.
\end{equation}
We then combine estimates~\eqref{eq:avchar1},~\eqref{eq:avchar2} and~\eqref{eq:avchar3}, multiply by ${1 \over (1 + |u|)^{1 + \delta}}$ and integrate in $u$ for $u \in \R$. This yields the claim.
\ep

We now turn to the main content of {\bf Step 2}, which is to recover the integrated estimate~\eqref{eq:bstpi2}. Let $i \in \{0, \ldots, N\}$, and apply Lemma~\ref{avcharest1} to equation~\eqref{eq:Psicomm}, choosing
$$
h := \sum_{i=0}^N \phi_i + \sum_{\substack{i,j=\{0, \ldots, N\} \\i \neq j}} \psi_{ij} + \Psi, \qquad \tilde \Psi := \der^I \Psi
$$
(with $I \in I^{\leq N_0}_{\boldsymbol{K}_R}$). Furthermore, we apply said lemma to bound the averaged characteristic energy adapted to the light cone associated with the $i$-th piece of data. We obtain:
 \begin{equation} 
        \begin{aligned}
            \int_0^t \int_{\Sigma_s} {|\overline{\partial}^{(i)}\der^I \Psi|^2  \over (1 + |u_i|)^{1 + \delta}} \de x \de s \le C \Vert \partial \der^I \Psi \Vert_{L^2 (\Sigma_0)}^2 + C \int_0^t \int_{\Sigma_s} |H| |\partial_t \der^I \Psi| \de x \de s
            \\ + C\int_0^t \int_{\Sigma_s} |\overline{\partial} \partial^{\leq 1} h| |\partial \der^I \Psi |^2 \de x \de s + C\int_0^t \int_{\Sigma_s} |\partial^{\leq 1} \partial h| |\overline{\partial} \der^I \Psi| |\partial \der^I \Psi| \de x \de s,
        \end{aligned}
    \end{equation}
with the appropriate choice of $h$ and $H$ arising from equation~\eqref{eq:Psicomm}. The error terms corresponding to $H$ and $h$ are identical to the ones we dealt with in {\bf Step 1}. Hence, these error integrals can be handled in the same way as in {\bf Step 1}

We have the following conclusion:
\begin{equation}
    \Vert (1+|u_i|)^{-\frac 12 - \frac \delta 2} \bar \p^{(i)} \der^I \Psi \Vert_{L^2([0,T]\times \R^3)} \leq \eps^{ 3 -\frac {3 \delta} 4}  R^{-\frac 32 + \delta}  \quad \hspace{8pt} \text{ for } \quad t \in [0,T], \ \  i \in \{0, \ldots, N\}.
\end{equation}
\begin{remark}
    Note that, under the additional assumption $\phi_0 \equiv 0$, we obtain the following estimate with an improvement in terms of the parameter $R$:
    \begin{equation}
         \Vert (1+|u_i|)^{-\frac 12 - \frac \delta 2} \bar \p^{(i)} \der^I \Psi \Vert_{L^2([0,T]\times \R^3)} \leq \eps^{ 3 -\delta}  R^{-\frac 32 + \frac 3 4 \delta}  \quad \hspace{8pt} \text{ for } \quad t \in [0,T], \ \  i \in \{0, \ldots, N\}.
    \end{equation}
\end{remark}

This concludes {\bf Step 2} of the proof. We now turn to {\bf Step 3}, in which we show the $L^\infty$ estimates.

\vspace{20pt}

{\bf Step 3}. In this step, we are going to deduce the improved pointwise estimates~\eqref{eq:bstpi3}--\eqref{eq:bstpi5}. From {\bf Step 1} and {\bf Step 2}, we know that, for all $I \in I^{\leq N_0}_{\boldsymbol{K}_R}$,
\begin{align*}
&\Vert \p \der^I \Psi \Vert_{L^2(\Sigma_t)} \leq \eps^{ 3 -\frac {3\delta} 4} R^{-\frac 32 + \delta} \quad  \text{ for } \quad t \in [0,T].
\end{align*}
Now, using the Sobolev Lemma~\ref{lem:bettersob1} and Lemma~\ref{lem:bettersob2}, we obtain immediately, for all $J \in I^{\leq N_0-3}_{\boldsymbol{K}_R}$:
\begin{align*}
|\p \der^J  \Psi(t, r, \theta,\varphi)| \leq C(d_\Pi) \eps^{ 3 -\frac {3\delta}4}  (1+t)^{-1} R^{-\frac 1 2 +\delta}  \quad  \text{ for } t \in [0,T].
\end{align*}
The constant depends only on $d_\Pi$ because the rescaled vector fields introduce bad weights that depend only on $d_\Pi$. Because $\eps$ is allowed to depend on $d_\Pi$, we can possibly restrict to a smaller value of $\eps$ and obtain
$$
|\p \der^J  \Psi(t, r, \theta,\varphi)| \leq \eps^{ 3 -\frac {7\delta}8}  (1+t)^{-1} R^{-\frac 1 2 +\delta}  \quad  \text{ for } t \in [0,T].
$$
This proves the improved bound~\eqref{eq:bstpi3}.

As for bound~\eqref{eq:bstpi5}, we use the classical Klainerman--Sobolev estimates~\ref{lem:classicalks}, and we obtain, as an application of inequality~\eqref{eq:ks1}, for all multi-indices $J \in I^{\leq N_0 -2}_{\boldsymbol{K}_R \cup \boldsymbol{K}^{(i)}_R}$:
\begin{align}\label{eq:ksfirst}
|\p \der^J \Psi(t, r, \theta,\varphi)| \leq  C(N, d_\Pi) \eps^{ 3 -\frac {3\delta}{4}}  (1+v_i)^{-1}(1+|u_i|)^{-\frac 12} R^{\frac 12 + \delta}  \quad  \text{ for } \quad t \in [0, T], \quad i \in \{0, \ldots, N\}.
\end{align}
Upon possibly restricting $\eps$ to a smaller value, we infer the bound~\eqref{eq:bstpi5}. Moreover, this implies, in particular, the claim~\eqref{eq:bstpi4} in the region $|u_i| \geq c t$, for some $c \in (0,1)$.

Note now that, if $|u_i| \leq ct$, then also $|u_i| \leq c v_i$. Integrating the above display~\eqref{eq:ksfirst} on a line of constant $v_i$ coordinate, noting that $\Psi$ has vanishing initial data, we then have, for all multi-indices $J \in I^{\leq N_0 -2}_{\boldsymbol{K}_R\cup \boldsymbol{K}^{(i)}_R}$:
\begin{equation*}
    |\der^J \Psi(t, r, \theta,\varphi)| \leq  C(N, d_\Pi) \eps^{ 3 -\frac {3\delta} 4}  (1+v_i)^{-\frac 12} R^{\frac 12 + \delta}  \quad  \text{ for } \quad t \in [0, T], \quad i \in \{0, \ldots, N\}.
\end{equation*}
Let us now recall the inequality:
\begin{equation*}
    |\bar \p^{(i)} f| \leq C \frac{R}{1+v_i} \sum_{H\in \boldsymbol{K}^{(i)}_R} |\der^H f|.
\end{equation*}
Using this estimate, we finally have, for all $J \in I^{\leq N_0 -3}_{\boldsymbol{K}_R}$:
\begin{align*}
    |\bar \p^{(i)} \der^J \Psi(t, r, \theta,\varphi)| \leq C(N, d_\Pi) \eps^{ 3 -\frac {3\delta}{4}} (1+t)^{-\frac 32 } R^{\frac 32 + \delta}  \quad  \text{ for } \quad t \in [R^{20}, T], \quad i \in \{0, \ldots, N\}
\end{align*}
Upon possibly restricting to a smaller value of $\eps$, we deduce bound~\eqref{eq:bstpi4}. This concludes the proof of the Theorem.
\end{proof}

We now record the calculations that involve using the averaged characteristic energy estimates in controlling the terms in the proof of Theorem \ref{thm:nonlinear}.
\begin{lemma}\label{lem:shortghost}
	Let $f, h : \R^{3 + 1} \rightarrow \R$ be smooth functions, and let $i \in \{0, \ldots, N\}$. Let us consider the usual coordinates $(u_i,v_i,\theta_i, \varphi_i)$ and $(t,r_i,\theta_i, \varphi_i)$ (see Definition~\ref{def:riviui}). Moreover, with $R \ge 10$ and $\alpha, \beta, \gamma, \mu > 0$ parameters, let $f$ satisfy the bulk bound
	\begin{equation}
	\begin{aligned}
    \int_{\{ t \ge 0 \}} |\overline{\partial}^{(i)} f|^2 (1 + |u_i|)^{-1 - {\delta \over 2}} \de x \de s \le C_1^2 R^{2 \alpha},
	\end{aligned}
	\end{equation}
	and let $h$ satisfy the pointwise bound
	\begin{equation}
	\begin{aligned}
	|\p h| \le {C_2 R^\beta \over (1 + v_i) (1 + |u_i|)^\gamma}
	\end{aligned}
	\end{equation}
	with $\gamma > \delta$. Here, as usual, we used the notation for $\overline{\p}^{(i)}$ (the ``good derivatives'') introduced in Definition~\ref{def:shorthand}.
	
	Then, the following inequality holds true:
	\begin{align}\label{eq:bulkest1}
	&\int_{R^\mu}^\infty \Vert |\overline{\partial}^{(i)} f| \, |\partial h| \Vert_{L^2(\Sigma_t)} \de t \le C C_1 C_2 R^{\alpha + \beta + \mu \left ({\delta \over 2} - 2 \gamma \right )}.
	\end{align}
	Here, $C$ is some constant that does not depend on $C_1$, $C_2$, $R$, $\alpha$, $\beta$, $\gamma$, $\mu$, $f$, or $h$.
	
    Moreover, if we also assume that $\gamma \leq \frac 1 2 + \frac \delta 2$, we have the following inequality:
    \begin{align}
    \label{eq:bulkest2}
    &\left (\int_{R^\mu}^\infty \int_{\Sigma_t} (1 + t)^{1 + \delta} |\overline{\partial}^{(i)} f|^2 |\partial h|^2 \de x \de t \right )^{{1 \over 2}} \le C C_1 C_2 R^{\alpha + \beta + \mu \left (\delta - \gamma \right )}.
	\end{align}
	Here, again, $C$ is some constant that does not depend on $C_1$, $C_2$, $R$, $\alpha$, $\beta$, $\gamma$, $\mu$, $f$, or $h$.
\end{lemma}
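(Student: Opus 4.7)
I would prove both inequalities in parallel, with \eqref{eq:bulkest2} treated first and \eqref{eq:bulkest1} obtained from it by Cauchy--Schwarz in $t$. The common engine is to exploit the two elementary geometric inequalities valid on $\Sigma_t$: first, $v_i = t + r_i \ge t$, and second, $v_i = t+r_i \ge |t-r_i| = |u_i|$. These together allow the weight $(1+v_i)^{-2}$ appearing in the hypothesis on $|\partial h|^2$ to be split freely: for any $a \in [0,2]$,
\[
(1+v_i)^{-2} \le (1+t)^{-a} (1+|u_i|)^{-(2-a)}.
\]
The parameter $a$ will be chosen so that the total power of $(1+|u_i|)^{-1}$ in the resulting bound exactly matches the bulk weight $(1+|u_i|)^{-1-\delta/2}$, while the remaining power of $(1+t)^{-1}$ is converted into a negative power of $R^\mu$ using $t \ge R^\mu$.

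\textbf{Proof of \eqref{eq:bulkest2}.} Squaring the hypothesis on $|\partial h|$ and applying the splitting above, we get
\[
(1+t)^{1+\delta} |\bar\partial^{(i)} f|^2 |\partial h|^2 \le C_2^2 R^{2\beta}\,(1+t)^{1+\delta-a}\,\frac{|\bar\partial^{(i)} f|^2}{(1+|u_i|)^{(2-a)+2\gamma}}.
\]
Choose $a = 1 + 2\gamma - \delta/2$ when this lies in $[0,2]$ and $a=2$ otherwise; in either case the assumption $\gamma \le 1/2+\delta/2$ guarantees $1+\delta-a \le 0$, so $(1+t)^{1+\delta-a} \le R^{\mu(1+\delta-a)}$ on $\{t \ge R^\mu\}$, and the $(1+|u_i|)$-exponent dominates $1+\delta/2$. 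Integrating in $(t,x)$ and invoking the bulk bound for the remaining integrand yields
\[
\int_{R^\mu}^\infty\!\!\int_{\Sigma_t} (1+t)^{1+\delta} |\bar\partial^{(i)} f|^2|\partial h|^2\,dx\,dt \le C\, C_1^2 C_2^2 R^{2\alpha+2\beta+\mu(2(\delta-\gamma))},
\]
and taking a square root gives \eqref{eq:bulkest2}.

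\textbf{Proof of \eqref{eq:bulkest1}.} I would deduce this from \eqref{eq:bulkest2} (or, more generally, from the same splitting procedure with a modified exponent) by Cauchy--Schwarz in $t$ against the weight $(1+t)^{-(1+\delta)/2}$:
\[
\int_{R^\mu}^\infty \|\bar\partial^{(i)} f \cdot \partial h\|_{L^2(\Sigma_t)}\, dt \le \Bigl(\int_{R^\mu}^\infty (1+t)^{-1-\delta}\,dt\Bigr)^{\!1/2}\!\Bigl(\int_{R^\mu}^\infty (1+t)^{1+\delta}\|\cdot\|_{L^2(\Sigma_t)}^2\,dt\Bigr)^{\!1/2}.
\]
The first factor is $O(R^{-\mu\delta/2})$ and the second is bounded by \eqref{eq:bulkest2}. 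To obtain the sharper exponent $\mu(\delta/2 - 2\gamma)$ claimed in \eqref{eq:bulkest1}, one rather applies the splitting of Step~1 directly to the $L^1_t L^2_x$ integral, with the free parameter $a$ retuned so that the $(1+t)^{-a/2}$ contribution, Cauchy--Schwarzed against $g(t)^{1/2}$ where $g(t) := \int_{\Sigma_t} |\bar\partial^{(i)} f|^2 (1+|u_i|)^{-1-\delta/2}\,dx$, produces the required $R$-power; the condition $\gamma > \delta$ is exactly what ensures convergence of the relevant temporal integral $\int_{R^\mu}^\infty (1+t)^{-1-(2\gamma-\delta/2)}\,dt$.

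\textbf{Main obstacle.} The substantive work is the bookkeeping of the interlocking weight exponents: the single parameter $a$ must be tuned to achieve three simultaneous objectives (convergence of the $t$-integral at infinity, matching of the spatial $(1+|u_i|)$ weight to the bulk hypothesis, and correct $R$-gain from $t \ge R^\mu$), and the two hypotheses $\gamma > \delta$ and $\gamma \le 1/2+\delta/2$ encode exactly the window in which all three requirements are compatible. The most delicate regime is $\gamma$ close to $\delta$, where the $(1+|u_i|)$ decay provided by $|\partial h|$ is only barely strong enough and most of the spatial weight needed to meet the bulk hypothesis must be transferred from $(1+v_i)^{-2}$ via the inequality $v_i \ge |u_i|$.
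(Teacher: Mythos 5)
Your approach coincides with the paper's: both use the pointwise bound on $|\partial h|$, multiply-and-divide by an appropriate power of $(1+|u_i|)$ so that the good-derivative factor exactly reproduces the bulk-bounded quantity, and then extract $R$-decay from $t \ge R^\mu$; for \eqref{eq:bulkest1} both close by Cauchy--Schwarz in $t$ against the weight $(1+t)^{-1/2-\gamma+\delta/4}$. Your ``splitting'' $(1+v_i)^{-2}\le(1+t)^{-a}(1+|u_i|)^{-(2-a)}$ with tunable $a$ is the same algebra the paper performs implicitly when it bounds $(1+|u_i|)^{1+\delta-2\gamma}/(1+v_i)^2$, so this is not a genuinely different route — just a cleaner way of organizing the exponent bookkeeping.

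Two remarks on details. First, the discrepancy you notice in \eqref{eq:bulkest1} is real and cannot be resolved by ``retuning $a$.'' Carrying out the Cauchy--Schwarz carefully (as the paper does), one obtains
\[
\int_{R^\mu}^\infty \big\| |\overline{\partial}^{(i)} f|\,|\partial h| \big\|_{L^2(\Sigma_t)}\, \de t \le C\,C_2 R^\beta \left(\int_{R^\mu}^\infty (1+t)^{-1-2\gamma+\delta/2}\,\de t\right)^{1/2}\!\left(\int_{R^\mu}^\infty g(t)\,\de t\right)^{1/2} \lesssim C_1 C_2 R^{\alpha+\beta+\mu(\delta/4-\gamma)},
\]
since the square root of the temporal integral yields $R^{-\mu(\gamma-\delta/4)}$, not $R^{-\mu(2\gamma-\delta/2)}$. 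The exponent $\mu(\delta/2-2\gamma)$ in the lemma's statement thus appears to be a transcription slip in the paper (the $1/2$ from the final square root was dropped); the exponent that is actually proved — and suffices for all downstream applications, as it is still strictly negative under $\gamma>\delta$ — is $\mu(\delta/4-\gamma)$. Second, your attribution of the role of $\gamma\le 1/2+\delta/2$ is slightly off: the inequality $1+\delta-a\le 0$ with $a=1+2\gamma-\delta/2$ reads $3\delta/2-2\gamma\le 0$, which follows from $\gamma>\delta$, not from the upper bound on $\gamma$. The hypothesis $\gamma\le 1/2+\delta/2$ is what makes the boundary case $a=2$ (equivalently, the paper's step that replaces $(1+|u_i|)^{1+\delta-2\gamma}$ by $(1+v_i)^{1+\delta-2\gamma}$, which requires the exponent to be non-negative) close up to the claimed $R$-power; you do in fact handle this case correctly, but the stated justification is misplaced.
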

\bp[Proof of Lemma~\ref{lem:shortghost}] Let us restrict to the case $i=0$ (without loss of generality), recalling that we adopt the convention $|\overline{\p}f| := |\overline{\p}^{(0)} f|$, and $u:= u_0, v:= v_0, r := r_0$. Let us first focus on the proof of estimate~\eqref{eq:bulkest1}. We have the following pointwise bound:
$$
|\p h|^2 (1+|u|)^{1+\frac \delta 2} \leq C^2_2  \frac{R^{2\beta}}{(1+t)^{1+2\gamma-\frac \delta 2}}.
$$
This implies that
\begin{equation*}
\begin{aligned}
&\int_{R^\mu}^\infty \left (\int_{\Sigma_t} |\overline{\partial} f|^2 |\partial h|^2 \de x \right )^{{1 \over 2}} \de t = \int_{R^\mu}^\infty \left (\int_{\Sigma_t} |\overline{\partial} f|^2 (1+|u|)^{-1 - {\delta \over 2}} (1+|u|)^{1 + {\delta \over 2}} |\partial h|^2 \de x \right )^{{1 \over 2}} \de t \\ 
&\qquad \le \int_{R^\mu}^\infty {C_2 R^\beta \over (1 + t)^{{1 \over 2} + \gamma - {\delta \over 4}}} \left (\int_{\Sigma_t} |\overline{\partial} f|^2 (1+|u|)^{-1 - {\delta \over 2}} \de x \right )^{{1 \over 2}} \de t \\
&\qquad \le C C_2 R^\beta \left (\int_{R^\mu}^\infty {1 \over (1 + t)^{1 + 2 \gamma - {\delta \over 2}}} \de t\right )^{{1 \over 2}} \left (\int_{R^\mu}^\infty \int_{\Sigma_t} |\overline{\partial} f|^2 (1+|u|)^{-1 - {\delta \over 2}} \de x \de t \right )^{{1 \over 2}} \\
&\qquad \le C C_1 C_2 R^{\alpha + \beta + \mu \left ({\delta \over 2} - 2 \gamma \right )},
\end{aligned}
\end{equation*}
as desired.

Let us then focus on the proof of bound~\eqref{eq:bulkest2}. We have the following pointwise bound, under the additional assumption that $\gamma \leq \frac 1 2 + \frac \delta 2$:
$$
|\p h|^2 (1+|u|)^{1 + \delta} (1 + t)^{1 + \delta} \leq C^2_2  \frac{R^{2\beta}}{(1+t)^{2\gamma - 2 \delta}}.
$$
This implies that
\begin{equation*}
\begin{aligned}
& \left (\int_{R^\mu}^\infty \int_{\Sigma_t} (1 + t)^{1 + \delta} |\overline{\partial} f|^2 |\partial h|^2 \de x \de t \right )^{{1 \over 2}} 
\\ & =\left (\int_{R^\mu}^\infty \int_{\Sigma_t} |\overline{\partial} f|^2 (1 + t)^{1 + \delta} (1+|u|)^{-1 - \delta} (1+|u|)^{1 + \delta} |\partial h|^2 \de x \de t \right )^{{1 \over 2}} \\ 
&\qquad \le {C_2 R^\beta \over R^{\mu (\gamma - \delta)}} \left (\int_{R^\mu}^\infty \int_{\Sigma_t} |\overline{\partial} f|^2 (1+|u|)^{-1 - \delta} \de x \de t \right )^{{1 \over 2}} \\
&\qquad \le C C_1 C_2 R^{\alpha + \beta + \mu (\delta - \gamma)}.
\end{aligned}
\end{equation*}
This proves inequality~\eqref{eq:bulkest2} and concludes the proof of the lemma.
\ep

\section{Proof of Theorem~\ref{thm:largedata}}\label{sec:largedata}

\begin{proof}[Proof of Theorem~\ref{thm:largedata}]
 Let $L > 0$ be given (this number corresponds to the total energy of the initial data we are going to focus on). Consider $N \in \N$ to be determined later, and let $N_1 \in \N$, $N_1 \geq 13$ ($N_1$ is the number of derivatives we require on the initial data). Consider moreover a collection of functions $(\phi_i^{(0)}, \phi_i^{(1)})$, $i \in \{1, \ldots, N\}$ which satisfies the following properties:
 \begin{align}
 &\text{supp}(\phi^{(0)}_i) \subset B(0,1), \qquad \text{supp}(\phi^{(1)}_i) \subset B(0,1),\\
 &\Vert \phi_{i}^{(0)}\Vert_{H^{N_1}(B(0,1))} \leq \eps_0, \qquad  \Vert \phi_{i}^{(1)}\Vert_{H^{N_1-1}(B(0,1))} \leq \eps_0,\\
 & \eps_1 \leq \Vert \phi_{i}^{(0)}\Vert_{H^{1}(B(0,1))}, \qquad \eps_1 \leq \Vert \phi_{i}^{(1)}\Vert_{L^2(B(0,1))},\quad \text{for all } i \in \{1, \ldots, N\}.
 \end{align}
 Here, $B(0,1)$ is the three-dimensional Euclidean ball of radius one centered at the origin.
 Moreover, $\eps_0 > 0$ is such that the global stability results of Lemma \ref{prop:decphii} hold true for compactly supported data in the unit ball whose $H^{N_1}$ norms are of size at most $2 \eps_0$, with $N_1 \geq 19$ (the number $19$ is chosen so that the bootstrap argument for the nonlinear equation in Section~\ref{sec:mainproof} carries over). Such initial data can easily be seen to exist. Importantly, we note that $\eps_1$ is independent of $N$.
 
Let now $N = \big\lfloor {100 L \over \eps_1}\big\rfloor$. We then take $N$ points $p_i$ on the unit sphere in $\Sigma_0$ which are roughly equidistributed. We could take, for example, $N$ points equidistributed on the unit circle $\{x= 0 \} \cap \{y^2 + z^2 =1\}$. In this case, the largest distance between two such points is bounded above by $2$, and the smallest pairwise distance between any two distinct such points is bounded below by $2 \sin (\pi /N) \geq \frac \pi N$ (if $N$ is sufficiently large). We note that the ratio between the largest and smallest pairwise distance between the points $p_i$, which we recall is denoted by $d_\Pi$, is a function of $N$ alone.

Now, we scale up by a factor $R > 1$ to be chosen momentarily in terms of $N$ and $d_\Pi$. We emphasize that, since $N$ is a function of $L$ and since $d_\Pi$ is a function of $N$, the value of $R$ will only depend on $L$.

For $R$ sufficiently large, we note that the unit balls around each point $w_i := R \, p_i$ will be pairwise disjoint. We define the collection of translated functions $(\tilde \phi_{i}^{(0)},\tilde \phi_{i}^{(1)})$ for $i \in \{1, \ldots, N\}$, as follows:
\begin{equation}
    \tilde \phi_{i}^{(0)}(x) := \phi^{(0)}_i(x-w_i), \qquad \tilde \phi_{i}^{(1)}(x) := \phi^{(1)}_i(x-w_i), \quad  \text{for all } i \in \{1, \ldots, N\}.
\end{equation}
This implies that that both $\tilde \phi_{i}^{(0)}$ and $\tilde \phi_{i}^{(1)}$ are supported in the unit ball centered at $w_i$, for all $i \in \{1, \ldots, N\}$. We also note that, letting
$$
\phi^{(0)}:= \sum_{i=1}^N \tilde \phi^{(0)}_i, \qquad \phi^{(1)}:= \sum_{i=1}^N \tilde \phi^{(1)}_i,
$$
we have trivially, for $R$ sufficiently large, that
$\Vert \phi^{(0)}\Vert_{H^{1}(\Sigma_0)} \geq L$, and that $\Vert \phi^{(1)}\Vert_{L^2(\Sigma_0)} \geq L$.

We then wish to show that, for $R$ sufficiently large, there exists a global-in-time solution to the initial value problem:
\begin{equation}
    \begin{aligned}
    &\Box \phi + F (d \phi, d^2 \phi) =  G (d \phi, d \phi),\\
    & \phi|_{t=0} = \phi^{(0)},\\
    & \p_t \phi|_{t=0} = \phi^{(1)}.
    \end{aligned}
\end{equation}
Now, because of how $\eps_0$ was chosen, we note that the following initial value problem admits a global-in-time solution $\phi_i$, for all $i \in \{1, \ldots, N\}$:
\begin{equation}
    \begin{aligned}
    &\Box \phi_{i} + F (d \phi_i, d^2 \phi_i) =  G (d \phi_i, d \phi_i),\\
    & \phi_i|_{t=0} = \tilde \phi^{(0)}_{i},\\
    & \p_t \phi_i|_{t=0} = \tilde \phi^{(1)}_{i},
    \end{aligned}
\end{equation}
Furthermore, every $\phi_i$ falls off according to the decay rates described in Lemma \ref{prop:decphii}:
$$
|\bar \p^{(i)} \der^{J_1} \phi_i| \leq C \varepsilon_0 \frac{1}{(1+r_i^2)(1+|u_i|)^\delta}, \quad |\p \der^{J_1} \phi_i| \leq C \varepsilon_0 \frac{1}{(1+v_i)(1+|u_i|)^{1+\delta}}.
$$
Here, $J_1 \in I^{\leq N_1 - 7}_{\boldsymbol{K}_R}$.

Thus, with $\psi_{i j}$ defined as in Section \ref{sub:seconditerate}, the trilinear estimates in Section \ref{sub:improvedenergy} give us that every function $\psi_{i j}$ satisfies the estimates~\eqref{eq:enpsiij} with $\eps_0$ in place of $\eps$. The energy estimates satisfied by the functions $\psi_{i j}$ are
\begin{equation}
    \begin{aligned}
        \sup_{t\geq 0} \Vert \partial \der^{J_2} \psi_{i j} \Vert_{L^2 (\Sigma_t)} \le C(N, d_\Pi) {\eps_0^2 \over R},
    \end{aligned}
\end{equation}
for all $J_2 \in I^{\leq N_1 - 8 }_{\boldsymbol{K}_R}$.

Meanwhile, the pointwise estimates satisfied by the functions $\psi_{i j}$ are, for all $J_3 \in I^{\leq N_1 - 11}_{\boldsymbol{K}_R}$:
\begin{align}
	& \Vert \p \der^{J_3} \psi_{ij} \Vert_{L^\infty (\Sigma_{t})} \le {C(N, d_\Pi) \eps_0^{2} \over {R^{\frac 12}(1+t)}} \qquad  \text{ for } i \neq j \text{ and } (i,j) \in \{1, \ldots, N\} \times \{1, \ldots, N\},\label{eq:LELinfty1}\\
	&\Vert \overline{\partial}^{(i)} \der^{J_3} \psi_{ij} \Vert_{L^\infty (\Sigma_{t})} \le {C(N, d_\Pi) \eps_0^{2} R^3 \over {(1+t)}^{3 \over 2}}, \qquad \text{ for } i \neq j \text{ and } (i,j) \in \{1, \ldots, N\} \times \{1, \ldots, N\}, \label{eq:LELinfty2}\\
	&|\partial \der^{J_3} \psi_{ij}|(t, u_i, \omega) \le {C(N, d_\Pi) \eps_0^{2} R^2 \over {t} (1 + |u_i|)^{1 \over 2}}, \qquad \text{ for } i \neq j \text{ and } (i,j) \in \{1, \ldots, N\} \times \{1, \ldots, N\}.\label{eq:LELinfty3}
\end{align}

We note that, in the above display, neither $i$ nor $j$ can be $0$, because the data are compactly supported in the $N$ balls, meaning that the remainder (what we called $\phi_0$) arising from the parts of initial data which are located far away from all the centers $w_i$ is not present.

We now repeat the proof of Theorem \ref{thm:nonlinear}. This may not be possible unless of $R$ is large enough. However, for $R$ large enough, we can use the fact that most of the estimates in Section~\ref{sec:mainproof} (in particular, those concerning terms $\boldsymbol{(a_1)}$ through $\boldsymbol{(a_{16})}$) have ``room'' in the parameter $R$. This is manifest for a subset of those terms and has already been noted in the proof of Theorem~\ref{thm:nonlinear}. In addition, for a few specific terms, we need to use the fact that, in the case relevant to the proof of Theorem~\ref{thm:largedata}, we have that $\phi_0 \equiv 0$ (see Remark~\ref{rmk:improvedR}). This allows us to close a bootstrap argument.  
More precisely, we start from the following bootstrap assumptions, valid for all $I \in I^{\leq N_0}_{\boldsymbol{K}_R}$ and all $J \in I^{\leq \lfloor N_0/2\rfloor +1 }_{\boldsymbol{K}_R}$, where $N_0\in \N, N_0 \geq 7$:
\begin{align}
&\Vert \p \der^I \Psi \Vert_{L^2(\Sigma_t)} \leq \eps_0^{ 3 -\delta} R^{-\frac 32 + \delta} \quad \hspace{120pt} \text{ for } \quad t \in [0,T], \label{eq:LDbstp1}   \\
&\Vert (1+|u_i|)^{-\frac 12 - \frac \delta 2} \bar \p^{(i)} \der^I \Psi \Vert_{L^2([0,T]\times \R^3)} \leq \eps_0^{ 3 -\delta}  R^{-\frac 32 + \delta}  \quad \hspace{8pt} \text{ for } \quad t \in [0,T], \quad i \in \{1, \ldots, N\}, \label{eq:LDbstp2}  \\
&|\p \der^J  \Psi(t, r, \theta,\varphi)| \leq \eps_0^{ 3 -\delta}  (1+t)^{-1} R^{-\frac 1 2 +\delta}  \quad \hspace{55pt} \text{ for } \quad t \in [0,T], \label{eq:LDbstp3}   \\
&|\bar \p^{(i)} \der^J \Psi(t, r, \theta,\varphi)| \leq \eps_0^{ 3 -\delta}  (1+t)^{-\frac 32 } R^{\frac 32 + \delta}  \quad \hspace{50pt} \text{ for } \quad t \in [R^{20}, T], \quad i \in \{1, \ldots, N\}, \label{eq:LDbstp4}\\
&|\p \der^J \Psi(t, r, \theta,\varphi)| \leq  \eps_0^{ 3 -\delta}  (1+v_i)^{-1}(1+|u_i|)^{-\frac 12} R^{\frac 12 + \delta}  \quad  \text{ for } \quad t \in [R^{20}, T], \quad i \in \{1, \ldots, N\}. \label{eq:LDbstp5}
\end{align}

These bootstrap assumptions are the same as those in Section \ref{sec:mainproof}, the only difference being the presence of $\eps_0$ in place of $\eps$. We then seek to improve these bootstrap assumptions, proving the following, for all $I \in I^{\leq N_0}_{\boldsymbol{K}_R}$ and all $J \in I^{\leq N_0 -3 }_{\boldsymbol{K}_R}$:
\begin{align}
    &\Vert \p \der^I \Psi \Vert_{L^2(\Sigma_t)} \leq \eps_0^{ 3 -\delta} R^{-\frac 32 +\frac 3 4 \delta} \quad  \hspace{111pt} \text{ for } \quad t \in [0,T], \label{eq:aRbstpi1}   \\
    &\Vert (1+|u_i|)^{-\frac 12 - \frac \delta 2} \bar \p^{(i)} \der^I \Psi \Vert_{L^2([0,T]\times \R^3)} \leq \eps_0^{ 3 -\delta} R^{-\frac 32 +\frac 3 4 \delta}   \quad  \hspace{8pt}\text{ for } \quad t \in [0,T], \ i \in \{1, \ldots, N\},\label{eq:aRbstpi2}  \\
    &|\p \der^J  \Psi(t, r, \theta,\varphi)| \leq \eps_0^{ 3 -\delta} (1+t)^{-1} R^{-\frac 1 2 + \frac 78 \delta}  \quad  \hspace{46pt} \text{ for } t \in [0,T], \label{eq:aRbstpi3}   \\
    &|\bar \p^{(i)} \der^J \Psi(t, r, \theta,\varphi)| \leq \eps_0^{ 3 -\delta} (1+t)^{-\frac 32 } R^{\frac 32 + \frac 78 \delta}  \quad \hspace{50pt} \text{ for } \quad t \in [R^{20}, T], \ i \in \{1, \ldots, N\}, \label{eq:aRbstpi4}\\
    &|\p \der^J \Psi(t, r, \theta,\varphi)| \leq \eps_0^{ 3 -\delta}  (1+v_i)^{-1}(1+|u_i|)^{-\frac 12} R^{\frac 12 + \frac 78 \delta}  \quad  \text{ for } \quad t \in [R^{20}, T], \ i \in \{1, \ldots, N\}. \label{eq:aRbstpi5}
\end{align}

Looking at how the terms in~\eqref{eq:masterl2space} were controlled, we note that most of the terms have ``room'' in the parameter $R$ (by a positive power of $R$). Indeed, the inequalities in~\eqref{terms12}, \eqref{terms56}, \eqref{terms7}, \eqref{terms89}, \eqref{terms12d}, \eqref{terms14151}, \eqref{terms14152}, and~\eqref{terms16} still hold with $\eps_0$ instead of $\eps$. In those inequalities, we now use that we can absorb the constant $C(N, d_\Pi)$ by negative powers of $R$ (instead of using positive powers of $\eps_0$). The worst case bound in such inequalities is therefore replaced by
\begin{equation}
    \begin{aligned}
        C(N, d_\Pi) \eps_0^{6 - 2 \delta} R^{-3 + \delta} \log (R) \le \eps_0^{6 - 2 \delta} R^{-3 + {3 \delta \over 2}},
    \end{aligned}
\end{equation}
for $R$ sufficiently large. This recovers the improved bootstrap assumption~\eqref{eq:aRbstpi1} for $R$ sufficiently large.

The remaining terms, namely
$$
\boldsymbol{(a_3) + (a_4)}, \quad \boldsymbol{(a_{10}) + (a_{11})}, \quad \boldsymbol{(a_{13})}
$$
need to be handled differently. See Remarks~\ref{rmk:improved34},~\ref{rmk:improved1011} and~\ref{rmk:improved13}. We are going to show in detail here how to obtain the improvement in $R$ solely for the term $\boldsymbol{(a_3) + (a_4)}$, as the other two are very similar. Indeed, we note that the inequality~\eqref{terms34} has no ``room'' in the parameter $R$, in the presence of a nonzero $\phi_0$.

We then use the fact that the data are compactly supported in $N$ balls, meaning that $\phi_0 = 0$. Because of this, we note that the $\psi_{i j}$ are supported in the set $\big\{t \ge {R \over 10}\big\}$, and similarly, we have that $\Psi$ is supported in $\big\{t \ge {R \over 10}\big\}$. Thus, we obtain
\begin{equation}
    \begin{aligned}
    & \boldsymbol{(a_3) + (a_4)}\\
    &\leq \sum_{i = 0}^N \int_{R / 10}^{t} \int_{\R^3}\big( |F^{\alpha\beta\gamma} \ \p_\beta \p_\alpha \phi_i \ \p_t \der^I \Psi  \  \p_\gamma \der^I \Psi| + |F^{\alpha \beta\gamma} \ \p_t \p_\alpha \phi_i \ \p_\beta  \der^I \Psi \p_\gamma \der^I \Psi|\big)\, \de x \de s.
    \end{aligned}
\end{equation}
Once again, we have that
\begin{equation}
    \begin{aligned}
        &\int_{R / 10}^{t} \int_{\R^3}\big( |F^{\alpha\beta\gamma} \ \p_\beta \p_\alpha \phi_i \ \p_t \der^I \Psi  \  \p_\gamma \der^I \Psi| + |F^{\alpha \beta\gamma} \ \p_t \p_\alpha \phi_i \ \p_\beta  \der^I \Psi \p_\gamma \der^I \Psi|\big)\, \de x \de s\\
        & \quad \leq C\int_{R / 10}^t \int_{\R^3}\big( | \p \bar \p^{(i)} \phi_i| \ |\p \der^I \Psi |  \,  |\p \der^I \Psi| + | \p \p \phi_i| \ |\bar \p^{(i)} \der^I \Psi |  \,  |\p \der^I \Psi| \big)\, \de x \de s.
    \end{aligned}
\end{equation}
Now, we bound the terms in the same way as before, using in addition the fact that the region of integration is restricted to $s \ge {R \over 10}$. We get that
\begin{equation}
    \begin{aligned}
    \int_{R / 10}^t \int_{\R^3} | \p \bar \p^{(i)} \phi_i| \ |\p \der^I \Psi |  \,  |\p \der^I \Psi| \de x \de s \leq C \eps_0^{7-2\delta}R^{-3 + 2 \delta}\int_{R / 10}^t \frac 1 {(1+s)^2}\de s \leq C \eps_0^{7 - 2 \delta} R^{-4 + 2 \delta}.
    \end{aligned}
\end{equation}
We note that this term is now better by one power of $R$ than it was before. Similarly, we have that
\begin{equation}
    \begin{aligned}
        &\int_{R / 10}^t \int_{\R^3}  | \p \p \phi_i| \ |\bar \p^{(i)} \der^I \Psi |  \,  |\p \der^I \Psi| \, \de x \de s \leq C\Big(\int_0^t \frac 1 {(1+s)^{1+\delta}}\Vert \p \der^I \Psi \Vert^2_{L^2(\Sigma_s)} \de s\Big)^{\frac 12} \\
        & \quad \times \Big( \int_{R / 10}^t \int_{\R^3}(1+s)^{1+\delta} |\p\p\phi_i|^2 |\p \bar \p^{(i)} \der^I \Psi|^2\de x \de s \Big)^{\frac 12} \leq C \eps_0^{7 - 2 \delta} R^{-3}.
    \end{aligned}
\end{equation}
Thus, after summing, we get that the new inequality replacing~\eqref{terms34} is
\begin{equation}
    \begin{aligned}
        C(N, d_\Pi) \eps_0^{6 - 2 \delta} R^{-3} \le \eps_0^{6-2\delta} R^{-3 + \frac 3 2\delta},
    \end{aligned}
\end{equation}
where we have absorbed $C(N, d_\Pi)$ in the term $R^{-\frac 32\delta}$. 

This shows that we can control all of the error integrals in order to recover the bootstrap assumptions~\eqref{eq:aRbstpi1} and~\eqref{eq:aRbstpi2}, upon restricting $R$ to be large (depending on $L$).

The pointwise bootstrap assumptions are recovered in exactly the same way as in Section~\ref{sec:mainproof}. Indeed, we have that, for $J \in I^{\leq N_0 -3}_{\boldsymbol{K}_R}$, 
\begin{equation}
    \begin{aligned}
       & |\p \der^J  \Psi(t, r, \theta,\varphi)| \leq C(N, d_\Pi) \eps_0^{ 3 - \delta}  (1+t)^{-1} R^{-\frac 1 2 +\frac 34 \delta}\le \eps_0^{3 - \delta} R^{-{1 \over 2} + {7 \delta \over 8}},
    \end{aligned}
\end{equation}
where we have once again absorbed $C(N, d_\Pi)$ by $R^{-{\delta \over 10}}$. This shows~\eqref{eq:aRbstpi3}. The other pointwise estimates~\eqref{eq:aRbstpi4}, and \eqref{eq:aRbstpi5} follow in a similar way. This completes the proof of the large data theorem (Theorem~\ref{thm:largedata}).
\end{proof}

\appendix

\section{Trace lemmas}
We record the following trace lemmas that were needed in the paper.
\subsection{Trace lemma on \texorpdfstring{$\Sigma_t$}{t = const.}}
\begin{lemma}\label{lem:radialtrace} There exists a positive constant $C$ such that the following holds. Let $f : \R^3 \rightarrow \R$ be a smooth function that decays sufficiently rapidly at infinity. We take polar coordinates $(r,\omega)$ with $\omega \in \mathbb{S}^2$. Then, we have that
	\begin{equation}
	\begin{aligned}
	\Vert f \Vert_{L^2 (S_r)} \le C \Vert f \Vert_{L^2 (\R^3)} + C \Vert \partial f \Vert_{L^2 (\R^3)},
	\end{aligned}
	\end{equation}
	where $S_r$ is the sphere of radius $r$, with $r \ge 1$.
\end{lemma}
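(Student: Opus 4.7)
The plan is to use the fundamental theorem of calculus in the radial direction together with Cauchy--Schwarz, exactly as in standard trace inequalities for Sobolev spaces on half-lines. Concretely, since $f$ decays sufficiently rapidly at infinity, for each fixed $\omega \in \mathbb{S}^2$ I would write
\begin{equation*}
    f(r,\omega)^2 \;=\; -2\int_r^\infty f(s,\omega)\, \partial_s f(s,\omega)\, \de s.
\end{equation*}
Multiplying both sides by $r^2$ and integrating over $\omega \in \mathbb{S}^2$ gives the $L^2(S_r)$ norm on the left. On the right, using $r \ge 1$ (more importantly, that $s \ge r$ in the region of integration, so that $r^2 \le s^2$), I can replace the factor $r^2$ by $s^2$, at the cost of the usual bound by a positive constant. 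This is the one place where the restriction $r \ge 1$ is used, and it simply ensures that we absorb the spherical measure weight cleanly into the bulk volume form $s^2 \, \de s \, \de \omega$ on $\R^3$.

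Having done this, the right-hand side becomes
\begin{equation*}
    2 \int_{\mathbb{S}^2}\int_r^\infty |f(s,\omega)|\, |\partial_s f(s,\omega)|\, s^2 \, \de s \, \de \omega,
\end{equation*}
and an application of Cauchy--Schwarz in the measure $s^2 \, \de s \, \de \omega$ on $\{s \ge r\} \times \mathbb{S}^2 \subset \R^3$ bounds this by $2 \Vert f \Vert_{L^2(\R^3)} \Vert \partial_r f \Vert_{L^2(\R^3)}$. Since $|\partial_r f| \le |\partial f|$ pointwise, and $\sqrt{ab} \le \frac12(a+b)$, taking square roots yields the desired inequality.

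There is no real obstacle here; the argument is a routine one-dimensional trace estimate done fiberwise in the angular variable, with the $r^2$ Jacobian from spherical coordinates treated by the elementary inequality $r^2 \le s^2$ on the relevant region. The only mild care is to justify the boundary term at infinity vanishes, which follows from the assumed decay of $f$, and to keep track of the weights so that the resulting bulk integrals are genuinely the $L^2(\R^3)$ norms rather than weighted versions thereof.
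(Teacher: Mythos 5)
Your argument is correct and is essentially the same radial FTC-plus-Cauchy--Schwarz argument as in the paper; the only bookkeeping difference is that the paper first substitutes $h = rf$ and then uses $r \ge 1$ to absorb the lower-order term $f$ coming from $\partial_r(rf)$, whereas you carry the Jacobian weight directly and bound $r^2 \le s^2$ on $\{s \ge r\}$. One small inaccuracy: contrary to your remark, $r^2 \le s^2$ holds for all $s \ge r \ge 0$, so your version of the argument in fact does not use $r \ge 1$ anywhere and proves the (slightly stronger) statement for all $r \ge 0$.
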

\bp[Proof of Lemma~\ref{lem:radialtrace}]
We integrate in the $r$ direction using the fundamental theorem of calculus. We have that
\begin{equation}
\begin{aligned}
h^2 (r,\omega) = \int_r^\infty 2 h (s,\omega) \partial_r h (s,\omega) \de s.
\end{aligned}
\end{equation}
Integrating the previous display over $\mathbb{S}^2$ gives us that
\begin{equation}
\begin{aligned}
\int_{\mathbb{S}^2} h^2 (r,\omega) \de \omega = \int_r^\infty 2 \int_{\mathbb{S}^2} h(s,\omega) \partial_r h (s,\omega) \de \omega \de s.
\end{aligned}
\end{equation}
Using the Cauchy--Schwarz inequality, we obtain
\begin{equation}
\begin{aligned}
\int_{\mathbb{S}^2} h^2 (r,\omega) \de \omega \le 2 \left (\int_r^\infty \int_{\mathbb{S}^2} h^2 (s,\omega) \de \omega \de s \right )^{{1 \over 2}} \left (\int_r^\infty \int_{\mathbb{S}^2} (\partial_r h)^2 (s,\omega) \de \omega \de s \right )^{{1 \over 2}}.
\end{aligned}
\end{equation}
Applying this to the function $h(r,\omega) = r f(r,\omega)$ gives us the desired result.
\ep

\subsection{Trace lemma on \texorpdfstring{$H_t$}{Ht}}

\begin{lemma}\label{lem:tracehyp}
Recall the coordinates $(\tau, \alpha, x, \varphi)$ introduced in display~\eqref{eq:xflathyp}. Recall moreover the definition of the hyperboloids $H_\tau$ (from Definition~\ref{def:hypdef}). There exists a positive constant $C$ such that the following holds. Let $\bar x \in \R$. Moreover, let $f : H_\tau \to \R$ be a smooth, compactly supported function. Then, we have that
\begin{equation}
\begin{aligned}
\Vert f\Vert_{L^2 (H_\tau \cap \{x = \bar x\})}^2 \le C \Vert f\Vert^2_{L^2 (H_\tau)}+ C\Vert\partial_x f\Vert^2_{L^2 (H_\tau)}.
\end{aligned}
\end{equation}
Here, the $L^2$ spaces are defined with respect to the induced volume form on the submanifolds considered.
\end{lemma}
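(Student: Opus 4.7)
The approach is the standard trace-in-one-direction argument adapted to the product-type structure that the hyperboloid $H_\tau$ carries in the coordinate system $(\tau, \alpha, x, \varphi)$ of~\eqref{eq:xflathyp}. The key point I would exploit is that, in these coordinates, the $x$-direction is essentially orthogonal (and flat) relative to the $(\alpha, \varphi)$ directions which parametrize the ``genuinely hyperbolic'' 2D hyperboloid $H_\tau\cap\{x=\bar x\}$. First I would compute the induced metric on $H_\tau$ by pulling back the Minkowski metric: using $t=\tau\cosh\alpha$, $y=\tau\sinh\alpha\cos\varphi$, $z=\tau\sinh\alpha\sin\varphi$, a short calculation yields
\begin{equation*}
g_{H_\tau} \;=\; \tau^2\, d\alpha^2 \,+\, dx^2 \,+\, \tau^2 \sinh^2(\alpha)\, d\varphi^2,
\end{equation*}
so the induced volume form factorises as $\de\mathrm{vol}_{H_\tau} = \tau^2\sinh(\alpha)\, \de\alpha\, \de x\, \de\varphi$, and the restriction to $\{x=\bar x\}$ carries the 2D volume form $\tau^2\sinh(\alpha)\,\de\alpha\,\de\varphi$. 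In particular, the coordinate vector field $\p_x$ is a unit vector tangent to $H_\tau$, which is why it is the natural derivative to appear on the right-hand side of the claimed inequality.

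Next, for each fixed $(\alpha,\varphi)$, I would apply the fundamental theorem of calculus in $x$, using that $f$ is compactly supported on $H_\tau$:
\begin{equation*}
f^2(\bar x,\alpha,\varphi) \;=\; -\int_{\bar x}^{\infty} \p_x\bigl(f^2\bigr)(x,\alpha,\varphi)\,\de x \;=\; -2\int_{\bar x}^{\infty} f(x,\alpha,\varphi)\,\p_x f(x,\alpha,\varphi)\,\de x,
\end{equation*}
followed by the elementary bound $2|f\,\p_x f|\le f^2 + (\p_x f)^2$. This gives the pointwise inequality
\begin{equation*}
f^2(\bar x,\alpha,\varphi) \;\le\; \int_{\R}\bigl( f^2 + (\p_x f)^2\bigr)(x,\alpha,\varphi)\,\de x.
\end{equation*}

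Finally, I would multiply through by $\tau^2\sinh(\alpha)$ and integrate over $(\alpha,\varphi)\in[0,\infty)\times[0,2\pi)$. Because the volume form on $H_\tau$ splits as $\de x$ tensored with the volume form on the slice $\{x=\bar x\}$, the right-hand side is exactly $\Vert f\Vert_{L^2(H_\tau)}^2 + \Vert\p_x f\Vert_{L^2(H_\tau)}^2$, while the left-hand side is $\Vert f\Vert_{L^2(H_\tau\cap\{x=\bar x\})}^2$. This yields the claim with $C=1$. I do not anticipate any genuine obstacle here: the only non-trivial ingredient is the observation that $\p_x$ is tangent to and of unit length on $H_\tau$, making the product structure of the volume form transparent; once that is in hand the argument reduces to the standard one-dimensional trace inequality applied fibrewise.
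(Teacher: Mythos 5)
Your proof is correct and follows essentially the same route as the paper: a one-dimensional fundamental-theorem-of-calculus argument in the $x$ direction followed by integration over the transverse $(\alpha,\varphi)$ variables. The only cosmetic difference is that you apply the elementary bound $2|f\,\partial_x f|\le f^2 + (\partial_x f)^2$ pointwise while the paper integrates first and then invokes Cauchy--Schwarz, and you additionally take the time to verify the induced metric and the product structure of the volume form on $H_\tau$ (which the paper leaves implicit); both observations are correct and the conclusion is identical.
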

\bp[Proof of Lemma~\ref{lem:tracehyp}]
We have that
\begin{equation}
\begin{aligned}
f^2 (\tau, \alpha, \bar x, \varphi) \le 2 \int_{\bar x}^\infty |f (\tau, \alpha, x, \varphi)| |\partial_x f (\tau, \alpha,x, \varphi)| \de x.
\end{aligned}
\end{equation}
Integrating along the set $H_\tau \cap \{x = \bar x\}$ (i.e., integrating in the $\alpha$ and $\varphi$ variables), and using the Cauchy--Schwarz inequality implies that
\begin{equation}
    \begin{aligned}
        \Vert f\Vert_{L^2 (H_\tau \cap \{x = \bar x\})}^2 \le C \Vert f\Vert^2_{L^2 (H_\tau)}+ C\Vert\partial_x f\Vert^2_{L^2 (H_\tau)},
    \end{aligned}
\end{equation}
as desired.
\ep

\section{Energy estimates and the hyperboloidal foliation}\label{sec:emst}
We recall the stress--energy--momentum tensor associated to the wave equation on Minkowski space (here, $m$ denotes the Minkowski metric):
\begin{equation}
\begin{aligned}
Q_{\mu \nu} [\phi] = \partial_\mu \phi \partial_\nu \phi - {1 \over 2} m_{\mu \nu} \, \partial^\gamma \phi \partial_\gamma \phi.
\end{aligned}
\end{equation}
Let now $D \subset \R^4$ be a bounded, open domain with piecewise smooth boundary $\p D$, such that every smooth piece of $\p D$ is spacelike. In particular, this implies that the Lorentzian unit outer normal to $\p D$, denoted by $N_{\p D}$, is well defined. Let now $X$ be a smooth vector field. Using the fact that $\nabla_\mu Q^{\mu \nu} = (\Box \phi) \p^\nu \phi$ and the divergence theorem, we now have
\begin{equation}
\begin{aligned}
\int_D (\Box \phi) X \phi \de x \de t + \int_D (\nabla_\mu X_\nu )  Q^{\mu\nu}\de x \de t = \int_{\partial D} Q(X,N_{\partial D}) \de \sigma(\partial D).
\end{aligned}
\end{equation}
Here, $\de \sigma(\p D)$ is the volume form associated to the induced Riemannian metric on the bondary $\p D$. When $X$ is a Killing field, the second term in the previous display vanishes, and we are left with
\begin{equation}
\begin{aligned}
\int_D (\Box \phi) X \phi \, \de x \de t = \int_{\partial D} Q(X,N_{\partial D}) \de \sigma(\partial D).
\end{aligned}
\end{equation}
When $X = T = \p_t$, we obtain the usual $\p_t$ energy.

We require the following fact on the $\p_t$ energy flux through the hyperboloidal foliation $H_{\tau}$. 
\begin{lemma}\label{lem:unifspacelike}
Recall the coordinates $(\tau, \alpha, x, \varphi)$ introduced in display~\eqref{eq:xflathyp}. Recall moreover the definition of the hyperboloids $H_\tau$ (from Definition~\ref{def:hypdef}).
There exists a positive constant $C$ such that the following inequality holds true:
\begin{equation}
    \int_{H_\tau \cap \{\rho \leq t/10\}} Q(X,N_{H_\tau}) \de \sigma(H_\tau) \geq C \Vert \p \phi \Vert^2_{L^2(H_\tau \cap \{\rho \leq t/10\})}.
\end{equation}
Here, $\de \sigma(H_\tau)$ is the volume form associated to the induced Riemannian metric on $H_\tau$, and the $L^2$ norm on the RHS is defined with respect to the induced volume form (pullback of the ambient volume form) on the hypersurface $H_\tau$.
\end{lemma}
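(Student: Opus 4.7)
The plan is to reduce the statement to a pointwise lower bound on the energy density $Q(\partial_t, N_{H_\tau})$ on $H_\tau \cap \{\rho \le t/10\}$, exploiting the fact that the hyperboloidal coordinate $\alpha$ is uniformly bounded in this region. Using the coordinates $(\tau, \alpha, x, \varphi)$ from~\eqref{eq:xflathyp}, one has $t = \tau\cosh\alpha$ and $\rho = \tau\sinh\alpha$, so the condition $\rho \le t/10$ translates into $\tanh\alpha \le 1/10$, which makes $\alpha$ lie in a fixed compact interval and forces $\cosh\alpha, \sinh\alpha, e^{\pm\alpha}$ to be bounded above and below by positive constants.

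The first step is to compute the future-directed Lorentzian unit normal to $H_\tau$. Since $H_\tau$ is a level set of $\tau(t,y,z)=\sqrt{t^2-y^2-z^2}$, taking the Minkowski gradient and normalizing yields
\begin{equation*}
N_{H_\tau} = \frac{1}{\tau}\bigl(t\,\partial_t + y\,\partial_y + z\,\partial_z\bigr),
\end{equation*}
which indeed has Minkowski length $-1$ and is future-directed. Next, using the definition of $Q$ and the identities $Q(\partial_t,\partial_t) = \tfrac{1}{2}((\partial_t\phi)^2 + |\nabla\phi|^2)$ and $Q(\partial_t,\partial_{y^i}) = \partial_t\phi\,\partial_{y^i}\phi$, one computes
\begin{equation*}
Q(\partial_t,N_{H_\tau}) = \frac{t}{2\tau}\bigl((\partial_t\phi)^2 + |\nabla\phi|^2\bigr) + \frac{1}{\tau}\,\partial_t\phi\,\bigl(y\,\partial_y\phi + z\,\partial_z\phi\bigr).
\end{equation*}

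The second step is the pointwise bound. Rewriting in hyperboloidal coordinates and applying Cauchy--Schwarz to the cross term, one obtains
\begin{equation*}
\Bigl|\frac{1}{\tau}\partial_t\phi\,(y\partial_y\phi + z\partial_z\phi)\Bigr| \le \frac{\rho}{\tau}\,|\partial_t\phi|\,\sqrt{(\partial_y\phi)^2+(\partial_z\phi)^2} \le \frac{\sinh\alpha}{2}\bigl((\partial_t\phi)^2+|\nabla\phi|^2\bigr),
\end{equation*}
so that
\begin{equation*}
Q(\partial_t,N_{H_\tau}) \ge \frac{\cosh\alpha - \sinh\alpha}{2}\bigl((\partial_t\phi)^2+|\nabla\phi|^2\bigr) = \frac{e^{-\alpha}}{2}\,|\partial\phi|^2.
\end{equation*}
In the region $\tanh\alpha \le 1/10$ this gives $Q(\partial_t,N_{H_\tau}) \ge c\,|\partial\phi|^2$ for a universal $c>0$.

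The final step is to integrate this pointwise bound against the induced Riemannian volume form $d\sigma(H_\tau)$ and identify the result with the $L^2$ norm appearing on the right-hand side of the claimed inequality; this is immediate once one observes that $d\sigma(H_\tau)$, expressed in the $(\alpha,x,\varphi)$ coordinates on $H_\tau$ as $\tau^2\sinh\alpha\,d\alpha\,dx\,d\varphi$, is precisely the volume form defining the $L^2$ norm on $H_\tau$ used in the statement. Since all estimates depend only on the bound $\tanh\alpha \le 1/10$, the constant $C$ is universal. The only mildly delicate point---the potential loss of positivity of $Q(\partial_t, N_{H_\tau})$ as $\alpha\to\infty$ (where the hypersurface becomes asymptotically null)---is precisely avoided by the cutoff $\rho \le t/10$, so there is no essential obstacle.
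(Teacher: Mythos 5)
Your proof is correct and fills in the computation that the paper only sketches; the paper's ``proof'' is essentially the one-line remark that $H_\tau\cap\{\rho\le t/10\}$ is uniformly spacelike, which is precisely what your bound $\alpha \le \operatorname{arctanh}(1/10)$ and the resulting lower bound $Q(\partial_t,N_{H_\tau}) \ge \tfrac{1}{2}e^{-\alpha}|\partial\phi|^2$ make quantitative. The computation of $N_{H_\tau}$, the expansion of $Q$, and the Cauchy--Schwarz absorption of the cross term are all correct.

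The one inaccuracy is at the very end, where you assert that $d\sigma(H_\tau) = \tau^2\sinh\alpha\,d\alpha\,dx\,d\varphi$ is ``precisely'' the measure defining the $L^2$ norm on the right-hand side. The statement of the lemma distinguishes $d\sigma(H_\tau)$ (Riemannian volume form of the induced metric) from ``the induced volume form (pullback of the ambient volume form)''; if the latter is understood as the Lebesgue measure $dx\,dy\,dz$ of $\R^3$ pulled back via the graph parametrization $t=\sqrt{\tau^2+\rho^2}$, one gets $\tau^2\sinh\alpha\cosh\alpha\,d\alpha\,dx\,d\varphi$, which differs from $d\sigma$ by a factor of $\cosh\alpha$. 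This does not affect the conclusion --- in the region $\tanh\alpha\le 1/10$ the factor $\cosh\alpha$ is uniformly bounded above and below, so the two measures are comparable and only the constant $C$ changes --- but you should either prove that the two measures coincide or explicitly invoke their comparability on the region $\{\rho\le t/10\}$ rather than claim equality.
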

\begin{proof}[Sketch of proof]
The proof follows in a straightforward manner from the fact that the hypersurface $H_\tau \cap \{\rho \leq t/10\}$ is uniformly spacelike, and from expanding the stress--energy--momentum tensor $Q$ in components.
\end{proof}

\bibliography{bumps.bib}
\bibliographystyle{plain}

\end{document}